\newcommand{\blind}{1}
\renewcommand{\baselinestretch}{1}
\def\vt{{\vartheta}}
\def\cs{{Cauchy–Schwarz }}
\def\Id{\operatorname{I}}
\def\tzt{{\tilde \zeta}}
\renewcommand{\baselinestretch}{1.1}
\newcommand*{\colorboxedAux}[3]{%
  \begingroup
    \colorlet{cb@saved}{.}%
    \color#1{#2}%
    \boxed{%
      \color{cb@saved}%
      #3%
    }%
  \endgroup
}
\DeclareMathOperator*{\argmax}{argmax}
\DeclareMathOperator*{\argmin}{argmin}
\newcommand{\abs}[1]{\left\lvert #1 \right\rvert}
\DeclareMathOperator*{\iid}{\text{i.i.d.}}
\newcommand{\ceil}[1]{\lceil #1 \rceil}
\newcommand{\floor}[1]{\lfloor #1 \rfloor}
\newcommand{\expec}[2]{\mathbb{E}_{#2}[ #1 ] }
\newcommand{\norm}[1]{\lVert #1 \rVert}
\newcommand\numberthis{\addtocounter{equation}{1}\tag{\theequation}}  
\def\fn[#1]#2{{f_{#1}\left(x_{#2}\right)}}
\newcommand{\kb}[1]{{\color{cyan}\bf[KB: #1]}}
\newcommand{\ar}[1]{{\color{red}\bf[AR: #1]}}
\newtheorem{lemma}{Lemma}[section]
\newtheorem{theorem}{Theorem}[section]
\newtheorem{assumption}{Assumption}[section]
\newtheorem{remark}{Remark}
\newcommand*{\colorboxed}{}
\def\colorboxed#1#{%
  \colorboxedAux{#1}%
}
\def\exp{{\rm exp}}
\def\cI{{\cal I}}
\def\cF{{\cal F}}
\def\tr{{\rm tr}}
\def\te{{\tilde e}}
\def\tS{{\tilde S}}
\def\slinv{(\textstyle \sum_{i=1}^n l_i)^{-1}}
\def\xik{\xi_k(\theta_{k-1},x_k)}
\def\tzk{\tilde{\zeta}_k}
\newenvironment{talign*}
 {\csname align*\endcsname}
 {\endalign}
 \numberwithin{equation}{section}
\begin{document}

\def\spacingset#1{\renewcommand{\baselinestretch}%
{#1}\small\normalsize} \spacingset{1}


\if1\blind
{
  \title{\bf Online covariance estimation for stochastic \\gradient  descent under Markovian sampling}
  \author{Abhishek Roy\thanks{
    This author gratefully acknowledges \textit{support from NSF via grant CCF-1934568, when the author was affiliated with UC Davis  where  part of the work was done.}}\hspace{.2cm}\\
    Halicioğlu Data Science Institute, University of California, San Diego\\
    and \\
    Krishnakumar Balasubramanian\thanks{
    This author gratefully acknowledges \textit{support from NSF via grant DMS-2053918.}}\\
    Department of Statistics, University of California, Davis}
  \maketitle
} \fi

\if0\blind
{
  \bigskip
  \bigskip
  \bigskip
  \begin{center}
    {\LARGE\bf Online covariance estimation for stochastic \\gradient  descent under Markovian sampling}
\end{center}
  \medskip
} \fi

\bigskip

\begin{abstract}
We investigate the online overlapping batch-means covariance estimator for Stochastic Gradient Descent (SGD) under Markovian sampling. Convergence rates of order $O\big(\sqrt{d}\,n^{-1/8}(\log n)^{1/4}\big)$ and $O\big(\sqrt{d}\,n^{-1/8}\big)$ are established under state-dependent and state-independent Markovian sampling, respectively, where $d$ is the dimensionality and $n$ denotes observations or SGD iterations. These rates match the best-known convergence rate for independent and identically distributed ($\iid$) data. Our analysis overcomes significant challenges that arise due to Markovian sampling, leading to the introduction of additional error terms and complex dependencies between the blocks of the batch-means covariance estimator. Moreover, we establish the convergence rate for the first four moments of the $\ell_2$ norm of the error of SGD dynamics under state-dependent Markovian data, which holds potential interest as an independent result. Numerical illustrations provide confidence intervals for SGD in linear and logistic regression models under Markovian sampling. Additionally, our method is applied to the strategic classification with logistic regression, where adversaries adaptively modify features during training to affect target class classification.
\end{abstract}

\noindent%
{\it Keywords:} Batch-means estimator, Covariance estimation, Decision-dependent Markov chains, Stochastic approximation, Strategic classification.
\vfill

\newpage
\spacingset{1} 

\section{Introduction}\label{sec:intro}
Many statistics and machine learning problems could be formulated as solving an underlying optimization problem of the form
\begin{align*}
    \argmin_{\theta\in\mathbb{R}^d}f(\theta)=\argmin_{\theta\in\mathbb{R}^d}\expec{F(\theta;x)}{\pi},\numberthis\label{eq:mainprobstand}
\end{align*}
where $\theta$ is the parameter to learn and $x$ is a random vector sampled from the distribution $\pi$. Alternatively, one can think of \eqref{eq:mainprob} as minimizing $f$ over $\theta$, where $F$ is the estimate of $f$ based on an observation $x$. This problem arises in numerous statistical applications. For example, in linear regression with squared-loss, let $x\coloneqq (z,y)$ where $z$ is the predictor variable and $y$ is the response variable, and let $\theta$ be the coefficient vector (to be optimized for). Then the least-squares linear regression problem boils down to solving \eqref{eq:mainprobstand} with $F\coloneqq (y-\langle z,\theta\rangle)^2$ for the unknown parameter. Note that if the data is drawn \textcolor{black}{$\iid$} from the statistical model, $y=x^\top {\theta}^* +\epsilon$ (with $x$ having a non-degenerate covariance matrix and with $\epsilon$ being a zero-mean finite-variance  noise parameter), the minimizer of~\eqref{eq:mainprobstand} with this choice of $F$ becomes the true model parameter $\theta^*$. Logistic regression can also be formulated as \eqref{eq:mainprobstand} with $F\coloneqq \log(1+\exp(-y\theta^\top z))$, where $x\coloneqq (z,y)$, $z$ is the feature, $y$ is the label, and $\theta$ is the coefficient vector. Similar to linear regression, in the well-specified case, the solution of \eqref{eq:mainprobstand} is the true model parameter. 

Over the last few decades, with the availability of huge datasets, online optimization algorithms for solving~\eqref{eq:mainprobstand} have become increasingly popular due to their small memory requirement and higher computation efficiency. Arguably, Stochastic Gradient Descent (SGD) has been the most popular choice among the online optimization algorithm. The update step of SGD takes the following form,
\begin{align}\label{eq:vanillasgd}
    \theta_{k+1}=\theta_k-\eta_{k+1}\nabla F(\theta_k,x_{k+1}),
\end{align}
where $k$ denotes the iteration index, $\eta_{k+1}$, and $x_{k+1}$ is the step-size, and the observed sample respectively at iteration $k$.
Since SGD is a stochastic algorithm, the estimate $\hat{\theta}$ provided by the algorithm is a random vector. As a consequence a single run of SGD only provides a point estimate of $\theta^*$. From a statistical perspective, quantifying the uncertainty associated with the estimate, by constructing confidence intervals for $\theta^*$ (where with a slight overload of notation, we use $\theta^*$ to denote the minimizer of \eqref{eq:mainprobstand} generically without assuming any statistical model) is preferable. 

A first step towards uncertainty quantification is assessing the limiting distribution of the SGD iterates. Suppose we run the SGD in~\eqref{eq:vanillasgd} for a total of $n$ iterations. It has been shown that for strongly convex objective function $f(\theta)$, when the data-stream $\{x_k\}_k$ are sampled $\iid$ (e.g., \cite{polyak1992acceleration}) or from a state-dependent Markov chain (e.g., \cite{liang2010trajectory}), under suitable regularity conditions, the averaged iterates follow a Central Limit Theorem (CLT), i.e., we have
\begin{align}\label{eq:informLCLT}
    \sqrt{n}(\bar{\theta}_n-\theta^*)\overset{d}{\to}N(0,\Sigma),\qquad\text{where}\qquad \bar{\theta}_n\coloneqq \frac{1}{n}\sum_{k=1}^n\theta_k,
\end{align}
where $\Sigma$ is the limiting covariance matrix of the form $A^{-1}\,S\,A^{-1}$, where $A\coloneqq\nabla^2 f(\theta^*)$. Under $\iid$ sampling, we have $S=\expec{\nabla F(\theta^*,x)\nabla F(\theta^*,x)^\top}{}$. Under the state-dependent Markovian sampling, $S$ is expressed as a limiting covariance of a martingale-difference sequence derived from the data sequence; see Lemma~\ref{lm:poisregular} for details. \textcolor{black}{It is important to observe that under state-dependent Markovian sampling, the minimizer $\theta^*$ of the problem \eqref{eq:mainprobstand} 
corresponds to the equilibrium point or performative stable point \cite{perdomo2020performative} which is different from the \textit{true parameter} of the model from which the data is generated; see Section~\ref{sec:motivation} for details.} We also make the following remarks about the above CLT:
\begin{enumerate}[noitemsep]
    \item In case of state-dependent Markovian sampling, a truncation with re-initialization step is added on top of vanilla SGD update in \eqref{eq:vanillasgd} to enusre that the dynamics is stable, i.e., the iterates remain contained in a compact set (see Algorithm~\ref{alg:tsgd} and Lemma~\ref{lm:finitetrunc} for details).
    \item The step-size sequence $\{\eta_{k}\}_k$ needs to be chosen properly. Intuitively, the step-size can neither be too small nor be too large (see Assumption~\ref{as:liangasA4}, and Remark~\ref{rm:step}).
    \item Moreover, $\Sigma$ turns out to be the optimal covariance, appropriately defined. Under $\iid$ sampling, it achieves the Cramer-Rao lower bound and for Markovian sampling it turns out to be the smallest possible covariance achievable by a class of stochastic approximation algorithms \citep{tang1999asymptotic,liang2010trajectory}.
\end{enumerate}
As the limiting covariance depends on the unknown parameter $\theta^*$, it becomes important to estimate $\Sigma$ to construct a confidence interval. Furthermore, to preserve the above-mentioned advantages of SGD, the estimator should be constructed in an online fashion.

Covariance estimators for the SGD iterates have been explored recently by \cite{fang2018online,fang2019scalable,chen2020statistical,zhu2021online,zhong2023online} for $\iid$ data-stream $\{x_k\}_k$. To estimate $\Sigma=A^{-1}\,S\,A^{-1}$ with $S$ as defined above, \cite{chen2020statistical} proposes a plug-in estimator of the form $A_n^{-1}\hat{S}_nA_n^{-1}$, where $$\textstyle A_n=n^{-1}\sum_{i=1}^n\nabla^2 F(\theta_{i-1},x_i)\quad\text{and}\quad \hat{S}_n=n^{-1}\sum_{i=1}^n\nabla F(\theta_{i-1},x_i)F(\theta_{i-1},x_i)^\top,$$
 which has faster convergence rate than the batch-means estimator. But as also observed by \cite{chen2020statistical} and \cite{zhu2021online}, while the SGD algorithm itself works with stochastic gradients, the above plug-in estimator requires stochastic Hessians (which maybe unavailable or intractable in various problems of interest). More importantly, inverting $A_n$ at every iteration increases the per-iteration computational cost by $O(d^3)$ which can be prohibitively large even in moderate dimensions (i.e., of order fifty to hundred).

Observe that due to time-dependent step-sizes, $\{\theta_k\}_k$ evolves as a inhomogeneous Markov chain even with $\iid$ data stream. This viewpoint enables one to leverage the rich literature available on inference in Markov chains, to develop inferential procedures for SGD. Motivated by this observation, \cite{chen2020statistical} and \cite{zhu2021online} studied overlapping batch-means covariance estimator with time-varying batches developed in the context of SGD inference. Let $\{a_m\}_m$ be a strictly increasing sequence of integers with $a_1=1$. In this approach, for any $k=1,2,\cdots$, we construct a block $B_k$ consisting of the iterates $\{\theta_{t_k},\theta_{t_k+1},\cdots,\theta_k\}$ where $t_k=a_m$ for $k\in [a_m,a_{m+1})$. We use the terms block and batch interchangeably. Let $l_k=|B_k|$ denote the size of the block $B_k$. Then, after $n$ iterations, the batch-means covariance estimator is given by
\begin{align}
    \hat{\Sigma}_n=\frac{\sum_{i=1}^n\left(\sum_{k=t_i}^i\theta_k-l_i\bar{\theta}_n\right)\left(\sum_{k=t_i}^i\theta_k-l_i\bar{\theta}_n\right)^
    \top}{\sum_{i=1}^nl_i}. \label{eq:covestimator}
\end{align}
Here, one needs to evaluate the covariance over batches because of the correlation among the iterates. The goal is to select batches such that the batch means $\textstyle l_i^{-1}\textstyle\sum_{k=t_i}^i\theta_k$ have low correlation among each other. The larger the correlation, the larger is the block-size $l_k$. More intuition about $\hat{\Sigma}_n$ could be found in \cite{zhu2021online}, and reference therein. Classical batch-means estimator maintain a constant batch-size for homogeneous Markov chain \citep{chen2020statistical}. Furthermore, we will see later from Theorem~\ref{th:mainthm} that $\hat{\Sigma}_n$ is a purely-online estimator (or an any-time estimator) since the knowledge of $n$ is not needed apriori to pick the parameters $\{a_m\}_m$.

\cite{zhu2021online} used $\hat{\Sigma}_n$ as in~\eqref{eq:covestimator} to estimate the covariance of SGD with $\iid$ data. As mentioned above, even for $\iid$ data-stream, the SGD sequence forms an inhomogeneous Markov chain, which demands a time-varying batch-size. Since the correlation among the updates depends on the step-size choices, the batch-size is related closely with the step-size choice (see Theorem~\ref{th:mainthm}). As the estimator is motivated by inference for inhomogeneous Markov chain, intuitively one would expect the methodology to extend to a more general data stream. Hence, the $\iid$ assumption in the above works seems to be restrictive and made for the convenience of analysis. In this work, we study the following problem:
\begin{quote}
    What is the rate of convergence of the online batch-means covariance estimator in~\eqref{eq:covestimator} in the context of SGD under Markovian sampling?
\end{quote}

Towards answering the above question, we show in Theorem~\ref{th:mainthm} that $\hat{\Sigma}_n$ in \eqref{eq:covestimator} is a consistent estimator of the true covariance $\Sigma$ under Markovian sampling, 
with the rate of convergence matching that of the $\iid$ data-stream case. While our main focus is on the case when the Markov chain is \textit{state-dependent}, where the transition probability of the Markov chain depends on the iterates of the algorithm, for the sake of completeness, we also show a similar result for the case of state-independent Markov chain in Theorem~\ref{th:simain}.

In general, estimating the asymptotic covariance of a Markov chain requires specially designed estimators like non-overlapping batch-means estimator \citep{glynn1991estimating,kitamura1997empirical,lahiri2003resampling}, overlapping batch-means estimator \citep{politis1999subsampling}, and spectral variance methods \citep{flegal2010batch} due to serial correlation in a Markov chain. In particular, \cite{meketon1984overlapping,lahiri2003resampling,flegal2010batch} show that the asymptotic covariance of overlapping batch-means  estimator is about $33\%$ smaller than non-overlapping batch-means estimator for homogeneous geometrically ergodic Markov chains although the convergence rate is same. As shown by \cite{flegal2010batch}, for a specific choice of window size, namely Bartlett window, overlapping batch-means  estimator is equal to the spectral variance estimator. Moreover, \cite{flegal2010batch} shows that for general windows and geometrically ergodic homogeneous Markov chain, the consistency of spectral variance estimator requires stronger assumptions on the moments of the invariant distribution compared to overlapping batch-means  estimator. Although we are primarily focusing on estimating the asymptotic covariance of SGD iterates that form an inhomogeneous state-dependent Markov chain, considering that the estimators have similar asymptotic rates of convergence in the homogeneous geometrically ergodic Markov chains and that overlapping batch-means  has a slight edge over the others, we concentrate in this work on the overlapping batch-means  estimator. 
\vspace{-0.05in}
\subsection{SGD under Markovian Sampling}\label{sec:sgdmarkov}
\begin{algorithm}[t]
	\caption{Truncated Stochastic Gradient Descent} \label{alg:tsgd}
	\textbf{Input:} Truncation parameters $\{d_k\}_k$, step-size parameters $\{\eta_k\}_k$, Initial point $\theta_0\in \mathbb{R}^{d}$. 
	\begin{algorithmic}[1]
        \State \textbf{set} $\varkappa_1=0$ 
		\State \textbf{for} $k=0,\cdots n$ \textbf{do}
        \State \textbf{sample } $x_{k+1}\sim P_{\theta_k}(\cdot)$
		\State \textbf{update} 
		\begin{align}\label{eq:sgd}
        \begin{aligned}
            &\theta_{k+1}=\theta_{k}-\eta_{k+1}\nabla F(\theta_k,x_{k+1})\\
      &\varkappa_{k+1}=\varkappa_k
        \end{aligned}
		\end{align}
  \State \textbf{if } $\left(\norm{\theta_{k+1}-\theta_k}_2\geq d_k\text{ or } \theta_{k+1}\notin \mathcal{K}_{\varkappa_k}\right)$
\State\qquad\qquad $\theta_{k+1}=\theta_1$, $x_k=x_1$, and $\varkappa_{k+1}=\varkappa_k+1$
  \State \textbf{end if}
		\State \textbf{end for}
	\end{algorithmic}	
 \textbf{Output:} $\bar{\theta}_n=\frac1n\sum_{k=1}^n\theta_k$
\end{algorithm}
We now discuss the problem setup in more details. Formally, the optimization problem that we consider is given by
\begin{align}\label{eq:mainprob}
    \argmin_{\theta\in\mathbb{R}^d}f(\theta)=\argmin_{\theta\in\mathbb{R}^d}\expec{F(\theta;x)}{x\sim\pi_{\theta}},
\end{align}
where $\pi_\theta$ is the stationary distribution corresponding to the $\theta$ dependent transition operator $P_\theta$. SGD has access to a data sequence $\{x_{k}\}_k$, $P_{\theta_{k-1}}(\cdot,\cdot)$ is the Markov transition kernel dependent on the iterate ${\theta_{k-1}}$ at iteration $k-1$, and $x_k\sim P_{\theta_{k-1}}(x_{k-1},\cdot)$. Let $\nabla f(\theta)=\expec{\nabla F(\theta,x)}{}$ where $x\sim \pi_\theta$, and define the \textit{gradient noise} at $\theta_k$ as 
\begin{align}\label{eq:gradnoise}
\xi_{k+1}(\theta_k,x_{k+1})\coloneqq\nabla F(\theta_k,x_{k+1})-\nabla f(\theta_k).
\end{align}
Under $\iid$ sampling, the gradient noise sequence turns out to be a martingale-difference sequence. However, as we discuss later in Lemma~\ref{lm:poisregular}, under Markovian sampling, the gradient noise in~\eqref{eq:gradnoise} exhibits a more nuanced structure.

To handle this, truncated SGD as discussed in Algorithm~\ref{alg:tsgd} is used. Truncated SGD maintains a sequence of compact sets $\{\mathcal{K}_q\}_q$, called truncation sets, such that 
\begin{align*}
\mathcal{K}_q\subset \text{int}(\mathcal{K}_{q+1}),\qquad \text{and}\qquad \cup_{q\geq 0} \mathcal{K}_q= \Theta,
\end{align*}
where $\text{int}(\cdot)$ denote the interior of a set. Let $\{d_k\}_k$ be a decreasing sequence of thresholds. At each iteration $k$, first, an iterate $\theta_{k+1}$ is generated from $\theta_k$ using the vanilla SGD step as in \eqref{eq:sgd}. Then, if $\theta_{k+1}$ does not belong to the current truncation set $\mathcal{K}_q$ or the change in the consecutive iterates is bigger than a predefined threshold, i.e., if $\theta_{k+1}\notin\mathcal{K}_q$ or $\norm{\theta_{k+1}-\theta_k}_2\geq d_k$, the algorithm is initialized from $\theta_0$ with a bigger truncation set $\mathcal{K}_{q+1}$. In stochastic approximation literature \citep{harold1997stochastic,andrieu2005stability,benveniste2012adaptive}, the iterates are assumed to remain confined in a compact set. Instead of this assumption, Algorithm~\ref{alg:tsgd} automatically guarantees the desired stability.

\textbf{A note about truncation.} Vanilla SGD update (as in~\eqref{eq:vanillasgd}) is not be stable under state-dependent Markovian sampling, i.e., the iterates may not be contained in a compact set; see, for example \citet[Page 2]{andradottir1991projected} and  \cite{chen2002stochastic,andrieu2005stability,liang2010trajectory} more details. To ensure the convergence of the SGD under $\iid$ sampling, it is assumed that the norm of the gradient $\norm{f(\theta_k)}_2$ grows at most linearly with $\norm{\theta_k}_2$; see, e.g., \citet[Assumption 4.3]{polyak1992acceleration}. In the case of state-dependent Markovian data, even if $\norm{\nabla f(\theta)}_2=O(\norm{\theta}_2)$, the mean of the gradient-noise can have a super-quadratic dependence on $\theta$ leading to the divergence of SGD dynamics. Furthermore, state-dependent Markovian data arises in strategic classification problems which are inherently adversarial in nature. In such problems, the dependence of the transition kernel on the state allows for data sequence such that the iterate is pushed away whenever it is within a compact set containing the true parameter. In order to ensure stability, using truncations as in Algorithm~\ref{alg:tsgd}, was proposed by \cite{chen1987continuous,andrieu2005stability,liang2010trajectory}. 

However, under the regularity conditions stated  Assumption~\ref{as:liangasA3}, we also have the following result by \cite{liang2010trajectory}, showing that there exists a finite $\sigma_s\geq 1$ such that for $k\geq \sigma_s$, the truncation step is not necessary. Without loss of generality let $\sigma_s=1$. Then, for $k\geq \sigma_s$, Algorithm~\ref{alg:tsgd} becomes vanilla SGD.
\begin{lemma}[\cite{liang2010trajectory}]\label{lm:finitetrunc}
    Let Assumption~\ref{as:strongcon}, Assumption~\ref{as:liangasA2}, Assumption~\ref{as:liangasA3} and Assumption~\ref{as:liangasA4} be true. Let the number of truncations be denoted by $\mathcal{T}$. Then, there is a finite positive integer $\sigma_s$ such that almost surely: (i) no truncation is necessary after $\mathcal{T}=\sigma_s$, and (ii)  the iterates remain in a compact set for $k\geq \sigma_s$.
\end{lemma}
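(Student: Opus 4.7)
The plan is to follow the classical stability argument for truncated stochastic approximation with state-dependent Markovian noise, as developed by \cite{chen1987continuous,andrieu2005stability,liang2010trajectory}. The high-level strategy is: (i) use a Poisson-equation decomposition of the gradient noise to convert Markovian noise into a martingale-difference part plus a controllable remainder; (ii) combine this with a Lyapunov drift inequality coming from strong convexity to show that outside a sufficiently large compact set the dynamics is contractive in expectation; and finally (iii) use a Borel--Cantelli/summability argument to conclude that only finitely many truncations occur almost surely.

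More concretely, the first step is to use the regularity in Assumption~\ref{as:liangasA3} to produce, for each $\theta$, a solution $u_\theta$ of the Poisson equation $u_\theta - P_\theta u_\theta = \nabla F(\theta,\cdot) - \nabla f(\theta)$. Writing $\xi_{k+1}(\theta_k,x_{k+1})$ via this Poisson solution, one obtains the standard decomposition
\begin{align*}
\xi_{k+1}(\theta_k,x_{k+1}) = \bigl(u_{\theta_k}(x_{k+1}) - P_{\theta_k}u_{\theta_k}(x_k)\bigr) + \bigl(P_{\theta_k}u_{\theta_k}(x_k) - P_{\theta_{k+1}}u_{\theta_{k+1}}(x_{k+1})\bigr) + R_{k+1},
\end{align*}
where the first bracket is a martingale difference, the second is telescoping (up to the $\theta$-Lipschitz error of $u_\theta$), and $R_{k+1}$ collects higher-order terms controlled by $\eta_{k+1}$. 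Take $V(\theta) = \tfrac{1}{2}\|\theta - \theta^*\|_2^2$ as the Lyapunov function. The SGD recursion together with strong convexity (Assumption~\ref{as:strongcon}) yields, outside a sufficiently large compact set, an inequality of the form
\begin{align*}
V(\theta_{k+1}) \leq (1 - c\,\eta_{k+1}) V(\theta_k) + \eta_{k+1}\,M_{k+1} + \eta_{k+1}^2 \,B_k,
\end{align*}
where $M_{k+1}$ is a martingale increment built from the Poisson decomposition and $B_k$ is bounded on any compact region. Under the step-size condition in Assumption~\ref{as:liangasA4} ($\sum_k \eta_k = \infty$, $\sum_k \eta_k^2 < \infty$, and comparable decay of $d_k$), both the martingale contribution and the $\eta_{k+1}^2$-remainder are summable in an appropriate sense, so the Robbins--Siegmund lemma (or a direct supermartingale argument) shows that $V(\theta_k)$ stays bounded along any trajectory that is not truncated.

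Next, one controls the number of truncations $\mathcal{T}$. Each truncation event occurs either because $\theta_{k+1} \notin \mathcal{K}_{\varkappa_k}$ or because $\|\theta_{k+1} - \theta_k\|_2 \geq d_k$. Because $d_k \downarrow 0$ and the $\mathcal{K}_q$ grow to cover $\Theta$, conditional on $\varkappa_k = q$ being large, the probability of either event on any given step is dominated by the tail probability of the martingale and telescoping noise on an interval of length $O(\eta_k)$. A maximal inequality for the martingale term, together with the summability of $\eta_k^2$, produces probabilities of reset events that are summable in $q$. A direct application of Borel--Cantelli then yields $\Prob(\mathcal{T} = \infty) = 0$, so there is an almost surely finite $\sigma_s$ after which no further truncation occurs; conclusion (ii) then follows because, after the last truncation, the iterates are by construction confined to the final truncation set $\mathcal{K}_{\varkappa_{\sigma_s}}$.

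The main obstacle I expect is the state-dependent Markov structure: the Poisson solution $u_\theta$ and the kernel $P_\theta$ both vary with the iterate, so the telescoping term $P_{\theta_k}u_{\theta_k}(x_k) - P_{\theta_{k+1}}u_{\theta_{k+1}}(x_{k+1})$ produces an additional error proportional to $\|\theta_{k+1}-\theta_k\|_2$ times a Lipschitz modulus of $\theta \mapsto P_\theta u_\theta$. Bounding this modulus uniformly over the expanding truncation sets, and showing that it does not destroy the contractive drift, is the delicate point and is exactly where Assumption~\ref{as:liangasA3} (regularity of the family $\{P_\theta\}$ and of the Poisson solution) has to be used in full strength; once these quantitative bounds are in place, the rest of the argument proceeds as in the $\iid$ case.
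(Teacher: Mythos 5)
The paper does not prove this lemma at all: it is imported verbatim from \cite{liang2010trajectory} (which in turn builds on \cite{andrieu2005stability}), so there is no in-paper argument to compare your proposal against. Your sketch follows the same overall strategy as that cited source --- Poisson-equation decomposition of the Markovian gradient noise, a Lyapunov drift inequality from strong convexity, and an almost-sure finiteness argument for the number of truncations --- so the approach is the right one in outline.

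As written, however, the sketch has concrete gaps at exactly the points the truncation mechanism is designed to handle. First, your drift inequality $V(\theta_{k+1})\le(1-c\,\eta_{k+1})V(\theta_k)+\eta_{k+1}M_{k+1}+\eta_{k+1}^2B_k$ with ``$B_k$ bounded on any compact region'' presupposes that the iterates live in a compact region, which is the conclusion you are trying to reach; the paper explicitly warns that under state-dependent sampling the mean of the gradient noise can grow super-quadratically in $\theta$, so the Robbins--Siegmund step cannot be run unconditionally. The resolution in \cite{andrieu2005stability} is to condition on the current truncation level $q$, exploit the re-initialization from $\theta_1\in\mathcal{K}_0$ to restart the analysis on a fixed compact set, and show that the probability of ever again leaving $\mathcal{K}_q$ or violating $\|\theta_{k+1}-\theta_k\|_2\ge d_k$ after a restart is bounded by some $\omega<1$ uniformly in $q$; this yields the geometric bound $P(\mathcal{T}>k)\le C\omega^k$ quoted in the paper, rather than a Borel--Cantelli argument over probabilities ``summable in $q$,'' which you assert but do not establish. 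Second, you never invoke the specific conditions $\sum_k\eta_k d_k<\infty$ and $\sum_k(\eta_k/d_k)^{\alpha_0}<\infty$ from Assumption~\ref{as:liangasA4}; the latter is precisely what controls $P(\|\theta_{k+1}-\theta_k\|_2\ge d_k)$ via a Markov inequality on the $\alpha_0$-th moment, using $\sup_k\E[V^{\alpha_0}(x_k)]<\infty$ from the drift condition (cf.\ Lemma~\ref{lm:lialemmaa2}). Without these two ingredients the ``delicate point'' you flag at the end remains open rather than resolved.
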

According to \citet[Theorem 5.4]{andrieu2005stability}, the probability of the event $\mathcal{T}>k$ decays exponentially, i.e., there exist constants $\omega\in(0,1)$, and $C>0$ such that $P(\mathcal{T}>k)\leq C\omega^k$. In our experiments, the algorithms did not require any truncation at all.
Even if truncation is necessary, after finitely many truncations, the updates are guaranteed to remain contained in a compact set by Lemma~\ref{lm:finitetrunc}, and then the updates of Algorithm~\ref{alg:tsgd} are equivalent to vanilla SGD. 
\subsection{Motivating Application}\label{sec:motivation}
Data sampled according to a state-dependent Markov chain is frequently encountered in applications such as reinforcement learning~\citep{bartlett1992learning,goldberg2013adaptive,karimi2019non,qu2020finite,li2023statistical}, algorithmic versions of adaptive inference~\citep{zhang2021statistical,khamaru2021near}, and strategic classification and performative prediction~\citep{cai2015optimum,hardt2016strategic,perdomo2020performative, mendler2020stochastic,li2022state}. We describe one such application in detail now. 

Consider the classification problem where a bank (learner) is trying to decide the eligibility of a client (agent) for a potential loan. If the features used by this classifier are made public, the clients try to adapt their features to increase the chance of being eligible for the loan \citep{li2022state}. Even if the classifier is not public, the population features, e.g. credit score, are susceptible to change by the classifier decisions \citep{drusvyatskiy2023stochastic}. A similar scenario arises in spam email filtering as well. On learning the learner's classifier information, agents often use iterative algorithms such as gradient ascent to learn the optimal perturbed feature to maximize the probability of getting classified in the target class \citep{li2022state}. From the learner's perspective, such a data sequence can be modeled as a Markov chain where the transition probability depends on the current classifier parameter $\theta_k$. Here, the goal is to learn the \textit{equilibrium point} $\theta^*$ which minimizes the expected loss when the data is being sampled from the stationary distribution $\pi_{\theta^*}$ of the Markov chain whose transition probability $P_{\theta^*}$ corresponds to $\theta^*$. In the performative prediction literature, $\theta^*$ is referred to as \textit{performatively stable} point \citep{li2022state}.

Let the classifier be $h(u,\theta)=u^\top\theta$ where $u\in\mathbb{R}^d$ is the feature and $\theta$ is the parameter to be optimized. Let the loss function be logistic loss which for a sample $(u,y)$, where $y\in\{-1,1\}$ denotes the corresponding class, is given by,
\begin{align*}
	L(\theta;x)=\log\left(1+\exp\left(-yh(u,\theta)\right)\right),
\end{align*} 	
where $x\coloneqq (u,y)$. We use $u_S$, and $u_{-S}$ to denote the parts of feature $u$ which are respectively strategically modifiable, and non-modifiable by the agents. In the bank loan example, modifiable features could be Revolving Utilization, Number of Open Credit Lines, and Number of Real Estate Loans or Lines. Then the modified feature (the best response) $u_S'$ reported by  the agent is the solution to the following  optimization problem.
\begin{align*}
	u_S'=\argmax_{u_S} \left(h(u;\theta)-c(u_S,u_S')\right), 
\end{align*}
where $c(u_S,u_S')$ is the cost of modifying $u_S$ to $u_S'$. Let the agents iteratively learn $u_S'$ similar to \cite{li2022state}. When the agents learn the best response $u_S'$ using some iterative optimization algorithm such as Gradient Ascent then, at every iteration $k$, a set $\cI_k$ of  $n_1\leq M_1$ randomly chosen agents out of $M_1$ agents modify their features as
\begin{align}\label{eq:feturemod}
	u_{S,i}^k=\begin{cases}
		u_{S,i}^{k-1}+\alpha\left(\nabla h(u_{S,i}^{k-1};\theta_{k})-\nabla c(u_{S,i}^{k-1},u_{S,i}^0)\right) & i\in \cI_k\\
		u_{S,i}^{k-1}& i\notin \cI_k,
	\end{cases}
\end{align} 
where $\alpha>0$ is the stepsize. With a little abuse of notation, we use $\nabla h(u_{S,i}^{k-1};\theta)$  in \eqref{eq:feturemod} to denote  the fact that the gradient is with respect to $u_{S,i}^{k-1}$ while $u_{-S,i}$ remains unchanged. This introduces the state-dependent Markov chain dynamics in the training data for the bank. One can readily see that this fits in the general framework of optimization under Markovian sampling as discussed in Section~\ref{sec:sgdmarkov}.

\subsection{Our Contributions}
In this section, we summarize our main contributions. We study the overlapping batch-means-based online covariance estimator to estimate the limiting covariance of SGD under Markovian sampling. 
We summarize our main theoretical result informally below. 

\noindent \textbf{Informal Statement.} \textit{Let $f(\theta)$ be strongly-convex, has Lipcshitz continuous gradient, and the data sequence $\{x_k\}_k$ is sampled from a $\theta_k$-dependent Markov chain. For the covariance estimator: in~\eqref{eq:covestimator},
\begin{enumerate}
    \item[(i)] if the chain satisfies standard regularity conditions, and a solution to the Poisson equation of the Markov chain exists (Assumption~\ref{as:liangasA3}) , then we have
    \begin{align*}
    \expec{\norm{\hat{\Sigma}_n-\Sigma}_2}{}
    \lesssim\, \sqrt{d}\,n^{-\frac{1}{8}}(\log n)^\frac14.
\end{align*}
    \item[(ii)] if the transition probability of the Markov chain does not depend on $\theta_k$, and the chain is $V$-uniformly mixing, then we have
\begin{align*}
    \expec{\norm{\hat{\Sigma}_n-\Sigma}_2}{}
    \lesssim\, \sqrt{d}\,n^{-\frac{1}{8}+\iota}, \qquad \forall\,\iota>0.
\end{align*}
\end{enumerate}} 

This provides an answer to the question posed in Section~\ref{sec:intro}. Specifically, it shows that, ignoring logarithmic factors the overlapping batch-means estimator $\hat{\Sigma}_n$ with the Markovian data-stream indeed has the same rate of convergence as $\iid$ data-stream. Establishing this result is far from trivial. In particular, our proof techniques differs from that of the $\iid$ case broadly in three ways. 
\begin{itemize}
    \item State-dependent Makrovian sampling leads to a different noise decomposition \eqref{eq:decomp} compared to the $\iid$ case, which leads to extra terms in error analysis. Specifically, the terms $\mathsf{A}_2$ and $\mathsf{A}_3$ in \eqref{eq:A1A2A3def}, the terms $\mathsf{V}$ and $\mathsf{VI}$ in \eqref{eq:IVVVIdef}, and the terms $\mathsf{K}_2$ and $\mathsf{K}_3$ in \eqref{eq:thetaithetakintermed} don't appear in $\iid$ data setting. We show, with explicit rates, that these error terms vanish. 
    \item State-dependent Markovian sampling also leads to non-trivial  dependencies between the blocks of the covariance estimator in~\eqref{eq:covestimator}, which are absent in the $\iid$ case. In particular, intricate analysis is needed to tackle the fourth-order terms which do not vanish unlike $\iid$ case. It involves constructing some auxiliary sequence of data with same asymptotic covariance and showing that these sequence is \textit{close} to the original data sequence. See \textit{point 2} in the proof outline of Theorem~\ref{th:mainthm} provided in Section~\ref{sec:proofoutline}. 
    \item Lastly, we show that $\expec{\norm{\theta_{k+1}-\theta^*}_2^p}{}=O(\eta_{k+1}^{p/2})$, $p=1,2,4$ (Lemma~\ref{lm:expecconviter}) which is required to show that the $\hat{\Sigma}_n$ defined on a linear approximation of the original SGD updates is indeed close to $\hat{\Sigma}_n$ defined on the original updates. The bound on $\expec{\norm{\theta_{k+1}-\theta^*}_2^4}{}$ for SGD with state-dependent Markovian data could be of independent interest. We establish this result using a novel technique which involves defining an auxiliary sequence of updates \eqref{eq:perturbtheta} (different from the one mentioned in the previous paragraph), establishing the expected convergence for this auxiliary updates, and showing that this sequence of updates is indeed \textit{close} to the original one \eqref{eq:sgd}.
\end{itemize}  

We illustrate the performance of the estimator on real and synthetic datasets for linear and logistic regression. We show that the convergence rate of the estimation error of the covariance matrix agrees with our theoretical result. Furthermore, the convergence plots for different dimensions seem to suggest that the dimension dependence is indeed polynomial. We show that the online confidence interval constructed using the covariance estimator $\hat{\Sigma}_n$ achieves the correct coverage probability for the one-dimensional projection $\boldsymbol{1}^\top\theta^*$ of the unknown parameter $\theta^*$, where $\boldsymbol{1}$ is a $d$ dimensional vector with all entries equal to $1$. 
\subsection{Prior works on inference for SGD}\label{sec:RW}

Following \cite{polyak1992acceleration}, under $\iid$ sampling, several works have established CLTs for variants of SGD under strong-convexity with decreasing step-sizes; see, for example,~\cite{ toulis2017asymptotic,asi2019stochastic,duchi2021asymptotic}.  Non-asymptotic rates for SGD CLTs were derived  in~\cite{anastasiou2019normal,shao2022berry}. Furthermore,~\cite{dieuleveut2020bridging} and~\cite{yu2020analysis} established asymptotic normality of constant step-size SGD in the convex and nonconvex setting respectively. More recently,~\cite{davis2023asymptotic} extended the results of~\cite{polyak1992acceleration} to certain non-smooth settings. While the above works focus on the low-dimensional setting, recently~\cite{agrawalla2023high} established high-dimensional CLTs in the context of linear regression and developed related inferential procedures.

Motivated by viewing SGD iterates as Markov chains, several works have studied estimating the limiting covariance matrix in the SGD CLT under $\iid$ sampling. 
A partially offline inference procedure based on estimating the minimum eigenvalue of the asymptotic covariance matrix is proposed by \cite{chee2022plus}. The plug-in estimator-based inference procedure for SGD has been discussed in the context of federated learning by \cite{li2022statistical}. \cite{su2023higrad} proposed a tree-based inference procedure where the tree is constructed to exploit the asymptotic independence between multiple threads of SGD. Inference for implicit SGD was studied by \cite{liang2019statistical}. Batch-means covariance estimator for zeroth-order SGD was studied in~\cite{jin2021statistical,chen2021online}. Inference for SGD applied to specific problems has been studied by \cite{chen2021statistical}, and \cite{shi2021statistical}. Online multiplier bootstrap procedures for inference of SGD estimator under $\iid$ data have been studied by \cite{fang2018online,roy2023fairness}. Recently, \cite{chen2023recursive} developed non-asymptotic confidence bounds for SGD applied to the specific problem of quantile estimation.

CLTs for SGD iterates have been established under various non-$\iid$ data sampling settings, e.g., stationary strongly mixing data \citep{solo1982stochastic}, and state-dependent Markovian data \citep{liang2010trajectory,fort2015central}. We know discuss works focusing on inference for SGD under non-$\iid$ sampling, which is the problem we focus on in this work.  \cite{ramprasad2022online} proposed a multiplier-bootstrap based approach for inference of SGD with uniformly-ergodic Markov chain data where $\nabla f(\theta)$ is a linear function of $\theta$. In comparison, our focus is on the case of state-dependent Markovian sampling with smooth and general strongly-convex functions. Moreover, bootstrap-based approach involves generating a large number of perturbed gradients at each iteration, especially in high dimensional problems, leading to higher per-iteration computational costs. \cite{li2023online} proposed an asymptotic covariance estimator similar to sample-covariance for uniformly-mixing Markov chain data, albeit with qualitative and asymptotic justification, for scale-invariant functions of one-dimensional projection of the averaged iterates. In contrast, we construct online confidence interval for $\theta^*$ by estimating the asymptotic covariance with explicit convergence rates. \cite{liu2023statistical} studied a multiplier-bootstrap based estimator for stationary, polynomially $\phi$-mixing data. Unlike our work, the method proposed in \cite{liu2023statistical} is dependent on mini-batch SGD which is incompatible with a fully-online setting where data sample arrives in a streaming manner. Moreover, compared to both \cite{li2023online} and \cite{liu2023statistical}, we assume state-dependent Markovian sampling where the data is not necessarily uniformly-mixing or $\phi$-mixing. In \citep{khamaru2021near}, the authors propose an inference procedure with debiasing for the \textit{true parameter} of the linear regression under adaptive data sampling, in a model-based setup. In contrast, our goal in this work is to develop inference procedure for the equilibrium point which is motivated by the problem of performative prediction, as discussed in Section~\ref{sec:motivation}.

\section{Main Results}
We now introduce our assumptions on the optimization problem~\eqref{eq:mainprob}. We refer to~\cite{douc2018markov} for a textbook introduction to additional details regarding several assumptions below. Let $\cF_k$ be the filtration generated by $\{\theta_0,\cdots,\theta_k,x_1,\cdots,x_k\}$. For any mapping $g:\mathbb{R}^d\to\mathbb{R}^d$ define the norm with respect to a function $V:\mathbb{R}^d\to [1,\infty)$ as
\begin{align*}
    \norm{g}_{V}=\underset{x\in\mathbb{R}^d}{\sup}\frac{\norm{g(x)}_2}{V(x)},
\end{align*}
and let $L_{V}=\{g:\mathbb{R}^d\to\mathbb{R}^d,\norm{g}_{V}<\infty\}$. 

We make the following regularity assumption on the objective function for our analysis. 
\begin{assumption}\label{as:strongcon} The objective function $f(\theta)$ is continuously differentiable and $\mu$-strongly convex where $\mu>0$, i.e., for any $\theta_1,\theta_2$
\begin{align*}
    f(\theta_2)\geq f(\theta_1)+\nabla f(\theta_1)^\top (\theta_2-\theta_1)+\frac{\mu}{2}\|\theta_2-\theta_1\|_2^2.
\end{align*}
\end{assumption}
\begin{assumption}\label{as:liangasA2}
There exists a positive-definite matrix $\tilde{Q}$, $r>0$, and a constant $c$ such that, 
\begin{align*}
    \norm{\nabla f(\theta)-\tilde Q(\theta-\theta^*)}_2\leq c\|\theta-\theta^*\|_2^{2} \quad \forall\theta \text{ such that }\norm{\theta-\theta^*}_2\leq r.
\end{align*}
\end{assumption}
Note that if a strongly-convex function $f(\theta)$ is thrice continuously differentiable in $\norm{\theta-\theta^*}\leq r$ for some $r>0$ then Assumption~\ref{as:liangasA2} is true for $f(\theta)$.
These are standard assumptions in stochastic approximation literature (see \cite{harold1997stochastic} for example) satisfied by important applications like linear regression where the predictor variable has a non-degenerate covariance matrix, and  $L_2$-regularized logistic regression. 
\begin{assumption}\label{as:liangasA3}
Let $\{x_k\}_k$ be a Markov chain controlled by $\theta$, i.e., there exists a transition probability kernel $P_\theta(\cdot,\cdot)$ such that
\begin{align*}
    \mathbb{P}(x_{k+1}\in B|\theta_0,x_0,\cdots,\theta_k,x_k)=P_{\theta_k}(x_k,B),
\end{align*}
almost surely for any Borel-measurable set $B\subseteq\mathbb{R}^d$ for $k\geq 0$. For any $\theta\in\Theta$, $P_\theta$ is irreducible and aperiodic. Additionally, there exists a function $V:\mathbb{R}^d\to [1,\infty)$ and a constant $\alpha_0\geq 8$ such that for any compact set $\Theta'\subset\Theta$, we have the following. For a function $g$, let $P_\theta g(x) \coloneqq \int P_\theta (x,y) g(y)\, \mathrm{d}y $.
\begin{enumerate}[label=(\alph*)]
    \item There exist a set $C\subset \mathbb{R}^d$, an integer $l$, constants $0<\lambda<1$, $b$, $\kappa$, $\delta>0$, and a probability measure $\nu$ such that,
    \begin{align*}
        \sup_{\theta\in\Theta'}P_\theta^lV^{\alpha_0}(x)&\leq \lambda V^{\alpha_0}(x)+bI(x\in C)\quad \forall x\in\mathbb{R}^d,\\
        \sup_{\theta\in\Theta'}P_\theta V^{\alpha_0}(x)&\leq\kappa V^{\alpha_0}(x)\quad \forall x\in\mathbb{R}^d,\\
        \inf_{\theta\in\Theta'}P_\theta^l(x,A)&\geq \delta\nu(A) \quad \forall x\in C, \forall A\in \mathcal{B}_{\mathbb{R}^d}.
    \end{align*}
    where $\mathcal{B}_{\mathbb{R}^d}$ is the Borel $\sigma$-algebra over $\mathbb{R}^d$.
    \item There exists a constant $c>0$, such that, for all $x\in\mathbb{R}^d$, and $\theta,\theta'\in\Theta'$
    \begin{align}\label{eq:noisylipgrad}
        \sup_{\theta\in\Theta'}\norm{\nabla F(\theta,x)}_V&\leq c,\\
        \norm{\nabla F(\theta,x)-\nabla F(\theta',x)}_V&\leq c\norm{\theta-\theta'}_2.
    \end{align}
    \item There exists a constant $c>0$, such that, for all $(\theta,\theta')\in\Theta'\times\Theta'$,
    \begin{align*}
        \norm{P_\theta g-P_{\theta'} g}_V&\leq c\norm{g}_V\norm{\theta-\theta'}_2\quad \forall g\in L_V\\
         \norm{P_\theta g-P_{\theta'} g}_{V^{\alpha_0}}&\leq c\norm{g}_{V^{\alpha_0}}\norm{\theta-\theta'}_2\quad \forall g\in L_{V^{\alpha_0}}. 
    \end{align*}
\end{enumerate}
\end{assumption}
\begin{remark}[On Assumption~\ref{as:liangasA3}]
    Condition (a) of Assumption~\ref{as:liangasA3} is the so-called drift condition and $V$ is the drift function widely used in the Markov chain literature \citep{meyn2012markov}. The drift condition implies that for each fixed $\theta\in\Theta$, the data sequence is $V^{\alpha_0}$-uniformly ergodic, i.e., for a fixed $\theta$, there exists constants $0<\rho_\theta<1$ and $C_\theta>0$ such that for any positive integer $k$, and function $g\in L_V$, we have, 
    \begin{align*}
        \norm{P_\theta^kg-\pi_\theta g}_{V^{\alpha_0}}\leq C_\theta\,\rho_\theta^k\,\norm{g}_{V^{\alpha_0}}.
    \end{align*}
    When $\nabla F(\theta,x)$ is bounded, one can choose $V(x)=1$. Condition (b) of Assumption~\ref{as:liangasA3} implies that in the compact set $\Theta'$, $\norm{\nabla F(\theta,x)}_2$ is $O(V(x))$. \eqref{eq:noisylipgrad} implies that for $\theta\in\Theta'$, $f(\theta)$ has Lipschitz continuous gradient. Condition (c) controls the change of transition kernel by imposing a Lipszhitz property on $P_\theta$ wr.t. $\theta$. 
    
    The main implication of Assumption~\ref{as:liangasA3} is that it ensures the existence and regularity of a solution $u(\theta,x)$ to Poisson equation of the transition kernel $P_\theta$ given by $u(\theta,x)-P_{\theta}u(\theta,x)=\nabla F(\theta,x)-\nabla f(\theta)$. Existence of $u(\theta,x)$ has been fundamental in the analysis of additive functionals of Markov chain (see \cite{andrieu2005stability,meyn2012markov,douc2018markov} for details).  has been verified for numerous applications \citep{liang2010trajectory,karimi2019non,wu2020finite,li2022state}.
    Assumption~\ref{as:liangasA3} on state-dependent Markovian data holds for numerous applications, e.g., strategic classification with adaptive best response \citep{li2022state}; policy-gradient \citep{karimi2019non}, and actor-critic algorithm in reinforcement learning \citep{wu2020finite}; MLE with missing data \citep{liang2010trajectory}. In the example presented in Section~\ref{sec:motivation}, when the agents have a quadratic cost function, Assumption~\ref{as:liangasA3} holds as shown by \citet[Appendix D]{li2022state}.
\end{remark}
We need the following assumption on the stepsizes in Algorithm~\ref{alg:tsgd}, similar to \cite{liang2010trajectory}.
\begin{assumption}\label{as:liangasA4}
The sequence $\{\eta_k\}_k$ and $\{d_k\}_k$ are decreasing, positive and satisfy
\begin{align*}
      \textstyle\sum_{k=1}^\infty\eta_k=\infty,&\quad \frac{\eta_{k+1}-\eta_k}{\eta_k}=o(\eta_{k+1}),\quad d_k=O\left(\eta_k^{(1+\tau)/2}\right)\quad \text{for some}~\tau\in(0,1],\\
    & \textstyle \sum_{k=1}^\infty\frac{\eta_{k}^{(1+\tau)/2}}{\sqrt{k}}<\infty, \quad\text{and}\quad
    \sum_{k=1}^\infty\left(\eta_kd_k+(\eta_k/d_k)^{\alpha_0}\right)<\infty,
\end{align*}
for some constant $\alpha_0\geq 8$ introduced in Assumption~\ref{as:liangasA3}.
\end{assumption}
\begin{remark}\label{rm:step} 
    The stepsizes are chosen such that it is not too small such that $\nabla f(\theta)$ controls the dynamics and it is not too large such that the dynamics are confined to a compact set. The reinitializations serve as a drastic drift towards $\theta^*$ such that if $\theta_k$ changes too rapidly then the algorithm can restart with smaller stepsize which essentially reduce  the effective variance in the gradient noise term in~\eqref{eq:gradnoise} \citep{andrieu2005stability}. One can choose, $\eta_k=Ck^{-a}$, and $d_k=Ck^{-b}$ where $a>1/2$, $b< 3/8$.
\end{remark}
We have the following bound on the estimation error of $\Sigma$ in terms of the $\norm{\cdot}_2$ norm. We provide the proof of the following theorem in the supplementary material. 
\begin{theorem}[State-dependent Markovian Data]\label{th:mainthm}
Let Assumption~\ref{as:strongcon}-\ref{as:liangasA4} be true. Then, we have\footnote{We use the notation $a\lesssim b$ to denote $a\leq Cb$ where $C>0$ is a constant.}
\begin{align*}
    \mathbb{E}[\|\hat{\Sigma}_n-\Sigma\|_2]
    \lesssim  n^{-\frac{a}{4}}\sqrt{d\log n}
    +d\,n^{1/\beta-(1-a)}
    +\sqrt{d}\,n^{-\frac{1}{2\beta}}+ \sqrt{d}\,n^{(1/\beta-(1-a))/2},
\end{align*}
by setting $a_m=O(\floor{m^\beta})$, where $m=1,2,\cdots,M$, and $a_M\leq n<a_{M+1}$.
\end{theorem}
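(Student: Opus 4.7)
The plan is to approximate the nonlinear SGD iterates by a linear auxiliary sequence and then control both the linearization error and the deviation of the auxiliary covariance from $\Sigma$. First I would invoke the Poisson equation granted by Assumption~\ref{as:liangasA3}: there exists $u(\theta,x)$ with $u(\theta,x)-P_\theta u(\theta,x)=\nabla F(\theta,x)-\nabla f(\theta)$. This lets me split the gradient noise $\xi_{k+1}$ as $\psi_{k+1}+\chi_{k+1}$, where $\psi_{k+1}=u(\theta_k,x_{k+1})-P_{\theta_k}u(\theta_k,x_k)$ is an $\mathcal{F}_{k+1}$-martingale difference and $\chi_{k+1}=P_{\theta_k}u(\theta_k,x_k)-P_{\theta_{k-1}}u(\theta_{k-1},x_k)$ is a ``Markov remainder'' that is small thanks to the Lipschitz property (c) of $\theta\mapsto P_\theta$ together with $\|\theta_k-\theta_{k-1}\|_2=O(\eta_k)$. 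This noise decomposition is exactly the source of the extra terms (e.g.\ $\mathsf{A}_2,\mathsf{A}_3,\mathsf{V},\mathsf{VI},\mathsf{K}_2,\mathsf{K}_3$) that are absent in the i.i.d.\ analysis of \cite{zhu2021online}.

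Next I would introduce a linearized auxiliary process by replacing $\nabla F(\theta_k,x_{k+1})$ by its first-order expansion around $\theta^*$,
\begin{align*}
\tilde\theta_{k+1}-\theta^* = (I-\eta_{k+1}A)(\tilde\theta_k-\theta^*)+\eta_{k+1}\psi_{k+1},
\end{align*}
where $A=\nabla^2 f(\theta^*)=\tilde Q$. Using Assumption~\ref{as:liangasA2} (quadratic remainder of $\nabla f$), Assumption~\ref{as:strongcon} (contractivity of $I-\eta A$ for small $\eta$), and the $p$-th moment bounds $\mathbb{E}\|\theta_k-\theta^*\|_2^p=O(\eta_k^{p/2})$ for $p\in\{1,2,4\}$ from Lemma~\ref{lm:expecconviter}, I would show that $\theta_k$ and $\tilde\theta_k$ are close enough in $L^2$ and $L^4$ that the corresponding batch-means estimators differ by terms that match $\sqrt{d}\,n^{-1/(2\beta)}$ and $\sqrt d\,n^{(1/\beta-(1-a))/2}$ in the statement. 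Writing $\hat\Sigma_n-\Sigma$ as a triangle-inequality split into (i) nonlinearity-induced error, (ii) Markov-remainder error, and (iii) the linearized estimator's deviation from $\Sigma$ organizes the entire analysis.

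For part (iii), I would expand each block sum $S_i:=\sum_{k=t_i}^{i}(\tilde\theta_k-\theta^*)$ using the linear recursion into $\sum_{j}\eta_j\,\Pi_{ij}\,\psi_j$ plus an initial-condition transient, where $\Pi_{ij}$ is a product of $(I-\eta A)$ factors. Matching the weighted martingale covariance against $l_i\cdot A^{-1}SA^{-1}$ exploits the approximation $\sum_j\eta_j\Pi_{ij}\approx A^{-1}$ for $j$ in a block centered around $i$. This yields the fluctuation term $n^{-a/4}\sqrt{d\log n}$ (from fluctuations of the block covariance around its mean, controlled by a martingale matrix Bernstein / fourth-moment argument) and the bias term $d\,n^{1/\beta-(1-a)}$ (from the block-edge transients and the truncation $\sum_i l_i$). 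The role of $\beta$ in $a_m=\lfloor m^\beta\rfloor$ is precisely to balance ``blocks large enough to decorrelate'' against ``blocks small enough that the within-block product $\Pi_{ij}$ stays close to $A^{-1}$''.

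The main obstacle, echoing point 2 in the proof outline, is the fourth-order cross-block moments: unlike the i.i.d.\ case, blocks $B_i$ and $B_j$ interact through the state-dependent kernel $P_{\theta_k}$, so $\mathbb{E}[(S_iS_i^\top)\otimes(S_jS_j^\top)]$ does not factor cleanly, and naive expansion produces non-vanishing terms. I plan to handle this by (a) using the martingale-difference property of $\psi$ to kill first-order cross-block dependencies, (b) bounding $\chi_{k+1}$ uniformly via the Lipschitz control of $\theta\mapsto P_\theta u(\theta,\cdot)$ together with the $O(\eta_k^{1/2})$ moment bound on the iterate increment, and (c) constructing an auxiliary sequence with the same asymptotic covariance but with a tractable cross-block structure (as alluded to in the proof outline) and showing via Assumption~\ref{as:liangasA4} that it is $L^2$-close to the linearized sequence. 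Substituting $\eta_k\asymp k^{-a}$ with $a\in(1/2,1)$ and optimizing $a,\beta$ in the resulting four-term bound then recovers the $n^{-1/8}(\log n)^{1/4}$ rate quoted in the informal statement.
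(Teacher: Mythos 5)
Your overall architecture matches the paper's: linearize the iterates around $\theta^*$, split $\hat{\Sigma}_n-\Sigma$ by the triangle inequality into a linearization error and the deviation of the linearized estimator from $\Sigma$, use the Poisson-equation decomposition of the gradient noise, rely on the moment bounds $\expec{\|\theta_k-\theta^*\|_2^p}{}=O(\eta_k^{p/2})$, and treat the fourth-order cross-block moments by separating nearby blocks from well-separated ones. However, there is a concrete gap in your noise decomposition. You write $\xi_{k+1}=\psi_{k+1}+\chi_{k+1}$ with $\psi_{k+1}=u(\theta_k,x_{k+1})-P_{\theta_k}u(\theta_k,x_k)$ and $\chi_{k+1}=P_{\theta_k}u(\theta_k,x_k)-P_{\theta_{k-1}}u(\theta_{k-1},x_k)$, but these two terms sum to $u(\theta_k,x_{k+1})-P_{\theta_{k-1}}u(\theta_{k-1},x_k)$, not to $\xi_{k+1}=u(\theta_k,x_{k+1})-P_{\theta_k}u(\theta_k,x_{k+1})$. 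The leftover piece, $P_{\theta_{k-1}}u(\theta_{k-1},x_k)-P_{\theta_k}u(\theta_k,x_{k+1})$ (the paper's $\zeta_k$ in Lemma~\ref{lm:poisregular}), is \emph{not} small term-by-term: because the $x$-argument changes between consecutive terms, its expected norm is $O(1)$, and it only becomes negligible after Abel summation exploiting its telescoping structure ($\zeta_k=(\tilde\zeta_k-\tilde\zeta_{k+1})/\eta_k$ with $\expec{\|\tilde\zeta_k\|_2}{}=O(\eta_k)$). Your plan to absorb the whole non-martingale residual into a ``Markov remainder'' bounded uniformly by the Lipschitz property of $\theta\mapsto P_\theta$ would therefore fail; the terms $\mathsf{A}_3$, $\mathsf{VI}$, $\mathsf{K}_3$ that you cite arise precisely from this telescoping component and require the summation-by-parts treatment of~\eqref{eq:zetatelescopicsum} and~\eqref{eq:nunormbound}.

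A second, smaller structural difference: you drive the auxiliary linear recursion with only the martingale part $\psi_{k+1}$, whereas the paper's $\vt_k$ in~\eqref{eq:linit} is driven by the \emph{full} noise $\xi_k(\theta_{k-1},x_k)$. With the paper's choice, $\varphi_k=\theta_k-\vt_k$ satisfies a noise-free recursion~\eqref{eq:varphidef} whose forcing is just the quadratic remainder $\nabla f(\theta_k)-\nabla^2 f(\theta^*)(\theta_k-\theta^*)=O(\|\theta_k-\theta^*\|_2^2)$, so Lemma~\ref{lm:expecconviter} alone closes the estimate. With your choice, $\theta_k-\tilde\theta_k$ accumulates the sums $\sum_j\Pi_{ij}\eta_j(\nu_j+\zeta_j)$, and you are back to needing the telescoping argument inside the linearization-error bound as well. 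Also, the terms $\sqrt{d}\,n^{-1/(2\beta)}$ and $\sqrt{d}\,n^{(1/\beta-(1-a))/2}$ do not come from the closeness of $\theta_k$ and $\tilde\theta_k$ as you claim (that contribution is $M^{-1/2}$ without the dimension factor, Lemma~\ref{lm:part2}); they come from the linearized estimator's own deviation from $\Sigma$, namely Lemma~\ref{lm:snserrorboundmain} and the cross term $\textsf{III}$ in~\eqref{eq:lincoverrordecompmain}. Repair the decomposition to the three-component form $e_k+\nu_k+\zeta_k$ and the rest of your outline is viable.
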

\begin{remark}
When $d\leq n^\frac{1-a}{2}$, choosing $\beta=2/(1-a)$, we obtain the convergence rate of 
$$O\big(n^{-\frac{a}{4}}\sqrt{d\log n}+\sqrt{d}\,n^{-\frac{1-a}{4}}\big).$$ 
Choosing $a=1/2+4\iota$ for arbitrarily small $\iota>0$ we obtain the rate $$O\big(\sqrt{d}\,\max(n^{-\frac{1}{8}+\iota},n^{-\frac{1}{8}-\iota}(\log n)^{1/4})\big).$$
Note that, ignoring the logarithmic factor, this rate matches the best known rate in the $\iid$ case \citep{zhu2021online}. 
\end{remark}
We now consider the case when the data samples are generated from a Markov chain where the transition kernel does not depend on $\theta_k$. In this case, we assume the following simplified version of Assumption~\ref{as:liangasA3}.
\begin{assumption}\label{as:liangasA3si}
Let conditions (a) and (b) of Assumption~\ref{as:liangasA3} be true with $P_\theta$ replaced by $P$, i.e., the transition kernel $P$ does not depend on the state $\theta$. 
\end{assumption}
We provide the proof of the following theorem in the supplementary material. 
\begin{theorem}[State-independent Markovian Data]\label{th:simain}
Let Assumption~\ref{as:strongcon}, \ref{as:liangasA2}, \ref{as:liangasA4}, and \ref{as:liangasA3si} be true. Then, we have
\begin{align*}
    \mathbb{E}[\|\hat{\Sigma}_n-\Sigma\|_2]
    \lesssim &\, d\,n^{1/\beta-(1-a)}
    +\sqrt{d}\,n^{-\frac{1}{2\beta}}+ \sqrt{d}\,n^{(1/\beta-(1-a))/2},
\end{align*}
by setting $a_m=O(\floor{m^\beta})$, where $m=1,2,\cdots,M$, and $a_M\leq n<a_{M+1}$.
\end{theorem}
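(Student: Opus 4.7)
The plan is to adapt the proof of Theorem~\ref{th:mainthm}, exploiting the simplification that the transition kernel $P$ is now independent of $\theta$. The overall architecture is identical: decompose $\hat\Sigma_n - \Sigma$ into (a) an error from linearizing the SGD iteration around $\theta^*$ via Assumption~\ref{as:liangasA2}, and (b) an error from the batch-means procedure applied to the linearized dynamics, then bound both contributions with sharp fourth-moment estimates.

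First I would establish the moment bounds $\mathbb{E}\|\theta_k-\theta^*\|_2^p=O(\eta_k^{p/2})$ for $p=1,2,4$, paralleling Lemma~\ref{lm:expecconviter}. The argument is strictly simpler here, because the Poisson solution $u(\theta,x)$ of $u(\theta,x)-Pu(\theta,x)=\nabla F(\theta,x)-\nabla f(\theta)$ inherits its $\theta$-regularity only through $\nabla F$; the Lipschitz-in-$\theta$ bound on the kernel, i.e., condition (c) of Assumption~\ref{as:liangasA3}, is no longer needed. The auxiliary sequence in \eqref{eq:perturbtheta} still serves to close the fourth-moment recursion.

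Next I would decompose the gradient noise via the Poisson equation as
\begin{align*}
\xi_{k+1}(\theta_k,x_{k+1}) = \bigl(u(\theta_k,x_{k+1})-Pu(\theta_k,x_k)\bigr) + \bigl(Pu(\theta_k,x_k)-Pu(\theta_{k-1},x_k)\bigr) + \text{(telescoping remainder)},
\end{align*}
where the first term is a martingale difference since $\mathbb{E}[u(\theta_k,x_{k+1})\mid\cF_k]=Pu(\theta_k,x_k)$. Because $P$ does not depend on $\theta$, the cross terms that appear in the state-dependent case, namely $\mathsf{A}_2,\mathsf{A}_3$ in \eqref{eq:A1A2A3def}, $\mathsf{V},\mathsf{VI}$ in \eqref{eq:IVVVIdef}, and $\mathsf{K}_2,\mathsf{K}_3$ in \eqref{eq:thetaithetakintermed}, either vanish or collapse into the martingale-difference part. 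This is what removes the $n^{-a/4}\sqrt{d\log n}$ contribution from the final bound and explains why the statement has three terms rather than four.

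The remainder of the analysis mirrors the block-wise computation in the proof of Theorem~\ref{th:mainthm}: the bias of $\hat\Sigma_n$ relative to the linearized dynamics produces the $d\,n^{1/\beta-(1-a)}$ term; the variance of individual batch means, controlled via the $V^{\alpha_0}$-uniform ergodicity provided by Assumption~\ref{as:liangasA3si}, produces $\sqrt{d}\,n^{-1/(2\beta)}$; and the fourth-order cross-covariances across overlapping blocks produce $\sqrt{d}\,n^{(1/\beta-(1-a))/2}$. The main obstacle will be this last step: even without state dependence, intra- and inter-block samples remain correlated through the Markov memory, so the $\iid$ argument of \cite{zhu2021online} does not apply directly; I would handle it by combining the exponential mixing of $P$ with the moment bounds from the first step, exactly as in the state-dependent case but without the extra error terms driven by the $\theta$-dependence of the kernel.
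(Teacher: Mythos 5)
Your overall architecture is the paper's: linearize via Assumption~\ref{as:liangasA2}, reuse the moment bounds of Lemma~\ref{lm:expecconviter}, and rerun the block-wise analysis with the state-independent kernel. But your explanation of why the $n^{-a/4}\sqrt{d\log n}$ term disappears is wrong, and this is not a cosmetic point. The terms $\mathsf{A}_2,\mathsf{A}_3$, $\mathsf{V},\mathsf{VI}$, and $\mathsf{K}_2,\mathsf{K}_3$ do \emph{not} vanish when $P_\theta\equiv P$: the Poisson solution $u(\theta,x)$ still depends on $\theta$ through $\nabla F(\theta,\cdot)$, so $\nu_k = Pu(\theta_k,x_k)-Pu(\theta_{k-1},x_k)+\frac{\eta_{k+1}-\eta_k}{\eta_k}Pu(\theta_k,x_k)$ and the telescoping terms $\zeta_k$ are nonzero and must still be bounded exactly as in the state-dependent case (the paper keeps Lemma~\ref{lm:thetabarthetabarbound} and Lemma~\ref{lm:thetakthetabarbound} unchanged for precisely this reason). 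More importantly, these terms were never the source of the $n^{-a/4}\sqrt{d\log n}$ contribution in Theorem~\ref{th:mainthm}; they only produce contributions already dominated by the surviving terms.

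The term that actually disappears lives inside the analogue of Lemma~\ref{lm:SntideSerrorbound}, in the cross-block fourth-moment bias $R_4$ of \eqref{eq:r4}. In the state-dependent case, $\expec{\te_p\te_p^\top\mid\cF_{a_{m-1}-1}}{}$ differs from $S$ by terms ($T_{3,p}$, $T'_{4,p}$, $T_{5,p}$) of size only $O(\sqrt{\eta_p}\log p)$, because the kernel drifts with $\theta_k$ and one pays $\norm{\theta_{k}-\theta^*}_2\asymp\sqrt{\eta_k}$ per step when swapping $P_{\theta_k}$ for $P_{\theta^*}$; summing over blocks yields $M^{-a\beta/2}\log M$ and, after the $\sqrt{d\,\norm{\cdot}_2}$ step of \eqref{eq:snstr2normrel}, the $M^{-a\beta/4}\sqrt{d\log M}$ term. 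When the kernel is fixed, $T_{3,k}$ and $T_{4,k}$ vanish identically (now $\te_k=e_k'$ and no kernel swap is needed), and the only remaining bias is the distance of the law of $x_k$ from $\pi$, which is $O(V(x_0)\rho^{k})$ by geometric ergodicity; $R_4$ then contributes only $M^{-1-2\beta}$ and the $\sqrt{d}\,M^{-1/2}$ variance term dominates. Your final paragraph does invoke the exponential mixing of $P$ for the cross-block terms, which is the right ingredient, but a proof written from your outline would leave $\nu_k$ and $\zeta_k$ unaccounted for and would misplace the bias computation; you should carry out the $S_n\to S$ step (the analogue of Lemma~\ref{lm:snserrorboundmainsi}) explicitly rather than asserting that the extra noise components collapse into the martingale part.
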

\begin{remark}
In this case, the term $n^{-\frac{a}{4}}\sqrt{d\log n}$ does not appear in the rate because of the exponential mixing of the data sequence. In fact, the rate matches with the $\iid$ setting in this case. Intuitively, this is due to the polynomial batch-sizes which allow the data-sequence to mix fast enough to be similar to the $\iid$ setting. 
\end{remark}
\section{Experiments}
In this section, we illustrate the performance of the covariance estimator in linear regression, and logistic regression problems. We focus on constructing online confidence interval for an one-dimensional projection $\theta_v=v^\top\theta^*$ of $\theta^*$, where $v=\boldsymbol{1}$, and $\boldsymbol{1}$ is a $d$ dimensional vector with all entries equal to $1$. At time point $k$, using our estimate $\hat{\Sigma}_k$ of $\Sigma$ we construct the 95\% confidence interval for $\theta_v$ as follows:
\begin{align*}
    \bigg[\bar{\theta}_k^\top \boldsymbol{1}-z_{0.975}\sqrt{(\boldsymbol{1}^\top\hat{\Sigma}_k\boldsymbol{1})/k},~\bar{\theta}_k^\top \boldsymbol{1}+z_{0.975}\sqrt{{(\boldsymbol{1}^\top\hat{\Sigma}_k\boldsymbol{1})}/{k}}\bigg],
\end{align*}
where $z_{0.975}$ is the 97.5 percentile of the standard normal distribution. To evaluate $\expec{\norm{\hat{\Sigma}_n-\Sigma}_2}{}$, and the coverage probability, we take the average of 200 repetitions. For all the experiments we choose $\eta_k=2k^{-a}$, $d_k=10k^{-0.3}$, $a=0.5005$, $\beta=2/(1-a)$, and $a_m=2m^\beta$. We set $\mathcal{K}_0$ to be the set $\{\theta:\norm{\theta}_2\leq 10\}$. Since $\Sigma$, and $\theta^*$ are not known beforehand, we run the algorithm 500 times for each experiment to estimate these quantities. In our experiments, we compare our method to the multiplier-bootstrap-based method from \cite{fang2018online,ramprasad2022online}. We emphasize here that to the best of our knowledge, there is no analysis of such bootstrap methods under state-dependent Markovian sampling. For easier reference, we will call our batch-means method \texttt{BM}, and the bootstrap-based method from \cite{ramprasad2022online} as \texttt{Boot}. 

As these two methods construct confidence intervals with different width and coverage probability, to make a meaningful comparison, we use the Mean Interval Score (MIS) \citep{gneiting2007strictly,askanazi2018comparison,wu2021quantifying}. Intuitively, lower values of MIS favors narrower intervals with higher coverage. Let $[u,l]$ be the $(1-\alpha_1)$-confidence interval for some statistic $Z$, where $u$ and $l$ denote that upper and lower confidence bounds respectively. The population MIS for $[u,l]$ is given by 
\begin{align*}
    \text{MIS}(u,l;\alpha_1)\coloneqq(u-l)+\frac{2}{\alpha_1}\left(\expec{Z-u|Z>u}{}+\expec{l-Z|Z<l}{}\right).
\end{align*}
The sample MIS is given by 
\begin{align*}
    \text{MIS}(u,l;\alpha_1,n)\coloneqq\frac{1}{n}\sum_{i=1}^n\left(u-l+\frac{2}{\alpha_1}(z_i-u)\mathbbm{1}(z_i>u)+\frac{2}{\alpha_1}(l-z_i)\mathbbm{1}(z_i<l)\right). 
\end{align*}
\subsection{State-dependent: Linear Regression}
\textbf{Synthetic Data.} We first consider linear regression with state-dependent Markovian data. Motivated by \cite{li2022state}, we generate the data stream $x_k\coloneqq(u_k,y_k)$ as,
\begin{align}\label{eq:dgplinregsynth}
    u_{k}=(1-\varrho)u_{k-1}+\varrho \upsilon_k+\varrho\varepsilon\theta_{k-1}\tilde{\upsilon}_k \quad y_k=u_k^\top \theta_r+\upsilon_k',
\end{align}
where $0<\varrho<1$, $\varepsilon\in\mathbb{R}$, $\theta_r\in\mathbb{R}_d$, and $\upsilon_k$, $\tilde{\upsilon}_k$ and $\upsilon_k'$ are $\iid$ sequences from $ N(0,\sigma^2 I_d)$, $ N(0,\sigma^2)$, and $ N(0,\sigma^2 )$ respectively. We emphasize here that $\theta^*$ is the equilibrium point of the dynamics and does not necessarily be equal to $\theta_r$. Note that Assumptions~\ref{as:strongcon}-\ref{as:liangasA2} are true for the squared loss in linear regression. Data sequence $\{x_k\}_k$, generated according to \eqref{eq:dgplinregsynth}, satisfies Assumption~\ref{as:liangasA4} for a fixed $\theta$ with $V(x)=\norm{x}_2^2+1$. For this experiment, we choose $\varrho=0.5$, $\varepsilon=0.5$, and $\sigma=1$. Total number of iterations are set to $N=50000$. 
\begin{figure}
    \centering
    \includegraphics[width=0.42\textwidth]{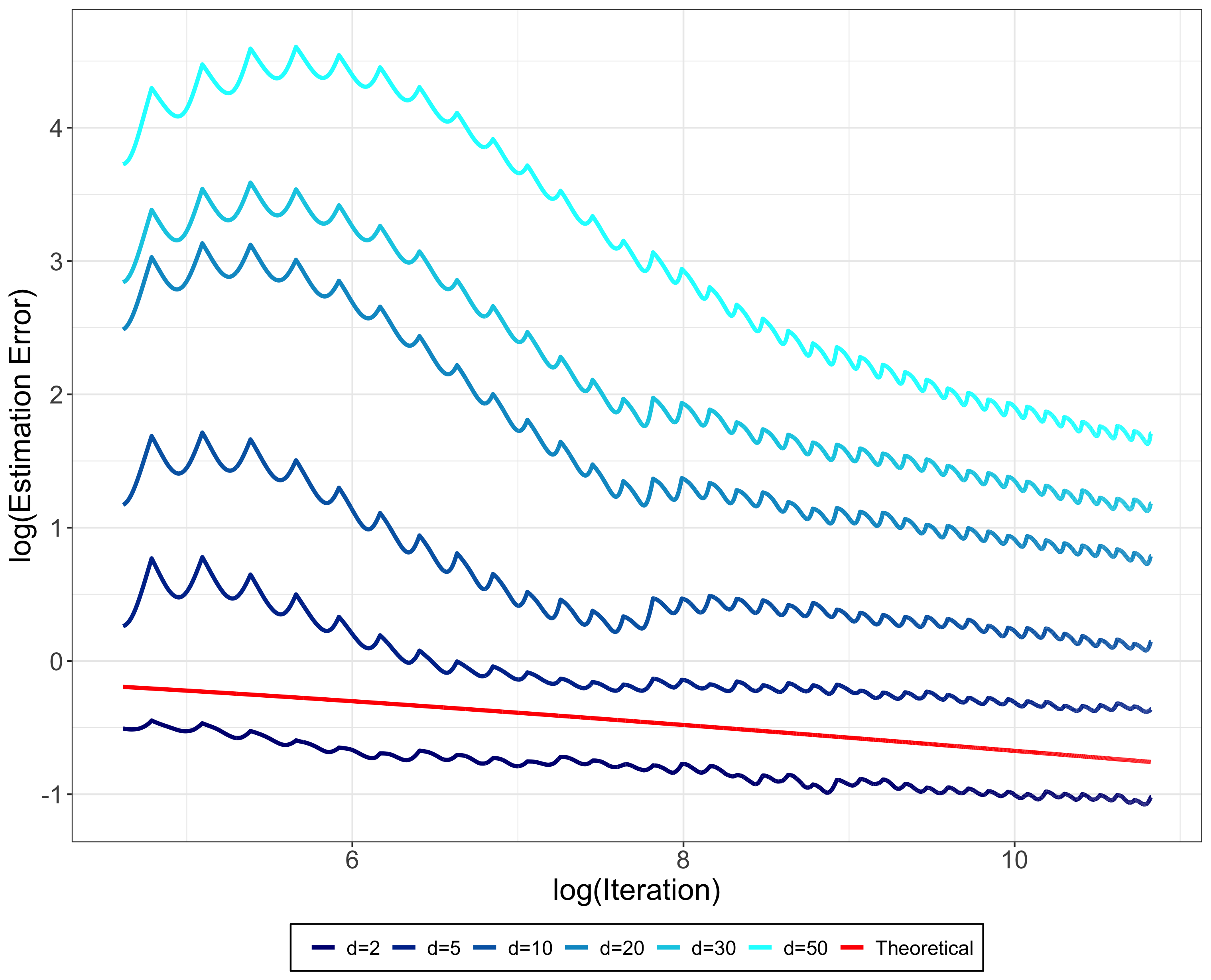}
    \includegraphics[width=0.42\textwidth]{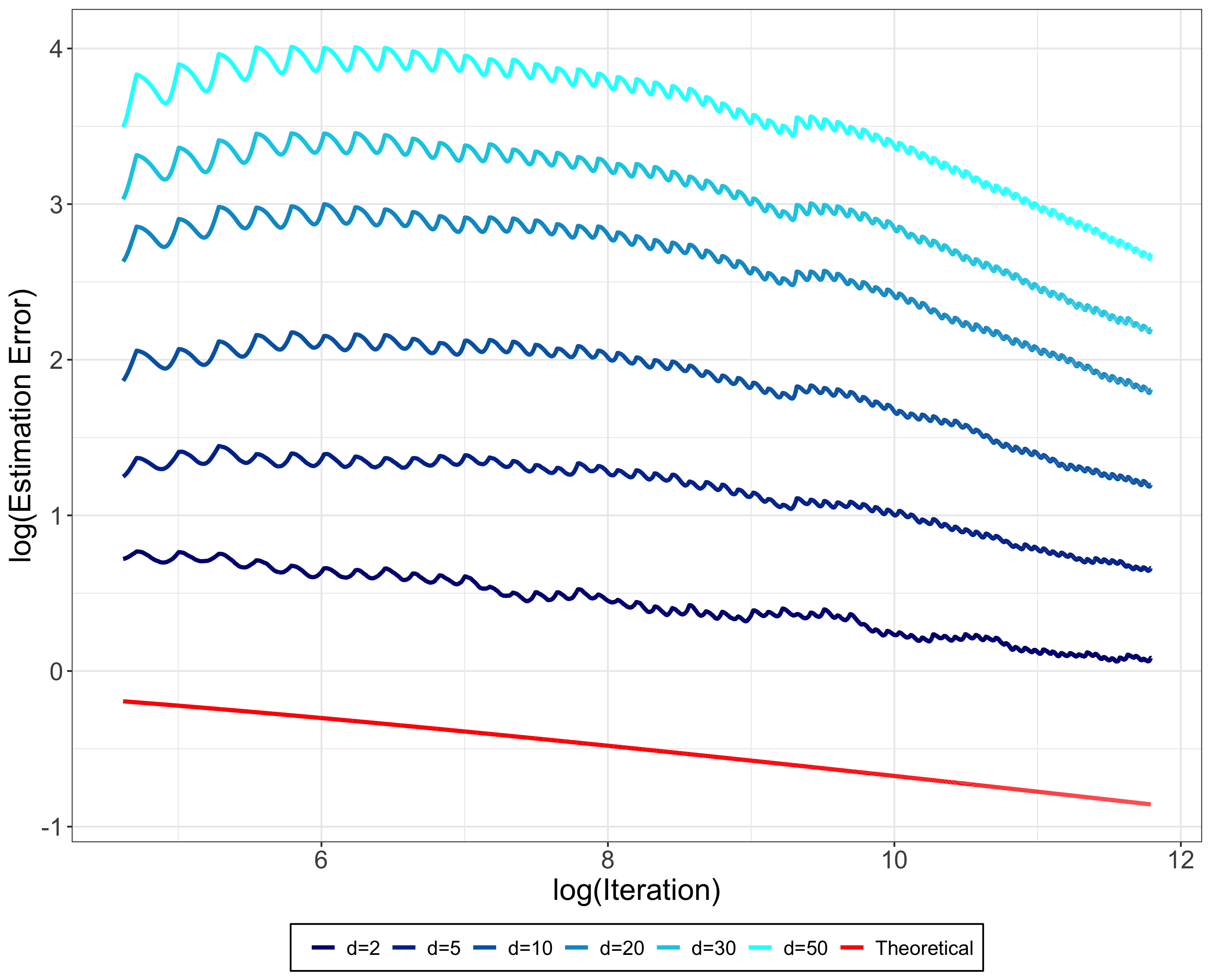}
    \\ 
    \includegraphics[width=0.42\textwidth]{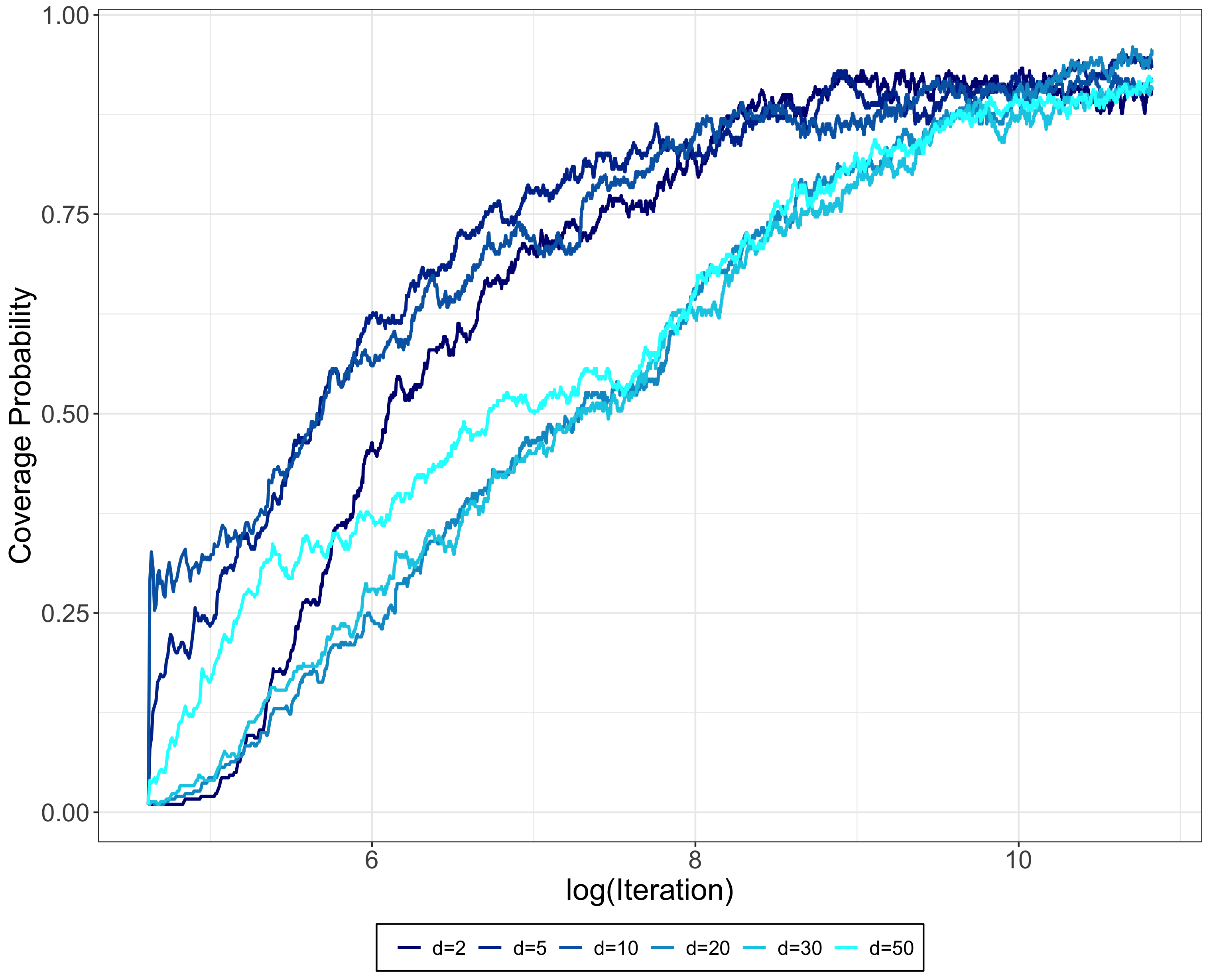}
    \includegraphics[width=0.42\textwidth]{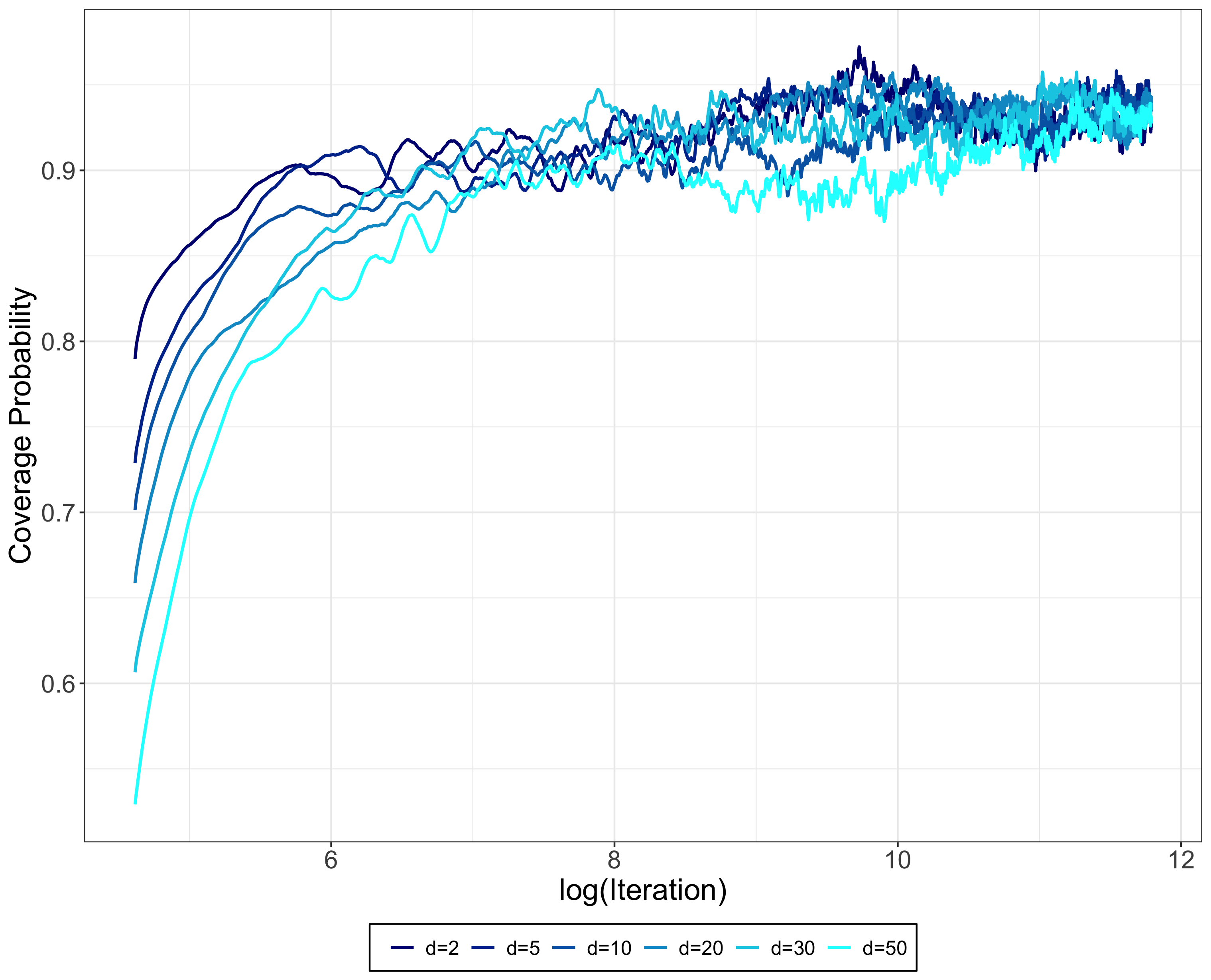}
    \\
    \includegraphics[width=0.42\textwidth]{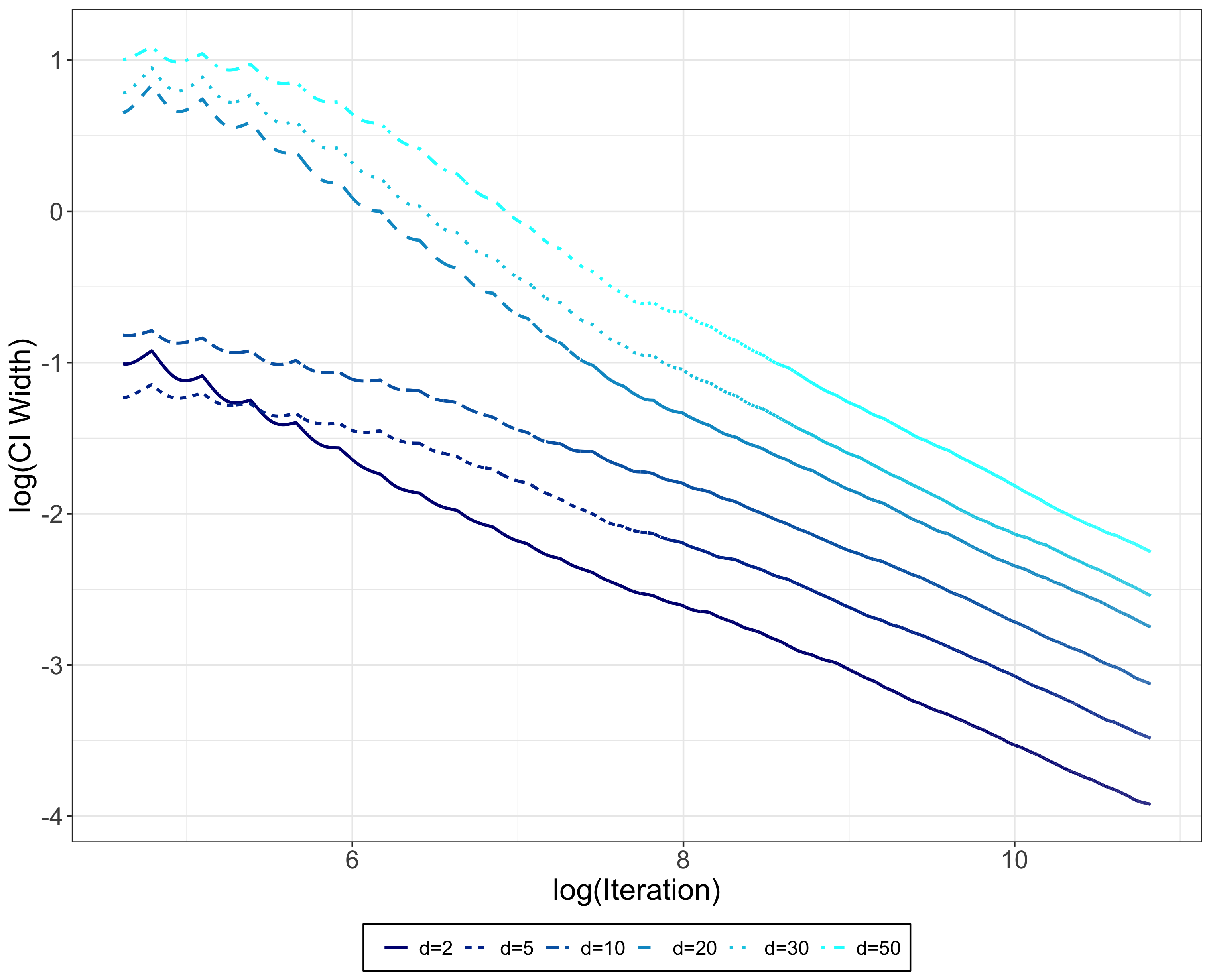}
    \includegraphics[width=0.42\textwidth]{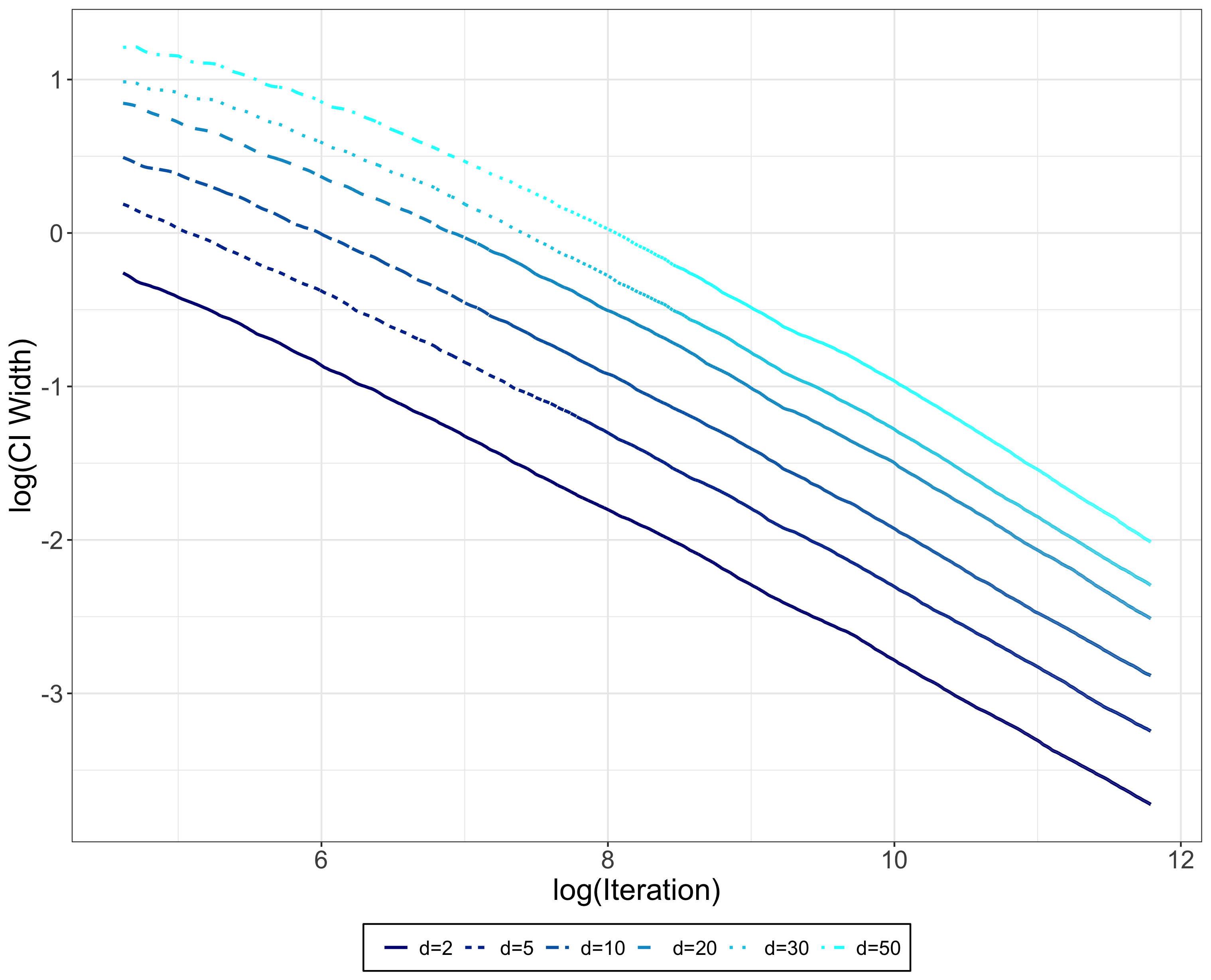}
    \caption{\it Linear regression (left) and logistic regression (right) with synthetic data: In each column, from top to bottom rows are the plots of $\log(\text{Estimation Error})$, coverage probability, and $\log(\text{width of Confidence Interval (CI)})$ respectively. The blue lines correspond to dimensions $d=2,5,10,20,30,50$. The red line in the plots of $\log(\text{Estimation Error})$ corresponds to the theoretical rate obtained in Theorem~\ref{pf:mainthm}. The results show that the slope of $\log(\text{Estimation Error})$ matches the theoretical rate.}
    \label{fig:linreg_synth}
\end{figure}
In the left column of Figure~\ref{fig:linreg_synth}, we show the estimation error $\expec{\norm{\hat{\Sigma}_k-\Sigma}_2}{}$, the coverage probability of 95\% confidence interval for $\theta_v$, and the width of the 95\% confidence interval over iterations. The decay of the estimation error is indeed almost linear in logarithmic scale and the slope matches with the theoretical rate which is approximately $-1/8$. The logarithm of estimation error at the last iteration for $d=5$ and $d=2$ differ by $0.57$ which approximately agrees with the dimension dependency of $\sqrt{d}$.

In the upper row of Figure~\ref{fig:linreg_synth_boot}, we compare \texttt{BM} (blue) with \texttt{Boot} (green). As there is no suggestion in \cite{ramprasad2022online} regarding how to choose the number of bootstrap replicates required to construct a confidence interval, we set the number of replicates to $50, 100, 200$ for $d=2,30,50$ respectively. It can be seen that \texttt{Boot} achieves a higher coverage probability at the expense of wider confidence intervals. From the plot of $\log (\text{MIS})$, one can see that \texttt{BM} is comparable with \texttt{Boot} in lower dimension. But for higher dimension $d=50$, \texttt{BM} performs better than \texttt{Boot} as the large width of the interval constructed by \texttt{Boot} negates the advantage of higher coverage probability.

\textbf{Effect of State-dependence.} We also test the performance of our estimator under various degree of dependence on the state. To do so,  we set $\varepsilon$ in \eqref{eq:dgplinregsynth} to $0.1$, $0.5$, and $0.9$. In linear regression (Fig.~\ref{fig:state_dep_bm_linreg_nuvar}), the increasing state-dependence seems to deteriorate the covariance estimation error but have no effect on the coverage probability.
\vspace{-0.07in}
\subsection{State-dependent: Logistic Regression}
\textbf{Synthetic Data.} Let $\{u_k\}_k$ be generated according to the data-generating mechanism as introduced in \eqref{eq:dgplinregsynth}, and $y_k=\text{Bernoulli}(1/(1+\exp(-u_k^\top\theta_r)))$. Since logistic loss is a strictly convex loss, we add a small regularizer $0.005\norm{\theta}_2^2$ to make it strongly convex. The results, are shown in the right column of Figure~\ref{fig:linreg_synth}. Similar to the linear regression example above, the results agree with our theoretical findings. Although, the empirical convergence  is much slower ($N=125000$) here compared to linear regression. This could be attributed to the poorer condition number of the loss function.
\begin{sidewaysfigure}
    \centering
    \captionsetup{width=.9\linewidth}
    \includegraphics[width=0.22\textwidth,height=2in]{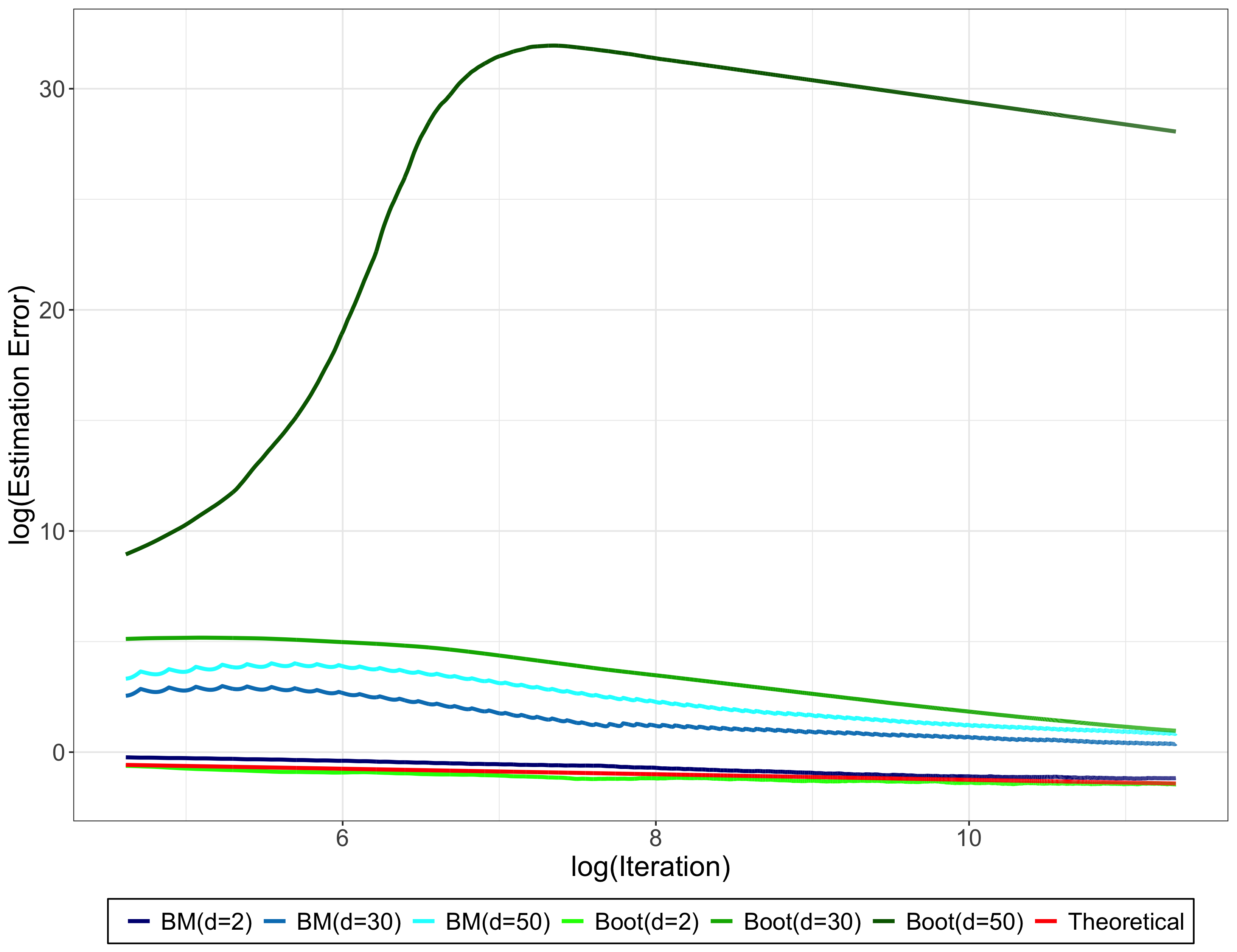}
    \includegraphics[width=0.22\textwidth,height=2in]{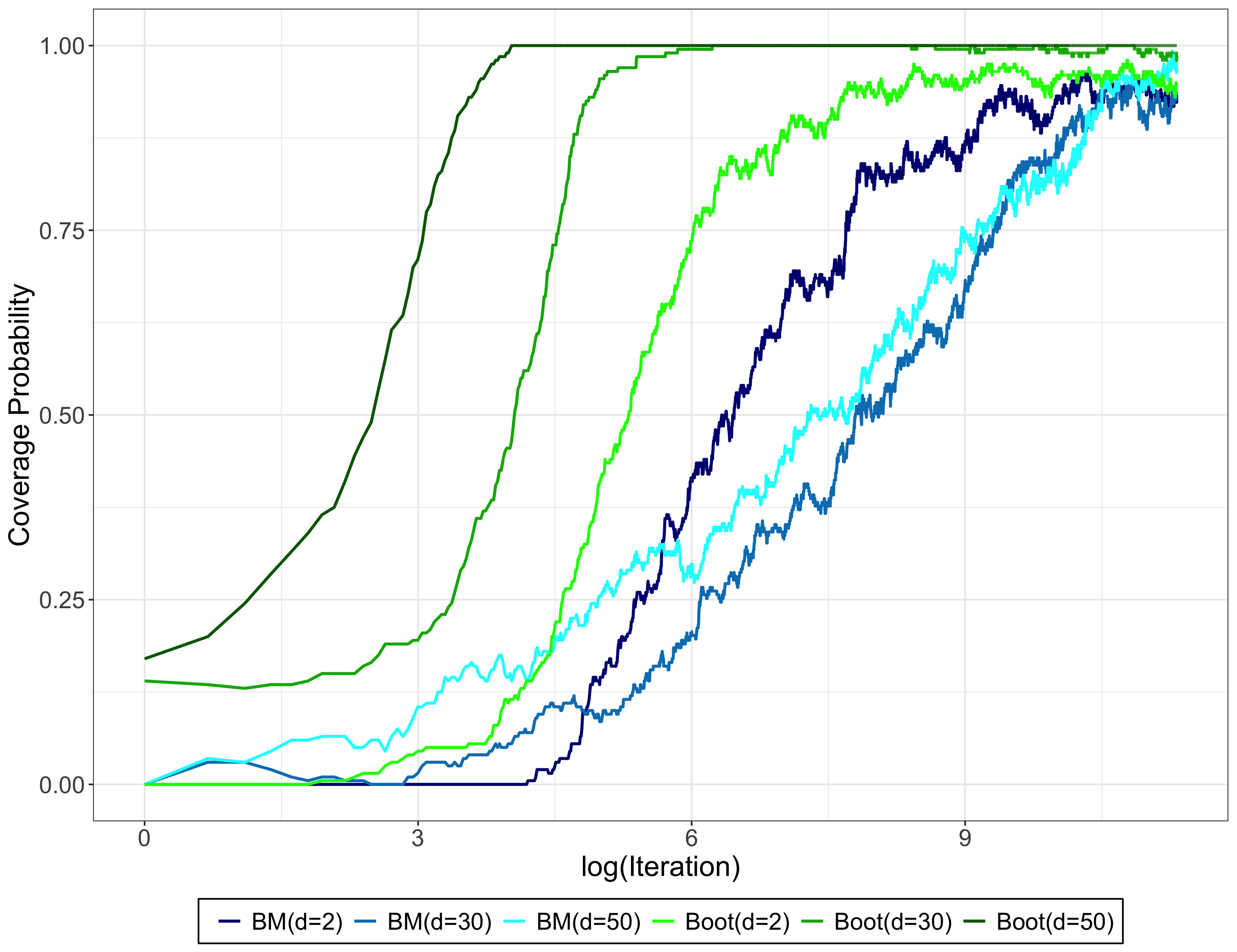}
    \includegraphics[width=0.22\textwidth,height=2in]{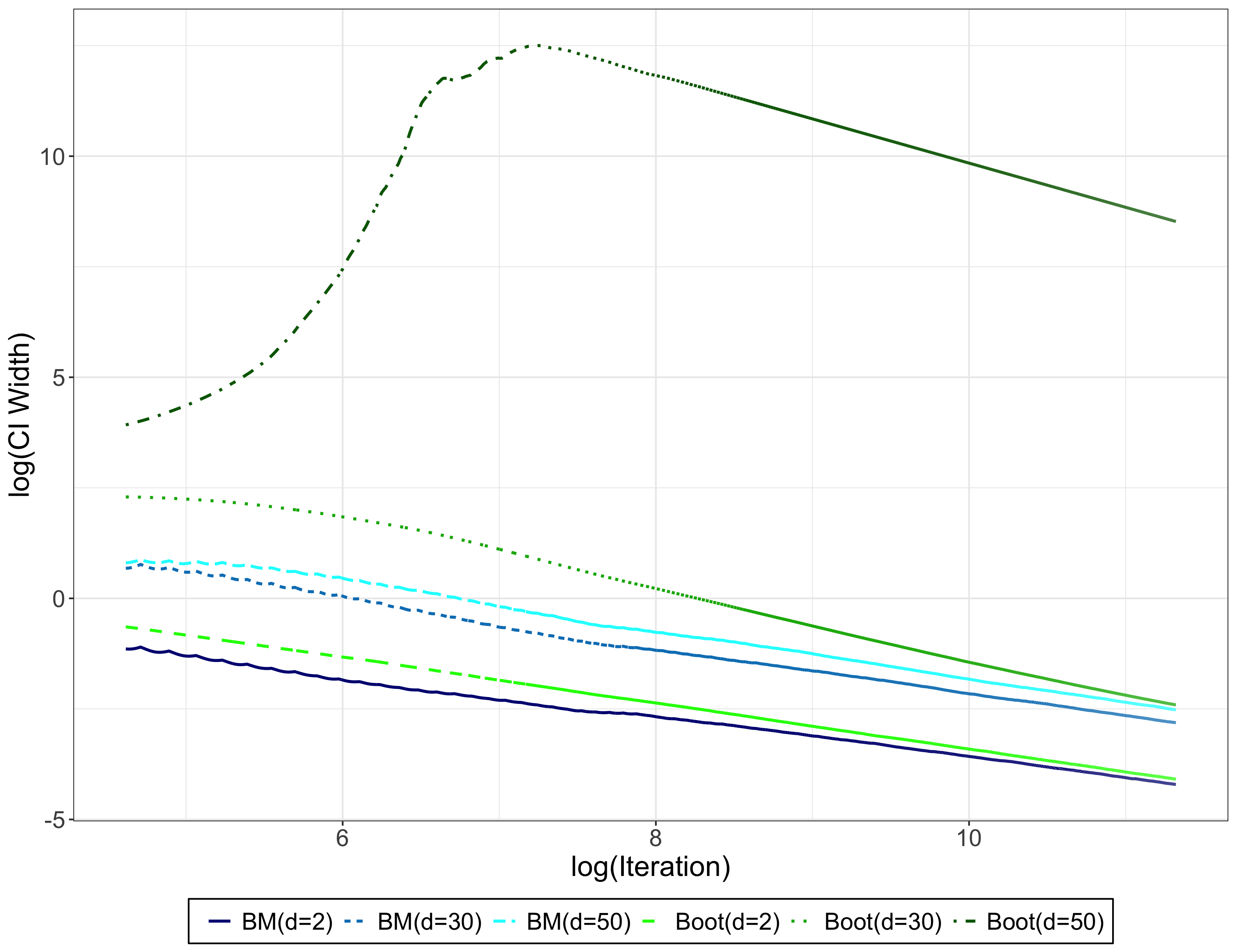}
    \includegraphics[width=0.22\textwidth,height=2in]{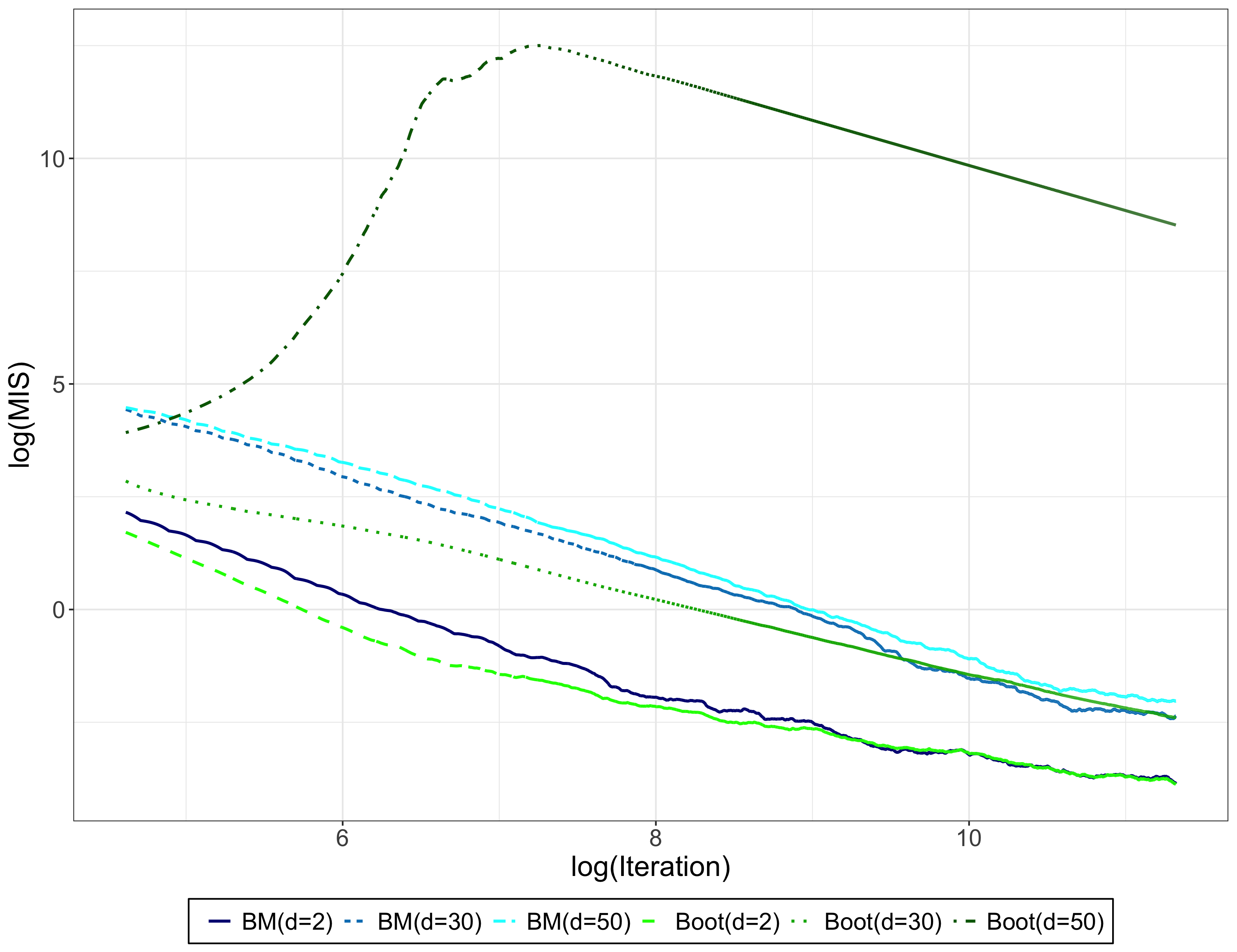}
    \\
    \includegraphics[width=0.22\textwidth,height=2in]{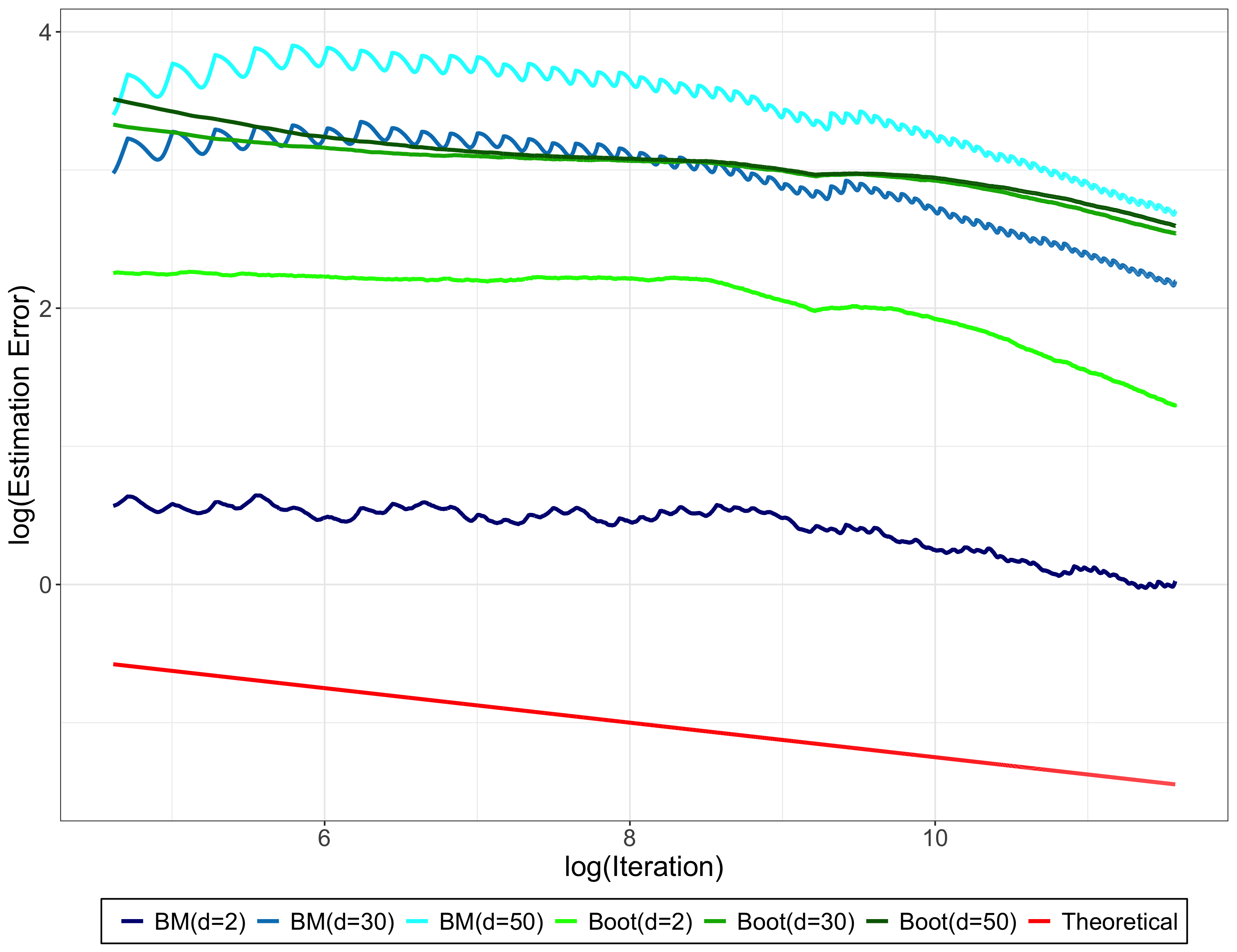}
    \includegraphics[width=0.22\textwidth,height=2in]{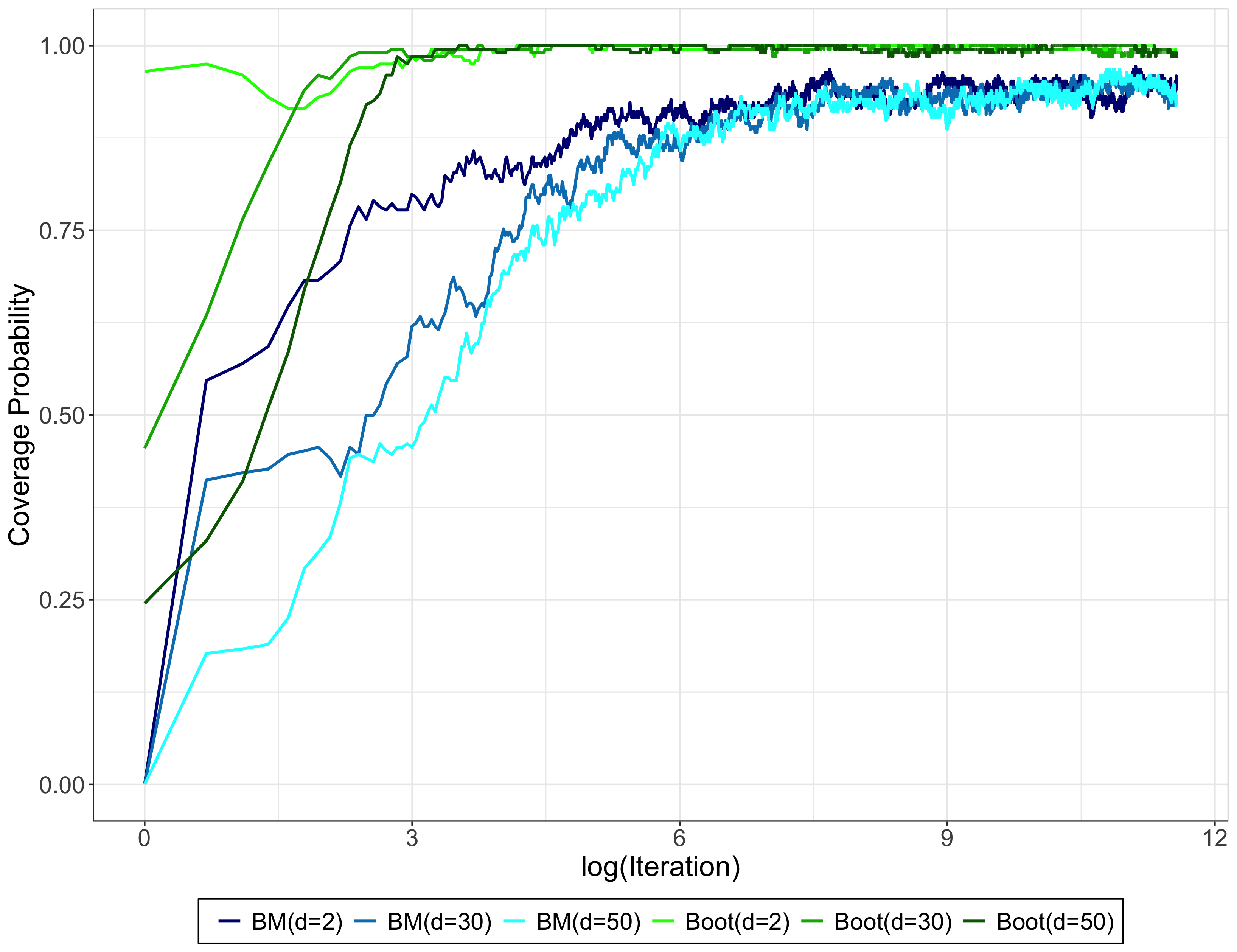}
    \includegraphics[width=0.22\textwidth,height=2in]{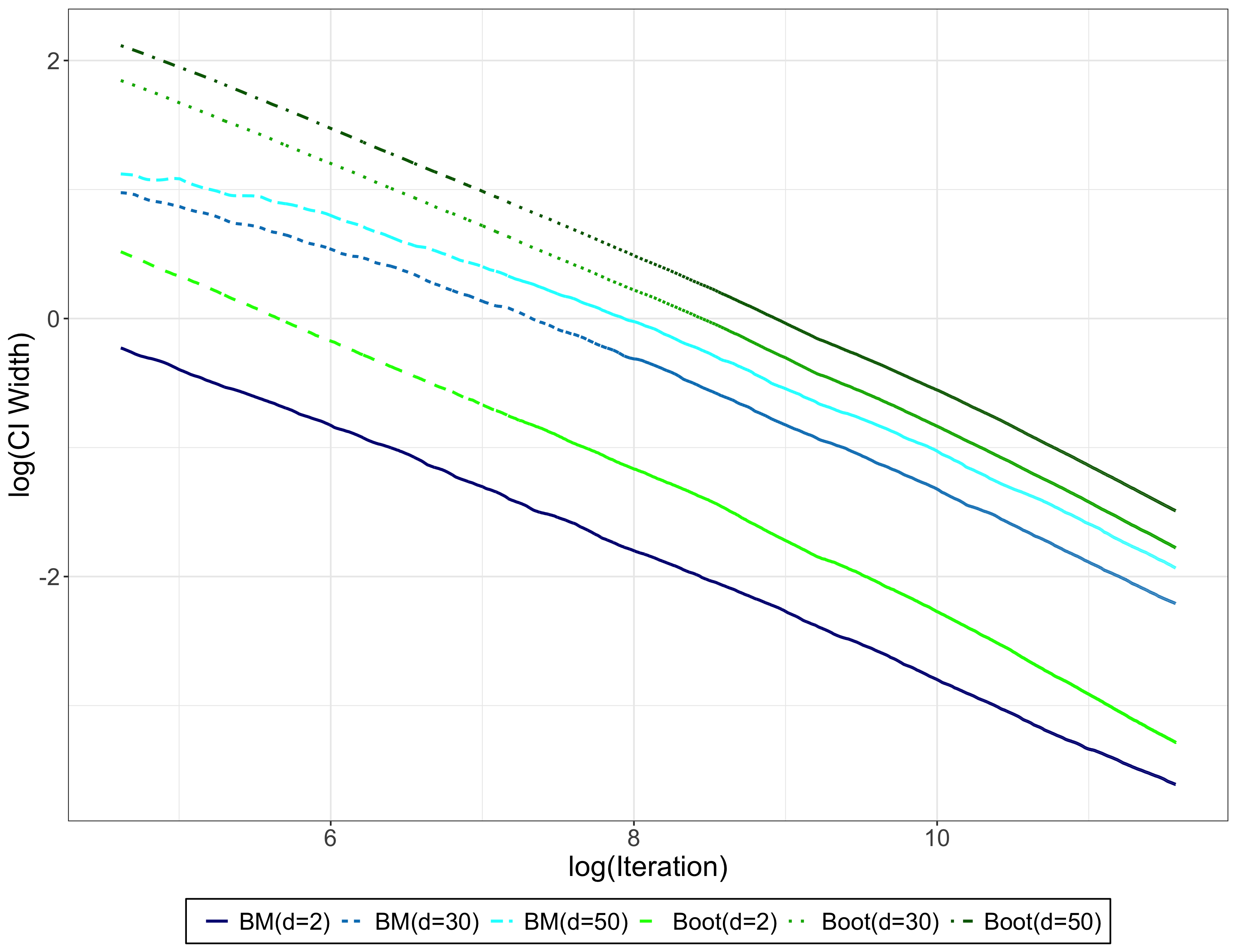}
    \includegraphics[width=0.22\textwidth,height=2in]{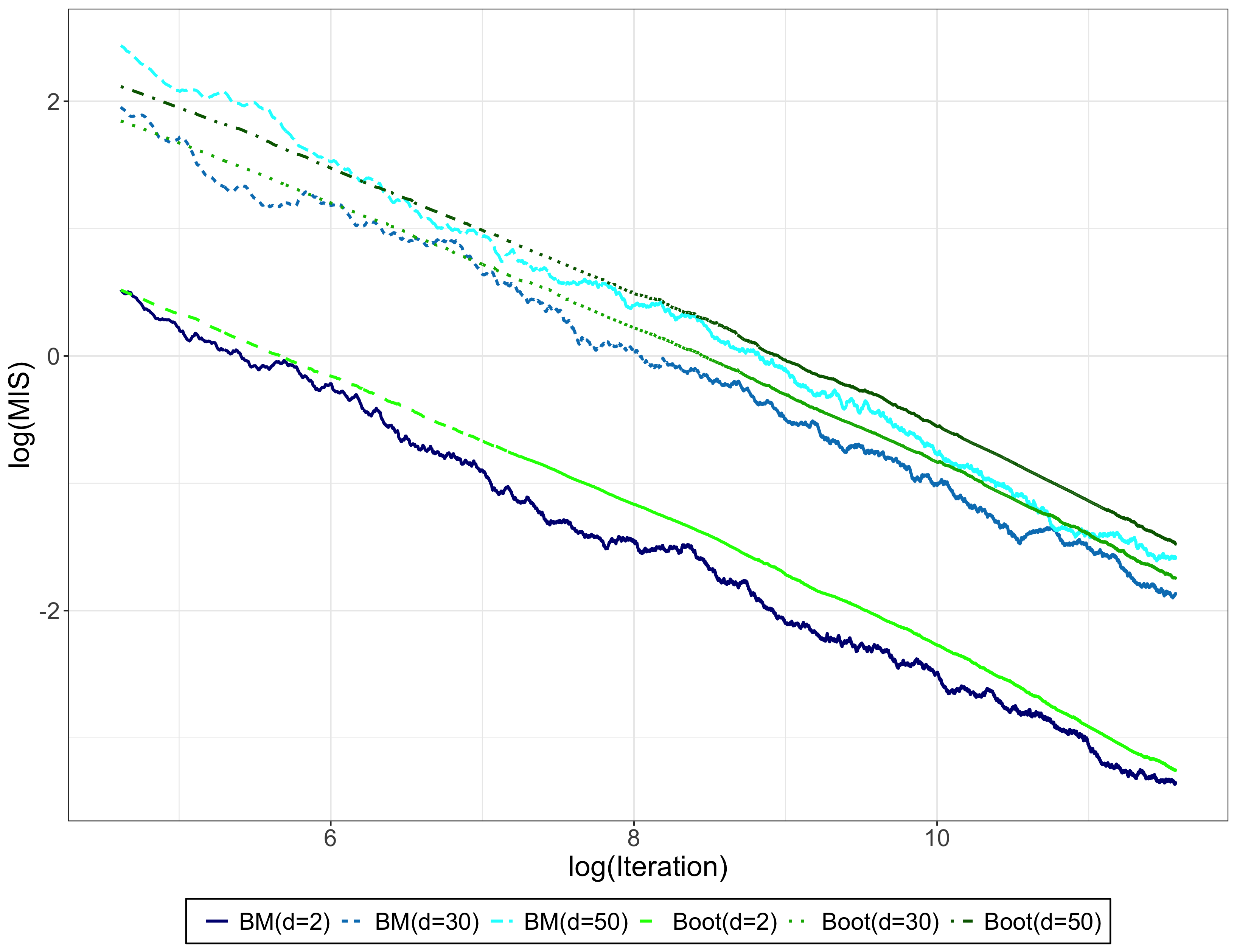}
    \caption{\it Linear regression (upper) and logistic regression (lower) with synthetic state-dependent Markovian data: From left to right columns are the plots of $\log(\text{Estimation Error})$, coverage probability, $\log(\text{width of Confidence Interval (CI)})$, and $\log(\text{MIS})$ respectively. The blue (green) lines correspond to \texttt{BM} (\texttt{Boot}) for dimensions $d=2,30,50$. The red line in the plots of $\log(\text{Estimation Error})$ shows the theoretical rate obtained in Theorem~\ref{pf:mainthm}. \texttt{Boot} achieves higher coverage probability by constructing wider confidence interval. The plot of $\log(\text{MIS})$ shows that, $\hat{\Sigma}_n$ performs better than \texttt{Boot}, especially in high dimensional linear regression. 
    }
    \label{fig:linreg_synth_boot}
\end{sidewaysfigure}
In the lower row of Figure~\ref{fig:linreg_synth_boot} we compare \texttt{BM} with \texttt{Boot}. Similar to linear regression, here too \texttt{Boot} achieves higher coverage probability by constructing wider confidence intervals, and the performance of \texttt{Boot} degrades in higher dimension. Although, the effect of dimension is not as pronounced as linear regression in this case. 
Regarding the effect of state-dependence, in logistic regression (Fig.~\ref{fig:state_dep_bm_logreg_nuvar}), the performance of the estimator $\hat{\Sigma}_n$ is less sensitive to the state-dependence.
\subsection{Data example: Strategic Classification}
In this section we illustrate our algorithm on the strategic classification problem as discussed in Section~\ref{sec:motivation} with the \texttt{GiveMeSomeCredit}\footnote{Available at \url{https://www.kaggle.com/c/GiveMeSomeCredit/data}} dataset. The main task is a credit score classification problem where the bank (learner) has to decide whether a loan should be granted to a client (agent).  Given the knowledge of the classifier the clients can distort some of their personal traits  in order to get approved for a loan. Here we use a linear classifier, given by $
	h(x;\theta)=\theta^\top x,
$
where $\theta,x\in \mathbb{R}^d$. We consider logistic loss with a regularizer. We consider a quadratic cost given by $c(x,x')=\norm{x_S-x_S'}_2^2/(2\lambda)$ where $\lambda$ is the sensitivity of the underlying distribution on $\theta$. We assume that the agents iteratively learn $x_S'$ similar to \cite{li2022state}. Furthermore, following \cite{li2022state}, we also assume that the agents use Gradient Ascent (GA) to learn the best response. 
\begin{figure}[h]
    \centering
    \includegraphics[width=0.9\textwidth]{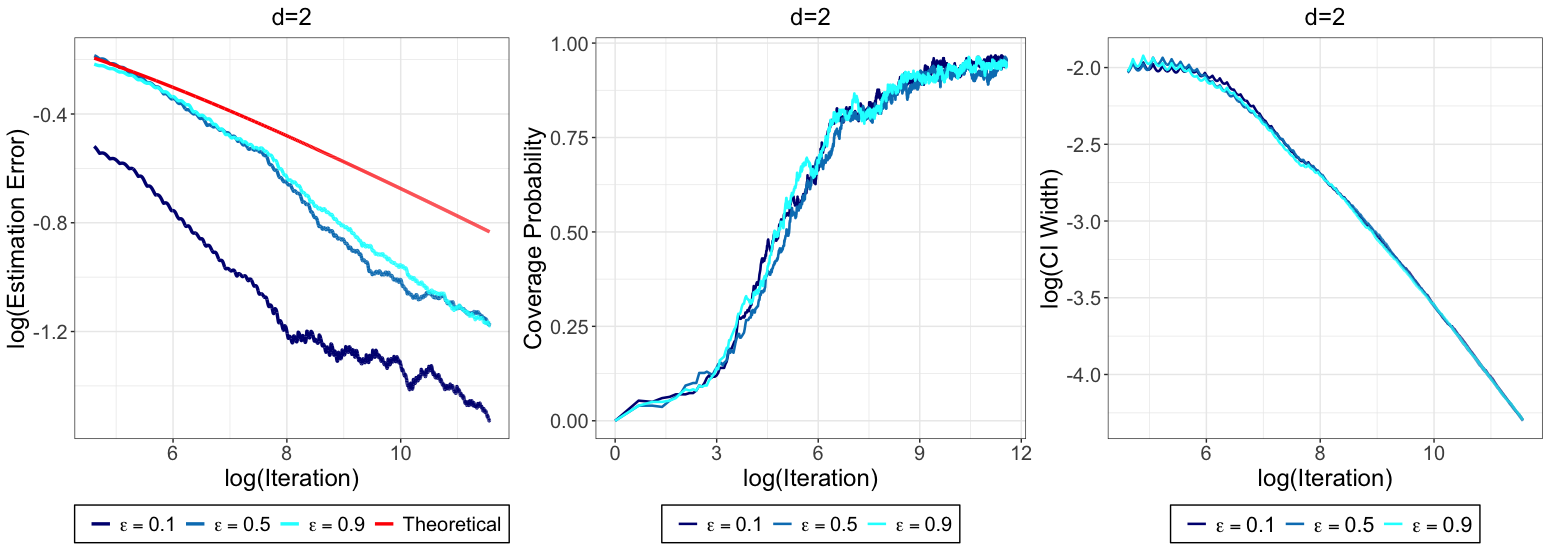} 
    \includegraphics[width=0.9\textwidth]{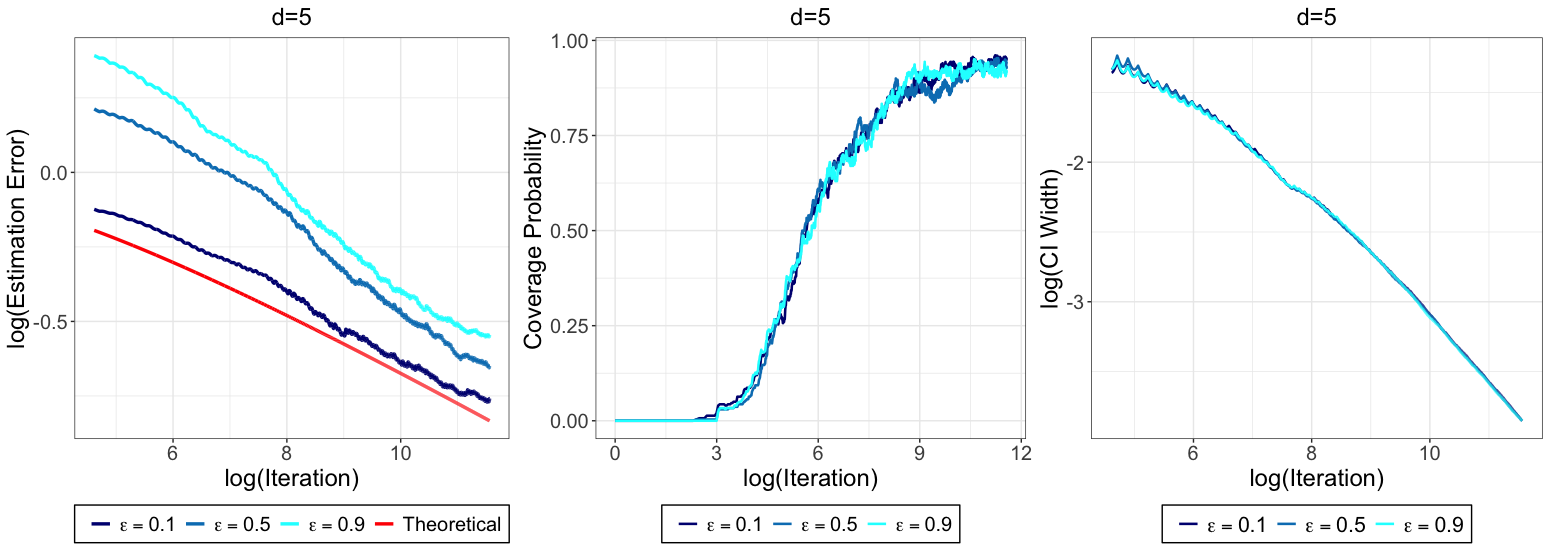}
    \caption{\it Linear Regression with synthetic State-dependent Markovian Dataset. From left to right the columns show the evolution of $\log(\text{Estimation Error})$, coverage probability, and $\log(\text{width of Confidence Interval (CI)})$ respectively over iterations. The top and bottom rows correspond to $d=2$, and $d=5$ respectively. The red line in the plot of $\log(\text{Estimation Error})$ corresponds to the theoretical rate obtained in Theorem~\ref{th:mainthm}.  }
    \label{fig:state_dep_bm_linreg_nuvar}
\end{figure}
After removing outliers, we select a subset of randomly chosen $9000$ samples (agents). Each agent has $10$ features. Note that since Algorithm~\ref{alg:tsgd} computes the gradient on one sample at every iterate, the computation time is independent of the total number of agents. We assume that the agents can modify Revolving Utilization, the Number of Open Credit Lines, and the Number of Real Estate Loans or Lines. We compare \texttt{BM} and \texttt{Boot} under varying degrees of state-dependence by setting the number of agents who can modify the feature at every time instant $n_1=50, 500, 1000, 2000$. For \texttt{Boot}, we set the number of bootstrap replicates to $60$. Similar to \cite{li2022state}, we set $\alpha=0.5\lambda$, and $\lambda=0.01$. From the leftmost plot in the panel of Figure~\ref{fig:logreg_bank} one can see that, over iterations, the slope of the observed estimation error $\expec{\norm{\hat{\Sigma}_n-\Sigma}_2}{}$ matches the theoretical rate. We observe that both \texttt{BM} and \texttt{Boot} behave similarly in terms of coverage probability, CI width, and MIS which is consistent with our observations in the experiments with low-dimensional synthetic data. In terms of the convergence rate of the estimation error of the covariance, \texttt{BM} seems to be better and less sensitive to state-dependence compared to \texttt{Boot}.
\begin{figure}[h]
    \centering
    \includegraphics[width=0.9\textwidth]{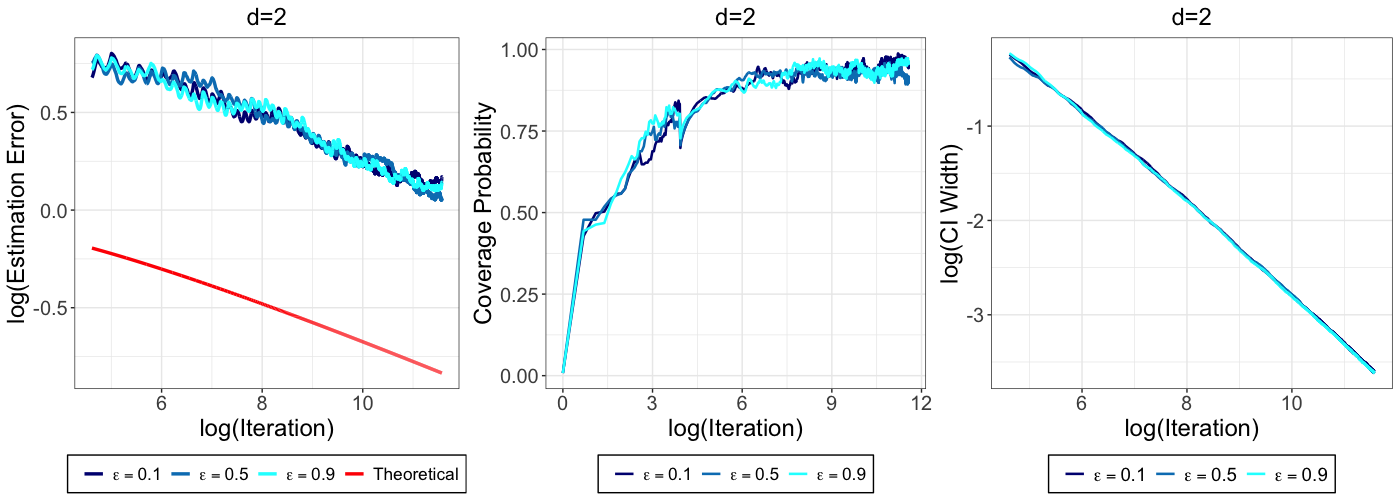} 
    \includegraphics[width=0.9\textwidth]{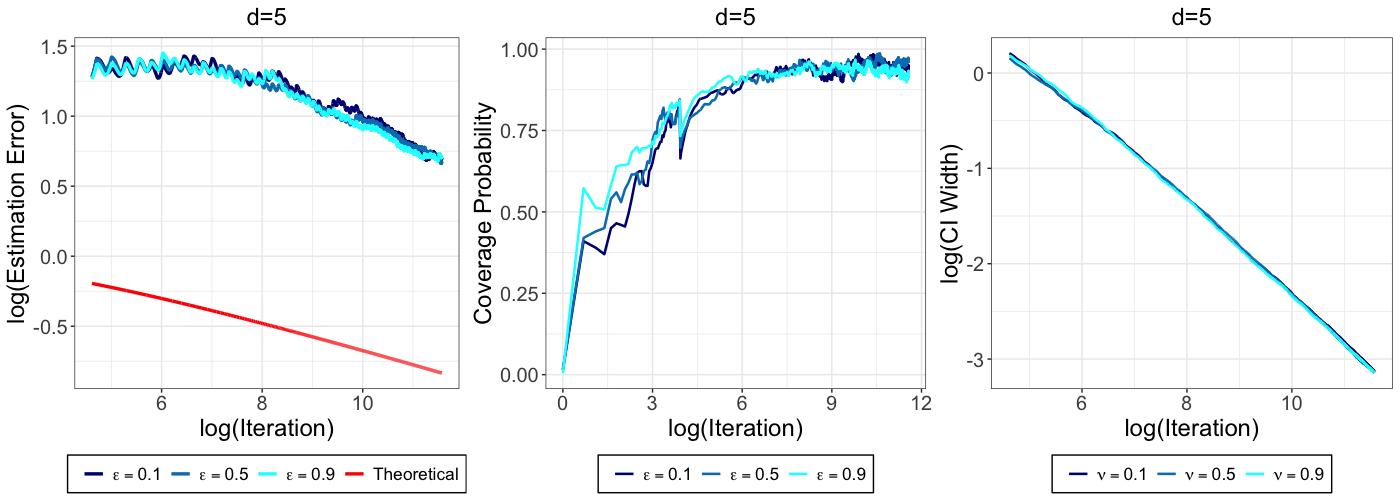}
    \caption{\it Logistic Regression with synthetic State-dependent Markovian Dataset. From left to right the columns show the evolution of $\log(\text{Estimation Error})$, coverage probability, and $\log(\text{width of Confidence Interval (CI)})$ respectively over iterations. The top and bottom rows correspond to $d=2$, and $d=5$ respectively. The red line in the plot of $\log(\text{Estimation Error})$ corresponds to the theoretical rate obtained in Theorem~\ref{th:mainthm}.  }
    \label{fig:state_dep_bm_logreg_nuvar}
\end{figure}
\section{Conclusion and Future Work}
\begin{figure}[t]
    \centering    
    \includegraphics[width=0.36\textwidth]{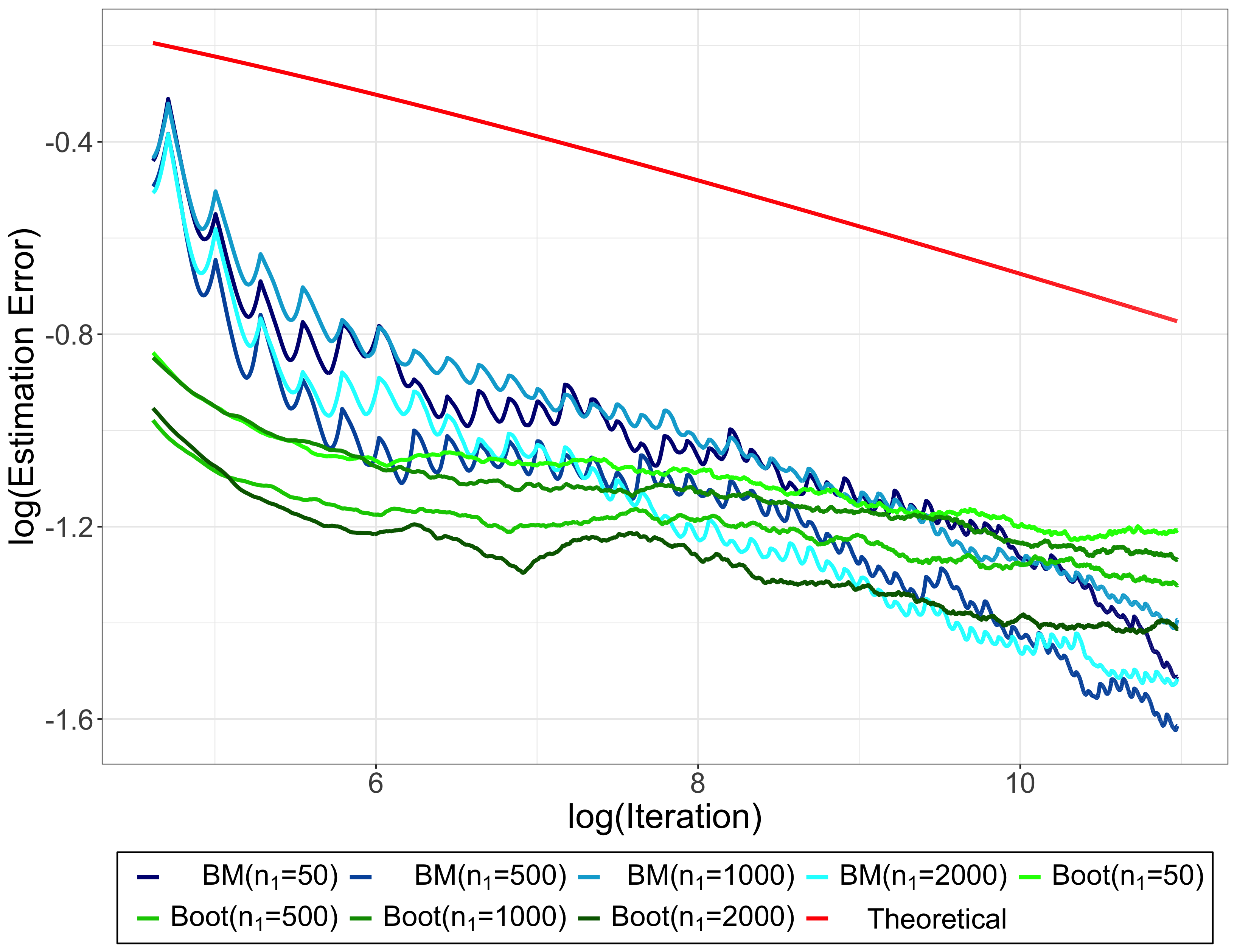} 
    \includegraphics[width=0.36\textwidth]{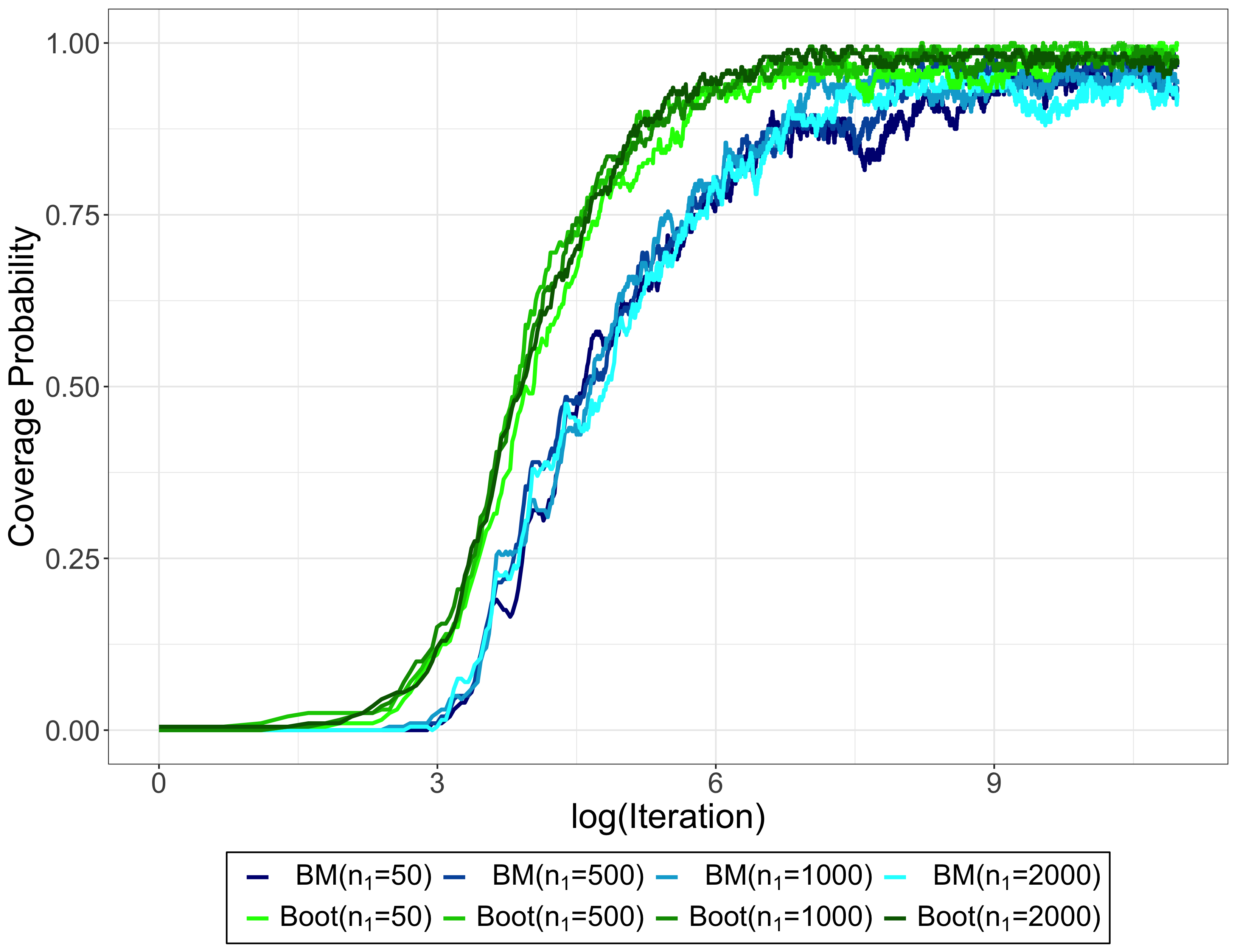}\\
    \includegraphics[width=0.36\textwidth]{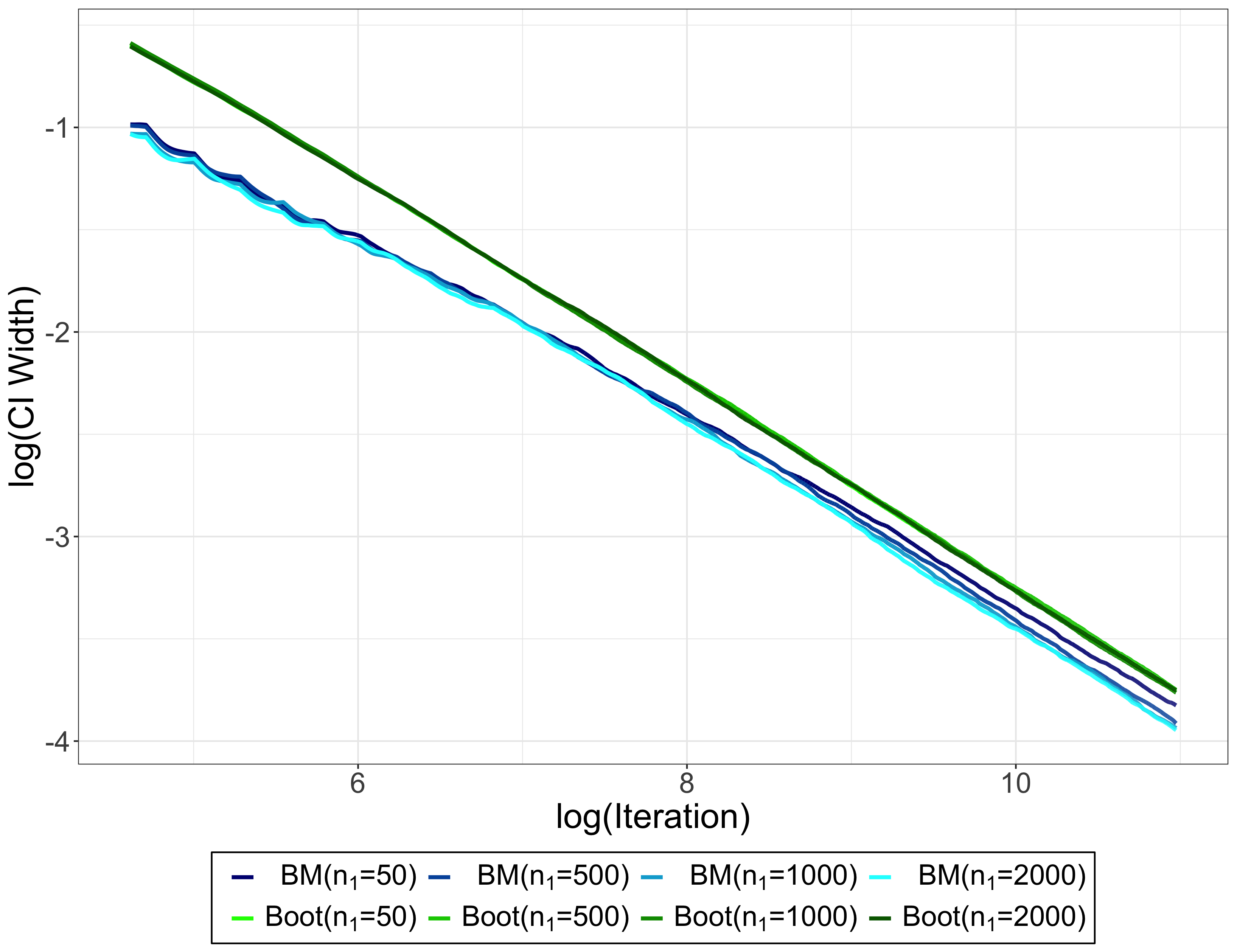}
    \includegraphics[width=0.36\textwidth]{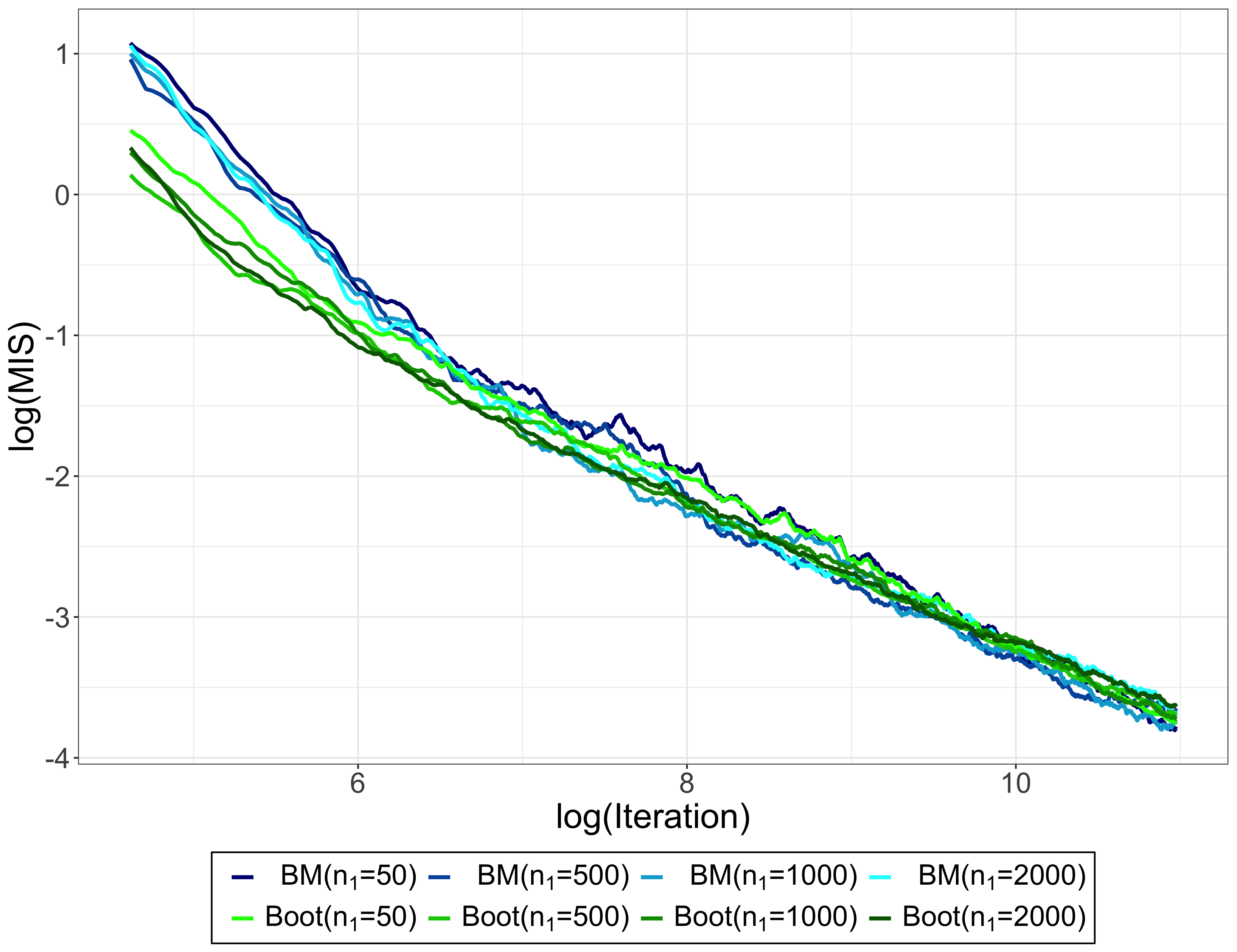}
    \caption{\it Logistic Regression with \texttt{GiveMeSomeCredit} Dataset. The plots show the evolution of $\log(\text{Estimation Error})$, coverage probability, $\log(\text{width of Confidence Interval (CI)})$, and $\log(\text{MIS})$. The red line in the plot of $\log(\text{Estimation Error})$ corresponds to the theoretical rate in Theorem~\ref{th:mainthm}. The slope of $\log(\text{Estimation Error})$ matches the theoretical rate after approximately $500$ iterations. \texttt{BM} and \texttt{Boot} have comparable performances in terms of coverage probability, CI width and MIS. In terms of convergence rate of the estimation error, \texttt{BM} seems to be better and less sensitive to state-dependence compared to \texttt{Boot}.}
    \label{fig:logreg_bank}
\end{figure}
In this work, we study the online batch-means covariance estimator $\hat{\Sigma}_n$ for SGD under Markovian sampling. We show that the convergence rate for $\hat{\Sigma}_n$ is $O\big(\sqrt{d}n^{-1/8}(\log n)^{1/4}\big)$ for state-dependent Markov data, and $O\big(\sqrt{d}n^{-1/8}\big)$ under state-independent Markovian sampling. Ignoring the logarithmic factor in the state-dependent case, this rate matches the best-known rate of convergence in the $\iid$ case \citep{zhu2021online,chen2020statistical}. We experimentally illustrate our results on synthetic and real datasets.  

There are several avenues for future work. For some applications, a convergence rate of $\tilde{O}(n^{-1/8})$, as is the case for $\hat{\Sigma}_n$, may be too slow. Designing an estimator with a faster convergence rate, focusing on specific functionals, is an active area for research. The dimension dependency of $\hat{\Sigma}_n$ is of the order $O(\sqrt{d})$ which may not be suitable for high-dimensional problems. Designing a fully-online estimator with polylogarithmic dependence on $d$ for functionals of SGD under Markovian sampling is open.

\newpage
\bibliographystyle{abbrvnat}
\bibliography{twocovar}
\appendix
\clearpage
\section{Auxiliary results from literature}
We need the following two lemma from \cite{andrieu2005stability}.
\begin{lemma}[Lemma A.1,\cite{liang2010trajectory}] \label{lm:liangmainlemma}
Let Assumption~\ref{as:liangasA3} be true. Then the following hold:
\begin{enumerate}
    \item $\nabla F(\theta,x)$ is measurable for all $\theta\in\Theta$, and $\int_\mathcal{X}\|\nabla F(\theta,x)\|_2\pi_\theta(x)\mathrm{d}x<\infty$.
    \item For any $\theta\in\Theta$, the Poisson equation $u(\theta,x)-P_\theta u(\theta,x)=\nabla F(\theta,x)-\nabla f(\theta)$ has a solution $u(\theta,x)$, where $P_\theta u(\theta,x)=\int_{\mathcal{X}}u(\theta,x')P_{\theta} (x,x')\mathrm{d}x'$. There exist a function $V:\mathcal{X}\to[1,\infty)$ such that $\{x\in\mathcal{X},V(x)<\infty\}\neq \varnothing$, and a constant $\nu\in (0,1]$  such that for any compact subset $\mathcal {K}\subset \Theta$, the following holds:
    \begin{enumerate}
        \item $\sup_{\theta\in\mathcal{K}}\|\nabla F(\theta,x)\|_V<\infty$,
        \item $\sup_{\theta\in\mathcal{K}}\left(\|u(\theta,x)\|_V+\|P_{\theta} u(\theta,x)\|_V\right)<\infty$,
        \item $\sup_{(\theta_1,\theta_2)\in\mathcal{K}\times \mathcal{K}}\|\theta_1-\theta_2\|_2\left(\|u(\theta_1,x)-u(\theta_2,x)\|_V+\|P_{\theta_1} u(\theta_1,x)-P_{\theta_2} u(\theta_2,x)\|_V\right)<\infty$. 
    \end{enumerate}
\end{enumerate}
\end{lemma}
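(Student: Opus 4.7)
The plan is to derive both parts from two core implications of Assumption~\ref{as:liangasA3}: first, the drift condition in (a) together with the minorization on $C$ yields $V^{\alpha_0}$-uniform ergodicity of $P_\theta$ uniformly over the compact set $\Theta'$; second, the Lipschitz-in-$\theta$ conditions in (b) and (c) let me transfer this ergodicity into joint regularity in $(\theta,x)$. More concretely, the drift-minorization pair, by standard Meyn--Tweedie theory, produces constants $\tilde C = \tilde C(\mathcal K)$ and $\rho = \rho(\mathcal K) \in (0,1)$ such that for every $\theta \in \mathcal K$ and every $g \in L_V$ with $\pi_\theta g = 0$,
$$\|P_\theta^k g\|_V \;\le\; \tilde C \rho^k \|g\|_V,$$
and the analogous inequality in the $V^{\alpha_0}$-norm. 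This geometric decay is the single most important ingredient.

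For Part 1, measurability of $\nabla F(\theta,\cdot)$ is inherited from the standing assumptions on $F$, while $\|\nabla F(\theta,x)\|_V \le c$ from Assumption~\ref{as:liangasA3}(b) and finiteness of $\pi_\theta V$ (a routine consequence of the drift bound $P_\theta V^{\alpha_0} \le \kappa V^{\alpha_0}$) yield $\int \|\nabla F(\theta,x)\|_2\,\pi_\theta(\mathrm d x) < \infty$. For Part 2, set $g(\theta,x) \coloneqq \nabla F(\theta,x) - \nabla f(\theta)$, which satisfies $\pi_\theta g(\theta,\cdot) = 0$ by the definition of $\nabla f$. I then define
$$u(\theta,x) \;\coloneqq\; \sum_{k=0}^{\infty} P_\theta^k g(\theta,\cdot)(x),$$
whose $V$-norm is bounded by $\tfrac{\tilde C}{1-\rho}\|g(\theta,\cdot)\|_V \le \tfrac{2c\tilde C}{1-\rho}$. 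A direct manipulation of the partial sums shows $u - P_\theta u = g$, which is the Poisson equation. Bound 2(a) is literally Assumption~\ref{as:liangasA3}(b); bound 2(b) follows from the geometric-sum estimate just displayed, and the companion bound on $P_\theta u$ comes after one extra application of $P_\theta$, picking up an additional factor $\kappa$ via Assumption~\ref{as:liangasA3}(a).

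The main obstacle is bound 2(c), the Lipschitz dependence of $u$ on $\theta$. Writing $g_i \coloneqq g(\theta_i,\cdot)$, I decompose each summand via the standard telescoping identity
$$P_{\theta_1}^k g_1 - P_{\theta_2}^k g_2 \;=\; P_{\theta_1}^k(g_1 - g_2) \;+\; \sum_{j=0}^{k-1} P_{\theta_1}^{j}\,(P_{\theta_1} - P_{\theta_2})\, P_{\theta_2}^{k-1-j} g_2.$$
The first piece is handled by the Lipschitz property of $\nabla F$ in Assumption~\ref{as:liangasA3}(b), combined with geometric ergodicity applied to the centered part of $g_1 - g_2$; the (non-zero-mean) remainder is absorbed using the $V^{\alpha_0}$-moment of $\pi_{\theta_1}$, which is precisely why the stronger $V^{\alpha_0}$-versions in Assumption~\ref{as:liangasA3}(a) and (c) are required. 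For the second piece, Assumption~\ref{as:liangasA3}(c) gives $\|(P_{\theta_1}-P_{\theta_2})h\|_V \le c\|h\|_V\|\theta_1-\theta_2\|_2$, which I apply with $h = P_{\theta_2}^{k-1-j} g_2$, whose $V^{\alpha_0}$-norm decays as $\rho^{k-1-j}$. Summing the resulting double geometric series in $j$ and $k$ yields $\|u(\theta_1,\cdot) - u(\theta_2,\cdot)\|_V \le C\|\theta_1-\theta_2\|_2$, giving Hölder exponent $\nu = 1$. The analogous bound on $P_{\theta_1}u(\theta_1,\cdot) - P_{\theta_2}u(\theta_2,\cdot)$ is obtained by inserting one additional transition at the front of the telescoping expansion and using Assumption~\ref{as:liangasA3}(c) once more. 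The delicate part of this step is making the centering rigorous so that the geometric bound applies to the correction term, and keeping the resulting constants uniform in $\mathcal K$; this is where the uniformity over $\theta \in \Theta'$ in Assumption~\ref{as:liangasA3}(a) does the essential work.
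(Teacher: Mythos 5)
First, a framing remark: the paper does not prove this statement at all — it is imported verbatim as Lemma A.1 of \cite{liang2010trajectory} (which in turn rests on \cite{andrieu2005stability}) — so there is no in-paper argument to compare against. Your reconstruction follows the standard route: drift plus minorization give $V$-uniform ergodicity uniformly over $\mathcal{K}$, the Poisson solution is the series $u(\theta,\cdot)=\sum_{k\ge 0}P_\theta^k g(\theta,\cdot)$ with $g=\nabla F-\nabla f$, and Part 1, the verification of $u-P_\theta u=g$, and bounds 2(a)--(b) are all fine. (One small quibble: integrability of $V$ under $\pi_\theta$ follows from the $l$-step drift inequality $P_\theta^l V^{\alpha_0}\le\lambda V^{\alpha_0}+b\,I(x\in C)$, not from $P_\theta V^{\alpha_0}\le\kappa V^{\alpha_0}$, since $\kappa$ may exceed $1$.)

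The genuine gap is in 2(c). In the telescoping term $\sum_{j=0}^{k-1}P_{\theta_1}^j(P_{\theta_1}-P_{\theta_2})P_{\theta_2}^{k-1-j}g_2$, the geometric factor $\rho^j$ for $P_{\theta_1}^j$ is available only when it acts on a $\pi_{\theta_1}$-centered function, and $(P_{\theta_1}-P_{\theta_2})P_{\theta_2}^{k-1-j}g_2$ is not centered (nor is $g_1-g_2$ in the first piece). Without centering you only have $\|P_{\theta_1}^j h\|_V\lesssim\|h\|_V$ uniformly in $j$, so the inner sum is $\sum_{j}\rho^{k-1-j}=O(1)$ and the $k$-th block is $O(\|\theta_1-\theta_2\|_2)$ \emph{uniformly in $k$}; the outer sum then diverges, so there is no convergent ``double geometric series.'' The two standard repairs are: (i) combine the per-$k$ bound $C\|\theta_1-\theta_2\|_2$ with the crude bound $C\rho^k$ and truncate the sum at $N\asymp\log(1/\|\theta_1-\theta_2\|_2)$, which yields $\|u(\theta_1,\cdot)-u(\theta_2,\cdot)\|_V\lesssim\|\theta_1-\theta_2\|_2\log(1/\|\theta_1-\theta_2\|_2)$, i.e.\ only Hölder exponent $\nu<1$ — precisely why the lemma is stated with $\nu\in(0,1]$ rather than asserting Lipschitz continuity; or (ii) center each $P_{\theta_1}^j$ and track the leftover means, which telescope to $(\pi_{\theta_1}-\pi_{\theta_2})P_{\theta_2}^k g_2$ and therefore require, as an additional intermediate result, the invariant-measure perturbation bound $|(\pi_{\theta_1}-\pi_{\theta_2})h|\lesssim\|h\|_V\|\theta_1-\theta_2\|_2$, which your proposal neither states nor proves. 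You flag the centering as ``delicate,'' but the conclusion you draw ($\nu=1$ via a geometric double sum) depends on resolving it, and the naive version of the step fails.
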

\begin{lemma}[Lemma A.2,\cite{liang2010trajectory}]\label{lm:lialemmaa2}
Let Assumptions~\ref{as:strongcon}, \ref{as:liangasA3}, and \ref{as:liangasA4} be true. Let $\mathcal{X}_0\subset\mathcal{X}$ be such that $\sup_{x\in\mathcal{X}_0}V(x)<\infty$ and that $\mathcal{K}_0\subset M_{C_0}$, where $M_{C_0}\coloneqq \{\theta\in\Theta :f(\theta)\leq C_0\}$, and $C_0>0$ is a constant. Then $\sup_k\expec{V^{\alpha_0}(x_k)I(k\geq k_{\sigma_s})}{}<\infty$, where $\alpha_0\geq 8$ is defined in Assumption~\ref{as:liangasA3}.
\end{lemma}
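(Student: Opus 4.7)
\medskip

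\noindent\textbf{Proof proposal.} The plan is to show that, once the truncation mechanism has stabilized (after $k_{\sigma_s}$), the Lyapunov function $V^{\alpha_0}$ evaluated along the data chain admits a geometric drift toward a bounded absorbing region, which will imply the claimed uniform moment bound. The starting point is Lemma~\ref{lm:finitetrunc}, which guarantees that for all $k\geq k_{\sigma_s}$ the parameter iterates $\theta_k$ lie almost surely in some compact set $\Theta'\subset\Theta$ (one can take $\Theta'=\mathcal{K}_{\sigma_s}$). Consequently, the $\sup_{\theta\in\Theta'}$ estimates in Assumption~\ref{as:liangasA3} are available along the full tail of the trajectory.

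\medskip

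\noindent The main step is to establish, for the ``post-stabilization'' sequence $v_k\coloneqq\expec{V^{\alpha_0}(x_k)\,I(k\geq k_{\sigma_s})}{}$, a drift-type recursion of the form
\begin{align*}
v_{k+l}\;\leq\;\tilde\lambda\,v_k+\tilde b
\end{align*}
for some $\tilde\lambda\in(0,1)$ and $\tilde b<\infty$ that do not depend on $k$. Conditioning on $\mathcal F_k$ and unrolling the Markov property gives
\begin{align*}
\expec{V^{\alpha_0}(x_{k+l})\mid\mathcal F_k}{}
=\bigl(P_{\theta_k}\cdots P_{\theta_{k+l-1}}V^{\alpha_0}\bigr)(x_k).
\end{align*}
Were the kernels all equal to $P_{\theta_k}$, condition (a) of Assumption~\ref{as:liangasA3} would immediately yield
\begin{align*}
P_{\theta_k}^l V^{\alpha_0}(x_k)\leq \lambda V^{\alpha_0}(x_k)+bI(x_k\in C),
\end{align*}
and we would be done. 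The heart of the argument is therefore to control the discrepancy between $P_{\theta_k}\cdots P_{\theta_{k+l-1}}$ and $P_{\theta_k}^l$. I would telescope
\begin{align*}
P_{\theta_k}\cdots P_{\theta_{k+l-1}}-P_{\theta_k}^l
=\sum_{i=0}^{l-1}P_{\theta_k}^{\,i}\bigl(P_{\theta_{k+i}}-P_{\theta_k}\bigr)P_{\theta_{k+i+1}}\cdots P_{\theta_{k+l-1}},
\end{align*}
apply the Lipschitz-in-$\theta$ bound from Assumption~\ref{as:liangasA3}(c) in the $V^{\alpha_0}$-norm to each difference factor, and use condition (a)'s one-step bound $P_\theta V^{\alpha_0}\leq \kappa V^{\alpha_0}$ to control the remaining kernels. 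Because $\|\theta_{k+i}-\theta_k\|_2=\mathcal{O}(\sum_{j=k}^{k+i-1}\eta_{j+1})$ along the vanilla SGD branch (which is active for $k\geq k_{\sigma_s}$), and because condition (b) yields a uniform bound on $\|\nabla F(\theta,x)\|/V(x)$, the remainder term is of size $\mathcal{O}(\eta_k)\cdot V^{\alpha_0}(x_k)$. Absorbing this into the multiplicative constant yields a recursion of the form $v_{k+l}\leq(\lambda+O(\eta_k))v_k+b\,\mathbb P(x_k\in C\mid k\geq k_{\sigma_s})+O(\eta_k)$; once $k$ is large enough that $\lambda+O(\eta_k)<\lambda'<1$, we have the desired contractive drift.

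\medskip

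\noindent From this recursion, a standard iteration argument gives
\begin{align*}
v_{k}\leq (\lambda')^{\lfloor(k-k_{\sigma_s})/l\rfloor}\,v_{k_{\sigma_s}}+\frac{\tilde b}{1-\lambda'},
\end{align*}
so the only remaining task is to bound the base value $v_{k_{\sigma_s}}$. This is where the hypotheses $\sup_{x\in\mathcal{X}_0}V(x)<\infty$ and $\mathcal{K}_0\subset M_{C_0}$ enter: the former supplies a finite starting bound for $V^{\alpha_0}(x_0)$, and the latter together with Assumption~\ref{as:strongcon} confines $\theta_0$ to a sublevel set of $f$ so that every reinitialization happens from a point with $V^{\alpha_0}\leq \sup_{x\in\mathcal{X}_0}V^{\alpha_0}(x)<\infty$. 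Combined with the exponentially decaying tail $\mathbb P(\mathcal T>k)\leq C\omega^k$ from \citet[Thm.~5.4]{andrieu2005stability} quoted after Lemma~\ref{lm:finitetrunc}, and using the one-step bound $P_\theta V^{\alpha_0}\leq\kappa V^{\alpha_0}$ to control growth between truncations, one obtains $\sup_{k\leq k_{\sigma_s}}v_k<\infty$, which combined with the recursion above yields $\sup_k v_k<\infty$ as required.

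\medskip

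\noindent The main obstacle I anticipate is the telescoping step that converts the state-dependent composition $P_{\theta_k}\cdots P_{\theta_{k+l-1}}$ into a fixed-kernel iterate plus a small perturbation. This requires careful bookkeeping in the $V^{\alpha_0}$-norm, and crucially uses that the step-sizes in Assumption~\ref{as:liangasA4} are small enough to make the perturbation subdominant to the geometric contraction $\lambda$. The remaining steps (iterating the drift recursion, bounding the pre-stabilization phase via the exponential truncation tail) are largely routine once the drift inequality has been established.
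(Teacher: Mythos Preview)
The paper does not prove this lemma; it is quoted verbatim from \cite{liang2010trajectory} (who in turn adapt \cite{andrieu2005stability}) as an auxiliary result, with no argument supplied. So there is nothing in the paper to compare against. Your overall strategy---use the $l$-step drift for a frozen kernel and control the discrepancy via the Lipschitz condition in Assumption~\ref{as:liangasA3}(c)---is indeed the approach taken in that literature.

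That said, one step in your sketch is incorrect as written. You claim
\[
\expec{V^{\alpha_0}(x_{k+l})\mid\mathcal F_k}{}=\bigl(P_{\theta_k}\cdots P_{\theta_{k+l-1}}V^{\alpha_0}\bigr)(x_k),
\]
but the kernels $P_{\theta_{k+1}},\dots,P_{\theta_{k+l-1}}$ are \emph{not} $\mathcal F_k$-measurable: each $\theta_{k+j}$ depends on $x_{k+1},\dots,x_{k+j}$ through the SGD update. Hence the right-hand side is itself a random quantity that is not a function of $(x_k,\theta_k)$ alone, and the displayed identity fails. The fix is to peel off one step at a time with the tower property, replacing $P_{\theta_{k+j}}$ by $P_{\theta_k}$ at each stage and bounding the increment via Assumption~\ref{as:liangasA3}(c) together with $\|\theta_{k+j}-\theta_k\|_2\le\sum_{r=k}^{k+j-1}d_r$ (this bound comes from the truncation rule in Algorithm~\ref{alg:tsgd}, not from $\eta_r$ as you wrote). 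The intermediate moments $\expec{V^{\alpha_0}(x_{k+j})\mid\mathcal F_k}{}$ that appear in the remainder are controlled by iterating the one-step bound $P_\theta V^{\alpha_0}\le\kappa V^{\alpha_0}$, giving a factor $\kappa^{l}$ that is harmless since $l$ is fixed. After this correction your recursion $v_{k+l}\le(\lambda+O(d_k))v_k+b$ goes through.

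A second, more structural point: you invoke Lemma~\ref{lm:finitetrunc} at the outset to confine $\{\theta_k\}$ to a single compact set. In the original framework of \cite{andrieu2005stability}, the moment bound is established \emph{first}, on segments where the iterates remain in a prescribed compact $\mathcal K$, and is then used to prove that truncations are finite. Your ordering reverses this, which is fine within the present paper (both results are imported as black boxes) but would be circular if you were reconstructing the theory from scratch.
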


We also introduce the following two results from the stochastic approximation literature used in the proofs of Lemma~\ref{lm:part1}, and Lemma~\ref{lm:part2}. Let $Q$ be a positive-definite matrix. We start with the notation,
\begin{align}
   W_i^j&\coloneqq\textstyle\prod_{k=i+1}^j(\Id-\eta_kQ) \quad \text{for}~~ j>i \quad\text{with}\quad W_i^i\coloneqq\Id,\label{eq:wijdef}\\[2.5pt]
    &S_i^j\coloneqq\textstyle\sum_{k=i+1}^jW_i^k \quad \text{for}~~ j>i \numberthis\label{eq:sijdef}\quad\text{with}\quad S_i^i\coloneqq0.
\end{align}
\begin{lemma}[\cite{polyak1992acceleration,zhu2021online}]\label{lm:Wijbound}
Let $W_i^j$ be defined as in \eqref{eq:wijdef} with $Q\succ 0$, and $\eta_k=\eta k^{-a}$ for $1/2<a<1$. Then, 
\begin{align}
    \norm{W_i^j}_2\leq\exp\big(-\eta\gamma\textstyle\sum_{k=i+1}^jk^{-a}\big)\leq \textstyle \exp\big(-\frac{\gamma\eta}{1-a}\left(j^{1-a}-(i+1)^{1-a}\right)\big), \label{eq:Wijbound}
\end{align}
where $\gamma=\min\left(\lambda_{min}(Q),1/(2\eta)\right)$.
\end{lemma}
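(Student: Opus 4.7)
The plan is to reduce the matrix product to a scalar problem via simultaneous diagonalization, bound each factor by $1-\eta_k\gamma$, and then upgrade the product bound through the standard inequality $1-x\le e^{-x}$ and a sum-to-integral comparison.

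First, because $Q\succ 0$ is symmetric, it admits an orthonormal eigenbasis with eigenvalues $0<\lambda_{\min}(Q)=\lambda_1\le\cdots\le\lambda_d=\lambda_{\max}(Q)$. Every matrix $I-\eta_k Q$ is diagonal in this same basis, so the product $W_i^j=\prod_{k=i+1}^{j}(I-\eta_k Q)$ is as well, and
\[
\|W_i^j\|_2 \;=\; \max_{1\le \ell\le d}\,\prod_{k=i+1}^{j}\bigl|1-\eta_k\lambda_\ell\bigr|.
\]
Thus the matrix bound reduces to a collection of scalar bounds, one per eigenvalue.

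Second, I would establish the per-factor estimate $\|I-\eta_k Q\|_2\le 1-\eta_k\gamma$. Note first that $\eta_k\gamma\le \eta\cdot\tfrac{1}{2\eta}=\tfrac12$, so $1-\eta_k\gamma\in[\tfrac12,1)$ is well-defined and positive. Because $\gamma\le\lambda_{\min}(Q)\le\lambda_\ell$, the ``upper'' eigenvalue of $I-\eta_k Q$ satisfies $1-\eta_k\lambda_\ell\le 1-\eta_k\gamma$. For the ``lower'' eigenvalue, we use that $\gamma\le 1/(2\eta)$ gives $\eta(\lambda_\ell+\gamma)\le\eta\lambda_{\max}(Q)+\tfrac12$; under the implicit step-size choice keeping $\eta\lambda_{\max}(Q)\le 3/2$ (which is part of the standing regime under which $Q$-contraction is meaningful), this yields $|1-\eta_k\lambda_\ell|\le 1-\eta_k\gamma$ uniformly in $\ell$ and $k$. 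Combining, $\|I-\eta_kQ\|_2\le 1-\eta_k\gamma$.

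Third, submultiplicativity of the operator norm together with $1-x\le e^{-x}$ (valid because $\eta_k\gamma\le 1/2$) gives
\[
\|W_i^j\|_2 \;\le\; \prod_{k=i+1}^{j}\bigl(1-\eta_k\gamma\bigr) \;\le\; \exp\!\Bigl(-\gamma\sum_{k=i+1}^{j}\eta_k\Bigr) \;=\; \exp\!\Bigl(-\eta\gamma\sum_{k=i+1}^{j}k^{-a}\Bigr),
\]
which is the first inequality in the lemma. For the second inequality, I compare the sum to the integral: since $x\mapsto x^{-a}$ is decreasing on $[1,\infty)$,
\[
\sum_{k=i+1}^{j}k^{-a} \;\ge\; \int_{i+1}^{\,j+1} x^{-a}\,\mathrm{d}x \;=\; \frac{(j+1)^{1-a}-(i+1)^{1-a}}{1-a} \;\ge\; \frac{j^{1-a}-(i+1)^{1-a}}{1-a},
\]
which, once exponentiated with a minus sign, yields the second claimed bound.

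The only nontrivial step is the per-factor estimate, because the ``reflection'' eigenvalue $1-\eta_k\lambda_{\max}(Q)$ can in principle be negative with large absolute value if $\eta$ were too large relative to $\lambda_{\max}(Q)$; the cutoff $\gamma\le 1/(2\eta)$ is precisely what keeps the geometry manageable, and the rest of the argument is standard scalar calculus.
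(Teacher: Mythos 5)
Your proof is correct and follows the standard argument from the cited sources (the paper itself imports this lemma without proof): diagonalize $Q$, bound each factor by $1-\eta_k\gamma$, apply $1-x\le e^{-x}$, and compare the sum to an integral. You are right that the per-factor bound $\abs{1-\eta_k\lambda_\ell}\le 1-\eta_k\gamma$ silently requires an initial step-size condition of the form $\eta\lambda_{\max}(Q)\le 3/2$ (so that $\eta_k(\lambda_\ell+\gamma)\le 2$); the lemma statement, like the references it cites, glosses over this, and your explicit flagging of it is the only substantive addition to the standard proof.
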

\begin{lemma}[\cite{polyak1992acceleration,zhu2021online}]\label{lm:Snormbound}
Let $S_i^j$ be defined as in \eqref{eq:sijdef}, and $\eta_k=\eta k^{-a}$ for $1/2<a<1$. Then, we have $\norm{S_i^j}_2\lesssim (i+1)^a$.
\end{lemma}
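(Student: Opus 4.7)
\textbf{Proof plan for Lemma~\ref{lm:Snormbound}.}

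The plan is to reduce everything to a scalar tail estimate via the triangle inequality plus the spectral bound in Lemma~\ref{lm:Wijbound}, and then control the resulting sum by an integral. Concretely, by $S_i^j = \sum_{k=i+1}^j W_i^k$ and the triangle inequality,
\begin{equation*}
\|S_i^j\|_2 \;\le\; \sum_{k=i+1}^j \|W_i^k\|_2 \;\le\; \sum_{k=i+1}^j \exp\!\Bigl(-c\bigl(k^{1-a}-(i+1)^{1-a}\bigr)\Bigr),
\end{equation*}
where $c := \gamma\eta/(1-a)$ and we applied Lemma~\ref{lm:Wijbound}. Factoring out $e^{c(i+1)^{1-a}}$ reduces the problem to bounding $\sum_{k=i+1}^j e^{-ck^{1-a}}$ uniformly in $j$.

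Next, because the integrand $x \mapsto e^{-c x^{1-a}}$ is positive and decreasing on $[0,\infty)$, standard integral comparison yields
\begin{equation*}
\sum_{k=i+1}^j e^{-ck^{1-a}} \;\le\; e^{-c(i+1)^{1-a}} + \int_{i+1}^{\infty} e^{-cx^{1-a}}\,dx.
\end{equation*}
I would then evaluate the integral via the substitution $u = c x^{1-a}$, which gives
\begin{equation*}
\int_{i+1}^\infty e^{-c x^{1-a}}\,dx \;=\; \frac{c^{-1/(1-a)}}{1-a}\,\Gamma\!\Bigl(\tfrac{1}{1-a},\; c(i+1)^{1-a}\Bigr),
\end{equation*}
an upper incomplete gamma function. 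Using either the classical asymptotic $\Gamma(s,T)\sim T^{s-1}e^{-T}$ as $T\to\infty$, or equivalently a direct integration-by-parts argument (writing $e^{-cx^{1-a}} = \tfrac{x^a}{-c(1-a)} \cdot \tfrac{d}{dx}e^{-cx^{1-a}}$ and integrating), one obtains
\begin{equation*}
\int_{i+1}^\infty e^{-c x^{1-a}}\,dx \;\le\; \frac{C\,(i+1)^a}{c(1-a)}\,e^{-c(i+1)^{1-a}},
\end{equation*}
for a universal constant $C$.

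Multiplying by the factor $e^{c(i+1)^{1-a}}$ that was pulled out, the exponentials cancel and we conclude
\begin{equation*}
\|S_i^j\|_2 \;\le\; 1 + \frac{C\,(i+1)^a}{c(1-a)} \;\lesssim\; (i+1)^a,
\end{equation*}
uniformly in $j>i$, which is the claim. The only subtle point — and what I expect to be the main technical obstacle — is the incomplete-gamma tail bound; I would handle it either by the explicit integration-by-parts identity (which avoids any appeal to asymptotics and works for all $i\ge 0$, with the constant depending only on $a$, $\gamma$, $\eta$) or by treating small $i$ separately via the trivial bound $\|W_i^k\|_2 \le 1$, which already gives $\|S_i^j\|_2 \le j - i$ that can be absorbed into $(i+1)^a$ for bounded $i$.
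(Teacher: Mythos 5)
Your argument is correct, and it is essentially the standard proof of this fact: the paper itself states Lemma~\ref{lm:Snormbound} without proof, importing it from \cite{polyak1992acceleration,zhu2021online}, and the derivation there is exactly the one you give --- triangle inequality, the exponential bound of Lemma~\ref{lm:Wijbound}, and an integral comparison showing $\sum_{k>i}\exp\bigl(-c(k^{1-a}-(i+1)^{1-a})\bigr)\lesssim (i+1)^a$. Your handling of the tail integral is sound; note that since $T=c(i+1)^{1-a}\ge c>0$ for all $i\ge 0$, the incomplete-gamma bound $\Gamma(s,T)\le C_{s,c}\,T^{s-1}e^{-T}$ already holds uniformly (it only fails as $T\to 0$), so no separate small-$i$ case is really needed. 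One small caution: your fallback of using $\|W_i^k\|_2\le 1$ to get $\|S_i^j\|_2\le j-i$ for bounded $i$ does not work as stated, because that bound is not uniform in $j$ and the claim must hold for all $j>i$; rely instead on the integration-by-parts (or uniform incomplete-gamma) route, which gives the constant depending only on $a$, $\gamma$, $\eta$.
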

\section{Proof details for Theorem~\ref{th:mainthm}}\label{sec:proofoutline}
We now provide the proof of Theorem~\ref{th:mainthm}, highlighting the main differences from the $\iid$ case.
\begin{proof}[Proof of Theorem~\ref{th:mainthm}]\label{pf:mainthm}
Define the sequence $\{\vt_k\}_k$ that evolves according to a linearized version of the dynamics of $\{\theta_k\}_k$ guided by the same noise sequence $\{\xi_{k}(\theta_{k-1},x_{k})\}_k$ (as defined in~\eqref{eq:gradnoise}), i.e., 
\begin{align}\label{eq:linit}
&\vt_0=\theta_0
   & \vt_{k}=\vt_{k-1}-\eta_{k}(Q\vt_{k-1}+\xi_{k}(\theta_{k-1},x_{k}))=(I-\eta_{k}Q)\vt_{k-1}+\eta_k \xi_{k}(\theta_{k-1},x_{k}),
\end{align}
where $Q$ is a positive definite matrix. We will show that the covariance estimator $\hat{\Sigma}_n$ for the nonlinear case is close to $\tilde{\Sigma}_n$ when $Q=\nabla^2 f(\theta^*)$. Indeed, note that \eqref{eq:linit} corresponds to the case where the gradient $\nabla f(\vt_k)$ is a linear function of $\vt_k$, i.e.,  $\nabla f(\vt_k)=Q\vt_k$. 
Now, recalling the notations above~\eqref{eq:covestimator}, define $\tilde{\Sigma}_n$ as
\begin{align}\label{eq:sigmatilde}
    \tilde{\Sigma}_n=\frac{\sum_{i=1}^n\left(\sum_{k=t_i}^i\vt_k-l_i\bar{\vt}_n\right)\left(\sum_{k=t_i}^i\vt_k-l_i\bar{\vt}_n\right)^
    \top}{\sum_{i=1}^nl_i}.
\end{align}
By triangle inequality, we then have that 
\begin{align}\label{eq:firstdecomp}
        \mathbb{E}[\|\hat{\Sigma}_n-\Sigma\|_2]\leq \underset{\textsf{Lemma}~\ref{lm:part2}}{\colorboxed{black}{\mathbb{E}[\|\hat{\Sigma}_n-\tilde\Sigma\|_2]}} + \underset{\textsf{Lemma}~\ref{lm:part1}}{\colorboxed{black}{    \mathbb{E}[\|\tilde{\Sigma}_n-\Sigma\|_2]}}.
\end{align}
Theorem~\ref{th:mainthm} is then proved by invoking the results in Lemma~\ref{lm:part1} and Lemma~\ref{lm:part2}, which are discussed next.
\end{proof}

In Lemma~\ref{lm:part2}, we show that the covariance estimator $\hat{\Sigma}_n$ is close to $\tilde{\Sigma}_n$ in~\eqref{eq:sigmatilde}, when $Q=\nabla^2 f(\theta^*)$.
\begin{lemma}\label{lm:part2}
Under Assumption~\ref{as:strongcon}-\ref{as:liangasA4}, for $Q=\nabla^2 f(\theta^*)$, we have, $
        \mathbb{E}[\|\hat{\Sigma}_n-\tilde\Sigma\|_2]
\lesssim M^{-\frac12}.$
\end{lemma}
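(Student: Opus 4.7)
The plan is to write the difference $\hat\Sigma_n-\tilde\Sigma_n$ as a sum of cross-terms involving the residual $\Delta_k:=\theta_k-\vt_k$, reduce each term via Cauchy--Schwarz to moments of block sums, and then control those moments using (i) the quadratic residual in Assumption~\ref{as:liangasA2} and (ii) the moment bounds on $\|\theta_k-\theta^*\|_2$ guaranteed by Lemma~\ref{lm:expecconviter} (cited in the introduction with $\mathbb{E}\|\theta_k-\theta^*\|_2^p=O(\eta_k^{p/2})$ for $p=1,2,4$).

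First, writing $B^{\theta}_i:=\sum_{k=t_i}^{i}\theta_k-l_i\bar\theta_n$, $B^{\vt}_i:=\sum_{k=t_i}^{i}\vt_k-l_i\bar\vt_n$, and $D_i:=\sum_{k=t_i}^{i}\Delta_k-l_i\bar\Delta_n$, the algebraic identity
\begin{equation*}
B^{\theta}_i(B^{\theta}_i)^\top-B^{\vt}_i(B^{\vt}_i)^\top=D_i(B^{\theta}_i)^\top+B^{\vt}_iD_i^\top
\end{equation*}
combined with $\|uv^\top\|_2\le\|u\|_2\|v\|_2$ and Cauchy--Schwarz yields
\begin{equation*}
\mathbb{E}\|\hat\Sigma_n-\tilde\Sigma_n\|_2\;\le\;\frac{1}{\sum_i l_i}\sum_{i=1}^n\Bigl(\sqrt{\mathbb{E}\|D_i\|_2^2\,\mathbb{E}\|B^{\theta}_i\|_2^2}+\sqrt{\mathbb{E}\|D_i\|_2^2\,\mathbb{E}\|B^{\vt}_i\|_2^2}\Bigr).
\end{equation*}

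Next, I would derive a recursion for $\Delta_k$ by subtracting the two dynamics. Taking $Q=\nabla^2 f(\theta^*)=\tilde Q$, the noise term $\xi_{k}(\theta_{k-1},x_k)$ cancels, so that $\Delta_{k}=(I-\eta_k Q)\Delta_{k-1}-\eta_k R_{k-1}$ with $R_j:=\nabla f(\theta_j)-Q(\theta_j-\theta^*)$. Assumption~\ref{as:liangasA2} gives $\|R_j\|_2\le c\|\theta_j-\theta^*\|_2^2$, which is the crucial quadratic residual. Unrolling using the notation $W_i^j$ from~\eqref{eq:wijdef} and $\Delta_0=0$ yields $\Delta_k=-\sum_{j=1}^{k}\eta_j W_j^k R_{j-1}$, and Lemma~\ref{lm:Wijbound} together with $\mathbb{E}\|R_{j-1}\|_2\lesssim\eta_j$ and $\mathbb{E}\|R_{j-1}\|_2^2\lesssim\eta_j^2$ leads to bounds of the form $\mathbb{E}\|\Delta_k\|_2^2\lesssim\eta_k^2$ (up to polylogarithmic factors) by the standard integral-of-decaying-kernel argument. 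Summing inside a block of length $l_i$ then gives $\mathbb{E}\|D_i\|_2^2\lesssim l_i^2\,\eta_{t_i}^2$. In parallel, Lemma~\ref{lm:expecconviter} and a similar manipulation for $\vt_k$ yield $\mathbb{E}\|B^{\theta}_i\|_2^2+\mathbb{E}\|B^{\vt}_i\|_2^2\lesssim l_i^2\,\eta_{t_i}$.

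Plugging these bounds into the Cauchy--Schwarz display gives a term-by-term estimate $\lesssim l_i^2\,\eta_{t_i}^{3/2}$. Summing over the $M$ blocks, with $l_i\lesssim a_{m+1}-a_m\sim m^{\beta-1}$, $t_i\sim m^\beta$, $\eta_{t_i}\sim m^{-a\beta}$, and $\sum_i l_i\sim a_M\sim n$, one obtains a bound that, when optimised over the block structure governed by $a_m=O(\lfloor m^\beta\rfloor)$, collapses to the claimed $O(M^{-1/2})$ rate; the critical cancellation is that the extra factor $\sqrt{\eta_{t_i}}$ coming from the $\Delta_k$ estimate provides precisely the gain relative to the block sizes appearing in $\sum l_i$.

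The main obstacle, in my view, is twofold. First, obtaining the sharp $\mathbb{E}\|\Delta_k\|_2^2=O(\eta_k^2)$ bound is delicate: the naive triangle inequality on the unrolled formula only gives $O(\eta_k)$ because $\|R_j\|_2$ is already of order $\eta_j$, and recovering the extra $\eta_k$ requires exploiting the geometric contraction of $W_j^k$ together with the fact that the residual $R_j$ is itself quadratic (not noisy) so no martingale cancellation is available---instead a direct integration of the decaying kernel does the job. Second, within a block the summands $\Delta_k$ are strongly correlated, so moving from $\mathbb{E}\|\Delta_k\|_2^2$ to $\mathbb{E}\|D_i\|_2^2$ needs the weak contraction in the $\Delta_k$ recursion rather than a naive $l_i$-factor blow-up; this is where the careful block-length accounting and the choice $a_m\sim m^\beta$ are essential to arriving at $M^{-1/2}$ rather than something larger.
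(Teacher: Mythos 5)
Your overall strategy is the same as the paper's: set $\Delta_k=\theta_k-\vt_k$ (the paper's $\varphi_k$), note that the gradient noise cancels so that $\Delta_k$ obeys a noiseless contraction driven by the quadratic residual $R_j=\nabla f(\theta_j)-Q(\theta_j-\theta^*)$, combine Assumption~\ref{as:liangasA2} with the moment bounds of Lemma~\ref{lm:expecconviter} to get $\mathbb{E}\|\Delta_k\|_2^2\lesssim\eta_k^2$, and hence $(\sum_i l_i)^{-1}\sum_i\mathbb{E}\|D_i\|_2^2\lesssim M^{-1}$ — this is precisely the key intermediate bound the paper isolates. The cross-term identity $B_i^\theta(B_i^\theta)^\top-B_i^{\vt}(B_i^{\vt})^\top=D_i(B_i^\theta)^\top+B_i^{\vt}D_i^\top$ and Cauchy--Schwarz are also the intended route.

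There is, however, a genuine gap in the final accounting. Your bound $\mathbb{E}\|B_i^{\theta}\|_2^2+\mathbb{E}\|B_i^{\vt}\|_2^2\lesssim l_i^2\eta_{t_i}$, obtained by the triangle inequality from $\mathbb{E}\|\theta_k-\theta^*\|_2^2\lesssim\eta_k$, is valid but too crude: the true order of $\mathbb{E}\|B_i^{\vt}\|_2^2$ is $l_i$ (each block contributes $\approx l_i\Sigma$ to $\tilde\Sigma_n$), and the crude bound overshoots by the factor $l_i\eta_{t_i}\asymp t_i^{(1-a)/2}\to\infty$ for the block schedule $\beta=2/(1-a)$. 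Tracking your term-by-term estimate $l_i^2\eta_{t_i}^{3/2}$ through the sums gives $(\sum_i l_i)^{-1}\sum_i l_i^2\eta_{t_i}^{3/2}\asymp M^{\beta(1-3a/2)-1}$, which equals $M^{0}$ at $a=1/2$, $\beta=4$ (and is only marginally negative for $a$ slightly above $1/2$) — so your computation yields $O(1)$, not $O(M^{-1/2})$, and the claimed ``critical cancellation'' does not occur. The missing ingredient is the sharp block bound $(\sum_i l_i)^{-1}\sum_i\mathbb{E}\|B_i^{\vt}\|_2^2=O(1)$ (equivalently $\mathbb{E}\operatorname{Tr}\tilde\Sigma_n=O(1)$), which cannot be extracted from Lemma~\ref{lm:expecconviter} by the triangle inequality alone: it requires the martingale-difference structure of the dominant component $e_k$ of the gradient noise from the Poisson-equation decomposition of Lemma~\ref{lm:poisregular}, plus control of the extra Markovian terms $\nu_k,\zeta_k$ — essentially the block second-moment bounds proved inside Lemma~\ref{lm:part1} (cf.~\eqref{eq:thetaithetakbound}). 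Once that is in hand, Cauchy--Schwarz over $i$ gives $\sqrt{M^{-1}}\cdot\sqrt{O(1)}=M^{-1/2}$ as claimed. (A minor additional point: Assumption~\ref{as:liangasA2} gives the quadratic residual only on $\|\theta-\theta^*\|_2\le r$; outside that ball you should patch with the Lipschitz-gradient bound from Assumption~\ref{as:liangasA3}(b).)
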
 
\begin{proof}[Proof of Lemma~\ref{lm:part2}]

Let $\varphi_k=\theta_k-\vt_k$. Then we have,
\begin{align}\label{eq:varphidef}
    \varphi_{k+1}=(I-\eta_{k+1}\nabla^2 f(\theta^*))\varphi_k-\eta_{k+1}\left(\nabla f(\theta_k)-\nabla^2 f(\theta^*)(\theta_k-\theta^*)\right).
\end{align}
By Assumption~\ref{as:liangasA2} we have, $\norm{\nabla f(\theta_k)-\nabla^2 f(\theta^*)(\theta_k-\theta^*)}_2\leq \norm{\theta_k-\theta^*}_2^2$. Note that no explicit noise term exists in \eqref{eq:varphidef}. One main step of the proof is to show the following bound.
$$\norm{\textstyle\left(\sum_{i=1}^nl_i\right)^{-1}\textstyle\sum_{i=1}^n (\sum_{k=t_i}^i\varphi_k-l_i\bar{\varphi}_n)(\sum_{k=t_i}^i\varphi_k-l_i\bar{\varphi}_n)^
    \top}_2\lesssim M^{-1},$$
where $\bar{\varphi}_n=n^{-1}\sum_{k=1}^n\varphi_k$. To establish the above bound (and also the proof of Lemma~\ref{lm:part1}), one needs to establish the expected convergence of $\norm{\theta_k-\theta^*}_2^p$, $p=1,2,4$ under state-dependent Markovian data; see Lemma~\ref{lm:expecconviter}. Lemma~\ref{lm:expecconviter} is novel and could be of independent interest. Using Lemma~\ref{lm:expecconviter}, the proof of Lemma~\ref{lm:part2} follows using simple but tedious algebraic manipulations similar to \cite{zhu2021online}. 
\end{proof}

\begin{lemma}\label{lm:part1}
Let Assumption~\ref{as:liangasA2}-\ref{as:liangasA4} be true. Then, choosing $a_m=O(\floor{m^\beta})$,
we have
\begin{align*}
    \expec{\norm{\tilde{\Sigma}_n-\Sigma}_2}{}\lesssim & \big((d/M)^{1/4}+C\big)M^{-a\beta/2}+M^{-\frac{a\beta}{4}}\sqrt{d\log M}
    +d\big(M^{1-\beta(1-a)}+M^{(\beta-1)(1-2a)}\big)\\
    &+\sqrt{{d}}{M}^{-1/2}+ \sqrt{d}\big(M^{(1-\beta(1-a))/2}+M^{(\beta-1)(1/2-a)}\big).
\end{align*}
\end{lemma}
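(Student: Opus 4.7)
The plan is to compare $\tilde{\Sigma}_n$ to the overlapping batch-means estimator $\hat{\Sigma}_n^{(e)}$ of an auxiliary sequence $\{\hat{\vt}_k\}$ driven purely by a martingale-difference noise, and then separately control the coupling error $\vt_k - \hat{\vt}_k$. The required martingale decomposition comes from the Poisson equation of the Markov kernel $P_\theta$: under Assumption~\ref{as:liangasA3} and Lemma~\ref{lm:liangmainlemma}, one can write
\begin{align*}
\xi_{k+1}(\theta_k, x_{k+1}) = e_{k+1} + \nu_{k+1} + \zeta_{k+1},
\end{align*}
where $\{e_k\}$ is an $\mathcal{F}_k$-martingale difference sequence with $S = \lim_k \E[e_k e_k^\top]$, the bias satisfies $\E\|\nu_k\|_2 = O(\eta_k)$, and the remainder $\zeta_k$ telescopes so that $\E\|\textstyle\sum_{i=1}^k \eta_i \zeta_i\|_2 = O(\eta_{k+1})$.

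First, I would define the linear sequence
\begin{align*}
\hat{\vt}_k = (I - \eta_k Q)\hat{\vt}_{k-1} + \eta_k e_k, \qquad \hat{\vt}_0 = \theta_0,
\end{align*}
and form the corresponding overlapping batch-means estimator $\hat{\Sigma}_n^{(e)}$. Unrolling the recursion yields $\hat{\vt}_k - \theta^* = W_0^k(\theta_0 - \theta^*) + \sum_{i=1}^k \eta_i W_i^k e_i$. Since $\{e_k\}$ is a martingale difference sequence with uniformly bounded moments under condition (b) of Assumption~\ref{as:liangasA3} (via Lemma~\ref{lm:liangmainlemma}), the block analysis of \cite{zhu2021online} can be mimicked for $\hat{\Sigma}_n^{(e)}$: the operator-norm bounds in Lemma~\ref{lm:Wijbound} and Lemma~\ref{lm:Snormbound} imply that the partial sums of $\hat{\vt}_k$ across a batch $B_m$ of size $l_m \sim m^{\beta-1}$ are centered with covariance asymptotically $l_m\,\Sigma$; a matrix Bernstein inequality applied block-wise to the resulting outer products then produces the stochastic fluctuation term $M^{-a\beta/4}\sqrt{d\log M}$; and an Abel-summation expansion of $\E\hat{\Sigma}_n^{(e)} - \Sigma$ across batches yields the deterministic bias contributions $d(M^{1-\beta(1-a)} + M^{(\beta-1)(1-2a)})$ and $\sqrt{d/M}$.

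Next, I would control the coupling $\Delta_k := \vt_k - \hat{\vt}_k$, which satisfies
\begin{align*}
\Delta_k = (I - \eta_k Q)\Delta_{k-1} + \eta_k(\nu_k + \zeta_k), \qquad \Delta_k = \textstyle\sum_{i=1}^k \eta_i W_i^k(\nu_i + \zeta_i).
\end{align*}
Using $\E\|\nu_i\|_2 = O(\eta_i)$ and summation by parts on the telescoping $\zeta_i$ together with $\|S_i^j\|_2 \lesssim (i+1)^a$ from Lemma~\ref{lm:Snormbound}, I obtain a decaying bound on $\E\|\Delta_k\|_2$. Expanding $\tilde{\Sigma}_n - \hat{\Sigma}_n^{(e)}$ produces cross terms of the form $l_m^{-1}\bigl(\sum_{k \in B_m}\Delta_k\bigr)\bigl(\sum_{k \in B_m}\hat{\vt}_k\bigr)^\top$ together with the pure quadratic $\bigl(\sum_{k \in B_m}\Delta_k\bigr)\bigl(\sum_{k \in B_m}\Delta_k\bigr)^\top$; Cauchy--Schwarz, combined with the fourth-moment bound $\E\|\theta_k - \theta^*\|_2^4 = O(\eta_k^2)$ from Lemma~\ref{lm:expecconviter}, controls these contributions by $((d/M)^{1/4}+C)M^{-a\beta/2}$ and $\sqrt{d}\bigl(M^{(1-\beta(1-a))/2} + M^{(\beta-1)(1/2-a)}\bigr)$.

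Assembling by triangle inequality
\begin{align*}
\E\|\tilde{\Sigma}_n - \Sigma\|_2 \leq \E\|\tilde{\Sigma}_n - \hat{\Sigma}_n^{(e)}\|_2 + \E\|\hat{\Sigma}_n^{(e)} - \E\hat{\Sigma}_n^{(e)}\|_2 + \|\E\hat{\Sigma}_n^{(e)} - \Sigma\|_2
\end{align*}
and summing the three contributions yields the stated rate. The main obstacle I anticipate is the coupling step: because $\tilde{\Sigma}_n - \hat{\Sigma}_n^{(e)}$ is quadratic in the iterates, I must bound not only $\E\|\Delta_k\|_2^2$ but the cross-batch inner products $\E[\Delta_j^\top \hat{\vt}_k]$, where $\hat{\vt}_k$ depends on the full martingale history while $\Delta_k$ depends on the Markovian bias terms $\nu_i,\zeta_i$. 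Because overlapping batches of size $l_m \sim m^{\beta-1}$ share many iterates, these correlations propagate nontrivially across blocks and are precisely what produces the extra exponents $M^{(\beta-1)(1-2a)}$ and $M^{(\beta-1)(1/2-a)}$ in the final bound; handling them cleanly is where the proof departs most significantly from the i.i.d.\ analysis of \cite{zhu2021online}.
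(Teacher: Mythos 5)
Your top-level strategy---coupling $\{\vt_k\}$ to an auxiliary linear process $\{\hat\vt_k\}$ driven only by the martingale part $e_k$ and then splitting off $\Delta_k=\sum_i\eta_iW_i^k(\nu_i+\zeta_i)$---is a reasonable reorganization of the argument, and your treatment of the coupling error is in the same spirit as the paper's handling of the residuals $\phi_i$ and of the contributions $\mathsf{V}$, $\mathsf{VI}$, $\mathsf{K}_2$, $\mathsf{K}_3$. The gap is in the step you dismiss as ``mimicking the block analysis of \cite{zhu2021online}'' for $\hat\Sigma_n^{(e)}$. That reduction fails for two reasons. First, $\mathbb{E}[e_ke_k^\top]\neq S$: since $e_k=u(\theta_{k-1},x_k)-P_{\theta_{k-1}}u(\theta_{k-1},x_{k-1})$ depends on the non-stationary pair $(\theta_{k-1},x_{k-1})$, its second moment only converges to $S$ as $\theta_k\to\theta^*$ and the chain mixes. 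Quantifying this bias is exactly what Lemma~\ref{lm:esntildesndiffprelim} and Lemma~\ref{lm:SntideSerrorbound} do, by introducing the intermediate sequences $\tilde e_k$, $e_k'$ and $\hat e_k$ (built by swapping the last $b_0$ transition kernels for $P_{\theta^*}$) and bounding the resulting perturbations $T_{3,k},T_{4,k},T_{5,k}$ via the drift condition, the kernel-Lipschitz property, and the fourth-moment bound of Lemma~\ref{lm:expecconviter}. Your proposal contains no mechanism for this, yet it is the source of the $((d/M)^{1/4}+C)M^{-a\beta/2}$ term you need to produce.

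Second, and more seriously, the fluctuation term $M^{-a\beta/4}\sqrt{d\log M}$ cannot be obtained from a block-wise matrix Bernstein inequality: the overlapping batch sums are neither independent nor martingale increments in the block index. The paper instead uses $\mathbb{E}\|\tilde S_n-S\|_2\le\sqrt{d\,\|\mathbb{E}(\tilde S_n-S)^2\|_2}$ and computes the second moment directly, which forces one to control fourth-order cross-block quantities $\mathbb{E}[\tilde e_p\tilde e_p^\top\tilde e_q\tilde e_q^\top]$. In the i.i.d.\ setting these factor as $S^2$ when $p$ and $q$ lie in distant blocks, but here they do not, because $p$ and $q$ remain coupled through the state-dependent chain. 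The Case~I/Case~II split on $|m-k|\le 1$ versus $|m-k|>1$, with conditioning on $\mathcal{F}_{a_{m-1}-1}$ and a fresh kernel-swapping construction inside the conditional expectation, is precisely the device that shows these cross-block correlations decay like $\sqrt{\eta_p}\log p+\sqrt{\eta_q}\log q$; the paper flags this as its main departure from the i.i.d.\ analysis, and it is absent from your proposal, so the claimed rate for $\hat\Sigma_n^{(e)}$ is unsupported. (Your attribution of the exponents $M^{(\beta-1)(1-2a)}$ and $M^{(\beta-1)(1/2-a)}$ to cross-batch correlations between $\Delta_j$ and $\hat\vt_k$ also does not match how those terms arise in the paper.)
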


\begin{proof}[Proof of Lemma~\ref{lm:part1}]
Using triangle inequality, we have the following decomposition:
\begin{align}\label{eq:lem21decomp}
\begin{aligned}
    \expec{\norm{\tilde{\Sigma}_n-\Sigma}_2}{}\leq ~~~&\underset{\textsf{Lemma~\ref{lm:thetabarthetabarbound}}}{\colorboxed{black}{\expec{\norm{\left(\textstyle\sum_{i=1}^nl_i\right)^{-1}\textstyle\sum_{i=1}^nl_i^2\bar{\vt}_n\bar{\vt}_n^\top}_2}{}}}\\
    +&\underset{\textsf{Lemma~\ref{lm:snserrorboundmain} \& Lemma~\ref{lm:thetathetasigmadiffbound}}}{\colorboxed{black}{\expec{\norm{\left(\textstyle \sum_{i=1}^nl_i\right)^{-1}\textstyle\sum_{i=1}^n\big(\sum_{k=t_i}^i\vt_k\big)\big(\sum_{k=t_i}^i\vt_k\big)^
    \top-\Sigma}_2}{}}}\\
    +&\underset{\textsf{Lemma~\ref{lm:thetakthetabarbound}}}{\colorboxed{black}{2\expec{\norm{\left(\textstyle\sum_{i=1}^nl_i\right)^{-1}\textstyle\sum_{i=1}^n\big(\textstyle\sum_{k=t_i}^i\vt_k\big)l_i\bar{\vt}_n^\top}_2}{}}}.
\end{aligned}
\end{align}
The proof is completed by combining the results in Lemma~\ref{lm:thetabarthetabarbound}, Lemma~\ref{lm:snserrorboundmain}, Lemma~\ref{lm:thetathetasigmadiffbound}, and Lemma~\ref{lm:thetakthetabarbound}.
\end{proof}

Handling the second term in the right hand side of~\eqref{eq:lem21decomp} requires additional effort. We proceed by further decomposing that term as below:
\begin{align}
\begin{aligned}\label{eq:lincoverrordecompmain}
    &\expec{\norm{(\textstyle \sum_{i=1}^nl_i)^{-1}\sum_{i=1}^n(\sum_{k=t_i}^i\vt_k)(\sum_{k=t_i}^i\vt_k)^
    \top-\Sigma}_2}{}\\
    \lesssim & \underbrace{\expec{\norm{(\textstyle\sum_{i=1}^n l_i)^{-1}\sum_{i=1}^n(Q^{-1}(\sum_{k=t_i}^i\xi_k(\theta_{k-1},x_k))(\sum_{k=t_i}^i\xi_k(\theta_{k-1},x_k))^
    \top Q^{-1}-\Sigma)}_2}{}}_{\textsf{I}}\\
    &+\underbrace{\expec{\norm{\slinv\sum_{i=1}^n\phi_i\phi_i^\top}_2}{}}_{\textsf{II}}+ \underbrace{\expec{\norm{\slinv\sum_{i=1}^n\phi_i\psi_i^\top}_2}{}}_{\textsf{III}}, 
\end{aligned}
\end{align}
where $\psi_i=\sum_{k=t_i}^iQ^{-1}\xi_k(\theta_{k-1},x_k)$, and $\phi_i=S_{t_i-1}^i\vt_{t_i-1}+\sum_{k=t_i}^i\left(\eta_kS_k^i+\eta_k\Id-Q^{-1}\right)\xik$. A key step in handling term \textsf{I} is provided in Lemma~\ref{lm:snserrorboundmain}. Terms \textsf{II} and \textsf{III} are directly handled in Lemma~\ref{lm:thetathetasigmadiffbound} in Section~\ref{sec:proofsthm21}.

We now discuss various aspects of the proofs of  Lemma~\ref{lm:part2}, and Lemma~\ref{lm:part1} that require significantly different techniques than the $\iid$ sampling case.\\

\textbf{1. Gradient noise from Markovian sampling.} Compared to $\iid$ data, we encounter additional error terms throughout the proof when the data is being sampled from a Markov chain. As described in Lemma~\ref{lm:poisregular}, the gradient noise sequence $\{\xi_{k}(\theta_{k-1},x_{k})\}_k$ decomposes into three components: (i) a martingale difference sequence $\{e_k\}_k$, (ii) a sequence $\{\nu_k\}_k$ consisting of terms with small norm, and (iii) a sequence $\{\zeta_k\}_k$ which has a telescoping-sum like structure. 
\begin{lemma}[Lemma A.5 in \cite{liang2010trajectory}]\label{lm:poisregular}
Let Assumption~\ref{as:strongcon}, \ref{as:liangasA3}, and \ref{as:liangasA4} be true. Let $\mathcal{X}_0\subset\mathcal{X}$ be such that $\sup_{x\in\mathcal{X}_0}V(x)<\infty$ and that $\mathcal{K}_0\subset M_{C_0}$, where $M_{C_0}$ is as in Lemma~\ref{lm:lialemmaa2}. Then there exists $\mathbb{R}^d$-valued random processes $\{e_k\}_k$, $\{\nu_k\}_k$, and $\{\zeta_k\}_k$ defined on a probability sapce $(\Omega,\cF,\mathbb{P})$ such that the following decomposition holds
    \begin{align}\label{eq:decomp}
        \xi_{k+1}(\theta_k,x_{k+1})=e_k+\nu_k+\zeta_k,
    \end{align}
    where 
\begin{align}
    &e_k=u(\theta_{k-1},x_{k})-P_{\theta_{k-1}}u(\theta_{k-1},x_{k-1}),\nonumber\\
    &\nu_k=P_{\theta_{k}}u(\theta_k,x_{k})-P_{\theta_{k-1}}u(\theta_{k-1},x_k)+\frac{\eta_{k+1}-\eta_k}{\eta_k}P_{\theta_{k}}u(\theta_k,x_k),\nonumber\\
    &\tilde{\zeta}_k=\eta_{k}P_{\theta_{k-1}}u(\theta_{k-1},x_{k-1}) \label{eq:tildezetabound}\\
    &\zeta_k=\frac{1}{\eta_k}\left(\tilde{\zeta}_k-\tilde{\zeta}_{k+1}\right).\label{eq:zetatelescopicsum}
\end{align}
We also have the following observations: 
\begin{enumerate}[label=(\alph*)]
\item The sequence $\{e_k\}_k$ is a martingale difference sequence, and $\frac{1}{\sqrt{n}}\sum_{k=1}^ne_k\to N(0,S)$ in distribution, where $S=\lim_{k\to\infty}\expec{e_ke_k^\top}{}$.
\item The term $\frac{1}{\sqrt{k}}\sum_{i=1}^k\expec{\norm{\nu_i}_2}{}\to 0$, as $k\to\infty$.
\item The term $\expec{\norm{\sum_{i=1}^k\eta_i\zeta_i}_2}{}\to 0$, as $k\to\infty$.
\item From \cite[Equation (31)]{liang2010trajectory} we also have that,
\begin{align}\label{eq:nunormbound}
\expec{\|\nu_k\|_2}{}=O(\eta_{k}) \qquad\text{and}\qquad \expec{\norm{\textstyle \sum_{k=1}^n\eta_k\zeta_k}_2}{}=O(\eta_{n+1}).
\end{align}
\end{enumerate}
\end{lemma}

The terms $\{\nu_k\}_k$, and $\{\zeta_k\}_k$ which are not present in the $\iid$ sampling setting, lead to additional error terms in the analysis of covariance estimator \eqref{eq:lincoverrordecompmain}. Specifically the terms $A_2$ and $A_3$ in \eqref{eq:A1A2A3def}, the terms $V$ and $VI$ in \eqref{eq:IVVVIdef}, and $K_2$ and $K_3$ in \eqref{eq:thetaithetakintermed} are not present in the $\iid$ case. We show, with explicit rates, that these error terms converge to $0$ under Markovian sampling.\\

\textbf{2. Optimization bounds.} We prove the following result on the expected convergence rate of the SGD iterates $\{\theta_k\}_k$ \eqref{eq:sgd}. While such results are previously known for $\iid$ sampling case (see \citet[Lemma 3.2]{chen2020statistical}), our results below under the state-dependent Markovian sampling is novel and is of independent interest. 
\begin{lemma}\label{lm:expecconviter}
Let Assumptions~\ref{as:strongcon}-\ref{as:liangasA3} be true. Then, for the updates generated by Algorithm~\ref{alg:tsgd}, we have,
\begin{align*}
\expec{\|\theta_{k+1}-\theta^*\|_2}{}
    =O(\sqrt{\eta_{k+1}}).\quad
    \expec{\|\theta_{k+1}-\theta^*\|_2^2}{}
    =O(\eta_{k+1}).\quad
    \expec{\|\theta_{k+1}-\theta^*\|_2^4}{}= O(\eta_{k+1}^2).
\end{align*}
\end{lemma}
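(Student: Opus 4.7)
The obstacle to a direct moment recursion for $\|\theta_k-\theta^*\|_2$ under Markovian data is that, by the Poisson decomposition of Lemma~\ref{lm:poisregular}, the gradient noise $\xi_{k+1}=e_k+\nu_k+\zeta_k$ contains the component $\zeta_k$, which has $O(1)$ norm at every step and lacks a conditional mean-zero property. However, $\eta_k\zeta_k=\tilde\zeta_k-\tilde\zeta_{k+1}$ with $\|\tilde\zeta_k\|_2=O(\eta_k)$ by \eqref{eq:tildezetabound}, so this contribution is a telescoping perturbation that can be absorbed into the iterate.

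The plan is to introduce the auxiliary iterate $\tilde\theta_k:=\theta_k-\tilde\zeta_k$. Substituting the SGD update, using $\eta_{k+1}\zeta_k=(\eta_{k+1}/\eta_k)(\tilde\zeta_k-\tilde\zeta_{k+1})$ with $(\eta_{k+1}-\eta_k)/\eta_k=o(\eta_{k+1})$ from Assumption~\ref{as:liangasA4}, and expanding $\nabla f(\theta_k)=\nabla f(\tilde\theta_k)+O(\|\tilde\zeta_k\|_2)$ by Lipschitzness, I expect the clean recursion
\[
\tilde\theta_{k+1}-\theta^* \;=\; (\tilde\theta_k-\theta^*) \;-\; \eta_{k+1}\nabla f(\tilde\theta_k) \;-\; \eta_{k+1}e_k \;-\; \eta_{k+1}\nu_k \;+\; R_{k+1},
\]
with deterministic remainder $\|R_{k+1}\|_2=O(\eta_{k+1}\eta_k)$. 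Here $\{e_k\}$ is a martingale difference sequence by Lemma~\ref{lm:poisregular}(a), with uniformly bounded $q$-th moments for $q\leq\alpha_0$ via Lemma~\ref{lm:lialemmaa2} and Assumption~\ref{as:liangasA3}(b); the hypothesis $\alpha_0\geq 8$ is precisely what will enable the fourth-moment control below. Also, $\mathbb{E}\|\nu_k\|_2=O(\eta_k)$ by \eqref{eq:nunormbound}.

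I will then close a Lyapunov-type recursion on $\mathbb{E}\|\tilde\theta_k-\theta^*\|_2^p$ for $p\in\{2,4\}$. Squaring, taking conditional expectation, and exploiting $\mu$-strong convexity (Assumption~\ref{as:strongcon}) together with the local quadratic behaviour (Assumption~\ref{as:liangasA2}) gives a per-step contraction factor $(1-c\mu\eta_{k+1})$; the $e_k$ cross term vanishes by the martingale property, and the $\nu_k$ and $R_{k+1}$ cross terms are absorbed via Cauchy--Schwarz, yielding
\[
\mathbb{E}\|\tilde\theta_{k+1}-\theta^*\|_2^2 \;\leq\; (1-c\mu\eta_{k+1})\,\mathbb{E}\|\tilde\theta_k-\theta^*\|_2^2 \;+\; C\eta_{k+1}^2,
\]
which unrolls by Chung's lemma to $O(\eta_k)$. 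The fourth-moment version uses the binomial expansion of $\|\tilde\theta_{k+1}-\theta^*\|_2^4$: the leading deterministic term contracts with factor $(1-c'\mu\eta_{k+1})$, the odd-order cross terms in $e_k$ vanish upon conditioning, the even-order $e_k$ contributions are controlled by the uniform bound on $\mathbb{E}\|e_k\|_2^4$, and the cross terms mixing $\nu_k$ or $R_{k+1}$ with $\|\tilde\theta_k-\theta^*\|_2^j$ are handled by Cauchy--Schwarz together with the already-established second-moment estimate, producing
\[
\mathbb{E}\|\tilde\theta_{k+1}-\theta^*\|_2^4 \;\leq\; (1-c'\mu\eta_{k+1})\,\mathbb{E}\|\tilde\theta_k-\theta^*\|_2^4 \;+\; C'\eta_{k+1}^3,
\]
which unrolls to $O(\eta_k^2)$.

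Finally the bounds transfer from $\tilde\theta_k$ back to $\theta_k$ via $\|\theta_k-\tilde\theta_k\|_2=\|\tilde\zeta_k\|_2=O(\eta_k)$ and $\|a+b\|_2^p\leq 2^{p-1}(\|a\|_2^p+\|b\|_2^p)$, and the first-moment bound $\mathbb{E}\|\theta_{k+1}-\theta^*\|_2=O(\sqrt{\eta_{k+1}})$ follows by Jensen's inequality from the second-moment bound. The hard part will be the fourth-moment step: the mixed terms of the form $\mathbb{E}[\|\tilde\theta_k-\theta^*\|_2^3\|R_{k+1}\|_2]$ and $\mathbb{E}[\|\tilde\theta_k-\theta^*\|_2^3\|\nu_k\|_2]$ must be shown to be $O(\eta_{k+1}^3)$ rather than only $O(\eta_{k+1}^2)$, which is exactly where the bootstrap from the second-moment estimate is used and where the $\alpha_0\geq 8$ drift-function moment control from Lemma~\ref{lm:lialemmaa2} combined with Assumption~\ref{as:liangasA3}(b) becomes essential.
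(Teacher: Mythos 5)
Your proposal follows essentially the same route as the paper: the paper defines the shifted iterate $\theta'_k=\theta_k-\tilde\zeta_{k+1}$ so that the telescoping component $\zeta$ of the Poisson decomposition is absorbed exactly (your choice $\tilde\theta_k=\theta_k-\tilde\zeta_k$ merely leaves a harmless $o(\eta_{k+1}\eta_k)$ remainder), then closes the same $(1-c\mu\eta_{k+1})$-contraction recursions for the second and fourth moments with additive floors $\eta_{k+1}^2$ and $\eta_{k+1}^3$, transfers back via $\|\tilde\zeta_{k+1}\|_2=O(\eta_{k+1})$, and gets the first moment by Jensen. The only cosmetic difference is that the paper keeps the gradient evaluated at $\theta_k$ and handles the mismatch inside the strong-convexity inner product rather than Taylor-expanding it at the auxiliary iterate.
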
 
We present here the main trick to prove the above lemma in the state-dependent Markovian sampling case, while deferring the detailed proof to Appendix~\ref{pf:theta2224bound}.
\begin{proof}[Outline of the proof ]
Recall that $\theta_k$ is generated by the following update. 
\begin{align*}
    \theta_{k+1}=\theta_{k}-\eta_{k+1}\nabla F(\theta_k,x_{k+1}).
\end{align*}
First, let us consider the following perturbed process generated from $\{\theta_k\}$.
\begin{align}\label{eq:perturbtheta}
    \theta'_k=\theta_k-\tilde{\zeta}_{k+1}. 
\end{align}
Then, using \eqref{eq:decomp}, we get,
\begin{align*}
    \theta'_{k+1}
    =&\theta'_k-\eta_{k+1}\left(\nabla f(\theta_k)+e_{k+1}+\nu_{k+1}\right). 
\end{align*}
Notice that this trick gets rid of the $\zeta_{k+1}$ component in the gradient noise decomposition~\eqref{eq:decomp}. But the true gradient is now evaluated at $\theta_k$ instead of $\theta_k'$. The error $\expec{\norm{\nabla f(\theta_k)-\nabla f(\theta_k')}_2}{}$ is small because, by Lemma~\ref{lm:poisregular}, $\expec{\norm{\theta_k'-\theta_k}_2}{}\leq \eta_{k+1}$, and $f$ has Lipschitz continuous gradient by Assumption~\ref{as:liangasA3}. After this modification, the additional error term $\nu_k$ still remains compared to the $\iid$ case. But by Lemma~\ref{lm:poisregular}, we have $\expec{\norm{\nu_k}_2}{}=O(\eta_k)$. For $p\geq 2$, we also have
\begin{align*}
    \expec{\|\theta_k-\theta^*\|_2^{p}}{}\leq 2^{p/2-1}\expec{\|\theta'_k-\theta^*\|_2^p}{}+2^{p/2-1}\expec{\|\tzt_{k+1}\|_2^p}{}\lesssim \expec{\|\theta'_k-\theta^*\|_2^p}{}+\eta_{k+1}^p.
\end{align*}
Then it suffices to prove the result for the sequence $\{\theta'_k\}_k$.
\end{proof}
\vspace{0.1in}

\textbf{3. Convergence of \textsf{I} in \eqref{eq:lincoverrordecompmain}.} A key step towards bounding term I in \eqref{eq:lincoverrordecompmain} is to show the following result.
\begin{lemma}\label{lm:snserrorboundmain}
Let Assumptions~\ref{as:strongcon}-\ref{as:liangasA4} be true. Then,
\begin{align}\label{eq:SnSnerrorbound}
    \expec{\norm{S_n-S}_2}{}\lesssim& \big((d/M)^{1/4}+C\big)M^{-\frac{a\beta}{2}}+M^{-\frac{a\beta}{4}}\sqrt{d\log M}+\sqrt{d}M^{-1/2},
\end{align}
where
\begin{align*}
    S_n\coloneqq \textstyle \left(\sum_{i=1}^nl_i\right)^{-1}\sum_{i=1}^n(\sum_{k=t_i}^ie_k)(\sum_{k=t_i}^ie_k)^\top,
\end{align*}
and $S=\lim_{k\to\infty}\expec{e_ke_k^\top}{}$ as defined in Lemma~\ref{lm:poisregular}.
\end{lemma}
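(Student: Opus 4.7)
The plan is to split $S_n - S$ into a diagonal contribution (the $e_k e_k^\top$ terms) and an off-diagonal contribution (the cross terms $e_j e_k^\top$ with $j \ne k$). Expanding $(\sum_{k=t_i}^i e_k)(\sum_{k=t_i}^i e_k)^\top = \sum_{j,k=t_i}^{i} e_j e_k^\top$, interchanging the order of summation, and noting that each pair $(j,k)$ with $j\ne k$ lies in at most one block, I would write, with $w_k := \#\{i : t_i \leq k \leq i\}$,
\begin{align*}
S_n \;=\; \frac{1}{\sum_i l_i}\Bigl(\sum_k w_k\, e_k e_k^\top \;+\; \sum_i \sum_{\substack{j,k=t_i \\ j\ne k}}^{i} e_j e_k^\top\Bigr),
\end{align*}
and handle the two pieces separately.

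For the diagonal piece I would further split $e_k e_k^\top - S = (\E[e_k e_k^\top\mid \mathcal{F}_{k-1}] - S) + \Delta_k$, where $\Delta_k := e_k e_k^\top - \E[e_k e_k^\top\mid \mathcal{F}_{k-1}]$ is a matrix martingale difference. The conditional mean depends on $\theta_{k-1}$ through the Poisson solution $u(\theta_{k-1},\cdot)$, so Assumption~\ref{as:liangasA3}(c) together with Lemma~\ref{lm:expecconviter} gives $\bigl\|\E[e_k e_k^\top\mid \mathcal{F}_{k-1}] - S\bigr\|_2 \lesssim \sqrt{\eta_k}$ in expectation. Weighting by $w_k / \sum_i l_i$ and using $a_m \asymp m^\beta$, $\eta_k \asymp k^{-a}$ gives the $M^{-a\beta/2}$ contribution, with a boundary correction of order $\sqrt{d}\,M^{-1/2}$ accounting for the fact that $\sum_k w_k$ does not exactly equal $\sum_i l_i$ (indices near the block endpoints are undercounted by $O(M^{\beta-1})$). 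The fluctuation $\sum_k w_k \Delta_k$ is a matrix martingale to which I would apply a matrix Freedman/Bernstein inequality; its predictable variance is controlled by moments of $V^2(x_k)$ via Lemma~\ref{lm:lialemmaa2}, and the resulting tail bound yields the $M^{-a\beta/4}\sqrt{d\log M}$ term.

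For the off-diagonal piece, freezing $j$, the process $k\mapsto e_j e_k^\top$ is a matrix martingale difference in $k$ since $e_j \in \mathcal{F}_{k-1}$ and $\E[e_k\mid \mathcal{F}_{k-1}]=0$. Summing in $k$ and then in $j$ within each block gives a matrix martingale whose predictable variance is, up to constants, $\sum_{m}\sum_{k\in B_m} \bigl\|\sum_{j\in B_m,\, j<k} e_j\bigr\|_2^2 \cdot \E[\|e_k\|_2^2\mid \mathcal{F}_{k-1}]$. Using orthogonality of the martingale differences within each block together with $\ell_m \asymp m^{\beta-1}$, this variance is of order $\sum_m \ell_m^2 \asymp M^{2\beta-1}$; normalizing by $(\sum_i l_i)^2 \asymp M^{2\beta}$ and converting Frobenius variance to operator norm via matrix concentration produces the remaining $(d/M)^{1/4} M^{-a\beta/2}$ and $\sqrt{d}\,M^{-1/2}$ pieces.

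The main obstacle will be controlling the operator norm of these matrix martingales with only the $\sqrt{d\log M}$ or $\sqrt{d}$ dependence, instead of a naive $d$ factor obtained from an $\epsilon$-net over the unit sphere. The natural remedy is a matrix Bernstein/Freedman inequality, but its hypotheses require uniform boundedness or sharp moment control on the summands, while $\{e_k\}$ is only controlled through the drift function $V$. I would truncate on the event $\{V(x_k)\leq c\log n\}$, using $\alpha_0 \geq 8$ in Assumption~\ref{as:liangasA3}(a) together with Lemma~\ref{lm:lialemmaa2} to show that the excluded mass contributes a negligible polynomial-in-$n$ error, and then apply the matrix concentration bound on the truncated martingale. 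A secondary subtlety is the non-stationarity of $\{e_k\}$ caused by $\theta_{k-1}$ drifting toward $\theta^*$, but this is absorbed into the conditional-mean bias bound above via the Lipschitz condition of Assumption~\ref{as:liangasA3}(c).
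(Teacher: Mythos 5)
Your decomposition of $S_n$ into a diagonal piece and an off-diagonal matrix martingale is a legitimate alternative organization to the paper's (which instead introduces the intermediate sequences $\te_k=u(\theta^*,x_k)-P_{\theta_{k-1}}u(\theta^*,x_{k-1})$ and $e'_k=u(\theta^*,x_k)-P_{\theta^*}u(\theta^*,x_{k-1})$, and controls $\E\|\tS_n-S\|_2$ through the trace inequality $\E\|A\|_2\le\sqrt{d\,\|\E A^2\|_2}$ plus a block-by-block fourth-moment computation). However, there is a genuine gap at the very first step of your diagonal analysis. You assert that $\bigl\|\E[e_ke_k^\top\mid\cF_{k-1}]-S\bigr\|_2\lesssim\sqrt{\eta_k}$ in expectation, attributing this to Assumption~\ref{as:liangasA3}(c) and Lemma~\ref{lm:expecconviter}. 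The conditional covariance $\E[e_ke_k^\top\mid\cF_{k-1}]$ is a deterministic function $G(\theta_{k-1},x_{k-1})$, and $S$ is the average of $G(\theta^*,\cdot)$ under the stationary law $\pi_{\theta^*}$. The Lipschitz condition in $\theta$ only controls the error from $\theta_{k-1}\ne\theta^*$; it says nothing about the discrepancy between $G(\theta^*,x_{k-1})$ evaluated at the \emph{actual} (non-stationary) state $x_{k-1}$ and its $\pi_{\theta^*}$-average. That discrepancy is $\Theta(1)$ pointwise and only becomes small after averaging over the law of $x_{k-1}$ and showing that this law is close to $\pi_{\theta^*}$. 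This is precisely where the paper's main technical work lives: it constructs an auxiliary chain whose last $b_0$ transitions use $P_{\theta^*}$, bounds the kernel-replacement cost by $\sum_{j}\sqrt{\eta_{k-j}}$ via Assumption~\ref{as:liangasA3}(c), invokes $V$-uniform ergodicity to get a $\rho^{b_0}$ residual, and optimizes $b_0\asymp\log(1/\eta_k)$ to obtain a bias of order $\eta_k\log k$. Your proposal contains no analogue of this mixing step, and your remark that non-stationarity "is absorbed into the conditional-mean bias bound via the Lipschitz condition" conflates drift in $\theta$ with drift in the law of $x$; the latter is not addressed.

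A secondary concern is the off-diagonal piece. The paper must separately handle the fourth-order cross-block terms $\E[\te_p\te_p^\top\te_q\te_q^\top]-S^2$ for $|m-k|>1$, which do not vanish under Markovian sampling and again require the auxiliary-chain argument conditioned on $\cF_{a_{m-1}-1}$. Your matrix-Freedman route would in principle bypass the fourth-moment computation and could even improve the dimension dependence from $\sqrt{d}$ to polylogarithmic, but you have only sketched it: the predictable quadratic variation involves $\|\sum_{j<k,\,j\in B}e_j\|_2^2$, which is $O(l)$ only in expectation, and Freedman-type bounds require almost-sure control of both the increments and the variation, so the truncation at $V(x_k)\le c\log n$ must be propagated through all of these quantities and through the bias terms above. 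As written, the claimed final terms $(d/M)^{1/4}M^{-a\beta/2}$ and $\sqrt{d}\,M^{-1/2}$ are asserted rather than derived, and they are the signature of the paper's trace-based argument, not of a concentration bound. The proposal is therefore not yet a proof: the missing mixing argument for the conditional covariance is the essential idea you would need to supply.
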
 
\begin{proof}[Proof of Lemma~\ref{lm:snserrorboundmain}]
Note that $S$ is defined as the covariance of random variables $\bar{e}_k=u(\theta^*,\hat{x}_k)-P_{\theta^*} u(\theta^*,\hat{x}_{k-1})$ where $\hat{x}_{k-1}\sim\pi_{\theta^*}$. To establish the convergence rate of $S_n$ to $S$ we introduce two intermediate sequences.
\begin{align}
    \te_k\coloneqq u(\theta^*,x_k)-P_{\theta_{k-1}} u(\theta^*,x_{k-1}),\label{eq:tildekdef}\\
    e'_k\coloneqq u(\theta^*,x_k)-P_{\theta^*} u(\theta^*,x_{k-1}),\label{eq:ekprimedef}
\end{align}
where 
$$P_{\theta_{k-1}} u(\theta^*,x_{k-1})=\textstyle\int u(\theta^*,x)P_{\theta_{k-1}} (x_{k-1},x)\mathrm{d}x,$$ and 
$$P_{\theta^*} u(\theta^*,x_{k-1})=\textstyle\int u(\theta^*,x)P_{\theta^*} (x_{k-1},x)\mathrm{d}x.$$ 
Note that $\te_k$ is a martingale difference sequence with respect to the filtration $\cF_k$ as
\begin{align*}
    \expec{\te_k|\cF_{k-1}}{}=\expec{u(\theta_{k-1},x_k)|\cF_{k-1}}{}-P_{\theta^*} u(\theta^*,x_{k-1})=0.
\end{align*}
Consider the following 
\begin{align*}
    \tS_n=\textstyle \left(\sum_{i=1}^nl_i\right)^{-1}\sum_{i=1}^n\left(\sum_{k=t_i}^i\te_k\right)\left(\sum_{k=t_i}^i\te_k\right)^\top.
\end{align*}
By triangle inequality we have,
\begin{align*}
  \expec{\|S_n-S\|_2}{}\leq  \underset{\textsf{Lemma~\ref{lm:esntildesndiffprelim}}}{\colorboxed{black}{\expec{\norm{S_n-\tS_n}_2}{}}}+\underset{\textsf{Lemma~\ref{lm:SntideSerrorbound}}}{\colorboxed{black}{ \expec{\|\tS_n-S\|_2}{}}}.
\end{align*}
Lemma~\ref{lm:snserrorboundmain} follows by combining the bounds in Lemma~\ref{lm:SntideSerrorbound} and Lemma~\ref{lm:esntildesndiffprelim} introduced next. 
\end{proof}

\begin{lemma}\label{lm:esntildesndiffprelim}
Let Assumptions~\ref{as:liangasA2}-\ref{as:liangasA4} be true. Then, 
\begin{align*}
    \expec{\norm{S_n-\tS_n}_2}{}\lesssim& \left((d/M)^{1/4}+C\right)M^{-a\beta/2}.
\end{align*}
\end{lemma}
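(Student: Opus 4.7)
The plan is to compare the block-wise partial sums defining $S_n$ and $\tS_n$ directly, leveraging that both $e_k$ and $\te_k$ are centered given $\cF_{k-1}$, and that their difference is controlled by the optimization error $\|\theta_{k-1}-\theta^*\|_2$ through the Lipschitz continuity of the Poisson solution $u$. Set $A_i \coloneqq \sum_{k=t_i}^i e_k$ and $B_i \coloneqq \sum_{k=t_i}^i \te_k$. The algebraic identity $A_iA_i^\top - B_iB_i^\top = (A_i-B_i)A_i^\top + B_i(A_i-B_i)^\top$, together with the triangle inequality and $\|uv^\top\|_2 = \|u\|_2\|v\|_2$, gives
\begin{align*}
\|S_n - \tS_n\|_2 \le \Big(\sum_{i=1}^n l_i\Big)^{-1} \sum_{i=1}^n \|A_i-B_i\|_2\,(\|A_i\|_2 + \|B_i\|_2).
\end{align*}

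The key step is a sharp bound on $\expec{\|A_i-B_i\|_2^2}{}$. Since $e_k - \te_k$ is again a martingale difference with respect to $\cF_k$, orthogonality yields $\expec{\|A_i-B_i\|_2^2}{} = \sum_{k=t_i}^i \expec{\|e_k-\te_k\|_2^2}{}$. Writing
\begin{align*}
e_k - \te_k = [u(\theta_{k-1},x_k) - u(\theta^*,x_k)] - [P_{\theta_{k-1}}u(\theta_{k-1},x_{k-1}) - P_{\theta_{k-1}}u(\theta^*,x_{k-1})],
\end{align*}
and invoking the $V$-Lipschitz continuity of $u(\cdot,x)$ and $P_\theta u(\theta,\cdot)$ in $\theta$ from item 2(c) of Lemma~\ref{lm:liangmainlemma}, I obtain $\|e_k-\te_k\|_2 \lesssim (V(x_k)+V(x_{k-1}))\|\theta_{k-1}-\theta^*\|_2$. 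Cauchy--Schwarz followed by the uniform $V^{\alpha_0}$ moment bound of Lemma~\ref{lm:lialemmaa2} (using $\alpha_0 \ge 8$) and the fourth-moment bound $\expec{\|\theta_{k-1}-\theta^*\|_2^4}{} = O(\eta_k^2)$ of Lemma~\ref{lm:expecconviter} then gives $\expec{\|e_k-\te_k\|_2^2}{} \lesssim \eta_k$, so $\expec{\|A_i-B_i\|_2^2}{} \lesssim l_i\,\eta_{t_i}$ by monotonicity of $\{\eta_k\}_k$.

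An analogous MDS orthogonality argument with $\|e_k\|_V, \|\te_k\|_V \lesssim 1$ and bounded $V$-moments yields $\expec{\|A_i\|_2^2}{},\,\expec{\|B_i\|_2^2}{} \lesssim l_i$. Combining via Cauchy--Schwarz in expectation,
\begin{align*}
\expec{\|S_n - \tS_n\|_2}{} \lesssim \Big(\sum_i l_i\Big)^{-1}\sum_i \sqrt{l_i\eta_{t_i}}\cdot\sqrt{l_i} = \Big(\sum_i l_i\Big)^{-1}\sum_i l_i\sqrt{\eta_{t_i}}.
\end{align*}
With $a_m = \lfloor m^\beta \rfloor$, the block length $L_m \asymp m^{\beta-1}$ and $\eta_{a_m} \asymp m^{-a\beta}$, so $\sum_{i \in [a_m,a_{m+1})} l_i \asymp L_m^2$; a direct summation then delivers $\sum_i l_i\sqrt{\eta_{t_i}}/\sum_i l_i \asymp M^{-a\beta/2}$, matching the constant term in the target bound.

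The additional $(d/M)^{1/4}$ piece emerges when controlling the expected spectral norm of the cross-term $\sum_i (A_i-B_i)A_i^\top$ by lifting to the Frobenius norm via Jensen, $\expec{\|X\|_2}{} \le \sqrt{\expec{\|X\|_F^2}{}}$: the Frobenius trace introduces a $\sqrt{d}$ factor that is split into $d^{1/4}\cdot d^{1/4}$ by a further Cauchy--Schwarz across blocks, producing the stated dimension-dependent correction. The main obstacle is securing the sharp fourth-moment bound $\expec{\|\theta_k-\theta^*\|_2^4}{} = O(\eta_k^2)$ under state-dependent Markovian sampling: only a second-moment estimate would weaken $\expec{\|e_k-\te_k\|_2^2}{}$ to $O(\sqrt{\eta_k})$ and destroy the final $M^{-a\beta/2}$ rate. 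The novel perturbed-iterate argument based on $\theta_k' = \theta_k - \tzt_{k+1}$ underpinning Lemma~\ref{lm:expecconviter} is therefore essential here.
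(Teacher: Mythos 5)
Your argument is correct in its essentials and reaches the stated bound, but it routes the computation slightly differently from the paper. The paper does not expand $A_iA_i^\top-B_iB_i^\top$ block by block; instead it cites the global decomposition of equations (53)--(54) of \cite{zhu2021online}, which bounds $\expec{\|S_n-\tS_n\|_2}{}$ by $\big((d/M)^{1/4}+C\big)\sqrt{\expec{\|\Sigma_b\|_2}{}}+\expec{\|\Sigma_b\|_2}{}$ with $\Sigma_b=(\sum_i l_i)^{-1}\sum_i(\sum_k b_k)(\sum_k b_k)^\top$ and $b_k=e_k-\te_k$, and then only needs the single estimate $\expec{\|\Sigma_b\|_2}{}\lesssim(\sum_il_i)^{-1}\sum_i\sum_k\expec{\|b_k\|_2^2}{}\lesssim M^{-a\beta}$. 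Your version replaces that global matrix Cauchy--Schwarz with a per-block rank-one identity and a scalar Cauchy--Schwarz, giving $(\sum_il_i)^{-1}\sum_il_i\sqrt{\eta_{t_i}}\asymp M^{-a\beta/2}$ directly. The two proofs share all the substantive ingredients: the martingale-difference property of $b_k$, the $V$-Lipschitz continuity of the Poisson solution, the fourth-moment bound $\expec{\|\theta_{k-1}-\theta^*\|_2^4}{}=O(\eta_k^2)$ from Lemma~\ref{lm:expecconviter} combined with the $V^{\alpha_0}$ moment bound of Lemma~\ref{lm:lialemmaa2}, and the block-size arithmetic $\sum_{i\in[a_m,a_{m+1})}l_i\asymp n_m^2$. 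Your identification of the fourth-moment bound as the load-bearing step is exactly right. What your route buys is that it produces $C\,M^{-a\beta/2}$ with no $(d/M)^{1/4}$ factor at all (assuming, as the paper does, that $\expec{\|e_k\|_2^2}{}$ and $\expec{\|\te_k\|_2^2}{}$ are bounded by dimension-free constants via $\|u\|_V<\infty$), which is at least as strong as the claimed bound.

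Your final paragraph, however, misdescribes where the $(d/M)^{1/4}$ term comes from. It does not arise from a Frobenius-norm lifting of the cross term split as $d^{1/4}\cdot d^{1/4}$; in the paper's (i.e.\ \cite{zhu2021online}'s) global Cauchy--Schwarz the cross term is bounded by $\sqrt{\expec{\|\Sigma_b\|_2}{}}\cdot\sqrt{\expec{\|(\sum_il_i)^{-1}\sum_iA_iA_i^\top\|_2}{}}$, and the second factor is controlled by $\sqrt{\|S\|_2+O(\sqrt{d/M})}\lesssim C+(d/M)^{1/4}$, which is the sole source of that factor. Since your per-block argument never needs this factor, you should either drop that paragraph or replace it with the observation that your bound trivially implies the stated one because $(d/M)^{1/4}+C\geq C$.
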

\noindent We defer the proof of Lemma~\ref{lm:esntildesndiffprelim} to Appendix~\ref{pf:esntildesndiffprelim} as its proof follows by algebraic manipulations.

\begin{lemma}\label{lm:SntideSerrorbound}
    Let Assumptions~\ref{as:liangasA2}-\ref{as:liangasA4} be true. Then,
    \begin{align*}
    \expec{\|\tS_n-S\|_2}{}\leq M^{-\frac{a\beta}{4}}\sqrt{d\log M}+\sqrt{d}M^{-1/2}+M^{-\frac{a\beta}{2}}\sqrt{d\log M}.
\end{align*}
\end{lemma}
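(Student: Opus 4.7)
The plan is to split $\tS_n - S$ into a deterministic bias $\expec{\tS_n}{}-S$ and a mean-zero stochastic fluctuation, then control each piece by exploiting the martingale-difference property of $\{\te_k\}$ together with matrix Freedman/Bernstein inequalities. The key starting point is that, using $\expec{\te_{k'}\mid\cF_{k'-1}}{}=0$ and swapping the order of summation in $\sum_i Y_iY_i^\top$ with $Y_i=\sum_{k=t_i}^i \te_k$, one obtains
\begin{align*}
\sum_{i=1}^n Y_iY_i^\top \;=\; \sum_{k=1}^n c_k\,\te_k\te_k^\top \;+\; \sum_{m}\sum_{a_m\leq k<k'<a_{m+1}} c_{k,k'}\bigl(\te_k\te_{k'}^\top + \te_{k'}\te_k^\top\bigr),
\end{align*}
where $c_k=a_{m(k)+1}-k$ counts the indices $i$ with $t_i\leq k\leq i$, and $c_{k,k'}=a_{m(k')+1}-k'$. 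Taking expectations kills the off-diagonal piece, giving $\expec{\tS_n}{}=N^{-1}\sum_k c_k\,\expec{\te_k\te_k^\top}{}$ with $N=\sum_i l_i\asymp\sum_m l_m^2\asymp M^{2\beta-1}$.

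For the bias $\expec{\tS_n}{}-S$, I would use Assumption~\ref{as:liangasA3}(c) together with Lemma~\ref{lm:liangmainlemma} to argue that the map $\theta\mapsto \expec{(u(\theta^*,x_k)-P_\theta u(\theta^*,x_{k-1}))^{\otimes 2}}{}$ is Lipschitz in $\theta$ uniformly over the compact truncation set, so that $\|\expec{\te_k\te_k^\top}{}-S\|_2 \lesssim \expec{\|\theta_{k-1}-\theta^*\|_2}{}+\rho^k$ for some $\rho<1$, the exponential term coming from the $V^{\alpha_0}$-uniform ergodicity of the $\theta^*$-chain. By Lemma~\ref{lm:expecconviter} this is $O(\sqrt{\eta_k})$, and a direct computation of $N^{-1}\sum_k c_k\sqrt{\eta_k}$ using $\eta_k\asymp k^{-a}$ and $a_m\asymp m^\beta$ yields a bias bound of order $M^{-a\beta/2}$.

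For the stochastic fluctuation I would handle the diagonal and off-diagonal contributions separately. The off-diagonal part, ordered in the outer index $k'$, is a sum of symmetric matrix martingale increments of the form $v_{k'}\te_{k'}^\top+\te_{k'}v_{k'}^\top$ with $v_{k'}=\sum_{k<k'}c_{k,k'}\te_k \in\cF_{k'-1}$; the diagonal part becomes a matrix martingale after subtracting $\expec{\te_k\te_k^\top\mid\cF_{k-1}}{}$. To each of these I would apply a matrix Freedman / matrix Bernstein inequality, using the uniform bound on $\|\te_k\|_2$ provided by Lemma~\ref{lm:liangmainlemma} and Lemma~\ref{lm:lialemmaa2}. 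The three terms in the stated bound then arise as follows: the $\sqrt{d}\,M^{-1/2}$ term is the main variance contribution, coming from the total predictable quadratic variation $\sum_k c_k^2/N^2\asymp 1/n$ combined with the dimension factor incurred by passing from the Freedman tail to the expected operator norm; the $M^{-a\beta/2}\sqrt{d\log M}$ term accounts for the compensator residual after replacing $\expec{\te_k\te_k^\top\mid\cF_{k-1}}{}$ with its unconditional version (a bias-type error dressed with a matrix-concentration factor); and the $M^{-a\beta/4}\sqrt{d\log M}$ term appears because the variance proxy entering the Freedman bound on the off-diagonal martingale is itself controlled by $M^{-a\beta/2}$ (coming from the bias estimate), whose square root is $M^{-a\beta/4}$.

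The main obstacle will be the careful bookkeeping of the overlapping weights $c_k$ and $c_{k,k'}$ across blocks, in particular verifying that the peaks $\max_k c_k/N\asymp M^{-\beta}$ and the squared sums $\sum_k c_k^2/N^2$, $\sum_{k<k'}c_{k,k'}^2/N^2$ combine with the Freedman variance proxies to give exactly the stated exponents of $M$. A secondary concern is to ensure that the Lipschitz constants extracted from the Poisson solution $u(\theta,x)$ in Lemma~\ref{lm:liangmainlemma} and the uniform bound on $\|\te_k\|_2$ are dimension-free, so that the $\sqrt{d}$ factor in the final bound genuinely originates from the matrix concentration step and is not artificially inflated by Lipschitz/boundedness constants.
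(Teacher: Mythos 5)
Your overall architecture (bias of $\expec{\tS_n}{}$ plus a mean-zero fluctuation, with the bias handled by Lipschitz-in-$\theta$ continuity of the kernel plus $V$-uniform ergodicity of the frozen $\theta^*$-chain) matches the paper in its first half: the paper likewise shows $\expec{\te_k\te_k^\top}{}=S-\expec{T_{3,k}}{}-T_{4,k}+\expec{T_{5,k}}{}$ with per-term error $O(\eta_k\log k)$ and aggregates it into the $M^{-\frac{a\beta}{2}}\sqrt{d\log M}$ contribution. Where you diverge is the fluctuation: the paper never invokes matrix concentration. It bounds $\expec{\|\tS_n-S\|_2}{}\leq\sqrt{d\,\norm{\expec{(\tS_n-S)^2}{}}_2}$ (this trace step is the sole source of the $\sqrt{d}$) and then computes the fourth-moment terms $\expec{\te_p\te_p^\top\te_q\te_q^\top}{}$ directly, splitting into within/neighboring blocks (crudely bounded, giving $\sqrt{d}M^{-1/2}$) and well-separated blocks, where conditioning on $\cF_{a_{m-1}-1}$ and re-running the $T_3,T_4,T_5$ machinery gives the residual correlation $\sqrt{\eta_p}\log p+\sqrt{\eta_q}\log q$ and hence the $M^{-\frac{a\beta}{4}}\sqrt{d\log M}$ term.

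Two concrete gaps in your route. First, matrix Freedman/Bernstein needs almost-surely bounded (or at least uniformly sub-exponential) increments, and you assert a ``uniform bound on $\|\te_k\|_2$'' that the cited lemmas do not supply: Lemma~\ref{lm:liangmainlemma} only gives $\|u(\theta^*,\cdot)\|_V<\infty$, i.e.\ $\|\te_k\|_2\lesssim V(x_k)$ with $V$ unbounded, and Lemma~\ref{lm:lialemmaa2} controls only $\sup_k\expec{V^{\alpha_0}(x_k)}{}$, a moment bound. Your diagonal increments scale like $V(x_k)^2$ and your off-diagonal increments like $\norm{v_{k'}}_2 V(x_{k'})$, so the standard concentration inequalities do not apply without a truncation/stopping-time layer, and with only $\alpha_0\geq 8$ moments available it is not automatic that this layer preserves the stated rates. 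Second, the ``compensator residual'' is not a bias-type error: $\expec{\te_k\te_k^\top\mid\cF_{k-1}}{}$ is a random function of $(x_{k-1},\theta_{k-1})$ that is \emph{not} pointwise close to $S$ — it only averages to $S$ under the stationary law of $x_{k-1}$. Showing that $N^{-1}\sum_k c_k\bigl(\expec{\te_k\te_k^\top\mid\cF_{k-1}}{}-S\bigr)$ is small in operator norm is itself a concentration statement for an additive functional of the controlled chain, requiring another Poisson-equation/mixing argument; the paper avoids this entirely by working with unconditional expectations of products over well-separated blocks. A minor further point: matrix concentration would give a $\log d$ (not $\sqrt{d}$) dimension factor, so your attribution of the $\sqrt{d}$ to ``passing from the Freedman tail to the expected operator norm'' does not match where it actually arises; that is not fatal, but it signals the accounting of the three terms has not been carried through.
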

We provide the proof of Lemma~\ref{lm:SntideSerrorbound} here as this is, besides Lemma~\ref{lm:expecconviter}, one of the main pillars of the proof of Theorem~\ref{th:mainthm} and is quite different from the $\iid$ data setting.
\begin{proof}[Proof of Lemma~\ref{lm:SntideSerrorbound}]
Unlike under $\iid$ sampling, $\tS_n$ is not an unbiased estimator of $S$ under Markovian sampling. So first we establish a bound on the bias term.  Since $\te_k$ is a martingale difference sequence, we have 
\begin{align}
    \expec{\tS_n}{}=&\textstyle \left(\sum_{i=1}^nl_i\right)^{-1}\sum_{i=1}^n\expec{(\sum_{k=t_i}^i\te_k)(\sum_{k=t_i}^i\te_k)^\top}{}\nonumber\\
    =&\textstyle \left(\sum_{i=1}^nl_i\right)^{-1}\sum_{i=1}^n\sum_{k=t_i}^i\expec{\te_k\te_k^\top}{}.\label{eq:ekekequaltoS}
\end{align}
Now from \eqref{eq:tildekdef}, and \eqref{eq:ekprimedef}, we have,
\begin{align}\label{eq:t1t2t3decomp}
\begin{aligned}
    \expec{\te_k\te_k^\top}{}=&\expec{\left(e'_k+(P_{\theta^*} -P_{\theta_{k-1}}) u(\theta^*,x_{k-1})\right)\left(e'_k+(P_{\theta^*} -P_{\theta_{k-1}}) u(\theta^*,x_{k-1})\right)^\top}{}\\
    =&\expec{e'_k{e'_k}^\top}{}+\expec{\underbrace{(P_{\theta^*} -P_{\theta_{k-1}}) u(\theta^*,x_{k-1}){e'_k}^\top}_{\mathsf{T_{1,k}}}}{}+\expec{\underbrace{e'_k(P_{\theta^*} -P_{\theta_{k-1}}) u(\theta^*,x_{k-1})^\top}_{\mathsf{T_{2,k}}}}{}\\
    &+\expec{\underbrace{(P_{\theta^*} -P_{\theta_{k-1}}) u(\theta^*,x_{k-1})(P_{\theta^*} -P_{\theta_{k-1}}) u(\theta^*,x_{k-1})^\top}_{\mathsf{T_{3,k}}}}{}.
    \end{aligned}
\end{align}
Now, observe that
\begin{align*}
    \expec{T_{1,k}}{}=&\expec{(P_{\theta^*} -P_{\theta_{k-1}}) u(\theta^*,x_{k-1}){e'_k}^\top}{}\\
    =&\expec{\expec{(P_{\theta^*} -P_{\theta_{k-1}}) u(\theta^*,x_{k-1}){e'_k}^\top|\cF_{k-1}}{}}{}\\
    =&\expec{(P_{\theta^*} -P_{\theta_{k-1}}) u(\theta^*,x_{k-1})(\expec{u(\theta^*,x_k)|\cF_{k-1}}{}-P_{\theta^*} u(\theta^*,x_{k-1}))^\top}{}\\
    =&-\expec{T_{3,k}}{}.
\end{align*}
Similarly, $\expec{T_{2,k}}{}=-\expec{T_{3,k}}{}$. Hence,
\begin{align}
    \expec{\te_k\te_k^\top}{}
    =\expec{e'_k{e'_k}^\top}{}-\expec{T_{3,k}}{}.\label{eq:ektildeekprimerel}
\end{align}
Note that, using $\expec{T_{3,k}}{}\succcurlyeq 0$, Assumption \ref{as:liangasA3}, and \cs inequality we have 
\begin{align}\label{eq:T3kbound}
    \expec{\norm{T_{3,k}}_2}{}= &\expec{\norm{(P_{\theta^*} -P_{\theta_{k-1}}) u(\theta^*,x_{k-1})(P_{\theta^*} -P_{\theta_{k-1}}) u(\theta^*,x_{k-1})^\top}_2}{}\nonumber\\
    =&\expec{\norm{(P_{\theta^*} -P_{\theta_{k-1}}) u(\theta^*,x_{k-1})}_2^2}{}\nonumber\\
    \leq &\expec{\norm{\theta_{k-1}-\theta^*}_2^2 V(x_{k-1})^2}{}\nonumber\\
    \leq & \sqrt{\expec{\norm{\theta_{k-1}-\theta^*}_2^4}{}\expec{V(x_{k-1})^4}{}} \,\,\lesssim  \eta_k. 
\end{align}
Now let us look at $\expec{e'_k{e'_k}^\top}{}$. Note that this is a function of $x_k$, and $x_{k-1}$. For convenience let us write $e'_k{e'_k}^\top=\mathcal{A}(x_{k-1},x_k)$. Here we make two observations. Firstly, the distance $\norm{\theta_{k+1}-\theta_{k}}_2$ between two consecutive iterates is of the order of the step-size $\eta_{k+1}$, and secondly, $\expec{\norm{\theta_k-\theta^*}_2}{}=O(\sqrt{\eta_k})$. Combining these two facts, we deduce that for small enough $b_0$, the expectation of the variables $\mathcal{A}(x_{k-1},x_k)$, and $\mathcal{A}(x_{k-1}',x_k')$ are close, where,
\begin{align*}
    &\mathcal{A}(x_{k-1},x_k)\sim P_{\theta_0}(x_0)P_{\theta_1}(x_1|x_0)\cdots P_{\theta_{k-1}}(x_{k}|x_{k-1})\\
    &\mathcal{A}(x_{k-1}',x_k')\sim P_{\theta_0}(x_0)P_{\theta_1}(x_1|x_0)\cdots P_{\theta_{k-b_0-1}}(x_{k-b_0}|x_{k-b_0-1})P_{\theta^*}(x_{k-b_0+1}'|x_{k-b_0})\cdots P_{\theta^*}(x_{k}'|x_{k-1}').
\end{align*}
 But, by Assumption~\ref{as:liangasA3}, we have that for a fixed $\theta$ the chain mixes exponentially fast to $\pi_\theta$. Now, for a proper choice of $b_0$, $\expec{\mathcal{A}(x_{k-1}',x_k')}{}$ is close to $\expec{\mathcal{A}(\hat{x}_{k-1},\hat{x}_k)}{}$ where $\hat{x}_{k-1}\sim\pi_{\theta^*}$, and $\hat{x}_k$ is obtained from $\hat{x}_{k-1}$ after transition according to the transition operator $P_{\theta^*}$. This implies $\expec{\mathcal{A}(\hat{x}_{k-1},\hat{x}_k)}{}=S$. 

Now let us define another sequence $\{x'_i\}$ as follows: $x'_i=	x_i$  for $ i=1,2,\cdots,k-b_0$, and $x'_{k-b_0+j}$, $j=1,\cdots,b_0$ are obtained after applying transition operator $P_{\theta^*}$ to $x'_{k-b_0}$ $j$ times. Let $\hat e_k\coloneqq u(\theta^*,x'_k)-P_{\theta^*}u(\theta^*,x'_{k-1})$.

For convenience, we introduce the following notation 
$$
P_{\theta^*}^{b_0} P_{\theta_{k-b_0-1}}\cdots P_0 \coloneqq P_{\theta^*}(x_{k-1},x_k)P_{\theta^*}(x_{k-2},x_{k-1})\cdots P_{\theta_{k-b_0-1}}(x_{k-b_0-1},x_{k-b_0})\cdots P_0(x_0).
$$
Then, we have
\begin{align}
    &\expec{\hat e_k\hat e_k^\top}{}=\textstyle \int \mathcal{A}(x_{k-1},x_k)P_{\theta^*}^{b_0} P_{\theta_{k-b_0-1}}\cdots P_0 \mathrm{d}x_k\, \mathrm{d}x_{k-1}\cdots \mathrm{d}x_0\nonumber\\
    &\expec{e_k' {e'_k}^\top}{}=\textstyle \int \mathcal{A}(x_{k-1},x_k)P_{\theta_{k-1}}P_{\theta_{k-2}}\cdots P_0\, \mathrm{d}x_k\,\mathrm{d}x_{k-1}\cdots \mathrm{d}x_0.\label{eq:hatekdef}
\end{align}
Hence, we can write 
\begin{align}\label{eq:ekhatekorimediff}
   \expec{\hat e_k\hat e_k^\top}{}=\expec{e_k' {e'_k}^\top}{}+\sum_{j=1}^{b_0}\textstyle \int \mathcal{A}(x_{k-1},x_k)P_{\theta^*}^{b_0-j}(P_{\theta_{k-(b_0-j)}}-P_{\theta^*})P_{\theta_{k-(b_0-j)-1}}\cdots P_0 \mathrm{d}x_k\mathrm{d}x_{k-1}\cdots \mathrm{d}x_0.
\end{align}
Now note that by Assumption~\ref{as:liangasA3}(c) we have,
\begin{align}\label{eq:totalvariation}
    \norm{P_{\theta^*}^{b_0-j}P_{\theta_{k-(b_0-j)}}P_{\theta_{k-(b_0-j)-1}}\cdots P_0-P_{\theta^*}^{b_0-j+1}P_{\theta_{k-(b_0-j)-1}}\cdots P_0}_{TV}\lesssim \sqrt{\eta_{k-(b_0-j)}}.
\end{align}
Using \eqref{eq:totalvariation}, we get,
\begin{align*}
    \textstyle\int \mathcal{A}(x_{k-1},x_k)P_{\theta^*}^{b_0-j}(P_{\theta_{k-(b_0-j)}}-P_{\theta^*})P_{\theta_{k-(b_0-j)-1}}\cdots P_0 \mathrm{d}x_k\mathrm{d}x_{k-1}\cdots \mathrm{d}x_0\lesssim \sqrt{\eta_{k-(b_0-j)}}.
\end{align*}
From \eqref{eq:ekhatekorimediff}, we get,
\begin{align}\label{eq:ekprimeekhatrel}
    \expec{e_k' {e'_k}^\top}{}=\expec{\hat e_k\hat e_k^\top}{}-T_{4,k}, 
\end{align}
where 
\begin{align}\label{eq:T4normbound}
    \norm{T_{4,k}}_2\lesssim \sum_{j=0}^{b_0-1}\sqrt{\eta_{k-j}}.
\end{align}
By Assumption~\ref{as:liangasA3}, one has that for each $\theta$, the Markov chain $\{x_k\}_k$ is $V$-uniformly ergodic. Let $\Delta$ denote the joint distribution of $(x'_{k-1},x'_k)$ conditioned on $x_{k-b_0}$, and $\Delta_{\theta^*}$ is the joint distribution $(\hat x_{k-1},\hat x_k)$ where $\hat x_{k-1}\sim\pi_{\theta^*}$, and $\hat x_k$ is obtained by applying the transition operator $P_{\theta^*}$ on $\hat x_{k-1}$. Note that $\expec{\mathcal{A}(\hat x_{k-1},\hat x_k)}{}=S$. One can write $\Delta=P_{\theta^*}P(x'_{k-1})=P_{\theta^*}\pi_{\theta^*}+P_{\theta^*}(P(x'_{k-1})-\pi_{\theta^*})=\Delta_{\theta^*}+P_{\theta^*}(P(x'_{k-1})-\pi_{\theta^*})$. Then, 
\begin{align*}
    \norm{\Delta-\Delta_{\theta^*}}_{TV}=& \norm{P_{\theta^*}(P(x'_{k-1})-\pi_{\theta^*})}_{TV}\\
    =&\textstyle \int\left(\int P_{\theta^*}(x'_k|x'_{k-1})\mathrm{d}x_k\right)(P(x'_{k-1})-\pi_{\theta^*(x'_{k-1})})\mathrm{d}x'_{k-1}\\
    \leq & CV(x_{k-b_0})\rho^{b_0},
\end{align*}
for some constant $C>0$, and $0<\rho<1$. Then we have, 
\begin{align}\label{eq:Asrelation}
    \expec{\mathcal{A}(x'_{k-1},x'_k)|x_{k-b_0}}{}=S+T_{5,k}, 
\end{align}
where $\norm{T_{5,k}}_2\lesssim V(x_{k-b_0})\rho^{b_0}$. Taking expectation on both sides of \eqref{eq:Asrelation} with respect to $x_{k-b_0}$ we have,
\begin{align}\label{eq:Asrelationfinal}
    \expec{\mathcal{A}(x'_{k-1},x'_k)}{}=S+\expec{T_{5,k}}{}, 
\end{align}
where, $\expec{\norm{T_{5,k}}_2}{}\lesssim \rho^{b_0}$. Choosing $b_0=\ceil{\log \eta_k/\log \rho}$, and from \eqref{eq:T4normbound}, we get, 
\begin{align}\label{eq:T4T5bound}
    {\norm{T_{4,k}}_2}\lesssim \eta_k\log k, \quad \expec{\norm{T_{5,k}}_2}{}\lesssim \eta_k. 
\end{align}
Combining \eqref{eq:ektildeekprimerel}, \eqref{eq:ekprimeekhatrel}, and \eqref{eq:Asrelationfinal}, we get,
\begin{align}\label{eq:ekekdecomp}
    \expec{\te_k\te_k^\top}{}=S-\expec{T_{3,k}}{}-T_{4,k}+\expec{T_{5,k}}{}.
\end{align}
Then, from \eqref{eq:ekekequaltoS}, we get, $\expec{\tS_n}{}
    =(\textstyle \sum_{i=1}^nl_i)^{-1}\sum_{i=1}^n\sum_{k=t_i}^i\expec{\te_k\te_k^\top}{} =S+T_6,$ where we have $T_6=(\textstyle \sum_{i=1}^nl_i)^{-1}\textstyle\sum_{i=1}^n\sum_{k=t_i}^i(\expec{T_{5,k}}{}-\expec{T_{3,k}}{}-T_{4,k})$. Now, using \eqref{eq:T3kbound}, and \eqref{eq:T4T5bound} we have, 
\begin{align*}
    \norm{T_6}_2\leq & (\textstyle\sum_{i=1}^nl_i)^{-1}\sum_{i=1}^n\sum_{k=t_i}^i\norm{\expec{T_{5,k}}{}-\expec{T_{3,k}}{}-T_{4,k}}_2\\
    \lesssim & (\textstyle\sum_{i=1}^nl_i)^{-1}\sum_{i=1}^n\sum_{k=t_i}^i \eta_k \log k \\
    \lesssim &(\textstyle\sum_{i=1}^nl_i)^{-1}\sum_{i=1}^n\sum_{k=t_i}^ik^{-a}\log k\\
    \leq &\log a_M(\textstyle\sum_{i=1}^nl_i)^{-1}\sum_{m=1}^M\sum_{i=a_m}^{a_{m+1}-1}l_ia_m^{-a}\\
    \leq &\log a_M(\textstyle\sum_{i=1}^nl_i)^{-1}\sum_{m=1}^Mn_m^2a_m^{-a}.
\end{align*}
The last inequality follows from the fact that $    \sum_{i=a_m}^{a_{m+1}-1}l_i \asymp n_m^2$. Choosing $a_m=\floor{Cm^\beta}$ where $\beta>1/(1-a)$, we have, $n_m\asymp m^{\beta-1}$, and,
\begin{align}\label{eq:expecbkbkboundmain}
\textstyle    \log a_M(\sum_{i=1}^nl_i)^{-1}\sum_{m=1}^Mn_m^2a_m^{-a}\lesssim M^{-a\beta}\log M.
\end{align}

As $\tS_n-S$ is a symmetric matrix, we have,
\begin{align}\label{eq:snstr2normrel}
   \expec{\|\tS_n-S\|_2}{}\leq \mathbb{E}\big[\textstyle \sqrt{\tr(\tS_n-S)^2}\big]\leq \sqrt{\tr(\expec{(\tS_n-S)^2}{})}\leq\sqrt{d\norm{\expec{(\tS_n-S)^2}{}}_2}.
\end{align}
Now, note that $
    \expec{(\tS_n-S)^2}{}=\expec{\tS_n^2}{}+2T_6S-S^2$. Using \eqref{eq:expecbkbkboundmain}, we hence have, 
\begin{align}\label{eq:sn2decompmid}
    \norm{2T_6S}_2\lesssim M^{-a\beta}\log M. 
\end{align}
Now we will show that $\expec{\tS_n^2}{}-S^2$ is small. First note that similar to Equation (45) in \cite{zhu2021online}, we have that $    \tS_n^2=\left(\textstyle\sum_{i=1}^nl_i\right)^{-2}(R_1+R_2)$, where
\begin{align*}
    R_1=\textstyle \sum_{m=1}^{M-1}\sum_{i=a_m}^{a_{m+1}-1}\bigg[&2\textstyle\sum_{j=a_m}^{i-1}\sum_{a_m\leq p_1\neq p_2\leq j}\big(\te_{p_1}\te_{p_1}^\top \te_{p_1}\te_{p_2}^\top+\te_{p_1}\te_{p_1}^\top \te_{p_2}\te_{p_1}^\top\big)\\
    &\qquad\qquad+\textstyle\sum_{a_m\leq p_1\neq p_2\leq i}\big(\te_{p_1}\te_{p_1}^\top \te_{p_1}\te_{p_2}^\top+\te_{p_1}\te_{p_1}^\top \te_{p_2}\te_{p_1}^\top\big)\bigg]\\
    +\textstyle\sum_{i=a_M}^{n}\bigg[&2\textstyle\sum_{j=a_M}^{i-1}\sum_{a_M\leq p_1\neq p_2\leq j}\left(\te_{p_1}\te_{p_1}^\top \te_{p_1}\te_{p_2}^\top+\te_{p_1}\te_{p_1}^\top \te_{p_2}\te_{p_1}^\top\right)\\
    &\qquad\qquad+\textstyle\sum_{a_M\leq p_1\neq p_2\leq i}\left(\te_{p_1}\te_{p_1}^\top \te_{p_1}\te_{p_2}^\top+\te_{p_1}\te_{p_1}^\top \te_{p_2}\te_{p_1}^\top\right)\bigg],
\end{align*}
and,$R_2=\textstyle\sum_{i=1}^n\sum_{j=1}^n\sum_{p=t_i}^i\sum_{q=t_j}^j \te_p\te_p^\top \te_q\te_q^\top$. Now, using equation (46) from \cite{zhu2021online}, we have,
\begin{align}\label{eq:R1bound}
    \left(\textstyle \sum_{i=1}^nl_i\right)^{-2}\norm{\expec{R_1}{}}_2\lesssim M^{-1}.
\end{align}
Similar to equation (47) in \cite{zhu2021online}, we have,
\begin{align}\label{eq:IIinitialdecomp}
\begin{aligned}
    &~~\norm{\left(\textstyle \sum_{i=1}^nl_i\right)^{-2}\expec{R_2}{}-S^2}_2\\
    \lesssim&~\left(\textstyle\sum_{i=1}^{a_{M+1}-1}l_i\right)^{-2}\textstyle\sum_{m=1}^M\sum_{k=1}^M\sum_{i=a_m}^{a_{m+1}-1}\sum_{j=a_k}^{a_{k+1}-1}\sum_{p=a_m}^i\sum_{q=a_k}^j\norm{\expec{\te_p\te_p^\top \te_q\te_q^\top}{}-S^2}_2.
\end{aligned}
\end{align}
At this point we need a more careful analysis to bound the right hand side of \eqref{eq:IIinitialdecomp}. Unlike \cite{zhu2021online}, $\expec{\te_p\te_p^\top \te_q\te_q^\top}{}\neq S^2$ when $p$ and $q$ belong to different blocks since $S$ is defined as the asymptotic covariance of a martingale-difference sequence derived from a $\theta_k$-dependent Markov chain whereas $S$ is the covariance of an $\iid$ sequence in \cite{zhu2021online}. So, now we establish a bound on this bias. We now decompose \eqref{eq:IIinitialdecomp} into two terms.  
\begin{enumerate}[noitemsep]
    \item $p$ and $q$ belong to either same block or neighboring blocks, i.e., $\abs{m-k}\leq 1$; see $R_3$ in~\eqref{eq:r3}.
    \item $p$ and $q$ are at least $1$ block apart, i.e., $\abs{m-k}> 1$; see $R_4$ in~\eqref{eq:r4}.
\end{enumerate}
In the first component, the correlation among the terms are high. So we just use a constant bound for these terms. In the second component, using the fact that the blocks are separated enough, we show that the correlation between the blocks are small. Then we show that these terms $\te_p\te_p^\top \te_q\te_q^\top$ can estimate $S^2$ with small bias.\\

\textbf{Case I, $\abs{m-k}\leq 1$:} Combining the terms where $p$ and $q$ belong to either the same block or neighboring blocks we get, 
\begin{align}\label{eq:r3}
    R_3\coloneqq \mathop{\sum_{m=1}^M\sum_{k=1}^M}\limits_{\abs{m-k}\leq 1}\sum_{i=a_m}^{a_{m+1}-1}\sum_{j=a_k}^{a_{k+1}-1}\sum_{p=a_m}^i\sum_{q=a_k}^j \norm{\expec{\te_p\te_p^\top \te_q\te_q^\top}{}-S^2}_2. 
\end{align}
By Lemma~\ref{lm:liangmainlemma} we have, $\norm{\expec{\te_p\te_p^\top \te_q\te_q^\top}{}}_2\leq C$ for some constant $C>0$. When $n_m\asymp m^{\beta-1}$, we have $$n_m^2+n_{m+1}^2+n_{m-1}^2\leq (4^\beta+2)n_m^2.$$ 
Then, using $\sum_{i=a_m}^{a_{m+1}-1}l_i\lesssim n_m^2$, we have 
\begin{align}\label{eq:diffless1numbound}
    R_3\lesssim \mathop{\sum_{m=1}^M\sum_{k=1}^M}\limits_{\abs{m-k}\leq 1}\sum_{i=a_m}^{a_{m+1}-1}\sum_{j=a_k}^{a_{k+1}-1}l_il_j
    \lesssim \mathop{\sum_{m=1}^M\sum_{k=1}^M}\limits_{\abs{m-k}\leq 1}n_m^2n_k^2
    \lesssim \sum_{m=1}^M n_m^4
    \lesssim M^{4\beta-3}.
\end{align}
We also have, 
\begin{align}\label{eq:li2bound}
    \left(\textstyle \sum_{i=1}^{a_{M+1}-1}l_i\right)^{2}=\left(\textstyle\sum_{m=1}^{M}\sum_{i=a_m}^{a_{m+1}-1}l_i\right)^{2}\asymp \left(\textstyle\sum_{m=1}^{M}n_m^2\right)^{2}\asymp M^{4\beta-2}.
\end{align}
Combining \eqref{eq:diffless1numbound}, and \eqref{eq:li2bound}, we get, 
\begin{align}\label{eq:R3bound}
   \left(\textstyle\sum_{i=1}^{a_{M+1}-1}l_i\right)^{-2} R_3\lesssim M^{-1}.
\end{align}

\textbf{Case II, $\abs{m-k}> 1$:}
Combining the terms where $p$ and $q$ are at least $1$ block apart, i.e., $\abs{m-k}> 1$, we get, 
\begin{align}\label{eq:r4}
    R_4\coloneqq \mathop{\sum_{m=1}^M\sum_{k=1}^M}\limits_{\abs{m-k}> 1}\sum_{i=a_m}^{a_{m+1}-1}\sum_{j=a_k}^{a_{k+1}-1}\sum_{p=a_m}^i\sum_{q=a_k}^j \norm{\expec{\te_p\te_p^\top \te_q\te_q^\top}{}-S^2}_2. 
\end{align}
Let us assume $k\leq m-2$. Then, $    \expec{\te_p\te_p^\top \te_q\te_q^\top|\cF_{a_{m-1}-1}}{}=\expec{\te_p\te_p^\top |\cF_{a_{m-1}-1}}{}\te_q\te_q^\top$. Similar to \eqref{eq:t1t2t3decomp}, we get,
\begin{align}\label{eq:eptildeprimecondn}
    \expec{\te_p\te_p^\top |\cF_{a_{m-1}-1}}{}=\expec{e_p'{e_p'}^\top |\cF_{a_{m-1}-1}}{}-\expec{T_{3,p}|\cF_{a_{m-1}-1}}{}.
\end{align}
Now, for consider the sequence $\{\hat{e}_p\}_p$ defined similarly as in \eqref{eq:hatekdef}. We will choose $b_0$ such that $b_0\leq n_{m-1}+1$. Then similar to \eqref{eq:ekprimeekhatrel}, we have, 
\begin{align}\label{eq:ekprimeekhatrelcond}
    \expec{e_p'{e_p'}^\top|\cF_{a_{m-1}-1}}{}=\expec{\hat e_p\hat e_p^\top|\cF_{a_{m-1}-1}}{}-T'_{4,p}, 
\end{align}
where 
\begin{align*}
    T'_{4,p}=\textstyle\sum_{j=1}^{b_0}\int \mathcal{A}(x_{p-1},x_p)P_{\theta^*}^{b_0-j}(P_{\theta_{p-(b_0-j)}}-P_{\theta^*})P_{\theta_{p-(b_0-j)-1}}\cdots P_{a_{m-1}} \mathrm{d}x_p\mathrm{d}x_{p-1}\cdots \mathrm{d}x_{a_{m-1}},
\end{align*}
and $\norm{T'_{4,p}}_2=\sum_{j=1}^{b_0}\sqrt{\eta_{p-j}}$. Similar to \eqref{eq:Asrelationfinal}, we have,
\begin{align}\label{eq:Asrelationfinalcondn}
    \expec{\mathcal{A}(x'_{p-1},x'_p)|\cF_{a_{m-1}-1}}{}=S+\expec{T_{5,p}|\cF_{a_{m-1}-1}}{}, 
\end{align}
where $\norm{T_{5,p}}_2\leq V(x_{p-b_0})\rho^{b_0}$.
Combining \eqref{eq:eptildeprimecondn}, \eqref{eq:ekprimeekhatrelcond}, and \eqref{eq:Asrelationfinalcondn}, we hence have $\expec{\te_p\te_p^\top |\cF_{a_{m-1}-1}}{}=S+\expec{T_{5,p}|\cF_{a_{m-1}-1}}{}-T'_{4,p}-\expec{T_{3,p}|\cF_{a_{m-1}-1}}{}$. Then, we have, 
\begin{align*}
    \expec{\te_p\te_p^\top \te_q\te_q^\top}{}=S\expec{\te_q\te_q^\top}{}+\expec{\left(T_{5,p}-T'_{4,p}-T_{3,p}\right)\te_q\te_q^\top}{}.
\end{align*}
Similar to \eqref{eq:ekekdecomp}, we also have that $\expec{\te_q\te_q^\top}{}=S-\expec{T_{3,q}}{}-T_{4,q}+\expec{T_{5,q}}{}$. Similar to \eqref{eq:T3kbound}, for $\alpha_0\geq 8$, we also have, 
\begin{align*}
    \expec{\norm{T_{3,p}}_2^\frac43}{}\leq \sqrt{\expec{\norm{\theta_{p-1}-\theta^*}_2^4\expec{V(x_{p-1})^8}{}}{}}\lesssim\eta_p.
\end{align*}
Then, we have that
\begin{align*}
    &\norm{\expec{\te_p\te_p^\top \te_q\te_q^\top}{}-S^2}_2\\
    \leq&~\norm{\expec{\left(T_{5,p}-T'_{4,p}-T_{3,p}\right)\te_q\te_q^\top}{}}_2+\norm{S\left(\expec{T_{3,q}}{}+T_{4,q}-\expec{T_{5,q}}{}\right)}_2\\
    \leq &~\expec{\norm{\left(T_{5,p}-T'_{4,p}-T_{3,p}\right)}_2\norm{\te_q\te_q^\top}_2}{}+\norm{S}_2\norm{\left(\expec{T_{3,q}}{}+T_{4,q}-\expec{T_{5,q}}{}\right)}_2\\
    \lesssim &~\sqrt{\expec{\norm{\left(T_{5,p}-T'_{4,p}\right)}_2^2{}}{}\expec{\norm{\te_q\te_q^\top}_2^2}{}}+\expec{\norm{T_{3,p}}_2^\frac43}{}\expec{\norm{\te_q\te_q^\top}_2^4}{}+\textstyle\sum_{j=0}^{b_0-1}\sqrt{\eta_{q-j}}\\
    \lesssim &~\textstyle \sum_{j=0}^{b_0-1}\sqrt{\eta_{p-j}}+\sum_{j=0}^{b_0-1}\sqrt{\eta_{q-j}}\\
    \lesssim &~\sqrt{\eta_p}\log p+\sqrt{\eta_q}\log q.
\end{align*}
Then, using $a_m\asymp m^\beta$, and $n_m\asymp (m+1)^{\beta-1}$, we have, 
\begin{align*}
    R_4\lesssim &\mathop{\sum_{m=1}^M\sum_{k=1}^M}\limits_{\abs{m-k}> 1}\sum_{i=a_m}^{a_{m+1}-1}\sum_{j=a_k}^{a_{k+1}-1}\sum_{p=a_m}^i\sum_{q=a_k}^j \left(\sqrt{\eta_p}\log p+\sqrt{\eta_q}\log q\right)\\
    \lesssim & \mathop{\sum_{m=1}^M\sum_{k=1}^M}\limits_{\abs{m-k}> 1} \bigg(n_k^2\sum_{i=a_m}^{a_{m+1}-1}\sum_{p=a_m}^i\sqrt{\eta_p}\log p+n_m^2\sum_{j=a_k}^{a_{k+1}-1}\sum_{q=a_k}^j\sqrt{\eta_q}\log q\bigg)\\
    \lesssim & \sum_{m=1}^M\sum_{k=1}^M\left(n_k^2n_m^2\left(a_m^{-\frac{a}{2}}\log a_{m+1}+a_k^{-\frac{a}{2}}\log a_{k+1}\right)\right)\\
    \lesssim & M^{2+4(\beta-1)-\frac{a\beta}{2}}\log M.
\end{align*}
Using \eqref{eq:li2bound} we have,
\begin{align}\label{eq:IVboundeeee}
    \left(\textstyle \sum_{i=1}^{a_{M+1}-1}l_i\right)^{-2}R_4\lesssim M^{-\frac{a\beta}{2}}\log M.
\end{align}
So, combining \eqref{eq:snstr2normrel}, \eqref{eq:sn2decompmid}, \eqref{eq:R1bound}, \eqref{eq:R3bound}, and \eqref{eq:IVboundeeee}, we get,
\begin{align*}
    \expec{\|\tS_n-S\|_2}{}\leq M^{-\frac{a\beta}{4}}\sqrt{d\log M}+\sqrt{d}M^{-1/2}+M^{-\frac{a\beta}{2}}\sqrt{d\log M}.
\end{align*}
\end{proof}
\section{Proof of Lemma~\ref{lm:expecconviter}}
\begin{proof}[Proof of Lemma~\ref{lm:expecconviter}]\label{pf:theta2224bound}
First, let us consider the following process: $\theta'_k=\theta_k-\tilde{\zeta}_{k+1}$. Then, combining \eqref{eq:decomp}, and \eqref{eq:sgd}, we get,
\begin{align*}
    \theta'_{k+1}=&\theta_k-\eta_{k+1}\left(\nabla f(\theta_k)+\xi_{k+1}(\theta_k,x_{k+1})\right)-\tilde{\zeta}_{k+2}\\[2.5pt]
    =& \theta_k-\eta_{k+1}\left(\nabla f(\theta_k)+e_{k+1}+\nu_{k+1}+\zeta_{k+1}\right)-\tilde{\zeta}_{k+2}\\[2.5pt]
    =& \theta_k-\eta_{k+1}\left(\nabla f(\theta_k)+e_{k+1}+\nu_{k+1}\right)-\tilde{\zeta}_{k+1}+\tilde{\zeta}_{k+2}-\tilde{\zeta}_{k+2}\\[2.5pt]
    =&\theta'_k-\eta_{k+1}\left(\nabla f(\theta_k)+e_{k+1}+\nu_{k+1}\right). 
\end{align*}
Note that for $p\geq 2$, we have
\begin{align}\label{eq:tildethetathetarelation}
    \expec{\|\theta_k-\theta^*\|_2^{p}}{}\leq 2^{p/2-1}\expec{\|\theta'_k-\theta^*\|_2^p}{}+2^{p/2-1}\expec{\|\tzt_{k+1}\|_2^p}{}\lesssim \expec{\|\theta'_k-\theta^*\|_2^p}{}+\eta_{k+1}^p.
\end{align}
Then, it is sufficient to establish bounds on $\expec{\|\theta'_k-\theta^*\|_2^2}{}$ and $\expec{\|\theta'_k-\theta^*\|_2^4}{}$, and then the proof follows from \eqref{eq:tildethetathetarelation}. 

To proceed, note that we have 
\begin{align}
    \|\theta'_{k+1}-\theta^*\|_2^2=&\|\theta'_{k}-\theta^*\|_2^2-2\eta_{k+1}\nabla f(\theta_k)^\top(\theta'_k-\theta^*)-2\eta_{k+1}\left(\nu_{k+1}+e_{k+1}\right)^\top(\theta'_k-\theta^*)\nonumber\\[2.5pt]
    &+\eta_{k+1}^2\norm{\nabla f(\theta_k)+\nu_{k+1}+e_{k+1}}_2^2\nonumber\\[2.5pt]
    \expec{\|\theta'_{k+1}-\theta^*\|_2^2|\cF_k}{}\lesssim &\|\theta'_{k}-\theta^*\|_2^2-2\eta_{k+1}\nabla f(\theta_k)^\top(\theta'_k-\theta^*)-2\eta_{k+1}\expec{\nu_{k+1}|\cF_k}{}^\top(\theta'_k-\theta^*)\nonumber\\[2.5pt]
    &+3\eta_{k+1}^2\left(\norm{\nabla f(\theta_k)}_2^2+C\right).\label{eq:tthetanormboundintermed}
\end{align}
Using Assumption~\ref{as:strongcon}, we have,
\begin{align}\label{eq:gradtthetainprod}
    \nabla f(\theta_k)^\top(\theta'_k-\theta^*)\geq \frac{\mu}{2}\|\theta_k-\theta^*\|_2^2+\nabla f(\theta_k)^\top\tzt_k\geq \frac{\mu}{4}\|\theta'_k-\theta^*\|_2^2-\frac{\mu}{2}\|\tzt_k\|_2^2+\nabla f(\theta_k)^\top\tzt_k. 
\end{align}
Then, from \eqref{eq:tthetanormboundintermed}, using \eqref{eq:gradtthetainprod}, and \eqref{eq:noisylipgrad} in Assumption~\ref{as:liangasA3}, we get
\begin{align*}
  &\expec{\|\theta'_{k+1}-\theta^*\|_2^2|\cF_k}{}\\[2.5pt]
  \lesssim &   \|\theta'_{k}-\theta^*\|_2^2-2\eta_{k+1}\left(\frac{\mu}{4}\|\theta'_k-\theta^*\|_2^2-\frac{\mu}{2}\|\tzt_k\|_2^2+\nabla f(\theta_k)^\top\tzt_k\right)-2\eta_{k+1}\expec{\nu_{k+1}|\cF_k}{}^\top(\theta'_k-\theta^*)\\[2.5pt]
  &+3\eta_{k+1}^2\left(\norm{\nabla f(\theta_k)}_2^2+C\right)\\[2.5pt]
  \lesssim & \|\theta'_{k}-\theta^*\|_2^2-\frac{\mu \eta_{k+1}}{2}\|\theta'_k-\theta^*\|_2^2+\mu \eta_{k+1}\|\tzt_k\|_2^2+\eta_{k+1}\left( \frac{\mu}{8}\|\theta_k-\theta^*\|_2^2+\frac{8L_G^2}{\mu}\|\tzt_k\|_2^2\right)\\[2.5pt]
  &+\eta_{k+1}\left(\frac{8}{\mu}\|\expec{\nu_{k+1}|\cF_k}{}\|_2^2+\frac{\mu}{8}\|\theta'_k-\theta^*\|_2^2\right)
  +3\eta_{k+1}^2\left(L_G^2\norm{\theta_k-\theta^*}_2^2+C\right)\\[2.5pt]
   \lesssim & \left(1-\frac{\mu \eta_{k+1}}{4}\right)\|\theta'_{k}-\theta^*\|_2^2+\mu \eta_{k+1}\|\tzt_k\|_2^2+\frac{8L_G^2\eta_{k+1}}{\mu}\|\tzt_k\|_2^2+\frac{8\eta_{k+1}}{\mu}\|\expec{\nu_{k+1}|\cF_k}{}\|_2^2\\[2.5pt]
  &+3\eta_{k+1}^2\left(L_G^2\norm{\theta'_k-\theta^*}_2^2+L_G^2\norm{\tzt_{k+1}}_2^2+C\right).
\end{align*}
Choosing $\eta_{k+1}\leq \frac{\mu}{24L_G^2}$, using \eqref{eq:tildezetabound}, \eqref{eq:nunormbound}, and taking expectation on both sides we get,
\begin{align}\label{eq:thetak22recur}
    \expec{\|\theta'_{k+1}-\theta^*\|_2^2}{}
    \lesssim & \left(1-\frac{\mu \eta_{k+1}}{8}\right)\expec{\|\theta'_{k}-\theta^*\|_2^2}{}+\mu \eta_{k+1}^3+\frac{8L_G^2\eta_{k+1}^3}{\mu}\\
    &\qquad+\frac{8\eta_{k+1}^3}{\mu}
  +3\eta_{k+1}^2\left(L_G^2\eta_{k+1}^2+C\right)\\
  \lesssim & \left(1-\frac{\mu \eta_{k+1}}{8}\right)\expec{\|\theta'_{k}-\theta^*\|_2^2}{}+\eta_{k+1}^2.
\end{align}
Similarly, we have
\begin{align*}
    \|\theta'_{k+1}-\theta^*\|_2^4& \leq \|\theta'_{k}-\theta^*\|_2^4-4\eta_{k+1}\nabla f(\theta_k)^\top (\theta'_k-\theta^*)\|\theta'_k-\theta^*\|_2^2\\[2.5pt]
    &~~~-4\eta_{k+1}(e_{k+1}+\nu_{k+1})^\top(\theta'_k-\theta^*)\|\theta'_k-\theta^*\|_2^2+\mu\eta_{k+1}\|\theta'_{k}-\theta^*\|_2^4\\[2.5pt]
    &~~~+\eta_{k+1}^3\|\theta'_k-\theta^*\|_2^4+\eta_{k+1}^3(9/\mu+3+\eta_{k+1})\|\nabla f(\theta_k)+e_{k+1}+\nu_{k+1}\|_2^4.
\end{align*}
Taking expectation with respect to $\cF_k$ on both sides we get, 
\begin{align*}
    &\expec{\|\theta'_{k+1}-\theta^*\|_2^4|\cF_k}{}\\[2.5pt]
    \lesssim &~\|\theta'_{k}-\theta^*\|_2^4-4\eta_{k+1}\left(\frac{3\mu}{8}\|\theta'_k-\theta^*\|_2^2-\frac{3\mu}{2}\|\tzt_{k+1}\|_2^2+\nabla f(\theta_k)^\top\tzt_{k+1}\right)\|\theta'_k-\theta^*\|_2^2\\[2.5pt]
    &~-4\eta_{k+1}\expec{\nu_{k+1}|\cF_k}{}^\top(\theta'_k-\theta^*)\|\theta'_k-\theta^*\|_2^2\\[2.5pt]&~+\mu\eta_{k+1}\|\theta'_{k}-\theta^*\|_2^4
    +\eta_{k+1}^3\|\theta'_k-\theta^*\|_2^4+\eta_{k+1}^3(9/\mu+3+\eta_{k+1})\left(\|\theta_k-\theta^*\|_2^4+C\right)\\[2.5pt]
    \lesssim &~\|\theta'_{k}-\theta^*\|_2^4-\frac{\eta_{k+1}\mu}{4}\|\theta'_k-\theta^*\|_2^4-4\eta_{k+1}\left(-\frac{3\mu}{2}\|\tzt_{k+1}\|_2^2+\nabla f(\theta_k)^\top\tzt_{k+1}\right)\|\theta'_k-\theta^*\|_2^2\\[2.5pt]
    &~+4\eta_{k+1}^2\|\theta'_k-\theta^*\|_2^3+\eta_{k+1}^3\left(\|\theta_k-\theta^*\|_2^4+C\right).
\end{align*}
Now, using Young's inequality and choosing $\eta_{k+1}\leq \min((\mu/48)^{3/2},\mu/16)$ we have,
\begin{align}
    &4\eta_{k+1}^2\|\theta'_k-\theta^*\|_2^3\leq \eta_{k+1}^3+3\eta_{k+1}^\frac{5}{3}\|\theta'_k-\theta^*\|_2^4\leq \eta_{k+1}^3+\frac{\eta_{k+1}\mu}{16}\|\theta'_k-\theta^*\|_2^4,\label{eq:eta2ttheta3}\\[2.5pt]
    &6\eta_{k+1}\|\tzt_{k+1}\|_2^2\|\theta'_k-\theta^*\|_2^2\leq 9\|\tzt_{k+1}\|_2^4+\eta_{k+1}^2\|\theta'_k-\theta^*\|_2^4\leq  9\|\tzt_{k+1}\|_2^4+\frac{\eta_{k+1}\mu}{16}\|\theta'_k-\theta^*\|_2^4,\label{eq:etazeta2tthteta2}\\[2.5pt]
    &4\eta_{k+1}\nabla f(\theta_k)^\top\tzt_{k+1}\|\theta'_k-\theta^*\|_2^2\lesssim \frac{\eta_{k+1}\mu}{16}\|\theta'_{k}-\theta^*\|_2^4+\eta_{k+1}\|\tzt_{k+1}\|_2^4. \label{eq:etanablazetatthteta2}
\end{align}
Using  \eqref{eq:eta2ttheta3}, \eqref{eq:etazeta2tthteta2}, \eqref{eq:etanablazetatthteta2}, and \eqref{eq:tildezetabound}, we get,
\begin{align}
    &\expec{\|\theta'_{k+1}-\theta^*\|_2^4|\cF_{k+1}}{}\lesssim \left(1-\frac{\mu\eta_{k+1}}{16}\right)\|\theta'_{k}-\theta^*\|_2^4+\|\tzt_{k+1}\|_2^4+\eta_{k+1}^3\nonumber\\[2.5pt]
    &\expec{\|\theta'_{k+1}-\theta^*\|_2^4}{}\lesssim \left(1-\frac{\mu\eta_{k+1}}{16}\right)\expec{\|\theta'_{k}-\theta^*\|_2^4}{}+\eta_{k+1}^3.\label{eq:thetak24recur}
\end{align}
Then, from \eqref{eq:thetak22recur}, and \eqref{eq:thetak24recur} we have,
\begin{align*}
\expec{\|\theta'_{k+1}-\theta^*\|_2^2}{}
    =O(\eta_{k+1}),\qquad\text{and}\qquad\expec{\|\theta'_{k+1}-\theta^*\|_2^4}{}= O(\eta_{k+1}^2).
\end{align*}
Using \eqref{eq:tildethetathetarelation}, we have,
\begin{align*}
\expec{\|\theta_{k+1}-\theta^*\|_2^2}{}=O(\eta_{k+1}),\qquad\text{and}\qquad\expec{\|\theta_{k+1}-\theta^*\|_2^4}{}= O(\eta_{k+1}^2).
\end{align*}
Now by Jensen's inequality we have,
\begin{align*}
    \expec{\|\theta_{k+1}-\theta^*\|_2}{}\leq \sqrt{\expec{\|\theta_{k+1}-\theta^*\|_2^2}{}}
    =O(\sqrt{\eta_{k+1}}).
\end{align*}
\end{proof}

\section{Proofs for Lemma used in Theorem~\ref{th:mainthm}}\label{sec:proofsthm21}
\begin{lemma}\label{lm:thetabarthetabarbound}
Let the conditions of Lemma~\ref{lm:poisregular} be true. Then, 
\begin{align}
    \expec{\norm{(\textstyle\sum_{i=1}^nl_i)^{-1}\sum_{i=1}^nl_i^2\bar{\vt}_n\bar{\vt}_n^\top}_2}{}\lesssim M^{-1}.\label{eq:thetabarthetabarbound}
\end{align}
\end{lemma}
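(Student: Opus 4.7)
Since $\bar{\vt}_n \bar{\vt}_n^\top$ is a rank-one matrix, its spectral norm equals $\norm{\bar{\vt}_n}_2^2$. Consequently the quantity to bound factors as a deterministic prefactor times an expectation:
\begin{align*}
\expec{\norm{(\textstyle\sum_{i=1}^n l_i)^{-1}\sum_{i=1}^n l_i^2 \bar{\vt}_n \bar{\vt}_n^\top}_2}{}
= \Bigl( \textstyle\sum_{i=1}^n l_i\Bigr)^{-1}\Bigl(\sum_{i=1}^n l_i^2\Bigr)\cdot \expec{\norm{\bar{\vt}_n}_2^2}{}.
\end{align*}
The plan is to evaluate the prefactor by a block-by-block calculation, and then to show $\expec{\norm{\bar{\vt}_n}_2^2}{} \lesssim d/n$ by an Abel-summation argument combined with the noise decomposition of Lemma~\ref{lm:poisregular}.

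First I would compute the prefactor. With $a_m \asymp m^{\beta}$ we have $n_m \asymp m^{\beta-1}$, and within block $m$ the sizes $l_i$ run from $1$ to $n_m$, so $\sum_{i \in B_m} l_i^2 \asymp n_m^3$ while $\sum_{i \in B_m} l_i \asymp n_m^2$. Summing over blocks gives $\sum_i l_i \asymp M^{2\beta-1}$ and $\sum_i l_i^2 \asymp M^{3\beta-2}$, so the prefactor is of order $M^{\beta-1} \asymp n/M$. It thus suffices to prove $\expec{\norm{\bar{\vt}_n}_2^2}{} \lesssim 1/n$ (with an implicit factor of $d$), since then the product is $\lesssim M^{-1}$.

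Next I would unroll the linear recursion in \eqref{eq:linit} to obtain, for each $k$,
\begin{align*}
\vt_k = W_0^k \vt_0 + \sum_{j=1}^k \eta_j W_j^k \xi_j(\theta_{j-1}, x_j),
\end{align*}
with the notation of \eqref{eq:wijdef}. Averaging and switching the order of summation yields
\begin{align*}
n\bar{\vt}_n = \sum_{k=1}^n W_0^k \vt_0 + \sum_{j=1}^n \eta_j (I + S_j^n)\xi_j(\theta_{j-1},x_j).
\end{align*}
By the standard Polyak--Ruppert-type estimate (Lemma~\ref{lm:Wijbound} and Lemma~\ref{lm:Snormbound}) one has $\norm{\eta_j(I+S_j^n) - Q^{-1}}_2 \lesssim j^{a-1} + \exp(-c(n^{1-a}-(j+1)^{1-a}))$, so after replacing $\eta_j(I+S_j^n)$ by $Q^{-1}$ and absorbing the residual into a negligible term, the contribution of the noise to $n\bar{\vt}_n$ is essentially $Q^{-1}\sum_{j=1}^n \xi_j(\theta_{j-1}, x_j)$.

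Finally I would decompose $\xi_j = e_j + \nu_j + \zeta_j$ via Lemma~\ref{lm:poisregular} and bound each of the three contributions to $\expec{\norm{n^{-1}Q^{-1}\sum_j \xi_j}_2^2}{}$: for the martingale part, orthogonality of increments gives $\expec{\norm{\sum_j Q^{-1} e_j}_2^2}{} \lesssim d\,n$, producing the target $d/n$ rate after the $n^{-2}$ normalization; for the $\nu_j$ part, $\expec{\norm{\nu_j}_2}{} = O(\eta_j)$ from \eqref{eq:nunormbound} gives $\expec{\norm{n^{-1}\sum_j Q^{-1}\nu_j}_2}{} \lesssim n^{-a}$, a lower-order term since $a>1/2$; and for the $\zeta_j$ part, the telescoping identity \eqref{eq:zetatelescopicsum} together with summation by parts reduces the contribution to $\expec{\norm{\sum_j \eta_j \zeta_j}_2}{} = O(\eta_n)$ from \eqref{eq:nunormbound}, which after the $n^{-1}$ normalization is again lower order. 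The initial-condition term $\sum_{k=1}^n W_0^k \vt_0$ is bounded in norm by $\sum_{k=1}^n \exp(-c k^{1-a}) \norm{\vt_0}_2 = O(1)$ by Lemma~\ref{lm:Wijbound}, so its contribution is $O(1/n^2)$ and is negligible.

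The main obstacle is controlling the $\nu_j$ contribution at the right order: unlike the martingale-difference $e_j$ terms, the $\nu_j$'s are not centered, and a naive $\ell_2$ bound via the triangle inequality yields a rate in $n^{1-a}$ rather than $\sqrt{n}$. The resolution is to use the fact that the effective weight applied to $\nu_j$ inside $n\bar{\vt}_n$ is (after the Abel-summation replacement above) bounded by $\norm{Q^{-1}}_2$ uniformly, combined with $\sum_j \eta_j < \infty$ in a summable sense after dividing by $n$, so that $n^{-2}\expec{\norm{\sum_j Q^{-1}\nu_j}_2^2}{} \lesssim n^{-2a}$, which is dominated by $n^{-1}$ since $a > 1/2$ by Assumption~\ref{as:liangasA4}. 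Combining all pieces gives $\expec{\norm{\bar{\vt}_n}_2^2}{} \lesssim d/n$, which multiplied by the prefactor $M^{\beta-1} \asymp n/M$ yields the claimed bound $\lesssim M^{-1}$.
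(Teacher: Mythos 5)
Your plan is essentially the paper's own proof: the paper likewise reduces $\norm{\bar\vt_n\bar\vt_n^\top}_2$ to $\norm{\bar\vt_n}_2^2$ (via the trace), unrolls the linear recursion \eqref{eq:linit}, splits the noise into the martingale, $\nu_k$, and telescoping $\zeta_k$ parts of Lemma~\ref{lm:poisregular}, and bounds each contribution to $\expec{\norm{\sum_k\vt_k}_2^2}{}$ by $O(n)$ using Lemma~\ref{lm:Wijbound}, Lemma~\ref{lm:Snormbound}, and \eqref{eq:nunormbound}, with the same prefactor arithmetic $(\sum_i l_i)^{-1}\sum_i l_i^2\asymp M^{\beta-1}$. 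The only substantive quibbles are cosmetic: the paper bounds $\eta_k\norm{\Id+S_k^n}_2\lesssim 1$ directly rather than swapping in $Q^{-1}$, it treats $\expec{\norm{e_k}_2^2}{}$ as an absolute constant (so no explicit $d$ appears, matching the stated bound $\lesssim M^{-1}$), and your aside that ``$\sum_j\eta_j<\infty$'' is misstated (Assumption~\ref{as:liangasA4} forces divergence) though the bound $n^{-2}(\sum_j\eta_j)^2\lesssim n^{-2a}\le n^{-1}$ you actually use is correct.
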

\begin{proof}[Proof of Lemma~\ref{lm:thetabarthetabarbound}]
First note that we have $\|\bar{\vt}_n\bar{\vt}_n^\top\|_2\leq \tr(\bar{\vt}_n\bar{\vt}_n^\top)$ since $\bar{\vt}_n\bar{\vt}_n^\top\succcurlyeq 0$. Through recursion from \eqref{eq:linit}, we also have that  $
    \vt_n=W_0^n\vt_0+\textstyle\sum_{k=1}^nW_k^n\eta_k\xi_{k}(\theta_{k-1},x_{k}).$
Using the decomposition \eqref{eq:decomp}, we have
\begin{align}\label{eq:A1A2A3def}
\begin{aligned}
     \textstyle    \sum_{k=1}^n\vt_k=&W_0^n\vt_0+\textstyle\sum_{k=1}^nW_k^n\eta_k\left(e_k+\nu_k+\zeta_k\right)\\[2.5pt]
     =&\underbrace{S_0^n\vt_0+\textstyle\sum_{k=1}^n\left(\Id+S^n_k\right)\eta_ke_k}_{\textsf{A}_1}+\underbrace{\textstyle\sum_{k=1}^n\left(\Id+S^n_k\right)\eta_k\nu_k}_{\textsf{A}_2}+\underbrace{\textstyle\sum_{k=1}^n\left(\Id+S^n_k\right)\eta_k\zeta_k}_{\textsf{A}_3}
\end{aligned}
\end{align}
Then, using \cs inequality, we have
\begin{align}\label{eq:thetabarthetabardecomp}
\begin{aligned}
    \expec{\norm{\bar{\vt}_n\bar{\vt}_n^\top}_2}{}=&n^{-2}\tr\big(\expec{\big((A_1+A_2+A_3)(A_1+A_2+A_3)^\top\big)}{}\big)\\[2.5pt]
    \lesssim & n^{-2}\tr\big(\expec{A_1A_1^\top}{}\big)+n^{-2}\tr\big(\expec{A_2A_2^\top}{}\big)+n^{-2}\tr\big(\expec{A_3A_3^\top}{}\big).
\end{aligned}
\end{align}
Since $\{e_k\}_k$ is a martingale-difference sequence, using \cs inequality, and Lemma~\ref{lm:Snormbound} we get, 
\begin{align}
    n^{-2}\tr(\expec{A_1A_1^\top}{})\lesssim& n^{-2}\tr\expec{S_0^n\vt_0(S_0^n\vt_0)^\top}{}+n^{-2}\tr\expec{\textstyle\sum_{k=1}^n\left(\Id+S^n_k\right)\eta_ke_k\left(\sum_{k=1}^n\left(\Id+S^n_k\right)\eta_ke_k\right)^\top}{}\nonumber\\[2.5pt]
    \lesssim & n^{-2}+n^{-2}\tr\expec{\textstyle\sum_{k=1}^n\eta_k^2\left(\Id+S^n_k\right)e_k\left(\left(\Id+S^n_k\right)e_k\right)^\top}{}\nonumber\\[2.5pt]
    \lesssim & n^{-2}+n^{-2}\textstyle\sum_{k=1}^n\eta_k^2\expec{\norm{\left(\Id+S^n_k\right)e_k}_2^2}{}\nonumber\\[2.5pt]
    \lesssim & n^{-2}+n^{-2}\textstyle\sum_{k=1}^n\expec{\norm{e_k}_2^2}{}\nonumber\\[2.5pt]
    =&O(n^{-1}).\label{eq:a1a1bound}
\end{align}
Next, using Lemma~\ref{lm:Snormbound} and \eqref{eq:nunormbound} we hence get
\begin{align*}
    \tr(\expec{A_2A_2^\top}{})=\expec{\norm{A_2}_2^2}{}\leq \expec{(\textstyle\sum_{k=1}^n\norm{(\Id+S^n_k)}_2\eta_k\norm{\nu_k}_2)^2}{}\lesssim& n\textstyle\sum_{k=1}^n\expec{\norm{\nu_k}_2^2}{}\lesssim n\sum_{k=1}^n\eta_k^2.
\end{align*}
By our choice of $\eta_k$ and noting that $a>1/2$, we have $ n\sum_{k=1}^n\eta k^{-2a}\lesssim n$. Hence,
\begin{align}\label{eq:a2a2bound}
    n^{-2}\tr(\expec{A_2A_2^\top}{})=O(n^{-1}).
\end{align}
Next, using \eqref{eq:tildezetabound}, \eqref{eq:zetatelescopicsum}, and Lemma~\ref{lm:Snormbound} we get,
\begin{align*}
    \tr(\expec{A_3A_3^\top}{})
    =&\expec{\norm{\textstyle\sum_{k=1}^n\left(\Id+S^n_k\right)\eta_k\zeta_k}_2^2}{}\\[2.5pt]
    =&\expec{\norm{\textstyle\sum_{k=1}^n\left(\Id+S^n_k\right)\left(\tilde{\zeta}_k-\tilde{\zeta}_{k+1}\right)}_2^2}{}\\[2.5pt]
    =&\expec{\norm{-\tilde{\zeta}_{n+1}+S_1^n\tilde{\zeta}_1+\textstyle\sum_{k=1}^{n-1}\left(S_{k+1}^n-S_k^n\right)\tilde{\zeta}_{k+1}}_2^2}{}\\[2.5pt]
    =&\expec{\norm{-\tilde{\zeta}_{n+1}+S_1^n\tilde{\zeta}_1+\textstyle\sum_{k=1}^{n-1}\left(\eta_{k+1}Q(\Id+S_{k}^n)-\Id\right)\tilde{\zeta}_{k+1}}_2^2}{}\\[2.5pt]
    \lesssim & 1+\expec{\big(\textstyle\sum_{k=1}^{n-1}\norm{\left(\eta_{k+1}Q(\Id+S_{k}^n)-\Id\right)\tilde{\zeta}_{k+1}}_2\big)^2}{}\\[2.5pt]
    \lesssim & 1+\expec{(\textstyle\sum_{k=1}^{n-1}(\eta_{k+1}k^a+1)\norm{\tilde{\zeta}_{k+1}}_2)^2}{}\\[2.5pt]
    \lesssim & 1+n\textstyle\sum_{k=1}^{n-1}\eta_{k+1}^2{}\\[2.5pt]
    \lesssim & n.
\end{align*}
Hence, we have that  
\begin{align}
    n^{-2}\tr(\expec{A_3A_3^\top}{})=O(n^{-1}).\label{eq:a3a3bound}
\end{align}
Combining \eqref{eq:thetabarthetabardecomp}, \eqref{eq:a1a1bound},\eqref{eq:a2a2bound}, and \eqref{eq:a3a3bound} proves Lemma~\ref{lm:thetabarthetabarbound}.
\end{proof}
\begin{lemma}\label{lm:thetathetasigmadiffbound}
Let Assumptions~\ref{as:liangasA2}-\ref{as:liangasA4} be true. Then,
\begin{align}\label{eq:thetathetasigmadiffbound}
\begin{aligned}
    &\expec{\norm{(\textstyle\sum_{i=1}^nl_i)^{-1}\textstyle\sum_{i=1}^n(\sum_{k=t_i}^i\theta_k)(\sum_{k=t_i}^i\theta_k)^
    \top-\Sigma}_2}{}\\[2.5pt] 
    \lesssim &((d/M)^{1/4}+C)M^{-a\beta/2}+ \sqrt{d}M^{(1-\beta(1-a))/2}+M^{-\frac{a\beta}{4}}\sqrt{d\log M}
    +dM^{1-\beta(1-a)}.
\end{aligned}
\end{align}
\end{lemma}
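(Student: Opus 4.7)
The plan is to use the triangle-inequality decomposition already displayed in \eqref{eq:lincoverrordecompmain}, namely to split the error into term $\textsf{I}$ (involving only the noise-summed quantities $Q^{-1}(\sum_k\xi_k)(\sum_k\xi_k)^\top Q^{-1}$), term $\textsf{II}=\expec{\|\slinv\sum_i\phi_i\phi_i^\top\|_2}{}$, and term $\textsf{III}=\expec{\|\slinv\sum_i\phi_i\psi_i^\top\|_2}{}$, where $\phi_i=S_{t_i-1}^i\vt_{t_i-1}+\sum_{k=t_i}^i(\eta_kS_k^i+\eta_k\Id-Q^{-1})\xi_k(\theta_{k-1},x_k)$. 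This decomposition is obtained by unrolling the linear recursion for $\vt_k$ from the block-start $t_i-1$, using the identity $\sum_{k=t_i}^i\vt_k=\phi_i+\psi_i$. Taking $Q=\nabla^2 f(\theta^*)$ ensures $\Sigma=Q^{-1}SQ^{-1}$, so that term $\textsf{I}$ reduces to $\|Q^{-1}\|_2^2\,\expec{\|S_n-S\|_2}{}$, which is bounded by Lemma~\ref{lm:snserrorboundmain} and supplies the $((d/M)^{1/4}+C)M^{-a\beta/2}$ and $M^{-a\beta/4}\sqrt{d\log M}$ contributions.

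First I will dispatch term $\textsf{II}$. The key step is a uniform bound on $\|\eta_k(S_k^i+\Id)-Q^{-1}\|_2$, which I will derive using Abel summation and the telescoping identity $\eta_k(S_k^i+\Id)-Q^{-1}=\sum_{j=k}^i(\eta_k-\eta_{j+1})W_k^j-Q^{-1}W_k^{i+1}$ together with Lemma~\ref{lm:Wijbound} and the step-size assumption in Assumption~\ref{as:liangasA4}; this produces a bound of order $k^{a-1}+\exp\bigl(-\tfrac{\gamma\eta}{1-a}((i+1)^{1-a}-(k+1)^{1-a})\bigr)$. Next I will bound $\expec{\|\phi_i\|_2^2}$ by plugging in the noise decomposition \eqref{eq:decomp}: the martingale part $e_k$ is handled by orthogonality, the bias part $\nu_k$ by $\expec{\|\nu_k\|_2}{}=O(\eta_k)$, and the telescoping part $\zeta_k$ by Abel summation together with $\expec{\|\tilde\zeta_k\|_2}{}=O(\eta_k)$. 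Combined with Lemma~\ref{lm:Snormbound} giving $\|S_{t_i-1}^i\|_2\lesssim t_i^a$ and the second-moment bound on $\vt_{t_i-1}$ (which scales like $O(d\eta_{t_i-1})$, inherited from the linear iterates via Lemma~\ref{lm:expecconviter} and \eqref{eq:tildethetathetarelation}), I expect to get $\expec{\|\phi_i\|_2^2}{}\lesssim d\,t_i^{2a}\eta_{t_i-1}+d\,l_i\sup_k\|\eta_k(S_k^i+\Id)-Q^{-1}\|_2^2$; aggregating over $i$ using $a_m\asymp m^\beta$ and $n_m\asymp m^{\beta-1}$, and dividing by $(\sum_i l_i)\asymp M^{2\beta-1}$, produces the $d\,M^{1-\beta(1-a)}$ contribution.

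Then I will handle term $\textsf{III}$ by \cs: $\|\sum_i\phi_i\psi_i^\top\|_2\le(\sum_i\|\phi_i\|_2^2)^{1/2}(\sum_i\|\psi_i\|_2^2)^{1/2}$, from which, using the already-established bound on $\sum_i\expec{\|\phi_i\|_2^2}{}$ and the rough bound $\expec{\|\psi_i\|_2^2}{}\lesssim d\,l_i$ (since $\psi_i$ is a linear combination of the near-martingale $\xi_k$'s with bounded second moment), the remaining term $\sqrt{d}\,M^{(1-\beta(1-a))/2}$ is obtained by taking a square root. Finally I will assemble the three contributions and verify that the $\sqrt{d}/\sqrt{M}$ term from Lemma~\ref{lm:snserrorboundmain} is dominated by $\sqrt{d}\,M^{(1-\beta(1-a))/2}$ for $\beta>1/(1-a)$, so that the stated bound is reached.

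The main obstacle will be the careful treatment of the Markovian bias inside $\phi_i$: because $\xi_k$ is not a martingale difference, the cross terms $\xi_j\xi_k^\top$ do not vanish, and one must separately control the contributions of $\nu_k$ (which requires the rate $O(\eta_k)$ from \eqref{eq:nunormbound}) and of $\zeta_k$ (which requires re-summing by parts to exploit the telescoping structure in \eqref{eq:zetatelescopicsum}). These are the analogues of terms $\mathsf V$, $\mathsf{VI}$, $\mathsf K_2$, $\mathsf K_3$ mentioned in the contributions, and showing that they do not inflate the rate compared to the $\iid$ case is the real technical work; once they are absorbed, the remaining arithmetic is essentially the same as in \cite{zhu2021online}.
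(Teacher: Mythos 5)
Your proposal is correct and follows essentially the same route as the paper: the same decomposition into terms $\textsf{I}$, $\textsf{II}$, $\textsf{III}$ from \eqref{eq:lincoverrordecompmain}, with $\textsf{I}$ handled by Lemma~\ref{lm:snserrorboundmain}, $\textsf{II}$ by splitting $\phi_i$ along the noise decomposition \eqref{eq:decomp} (the paper's terms $\mathsf{IV}$, $\mathsf{V}$, $\mathsf{VI}$, using exactly the Abel-summation and telescoping arguments you describe, with the bound on $\eta_k(S_k^i+\Id)-Q^{-1}$ imported from Lemma D.2(3) of \cite{chen2020statistical}), and $\textsf{III}$ by Cauchy--Schwarz against $\expec{\norm{\psi_i}_2^2}{}\lesssim l_i$. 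The only cosmetic difference is where the dimension factor is booked (you carry $d$ through the per-block moment bounds, the paper inserts it at the end), which does not change the final rate.
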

\begin{proof}[Proof of Lemma~\ref{lm:thetathetasigmadiffbound}]
The proof proceeds by bounding the terms in the decomposition introduced in \eqref{eq:lincoverrordecompmain}.\\

\noindent \textbf{Bound on I:} Since, $\Sigma=Q^{-1}SQ^{-1}$, from \eqref{eq:SnSnerrorbound}, we have,
\begin{align}\label{eq:boundonI}
\begin{aligned}
    &\expec{\norm{(\textstyle\sum_{i=1}^n l_i)^{-1}\sum_{i=1}^n(Q^{-1}(\sum_{k=t_i}^i\xi_k(\theta_{k-1},x_k))(\sum_{k=t_i}^i\xi_k(\theta_{k-1},x_k))^
    \top Q^{-1}-\Sigma)}_2}{}\\[2.5pt]
    \lesssim &((d/M)^{1/4}+C)M^{-a\beta/2}+M^{-\frac{a\beta}{4}}\sqrt{d\log M}+\sqrt{d}M^{-1/2}.
\end{aligned}
\end{align}
\textbf{Bound on II:} Using Young's inequality we have,
\begin{align}\label{eq:IVVVIdef}
\expec{\norm{\phi_i\phi_i^\top}_2}{}=&\expec{\norm{\phi_i}_2^2}{}\nonumber\\[2.5pt]
   =& \expec{\norm{S_{t_i-1}^i\vt_{t_i-1}+\textstyle\sum_{k=t_i}^i\left(\eta_kS_k^i+\eta_k\Id-Q^{-1}\right)\xik}_2^2}{}\nonumber\\[2.5pt]
   =&\expec{\norm{S_{t_i-1}^i\vt_{t_i-1}+\textstyle\sum_{k=t_i}^i\left(\eta_kS_k^i+\eta_k\Id-Q^{-1}\right)(e_k+\nu_k+\zeta_k)}_2^2}{}\nonumber\\[2.5pt]
   \lesssim & \underbrace{\expec{\norm{S_{t_i-1}^i\vt_{t_i-1}+\textstyle\sum_{k=t_i}^i\left(\eta_kS_k^i+\eta_k\Id-Q^{-1}\right)e_k}_2^2}{}}_{\mathsf{IV}}
   +\underbrace{\expec{\norm{\textstyle\sum_{k=t_i}^i\left(\eta_kS_k^i+\eta_k\Id-Q^{-1}\right)\nu_k}_2^2}{}}_{\mathsf{V}}\nonumber\\[2.5pt]
   &+\underbrace{\expec{\norm{\textstyle\sum_{k=t_i}^i\left(\eta_kS_k^i+\eta_k\Id-Q^{-1}\right)\zeta_k}_2^2}{}}_{\mathsf{VI}}
\end{align}
To bound term \textsf{II}, we first bound terms \textsf{IV}, \textsf{V} and \textsf{VI} below.\\

\noindent \textit{Bound on \textsf{IV}:} Using Young's inequality we have,
\begin{align*}
    &\expec{\norm{S_{t_i-1}^i\theta_{t_i-1}+\textstyle\sum_{k=t_i}^i\left(\eta_kS_k^i+\eta_k\Id-Q^{-1}\right)e_k}_2^2}{}\\[2.5pt]
    \lesssim & \expec{\norm{S_{t_i-1}^i\theta_{t_i-1}}_2^2}{}+\expec{\norm{\textstyle\sum_{k=t_i}^i\left(\eta_kS_k^i+\eta_k\Id-Q^{-1}\right)e_k}_2^2}{}\\[2.5pt]
    \leq & \norm{S_{t_i-1}^i}_2^2\expec{\norm{\theta_{t_i-1}}_2^2}{}+\expec{\textstyle\sum_{k=t_i}^i\norm{\left(\eta_kS_k^i+\eta_k\Id-Q^{-1}\right)e_k}_2^2}{}\\[2.5pt]
    \lesssim & \norm{S_{t_i-1}^i}_2^2\expec{\norm{\theta_{t_i-1}}_2^2}{}+\expec{\textstyle\sum_{k=t_i}^i\left(\norm{\left(\eta_kS_k^i-Q^{-1}\right)}_2^2+\norm{\eta_k\Id}_2^2\right)\norm{e_k}_2^2}{}.
\end{align*}
Combining Lemma~\ref{lm:Snormbound}, and Lemma~\ref{lm:expecconviter} we get,
\begin{align}\label{eq:ivbound1}
    \norm{S_{t_i-1}^i}_2^2\expec{\norm{\vt_{t_i-1}}_2^2}{}\lesssim (t_i-1)^a.  
\end{align}
From Lemma D.2(3) of \cite{chen2020statistical}, and Lemma~\ref{lm:poisregular}, we have,
\begin{align}\label{eq:ivbound2}
    \expec{\textstyle\sum_{k=t_i}^i\norm{\left(\eta_kS_k^i-Q^{-1}\right)}_2^2\norm{e_k}_2^2}{}\lesssim i^a+l_it_i^{2a-2}. 
\end{align}
We also have,
\begin{align}\label{eq:ivbound3}
    \expec{\textstyle\sum_{k=t_i}^i\norm{\eta_k\Id}_2^2\norm{e_k}_2^2}{}\lesssim 
& \textstyle\sum_{k=t_i}^i k^{-2a}=\int_{t_i}^ik^{-2a}\mathrm{d}k=\frac{1}{2a-1}(t_i^{1-2a}-(t_i+l_i)^{1-2a})\nonumber\\[2.5pt]
\leq & \frac{1}{2a-1}\big(t_i^{1-2a}-t_i^{1-2a}\big(1-(2a-1){l_i}/{t_i}\big)\big) 
\leq l_it_i^{-2a}. 
\end{align}
For $t_i=a_m$, combining \eqref{eq:ivbound1}, \eqref{eq:ivbound2}, and \eqref{eq:ivbound3} we get,
\begin{align}\label{eq:boundoniv}
    \expec{\norm{S_{t_i-1}^i\vt_{t_i-1}+\textstyle\sum_{k=t_i}^i\left(\eta_kS_k^i+\eta_k\Id-Q^{-1}\right)e_k}_2^2}{}
     \lesssim i^a+l_ia_m^{2a-2}. \\
\end{align}
\noindent \textit{Bound on \textsf{V}:} Using Lemma~\ref{lm:poisregular}, and using $\textstyle\sum_{k=t_i}^i k^{-2a}\leq l_it_i^{-2a}$, we have that 
\begin{align}\label{eq:boundonv}
\begin{aligned}
    \expec{\norm{\textstyle\sum_{k=t_i}^i\left(\eta_kS_k^i+\eta_k\Id-Q^{-1}\right)\nu_k}_2^2}{}
    \lesssim &~l_i\textstyle\sum_{k=t_i}^i\expec{\norm{\left(\eta_kS_k^i+\eta_k\Id-Q^{-1}\right)}_2^2\norm{\nu_k}_2^2}{}\\[2.5pt]
    \lesssim &~l_i\textstyle\sum_{k=t_i}^i\expec{\left(\eta_k^2k^{2a}+\eta_k^2+1\right)\norm{\nu_k}_2^2}{}\\[2.5pt]
    \lesssim &~l_i\textstyle\sum_{k=t_i}^i\eta_k^2
    \lesssim  l_i^2t_i^{-2a}.
\end{aligned}
\end{align}

\noindent \textit{Bound on \textsf{VI}:} Using Lemma~\ref{lm:poisregular},
\begin{align}\label{eq:boundonvi}
    &\expec{\norm{(\textstyle\sum_{k=t_i}^i\left(\eta_kS_k^i+\eta_k\Id-Q^{-1}\right)\zeta_k)}_2^2}{}\nonumber\\[2.5pt]
    \lesssim & \expec{\norm{\textstyle\sum_{k=t_i}^i\eta_kS_k^i\zeta_k}_2^2}{}+\expec{\norm{\textstyle\sum_{k=t_i}^i\eta_k\zeta_k}_2^2}{}+\expec{\norm{\textstyle\sum_{k=t_i}^i\zeta_k}_2^2}{}\nonumber\\[2.5pt]
    = & \expec{\norm{\textstyle\sum_{k=t_i}^iS_k^i(\tzk-\tilde{\zeta}_{k+1})}_2^2}{}+\expec{\norm{\textstyle\sum_{k=t_i}^i(\tzk-\tilde{\zeta}_{k+1})}_2^2}{}+\expec{\norm{\textstyle\sum_{k=t_i}^i(\tzk-\tilde{\zeta}_{k+1})/\eta_k}_2^2}{}\nonumber\nonumber\\[2.5pt]
     = & \expec{\norm{S_{t_i}^i\tilde{\zeta}_{t_i}+\textstyle\sum_{k={t_i+1}}^{i}(S_{k}^i-S_{k-1}^i)\tzk}_2^2}{}+\expec{\norm{\tilde{\zeta}_{t_i}-\tilde{\zeta}_{i+1}}_2^2}{}\nonumber\\[2.5pt]
     &\qquad+\expec{\norm{({\tilde{\zeta}_{t_i}}/{\eta_{t_i}})-({\tilde{\zeta}_{i+1}}/{\eta_{i})}+\textstyle\sum_{k=t_i+1}^i\left({1}/{\eta_{k}}-{1}/{\eta_{k-1}}\right)\tzk}_2^2}{}\nonumber\\[2.5pt]
      \lesssim & C+\expec{\norm{\textstyle\sum_{k={t_i+1}}^{i}\eta_{k+1}Q(\Id+S_{k}^i)\tzk}_2^2}{}+\expec{\norm{\textstyle\sum_{k=t_i+1}^ik^{a-1}\tzk}_2^2}{}\nonumber\\[2.5pt]
      \leq & C+l_i\textstyle\sum_{k={t_i+1}}^{i}\expec{\norm{\eta_{k+1}Q(\Id+S_{k}^i)\tzk}_2^2}{}+l_i\textstyle\sum_{k=t_i+1}^i\expec{\norm{k^{a-1}\tzk}_2^2}{}\nonumber\\[2.5pt]
      \leq & C+l_i\textstyle\sum_{k={t_i+1}}^{i}k^{-2a}+l_i\textstyle\sum_{k=t_i+1}^ik^{-2}\nonumber\\[2.5pt]
      \lesssim & C+l_i^2t_i^{-2a}+l_i^2t_i^{-2}.
\end{align}
Combining \eqref{eq:boundoniv}, \eqref{eq:boundonv}, and \eqref{eq:boundonvi}, for $t_i=a_m$, we have,
\begin{align}
    \expec{\norm{\phi_i\phi_i^\top}_2}{}\leq \left(C+l_i^2a_m^{-2a}+l_i^2a_m^{-2}+i^a+l_ia_m^{2a-2}\right) \label{eq:expecphiphibound}\\
    \nonumber
\end{align}

Now we establish the bound on term II. Using \eqref{eq:expecphiphibound}, we have 
\begin{align*}
    \expec{\norm{\slinv\sum_{i=1}^n\phi_i\phi_i^\top}_2}{}\leq \slinv\sum_{i=1}^n\expec{\norm{\phi_i\phi_i^\top}_2}{}
\end{align*}
Note that we also have 
\begin{align*}
    &n \slinv\leq M^{1-\beta}, \qquad \slinv\sum_{i=1}^n(i^a+l_ia_M^{2a-2})\lesssim M^{1-\beta(1-a)},
\end{align*}
and
\begin{align*}
    \slinv\sum_{i=1}^nl_i^{2}a_m^{-2a}&=\big(\textstyle\sum_{m=1}^Mn_m^2\big)^{-1}\sum_{m=1}^M\sum_{i=a_m}^{a_{m+1}-1}l_i^{2}a_m^{-2a}\\[2.5pt]
    &=\big(\textstyle\sum_{m=1}^Mn_m^2\big)^{-1}\sum_{m=1}^Mn_m^{3}a_m^{-2a}\\[2.5pt]
    &\lesssim M^{-1-\beta(2a-1)},
\end{align*}
and
\begin{align*}
    \slinv\sum_{i=1}^nl_i^{2}a_m^{-2}&=\big(\textstyle\sum_{m=1}^Mn_m^2\big)^{-1}\textstyle\sum_{m=1}^M\sum_{i=a_m}^{a_{m+1}-1}l_i^{2}a_m^{-2}\\[2.5pt]
    &=\big(\textstyle\sum_{m=1}^Mn_m^2\big)^{-1}\sum_{m=1}^Mn_m^{3}a_m^{-2}\\[2.5pt]
    &\lesssim M^{-1-\beta},
\end{align*}
Hence, we have the following bound on term II:
 \begin{align}
    \expec{\norm{\slinv\sum_{i=1}^n\phi_i\phi_i^\top}_2}{}\lesssim M^{1-\beta(1-a)}.\label{eq:boundonII}
\end{align}

\noindent \textbf{Bound on III:} Using \cs inequality we get,
\begin{align}
    \expec{\norm{\slinv\sum_{i=1}^n\phi_i\psi_i^\top}_2}{}\leq\sqrt{\frac{\sum_{i=1}^n\expec{\norm{\phi_i\phi_i^\top}_2}{}}{\sum_{i=1}^nl_i}}\,\sqrt{\frac{\sum_{i=1}^n\expec{\norm{\psi_i\psi_i^\top}_2}{}}{\sum_{i=1}^nl_i}}. \label{eq:phipsics}
\end{align}
Since we have the bound on $\expec{\norm{\phi_i\phi_i^\top}_2}{}$ in \eqref{eq:expecphiphibound}, we only need to establish an bound on $\expec{\norm{\psi_i\psi_i^\top}_2}{}$. Towards that, we have 
\begin{align*}
    \expec{\norm{\psi_i\psi_i^\top}_2}{}=&\expec{\norm{(\textstyle\sum_{k=t_i}^iQ^{-1}\xi_k(\theta_{k-1},x_k))(\sum_{k=t_i}^iQ^{-1}\xi_k(\theta_{k-1},x_k))^\top}_2}{}\\[2.5pt]
    \leq & \expec{\norm{\textstyle\sum_{k=t_i}^iQ^{-1}\xi_k(\theta_{k-1},x_k)}_2^2}{}\\[2.5pt]
    \lesssim & \expec{\norm{\textstyle\sum_{k=t_i}^iQ^{-1}e_k}_2^2}{}+\expec{\norm{\textstyle\sum_{k=t_i}^iQ^{-1}\nu_k}_2^2}{}+\expec{\norm{\textstyle\sum_{k=t_i}^iQ^{-1}\zeta_k}_2^2}{}
\end{align*}
Since $e_k$ is a martingale-difference sequence, for any $i>j$ we have, 
\begin{align*}
    \expec{e_i^\top e_j}{}=\expec{\expec{e_i^\top e_j|\cF_{i-1}}{}}{}=\expec{\expec{e_i |\cF_{i-1}}{}^\top e_j}{}=0.
\end{align*}
Then,
\begin{align}
    \expec{\norm{\textstyle\sum_{k=t_i}^iQ^{-1}e_k}_2^2}{}\leq \textstyle\sum_{k=t_i}^i\expec{\norm{e_k}_2^2}{}\lesssim l_i. \label{eq:psipsicomp1}
\end{align}
From \eqref{eq:boundonv}, we have,
\begin{align}
    \expec{\norm{\textstyle\sum_{k=t_i}^iQ^{-1}\nu_k}_2^2}{}\lesssim l_i^2t_i^{-2a}.\label{eq:psipsicomp2}
\end{align}
For $t_i=a_m$, from \eqref{eq:boundonvi} we have,
\begin{align}
    \expec{\norm{\textstyle\sum_{k=t_i}^iQ^{-1}\zeta_k}_2^2}{}\lesssim l_i^2t_i^{-2}.\label{eq:psipsicomp3}
\end{align}
Combining \eqref{eq:psipsicomp1}, \eqref{eq:psipsicomp2}, and \eqref{eq:psipsicomp3}, we get that
\begin{align}
    \frac{\sum_{i=1}^n\expec{\norm{\psi_i\psi_i^\top}_2}{}}{\sum_{i=1}^nl_i}=O(1). \label{eq:psipsilinv}
\end{align}
From \eqref{eq:phipsics}, and \eqref{eq:psipsilinv}, we have,
\begin{align}
    \expec{\norm{\slinv\sum_{i=1}^n\phi_i\psi_i^\top}_2}{}=O\left(M^{(1-\beta(1-a))/2}\right).\label{eq:boundonIII}
\end{align}
Combining \eqref{eq:boundonI}, \eqref{eq:boundonII}, and \eqref{eq:boundonIII}, we get,
\begin{align*}
    &\expec{\norm{(\textstyle\sum_{i=1}^nl_i)^{-1}\textstyle\sum_{i=1}^n(\textstyle\sum_{k=t_i}^i\theta_k)(\textstyle\sum_{k=t_i}^i\theta_k)^
    \top-\Sigma}_2}{}\\[2.5pt]
    \lesssim &\big((d/M)^{1/4}+C\big)M^{-a\beta/2}+M^{-\frac{a\beta}{4}}\sqrt{d\log M}
    +dM^{1-\beta(1-a)}
    + \sqrt{d}M^{(1-\beta(1-a))/2}.
\end{align*}
\end{proof}
\begin{lemma}\label{lm:thetakthetabarbound}
Let Assumptions~\ref{as:liangasA2}-\ref{as:liangasA4} be true. Then,
\begin{align*}
    \expec{\norm{(\textstyle\sum_{i=1}^nl_i)^{-1}\sum_{i=1}^n(\sum_{k=t_i}^i\vt_k)l_i\bar{\vt}_n^\top}_2}{}\lesssim M^{-a\beta/2}. 
\end{align*}
\end{lemma}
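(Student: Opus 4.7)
The plan is to exploit the fact that, since $\bar{\vt}_n$ does not depend on the summation index $i$, the quantity inside the norm is actually an outer product of two random vectors. Writing $G_n \coloneqq \slinv \sum_{i=1}^n l_i \sum_{k=t_i}^i \vt_k$, we have
\[
\slinv \sum_{i=1}^n \Bigl(\sum_{k=t_i}^i \vt_k\Bigr) l_i \bar{\vt}_n^\top \;=\; G_n\, \bar{\vt}_n^\top,
\]
so $\norm{G_n \bar{\vt}_n^\top}_2 = \norm{G_n}_2 \norm{\bar{\vt}_n}_2$. Cauchy--Schwarz in expectation then separates the two factors: $\expec{\norm{G_n \bar{\vt}_n^\top}_2}{} \leq \sqrt{\expec{\norm{G_n}_2^2}{}}\,\sqrt{\expec{\norm{\bar{\vt}_n}_2^2}{}}$. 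The second factor is immediately furnished by the computation in Lemma~\ref{lm:thetabarthetabarbound}, which gives $\expec{\norm{\bar{\vt}_n}_2^2}{} \lesssim n^{-1} \asymp M^{-\beta}$ by combining the bounds on $\tr(\expec{A_jA_j^\top}{})$ for $j=1,2,3$.

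The core task is to bound $\expec{\norm{G_n}_2^2}{}$. Since $\{l_i/\sum_j l_j\}$ is a probability distribution, Jensen's inequality applied to $\|\cdot\|_2^2$ gives
\[
\expec{\norm{G_n}_2^2}{} \;\leq\; \slinv \sum_{i=1}^n l_i \, \expec{\Bigl\|\textstyle\sum_{k=t_i}^i \vt_k\Bigr\|_2^2}{}.
\]
To control the per-batch term I will recycle the decomposition $\sum_{k=t_i}^i \vt_k = \phi_i + \psi_i$ introduced in the proof of Lemma~\ref{lm:thetathetasigmadiffbound}, with $\psi_i = Q^{-1}\sum_{k=t_i}^i\xi_k$ and $\phi_i = S_{t_i-1}^i \vt_{t_i-1} + \sum_{k=t_i}^i(\eta_kS_k^i+\eta_k\Id-Q^{-1})\xi_k$. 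The already-derived bounds \eqref{eq:expecphiphibound} and the martingale-plus-telescoping bounds around \eqref{eq:psipsilinv} then yield $\expec{\norm{\sum_{k=t_i}^i \vt_k}_2^2}{} \lesssim C + i^a + l_i a_m^{2a-2} + l_i^2 a_m^{-2a} + l_i^2 a_m^{-2}$, where $t_i=a_m$.

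With the parameter choice $a_m = O(\lfloor m^\beta\rfloor)$ so that $n_m \asymp m^{\beta-1}$ and $\sum_i l_i \asymp M^{2\beta-1}$, summing these contributions term by term reduces to elementary series in $m$. The dominant one comes from the $i^a$ summand: one has $\sum_m n_m^2 \, m^{a\beta} \asymp M^{2\beta-1+a\beta}$, so $\slinv \sum_i l_i\, i^a \asymp M^{a\beta}$, and the remaining summands are of strictly smaller order once Assumption~\ref{as:liangasA4}'s relation $\beta > 1/(1-a)$ is used to compare exponents. Finally substituting $\expec{\norm{G_n}_2^2}{}\lesssim M^{a\beta}$ and $\expec{\norm{\bar{\vt}_n}_2^2}{}\lesssim M^{-\beta}$ into the Cauchy--Schwarz bound gives $\expec{\norm{G_n\bar{\vt}_n^\top}_2}{}\lesssim \sqrt{M^{a\beta}\cdot M^{-\beta}} = M^{-(1-a)\beta/2}$, which under the assumed range of $a,\beta$ is of the stated order $M^{-a\beta/2}$.

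The main obstacle is the careful accounting of the dominant term while ensuring that the cross-block contributions to $\expec{\norm{G_n}_2^2}{}$ (which appear because, after swapping the order of summation, $\sum_i l_i \bar X_i = \sum_k c_k \vt_k$ with non-uniform weights $c_k \leq n_{m}^2$) do not blow up. Here one uses that the effective decay length $\eta_j^{-1}\asymp j^a$ of the operator $W_j^k$ appearing in the closed form of $\vt_k$ is small compared to the block size $n_m$ (since $a\beta < \beta-1$ under Assumption~\ref{as:liangasA4}), so the martingale-difference piece $\sum_k c_k e_k$ of $\sum_k c_k\vt_k$ decorrelates across blocks and its variance can be controlled by $\sum_k c_k^2\expec{\norm{e_k}_2^2}{}$; the remaining $\nu_k$ and $\zeta_k$ pieces from \eqref{eq:decomp} are handled exactly as in Lemmas~\ref{lm:thetabarthetabarbound} and~\ref{lm:thetathetasigmadiffbound} via \eqref{eq:nunormbound} and the telescoping identity \eqref{eq:zetatelescopicsum}.
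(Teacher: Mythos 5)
Your overall architecture is sound and close to the paper's: the paper also proves this lemma by Cauchy--Schwarz, splitting off a factor controlled by Lemma~\ref{lm:thetabarthetabarbound} and a factor controlled by $\slinv\sum_i\expec{\norm{\sum_{k=t_i}^i\vt_k}_2^2}{}$ (see \eqref{eq:thetakthetabarboundcs} and \eqref{eq:thetaithetakbound}); your rank-one factorization $G_n\bar{\vt}_n^\top$ combined with Jensen is an equivalent repackaging, and your bound $\expec{\norm{\bar{\vt}_n}_2^2}{}\lesssim n^{-1}\asymp M^{-\beta}$ is correct. However, there are two genuine errors in the quantitative bookkeeping. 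First, you misidentify the dominant contribution to $\expec{\norm{\sum_{k=t_i}^i\vt_k}_2^2}{}$: the martingale piece of $\psi_i$ alone contributes $\expec{\norm{\sum_{k=t_i}^iQ^{-1}e_k}_2^2}{}\asymp l_i$ (this is exactly \eqref{eq:psipsicomp1}, and it cannot be dropped, since each block must carry variance of order $l_i\cdot S$ for the estimator to work at all). Because Assumption~\ref{as:liangasA4} forces $\beta(1-a)>1$, i.e.\ $\beta-1>a\beta$, one has $l_i\asymp m^{\beta-1}\gg m^{a\beta}\asymp i^a$, so $l_i$ dominates $i^a$, not the reverse. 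With the $l_i$ term restored, your Jensen step gives $\expec{\norm{G_n}_2^2}{}\lesssim\slinv\sum_{i=1}^nl_i^2\asymp M^{\beta-1}$, not $M^{a\beta}$. Second, even taking your intermediate bounds at face value, the last step fails: since $a>1/2$ we have $(1-a)\beta<a\beta$, so $M^{-(1-a)\beta/2}$ is \emph{larger} than $M^{-a\beta/2}$; the claim that your bound ``is of the stated order'' has the inequality backwards.

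For what it is worth, correcting the first error turns your chain into $\sqrt{M^{\beta-1}\cdot M^{-\beta}}=M^{-1/2}$, which is also what the paper's own display \eqref{eq:thetaithetakbound} actually delivers once the constant term $\slinv\sum_il_i=O(1)$ appearing there is retained (the paper's final line $\lesssim M^{1-a\beta}$ likewise presupposes $a\beta\le 1$, which is incompatible with $a>1/2$ and $\beta>1/(1-a)$). An $M^{-1/2}$ rate is harmless downstream, since Lemma~\ref{lm:part1} already carries a $\sqrt{d}\,M^{-1/2}$ term, but it is not the $M^{-a\beta/2}$ rate asserted in the lemma, and your argument as written does not establish that rate.
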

\begin{proof}[Proof of Lemma~\ref{lm:thetakthetabarbound}]\label{pf:thetakthetabarbound}
Applying Cauchy-Schwarz inequality, 
\begin{align}
    \expec{\norm{(\textstyle\sum_{i=1}^nl_i)^{-1}\sum_{i=1}^n(\sum_{k=t_i}^i\vt_k)l_i\bar{\vt}_n^\top}_2}{}
    \leq \textstyle\sqrt{\frac{\expec{\norm{\sum_{i=1}^n(\sum_{k=t_i}^i\vt_k)(\sum_{k=t_i}^i\vt_k)^\top}_2}{}}{\sum_{i=1}^nl_i}}\, \sqrt{\frac{\expec{\norm{\sum_{i=1}^nl_i^2\bar{\vt}_n\bar{\vt}_n^\top}_2}{}}{\sum_{i=1}^nl_i}}. \label{eq:thetakthetabarboundcs}
\end{align}
Now plugging $\textstyle\vt_i=W^i_0\vt_0+\sum_{k=1}^iW_k^i\eta_k\xik$, using triangle inequality, and Lemma~\ref{lm:poisregular}, for $t_i=a_m$, we get
\begin{align}\label{eq:thetaithetakintermed}
\begin{aligned}
    &\expec{\norm{\textstyle \sum_{i=1}^n(\textstyle\sum_{k=t_i}^i\vt_k)(\textstyle\sum_{k=t_i}^i\vt_k)^\top}_2}{}\\[2.5pt]
    \leq &\expec{\textstyle\sum_{i=1}^n\norm{(\textstyle\sum_{k=t_i}^i\vt_k)(\textstyle\sum_{k=t_i}^i\vt_k)^\top}_2}{}\\[2.5pt]
    =&\expec{\textstyle\sum_{i=1}^n\norm{(\textstyle\sum_{k=a_m}^i(W^k_0\vt_0+\textstyle\sum_{p=1}^kW_p^k\eta_p\xi_p(\theta_{p-1},x_p)))(\textstyle\sum_{k=a_m}^i(W^k_0\vt_0+\textstyle\sum_{p=1}^kW_p^k\eta_p\xi_p(\theta_{p-1},x_p)))^\top}_2}{}\\[2.5pt]
     =&\expec{\textstyle\sum_{i=1}^n\norm{\textstyle\sum_{k=a_m}^i(W^k_0\vt_0+\textstyle\sum_{p=1}^kW_p^k\eta_p\xi_p(\theta_{p-1},x_p))}_2^2}{}\\[2.5pt]
    =&\textstyle\sum_{i=1}^n\expec{\norm{\textstyle\sum_{k=a_m}^iW^k_0\vt_0+\textstyle\sum_{p=1}^i(\textstyle\sum_{k=\max(p,a_m)}^iW_p^k)\eta_p\xi_p(\theta_{p-1},x_p)}_2^2}{}\\[2.5pt]
    = & \textstyle\sum_{i=1}^n\expec{\norm{\textstyle\sum_{k=a_m}^iW^k_0\vt_0+\textstyle\sum_{p=1}^i(\textstyle\sum_{k=\max(p,a_m)}^iW_p^k)\eta_p(e_p+\nu_p+\zeta_p)}^2_2}{}\\[2.5pt]
    \lesssim & \textstyle\sum_{i=1}^n\expec{\norm{\sum_{k=a_m}^iW^k_0\vt_0}^2_2}{}+\underbrace{\textstyle\sum_{i=1}^n\expec{\norm{\sum_{p=1}^i(\textstyle\sum_{k=\max(p,a_m)}^iW_p^k)\eta_pe_p}^2_2}{}}_{\mathsf{K}_1}\\[2.5pt]
    &+\underbrace{\textstyle\sum_{i=1}^n\expec{\norm{\textstyle\sum_{p=1}^i(\textstyle\sum_{k=\max(p,a_m)}^iW_p^k)\eta_p\nu_p}^2_2}{}}_{\mathsf{K}_2}+\underbrace{\textstyle\sum_{i=1}^n\expec{\norm{\textstyle\sum_{p=1}^i(\textstyle\sum_{k=\max(p,a_m)}^iW_p^k)\eta_p\zeta_p}^2_2}{}}_{\mathsf{K}_3}.
\end{aligned}
\end{align}
Using Lemma~\ref{lm:Wijbound} we get,
\begin{align}\label{eq:w0theta0bound}
\begin{aligned}
\textstyle    \sum_{i=1}^n\expec{\norm{\sum_{k=a_m}^iW^k_0\vt_0}_2^2}{}&\leq \textstyle\sum_{i=1}^n\expec{l_i\sum_{k=a_m}^i\norm{W^k_0}_2^2\norm{\vt_0}_2^2}{}\\[2.5pt]
\textstyle \lesssim \sum_{i=1}^nl_i\sum_{k=a_m}^i&\exp\big(-\textstyle\frac{2\gamma\eta}{1-a}k^{1-a}\big)\lesssim \sum_{i=1}^nl_i.
\end{aligned}
\end{align}
From \cite[Equation (87)]{zhu2021online}, we get,
\begin{align}
 \textstyle   \sum_{i=1}^n\expec{\norm{\sum_{p=1}^i(\sum_{k=\max(p,a_m)}^iW_p^k)\eta_pe_p}^2_2}{}\lesssim \sum_{i=1}^nl_i. \label{eq:wetaepbound}
\end{align}
Now we bound the rest of the terms of \eqref{eq:thetaithetakintermed}. Using Lemma~\ref{lm:poisregular}, we get
\begin{align*}
   &\expec{\norm{ \textstyle\sum_{p=1}^i( \textstyle\sum_{k=\max(p,a_m)}^iW_p^k)\eta_p\zeta_p}^2_2}{}\\[2.5pt]
    =&\expec{\norm{ \textstyle\sum_{p=1}^i( \textstyle\sum_{k=\max(p,a_m)}^iW_p^k)(\tilde{\zeta}_p-\tilde{\zeta}_{p+1})}^2_2}{}\\[2.5pt]
    =&\expec{\norm{( \textstyle\sum_{k=a_m}^iW_1^k)\tilde{\zeta}_1-W_i^i\tilde{\zeta}_{i+1}+ \textstyle\sum_{p=2}^i( \textstyle\sum_{k=\max(p,a_m)}^iW_p^k- \textstyle\sum_{k=\max(p-1,a_m)}^iW_{p-1}^k)\tilde{\zeta}_p}^2_2}{}\\[2.5pt]
    =&\expec{\norm{( \textstyle\sum_{k=a_m}^iW_1^k)\tilde{\zeta}_1-\tilde{\zeta}_{i+1}+ \textstyle\sum_{p=a_m+1}^i\tilde{\zeta}_p+ \textstyle\sum_{p=2}^{a_m-1}( \textstyle\sum_{k=a_m}^iW_p^k)\eta_pQ\tilde{\zeta}_p+ \textstyle\sum_{p=a_m}^i( \textstyle\sum_{k=p}^iW_p^k)\eta_pQ\tilde{\zeta}_p}^2_2}{}.
\end{align*}
Using \eqref{eq:tildezetabound}, and \eqref{eq:Wijbound}, we get
\begin{align}
    &\expec{\norm{\textstyle\sum_{p=2}^{a_m-1}\left(\sum_{k=a_m}^iW_p^k\right)\eta_pQ\tilde{\zeta}_p}_2^2}{}\nonumber\\[2.5pt]
    \leq& a_m \textstyle\sum_{p=2}^{a_m-1}\expec{\norm{(\sum_{k=a_m}^iW_p^k)\eta_pQ\tilde{\zeta}_p}_2^2}{}\nonumber\\[2.5pt]
    \lesssim & a_m \textstyle\sum_{p=2}^{a_m-1}\norm{(\textstyle\sum_{k=a_m}^iW_p^k)}_2^2\eta_p^4\nonumber\\[2.5pt]
    \lesssim & a_m\textstyle \sum_{p=2}^{a_m-1}\big(\textstyle \sum_{k=a_m}^i\exp(-\frac{\eta\gamma}{1-a}(k^{1-a}-(p+1)^{1-a}))\big)^2\eta_p^4\nonumber\\[2.5pt]
    \lesssim & a_m \textstyle \sum_{p=2}^{a_m-1}\big(\int_{a_m}^i\exp(-\frac{\eta\gamma}{1-a}k^{1-a}\big)\mathrm{d}k)^2\exp\big(\frac{2\eta\gamma}{1-a}(p+1)^{1-a}\big)\eta_p^4\nonumber\\[2.5pt]
    \lesssim & a_m\textstyle \sum_{p=2}^{a_m-1}\exp\big(-\frac{2\eta\gamma}{1-a}a_m^{1-a}\big)a_m^{2a}\exp\big(\frac{2\eta\gamma}{1-a}(p+1)^{1-a}\big)\eta_p^4\nonumber\\[2.5pt]
    \lesssim & a_m^{1+2a}\textstyle \exp\big(-\frac{2\eta\gamma}{1-a}a_m^{1-a}\big)\int_{2}^{a_m-1}\exp\big(\frac{2\eta\gamma}{1-a}(p+1)^{1-a}\big)p^{-4a}\mathrm{d}p\nonumber\\[2.5pt]
    \lesssim & a_m^{1+2a}\textstyle \exp\big(-\frac{2\eta\gamma}{1-a}a_m^{1-a}\big)\exp\big(\frac{2\eta\gamma}{1-a}a_m^{1-a}\big)a_m^{-3a}\nonumber\\[2.5pt]
    =&a_m^{1-a}. \label{eq:wetazetabound1}
\end{align} 
Similarly, we have
\begin{align}\label{eq:wetazetabound2}
\begin{aligned}
\expec{\norm{\textstyle\sum_{p=a_m}^i(\sum_{k=p}^iW_p^k)\eta_pQ\tilde{\zeta}_p}^2_2}{}    \lesssim &~l_i \textstyle\sum_{p=a_m}^{i}\exp\big(-\frac{2\eta\gamma}{1-a}p^{1-a}\big)p^{2a}\exp\left(\frac{2\eta\gamma}{1-a}(p+1)^{1-a}\right)p^{-4a}\\[2.5pt]
    \lesssim &~l_i \textstyle\sum_{p=a_m}^{i}p^{-2a}
    \lesssim  l_i^{2}a_m^{-2a},
\end{aligned}
\end{align}
and
\begin{align}
    \expec{\norm{(\textstyle\sum_{k=a_m}^iW_1^k)\tilde{\zeta}_1-\tilde{\zeta}_{i+1}+\sum_{p=a_m+1}^i\tilde{\zeta}_p}^2_2}{}\lesssim &~\expec{\norm{(\textstyle\sum_{k=a_m}^iW_1^k)\tilde{\zeta}_1-\tilde{\zeta}_{i+1}}^2_2}{}+\expec{\norm{\sum_{p=a_m+1}^i\tilde{\zeta}_p}^2_2}{}\nonumber\\[2.5pt]
    \lesssim &~\textstyle \exp\big(-\frac{2\eta\gamma}{1-a}a_m^{1-a}\big)a_m^{2a}+1+l_i^{2}a_m^{-2a}, \label{eq:wremainderbound}
\end{align}
and
\begin{align}
    \expec{\norm{\textstyle\sum_{p=1}^i(\sum_{k=\max(p,a_m)}^iW_p^k)\eta_p\nu_p}^2_2}{} \lesssim a_m^{1-a}+l_i^{2-2a}. \label{eq:wetanubound1}
\end{align}
Combining \eqref{eq:w0theta0bound}, \eqref{eq:wetaepbound}, \eqref{eq:wetazetabound1}, \eqref{eq:wetazetabound2}, \eqref{eq:wremainderbound}, and \eqref{eq:wetanubound1}, we get, 
\begin{align}\label{eq:thetaithetakbound}
\begin{aligned}
    \slinv\expec{\norm{\sum_{i=1}^n(\textstyle\sum_{k=t_i}^i\vt_k)(\sum_{k=t_i}^i\vt_k)^\top}_2}{}\lesssim & \slinv\sum_{m=1}^M\sum_{i=a_m}^{a_{m+1}-1}(a_m^{1-a}+l_i)\\[2.5pt]
    \lesssim & C+\slinv\sum_{m=1}^Mn_ma_m^{1-a}\\[2.5pt]
    \lesssim & M^{1-a\beta}.
\end{aligned}
\end{align}
Combining \eqref{eq:thetabarthetabarbound}, \eqref{eq:thetakthetabarboundcs}, and \eqref{eq:thetaithetakbound}, we get, 
\begin{align*}
    \expec{\norm{(\textstyle\sum_{i=1}^nl_i)^{-1}\sum_{i=1}^n(\sum_{k=t_i}^i\vt_k)l_i\bar{\vt}_n^\top}_2}{}\lesssim M^{-a\beta/2}.
\end{align*}
\end{proof}

\begin{proof}[Proof of Lemma~\ref{lm:esntildesndiffprelim}]\label{pf:esntildesndiffprelim}
Define $b_k=e_k-\te_k$. Note that $b_k$ is a martingale difference sequence as well. First we will establish a bound on $\expec{\|b_k\|_2^2}{}$ which will be crucial to establish a bound on $\expec{\norm{\tS_n-S_n}_2}{}$. Note that we have,
\begin{align*}
\expec{\|b_k\|_2^2}{}=&\expec{\|e_k-\te_k\|_2^2}{}\\[2.5pt]
    =&\expec{\|u(\theta_{k-1},x_{k})-u(\theta^*,x_k)-(P_{\theta_{k-1}} u(\theta_{k-1},x_{k-1})-P_{\theta^*} u(\theta^*,x_{k-1}))\|_2^2}{}\\[2.5pt]
    \lesssim & \expec{\|u(\theta_{k-1},x_{k})-u(\theta^*,x_k)\|_2^2}{}+\expec{\|(P_{\theta_{k-1}} u(\theta_{k-1},x_{k-1})-P_{\theta^*} u(\theta^*,x_{k-1}))\|_2^2}{}\\[2.5pt]
    \leq& \expec{\left(V(x_k)^2+V(x_{k-1})^2\right)\|\theta_{k-1}-\theta^*\|_2^{2}}{}. 
\end{align*}
By Lemma~\ref{lm:expecconviter}, we have
\begin{align}
    \expec{\|b_k\|_2^2}{}\lesssim \expec{\|\theta_{k-1}-\theta^*\|_2^{2}}{}\leq \eta_k\leq k^{-a}\label{eq:bkbound}
\end{align}
From (53) and (54) of \cite{zhu2021online} we have,
\begin{align}\label{eq:esntildesndiffprelim}
\begin{aligned}
    \expec{\norm{\tS_n-S_n}_2}{}\lesssim& \big((d/M)^{1/4}+C\big)\sqrt{\expec{\norm{(\textstyle\sum_{i=1}^nl_i)^{-1}\sum_{i=1}^n(\sum_{k=t_i}^ib_k)(\sum_{k=t_i}^ib_k)^\top}_2}{}}\\[2.5pt]
    &+\expec{\norm{(\textstyle\sum_{i=1}^nl_i)^{-1}\sum_{i=1}^n(\sum_{k=t_i}^ib_k)(\sum_{k=t_i}^ib_k)^\top}_2}{}
\end{aligned}    
\end{align}
Since $b_k$ is a martingale difference sequence, using \eqref{eq:bkbound} for $a_m\leq i<a_{m+1}$ we have,
\begin{align*}
    \expec{\norm{(\textstyle \sum_{i=1}^nl_i)^{-1}\sum_{i=1}^n(\sum_{k=t_i}^ib_k)(\sum_{k=t_i}^ib_k)^\top}_2}{}
    & \leq  (\textstyle\sum_{i=1}^nl_i)^{-1}\sum_{i=1}^n\expec{\tr(\sum_{k=t_i}^ib_k)(\sum_{k=t_i}^ib_k)^\top}{}\\[2.5pt]
    &=(\textstyle \sum_{i=1}^nl_i)^{-1}\sum_{i=1}^n\expec{\norm{\sum_{k=t_i}^ib_k}_2^2}{}\\[2.5pt]&
    =(\textstyle\sum_{i=1}^nl_i)^{-1}\sum_{i=1}^n\sum_{k=t_i}^i\expec{\norm{b_k}_2^2}{}\\[2.5pt]
    &\lesssim (\textstyle \sum_{i=1}^nl_i)^{-1}\sum_{i=1}^n\sum_{k=t_i}^ik^{-a}\\[2.5pt]
    &\leq (\textstyle \sum_{i=1}^nl_i)^{-1}\sum_{m=1}^M\sum_{i=a_m}^{a_{m+1}-1}l_ia_m^{-a}\\[2.5pt]
    &\leq (\textstyle\sum_{i=1}^nl_i)^{-1}\sum_{m=1}^Mn_m^2a_m^{-a}.
\end{align*}
The last inequality above follows from the fact that $\sum_{i=a_m}^{a_{m+1}-1}l_i \asymp n_m^2$. Choosing $a_m=\floor{Cm^\beta}$ where $\beta>1/(1-a)$, we have, $n_m\asymp m^{\beta-1}$, and,
\begin{align}
    (\textstyle \sum_{i=1}^nl_i)^{-1}\sum_{m=1}^Mn_m^2a_m^{-a}\lesssim M^{-a\beta}.\label{eq:expecbkbkbound}
\end{align}
Combining \eqref{eq:expecbkbkbound} with \eqref{eq:esntildesndiffprelim} we get,
\begin{align}
    \expec{\norm{\tS_n-S_n}_2}{}\lesssim& \big((d/M)^{1/4}+C\big)M^{-a\beta/2}.\label{eq:SntideSnerrorbound}
\end{align}
Combining \eqref{eq:SntideSnerrorbound} with \eqref{eq:SntideSnerrorbound}, we get,
\begin{align*}
    \expec{\norm{S_n-S}_2}{}\lesssim& \big((d/M)^{1/4}+C\big)M^{-a\beta/2}+M^{-\frac{a\beta}{4}}\sqrt{d\log M}+\sqrt{d}M^{-1/2}.
\end{align*}
\end{proof}
\section{Proof of Theorem~\ref{th:simain}}
The proof for under state-independent Markovian sampling follows that of the state-dependent case with the following modification. Recall the decomposition in~\eqref{eq:firstdecomp}. Lemma~\ref{lm:part2} remains unchanged in the state-independent Markovian data case, whereas, we have the following analog of Lemma~\ref{lm:part1}. Recall also that the result in Lemma~\ref{lm:part1} was proved by handling the terms from the decomposition in~\eqref{eq:lem21decomp}. Under state-independent Markovian sampling, Lemma~\ref{lm:thetabarthetabarbound}, and Lemma~\ref{lm:thetakthetabarbound} remain unchanged. Lemma~\ref{lm:thetathetasigmadiffboundsi} plays the analog of Lemma~\ref{lm:thetathetasigmadiffbound}.  


\begin{lemma}\label{lm:thetathetasigmadiffboundsi}
Let Assumption~\ref{as:strongcon}, Assumption~\ref{as:liangasA2}, Assumption~\ref{as:liangasA3si} and Assumption~\ref{as:liangasA4} be true. Then,
\begin{align*}
\begin{aligned}
    &\expec{\norm{(\textstyle\sum_{i=1}^nl_i)^{-1}\textstyle\sum_{i=1}^n(\sum_{k=t_i}^i\theta_k)(\sum_{k=t_i}^i\theta_k)^
    \top-\Sigma}_2}{}\\[2.5pt] 
    \lesssim &((d/M)^{1/4}+C)M^{-a\beta/2}+ \sqrt{d}M^{(1-\beta(1-a))/2}
    +dM^{1-\beta(1-a)}.
\end{aligned}
\end{align*}
\end{lemma}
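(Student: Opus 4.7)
The plan is to follow the proof of Lemma~\ref{lm:thetathetasigmadiffbound} with the same decomposition~\eqref{eq:lincoverrordecompmain} into terms $\mathsf{I}$, $\mathsf{II}$, and $\mathsf{III}$. The only place where the state-independent assumption changes the analysis is in bounding $\mathsf{I}$, i.e., in the state-independent analog of Lemma~\ref{lm:snserrorboundmain}. Terms $\mathsf{II}$ and $\mathsf{III}$ were controlled via the bounds $\expec{\norm{\phi_i\phi_i^\top}_2}{}$ and $\expec{\norm{\psi_i\psi_i^\top}_2}{}$ in \eqref{eq:expecphiphibound} and \eqref{eq:psipsilinv}, whose derivation only uses Assumption~\ref{as:strongcon}, Assumption~\ref{as:liangasA2}, Assumption~\ref{as:liangasA4}, the noise decomposition of Lemma~\ref{lm:poisregular} (whose validity for the state-independent case follows by specialization $P_\theta\equiv P$), and the moment bounds of Lemma~\ref{lm:expecconviter}. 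In particular, the bounds \eqref{eq:boundonII} and \eqref{eq:boundonIII} carry over unchanged, contributing $dM^{1-\beta(1-a)}$ and $\sqrt{d}M^{(1-\beta(1-a))/2}$ respectively.

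For term $\mathsf{I}$, I will establish the cleaner bound
\[
\expec{\norm{S_n-S}_2}{}\lesssim \bigl((d/M)^{1/4}+C\bigr)M^{-a\beta/2}+\sqrt{d}M^{-1/2},
\]
which is the state-independent analog of \eqref{eq:SnSnerrorbound}, missing the $M^{-a\beta/4}\sqrt{d\log M}$ term. The argument retains the triangle split $\expec{\|S_n-S\|_2}\leq \expec{\norm{S_n-\tS_n}_2}+\expec{\|\tS_n-S\|_2}$ with the same auxiliary sequence $\tilde e_k=u(\theta^*,x_k)-Pu(\theta^*,x_{k-1})$. The first piece is controlled exactly as in Lemma~\ref{lm:esntildesndiffprelim}, since the bound $\expec{\norm{b_k}_2^2}{}\lesssim k^{-a}$ only relies on the Lipschitz property of $u(\cdot,x)$ in its first argument and on Lemma~\ref{lm:expecconviter}; this yields the $((d/M)^{1/4}+C)M^{-a\beta/2}$ contribution.

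The key simplification occurs in the analog of Lemma~\ref{lm:SntideSerrorbound}. Under Assumption~\ref{as:liangasA3si} we have $P_{\theta^*}=P_{\theta_{k-1}}=P$, so in the decomposition \eqref{eq:t1t2t3decomp} the terms $T_{1,k}$, $T_{2,k}$, $T_{3,k}$ all vanish identically, giving $\expec{\tilde e_k\tilde e_k^\top}{}=\expec{e'_k{e'_k}^\top}{}$. Moreover, the bookkeeping term $T_{4,k}$ of \eqref{eq:T4normbound} also vanishes since $P_{\theta_{k-(b_0-j)}}-P_{\theta^*}=0$. Only the $V$-uniform mixing term $T_{5,k}$ of \eqref{eq:Asrelationfinal} survives, and by choosing $b_0=\lceil \log\eta_k/\log\rho\rceil$ one can actually drive $\norm{\expec{T_{5,k}}{}}_2$ below any polynomial rate in $k$, so the resulting bias contribution $\norm{T_6}_2$ is negligible compared to the remaining terms. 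This is precisely why the $M^{-a\beta/4}\sqrt{d\log M}$ summand present in the state-dependent case does not appear here.

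For the second-moment analysis of $\tilde S_n$, the same two-case split ($|m-k|\leq 1$ versus $|m-k|>1$) applies. In Case I the bound \eqref{eq:R3bound} gives $\sqrt{d} M^{-1/2}$ after using \eqref{eq:snstr2normrel}, and is unaffected. In Case II the analogous bias computation shows $\norm{\expec{\tilde e_p\tilde e_p^\top \tilde e_q\tilde e_q^\top}-S^2}_2$ decays exponentially in the gap $|m-k|$ rather than polynomially in $\eta$, again because the $T_{3,\cdot}$ and $T'_{4,\cdot}$ contributions vanish and only the geometrically decaying $T_{5,\cdot}$ remains; the resulting sum is dominated by $M^{-1}$, which is absorbed. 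The main obstacle to verify carefully is that bounding the conditional expectation $\expec{\tilde e_p\tilde e_p^\top|\mathcal{F}_{a_{m-1}-1}}{}$ really does yield $S+O(\rho^{b_0})$ after averaging, with $b_0$ chosen at most $n_{m-1}+1$ so that the conditioning point lies in an earlier block; this requires combining Assumption~\ref{as:liangasA3si}(a) with Lemma~\ref{lm:lialemmaa2} to bound $\expec{V^{\alpha_0}(x_{p-b_0})}{}$ uniformly. Combining the bounds on $\mathsf{I}$, $\mathsf{II}$, and $\mathsf{III}$ yields the claimed estimate.
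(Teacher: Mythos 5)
Your proposal is correct and follows essentially the same route as the paper: terms $\mathsf{II}$ and $\mathsf{III}$ carry over verbatim, and the whole change is confined to term $\mathsf{I}$, whose state-independent bound $\bigl((d/M)^{1/4}+C\bigr)M^{-a\beta/2}+\sqrt{d}M^{-1/2}$ is exactly Lemma~\ref{lm:snserrorboundmainsi}, with the $M^{-a\beta/4}\sqrt{d\log M}$ term disappearing because the $T_{1,k},T_{2,k},T_{3,k},T_{4,k}$ corrections vanish when $P_\theta\equiv P$ and only the geometrically decaying mixing term survives. One cosmetic remark: with $b_0=\lceil\log\eta_k/\log\rho\rceil$ you would only get $\rho^{b_0}\asymp\eta_k$, which is polynomial; the paper instead drops the $b_0$ truncation entirely (permissible since $T_{4}$ is gone) and mixes over all $k$ steps from $x_0$, yielding $\norm{\expec{T_{5,k}}{}}_2\lesssim\rho^{k}$ and hence the exponentially small bias you intend.
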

Under state-dependent Markovian sampling, recall that Lemma~\ref{lm:snserrorboundmain} forms the key to prove Lemma~\ref{lm:thetathetasigmadiffbound}. Under state-independent Markovian sampling, we have the following analog of Lemma~\ref{lm:snserrorboundmain}. With this result in hand, the rest of the proof of Lemma~\ref{lm:thetathetasigmadiffboundsi} follows mutatis mutandis that of Lemma~\ref{lm:thetathetasigmadiffbound}. 
\begin{lemma}\label{lm:snserrorboundmainsi}
Let Assumption~\ref{as:strongcon}, Assumption~\ref{as:liangasA2}, Assumption~\ref{as:liangasA3si} and Assumption~\ref{as:liangasA4} be true. Then,
\begin{align*}
    \expec{\norm{S_n-S}_2}{}\lesssim& \big((d/M)^{1/4}+C\big)M^{-{a\beta}/{2}}+\sqrt{d}M^{-1/2},
\end{align*}
where
\begin{align*}
    S_n\coloneqq \textstyle \left(\sum_{i=1}^nl_i\right)^{-1}\sum_{i=1}^n(\sum_{k=t_i}^ie_k)(\sum_{k=t_i}^ie_k)^\top,
\end{align*}
$e_k=u(\theta_{k-1},x_{k})-Pu(\theta_{k-1},x_{k-1})$, and $S=\lim_{k\to\infty}\expec{e_ke_k^\top}{}$.
\end{lemma}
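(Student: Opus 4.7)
My plan is to mirror the architecture of the proof of Lemma~\ref{lm:snserrorboundmain}, but to exploit the fact that under Assumption~\ref{as:liangasA3si} the transition kernel $P$ is independent of $\theta$, which causes the correction terms that produced the $\log M$ factor in the state-dependent case to collapse. Concretely, I define the auxiliary martingale-difference sequence $\te_k \coloneqq u(\theta^*,x_k) - P u(\theta^*,x_{k-1})$ (which is now also equal to the surrogate $e'_k$ of Lemma~\ref{lm:snserrorboundmain}, since $P_{\theta^*}=P_{\theta_{k-1}}=P$), and the corresponding estimator $\tS_n \coloneqq (\sum_{i=1}^n l_i)^{-1}\sum_{i=1}^n (\sum_{k=t_i}^i \te_k)(\sum_{k=t_i}^i \te_k)^\top$. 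Then split
\begin{align*}
\expec{\|S_n-S\|_2}{} \;\leq\; \expec{\|S_n-\tS_n\|_2}{} \;+\; \expec{\|\tS_n-S\|_2}{}.
\end{align*}

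First I would dispatch $\expec{\|S_n-\tS_n\|_2}{}$ by quoting Lemma~\ref{lm:esntildesndiffprelim} essentially verbatim. The only ingredient used there is that $b_k := e_k - \te_k$ is a martingale difference with $\expec{\|b_k\|_2^2}{}\lesssim k^{-a}$. This still holds in the state-independent setting because $b_k = u(\theta_{k-1},x_k)-u(\theta^*,x_k) - P(u(\theta_{k-1},\cdot)-u(\theta^*,\cdot))(x_{k-1})$ is controlled by Assumption~\ref{as:liangasA3si}(b) and Lemma~\ref{lm:expecconviter}. This gives the term $((d/M)^{1/4}+C)M^{-a\beta/2}$.

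The key part is $\expec{\|\tS_n - S\|_2}{}$, and here the proof becomes strictly easier. In the state-dependent expansion \eqref{eq:t1t2t3decomp}, the terms $T_{1,k}, T_{2,k}, T_{3,k}$ all vanish identically because they involve $(P_{\theta^*}-P_{\theta_{k-1}})u(\theta^*,x_{k-1})$. Similarly, the $T_{4,k}$ term from \eqref{eq:ekhatekorimediff} vanishes because the factor $(P_{\theta_{k-(b_0-j)}}-P_{\theta^*})$ is zero, so no $b_0$-window truncation is needed. All that remains is $T_{5,k}$, which quantifies the mixing of the initial law of $x_{k-1}$ to $\pi$ under the single kernel $P$. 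By $V$-uniform ergodicity (implied by condition (a) of Assumption~\ref{as:liangasA3si}), $\|\expec{\te_k\te_k^\top}{}-S\|_2 \lesssim \rho^k$ for some $\rho\in(0,1)$, so the bias of $\tS_n$ contributes at most a summable, exponentially decaying quantity $\|\expec{\tS_n}{}-S\|_2 \lesssim (\sum l_i)^{-1} = O(M^{-\beta(2-1/\beta)})$, which is swamped by the eventual $\sqrt{d}M^{-1/2}$ rate.

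For the variance $\expec{(\tS_n-S)^2}{}$, I would again use $\expec{\|\tS_n-S\|_2}{}\leq \sqrt{d\,\|\expec{(\tS_n-S)^2}{}\|_2}$ and decompose $\tS_n^2 = (\sum l_i)^{-2}(R_1+R_2)$ as in \eqref{eq:R1bound}; the quartic-martingale bound $\|(\sum l_i)^{-2}\expec{R_1}{}\|_2 \lesssim M^{-1}$ transfers without change. For $R_2$, the $R_3$ piece ($|m-k|\leq 1$) still gives $M^{-1}$ by the same combinatorial count \eqref{eq:diffless1numbound}. The $R_4$ piece ($|m-k|>1$) is where the improvement crystallizes: conditioning on $\cF_{a_{m-1}-1}$, we get $\expec{\te_p\te_p^\top|\cF_{a_{m-1}-1}}{} = S + E_p$ with $\|E_p\|_2 \lesssim V(x_{a_{m-1}})\rho^{\,p-a_{m-1}}$, so $\|\expec{\te_p\te_p^\top \te_q\te_q^\top}{}-S^2\|_2 \lesssim \rho^{n_{m-1}}+\rho^{n_{k-1}}$, and summing against $n_m^2 n_k^2$ gives a geometrically small contribution, much smaller than $M^{-1/2}$. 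Combining, $\expec{\|\tS_n-S\|_2}{}\lesssim \sqrt{d}M^{-1/2}$, which completes the bound.

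The main obstacle, as in Lemma~\ref{lm:snserrorboundmain}, is carefully verifying that the $V$-uniform ergodicity from Assumption~\ref{as:liangasA3si}(a) together with the bound $\sup_k \expec{V^{\alpha_0}(x_k)I(k\geq k_{\sigma_s})}{}<\infty$ from Lemma~\ref{lm:lialemmaa2} can be propagated through the conditional expectations to yield uniform-in-$(m,k)$ bounds on the $R_4$ sum; this is routine but tedious, and it replaces all the delicate state-dependent mixing bookkeeping of the original proof.
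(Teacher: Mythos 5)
Your proposal is correct and follows essentially the same route as the paper's proof: the same auxiliary martingale-difference sequence $\te_k=u(\theta^*,x_k)-Pu(\theta^*,x_{k-1})$, the same triangle-inequality split with Lemma~\ref{lm:esntildesndiffprelim} handling $\expec{\|S_n-\tS_n\|_2}{}$, the same trace bound $\expec{\|\tS_n-S\|_2}{}\leq\sqrt{d\,\|\expec{(\tS_n-S)^2}{}\|_2}$, and the same $R_1/R_2$ decomposition with the $|m-k|\leq 1$ and $|m-k|>1$ cases, the latter resolved by conditioning on $\cF_{a_{m-1}-1}$ and geometric ergodicity. Your framing of the simplification as the vanishing of the $T_{1,k}$–$T_{4,k}$ correction terms is a clean way of expressing what the paper does implicitly, but it is the same argument.
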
 
\begin{proof}[Proof of Lemma~\ref{lm:snserrorboundmainsi}]\label{pf:snserrorboundmainsi}
Note that $S$ is defined as the covariance of random variables $\bar{e}_k=u(\theta^*,\hat{x}_k)-P u(\theta^*,\hat{x}_{k-1})$ where $\hat{x}_{k-1}\sim\pi$. To establish the convergence rate of $S_n$ to $S$ we introduce the following intermediate sequence,
\begin{align*}
    \te_k\coloneqq u(\theta^*,x_k)-P u(\theta^*,x_{k-1}),~~\text{where,}~~P u(\theta^*,x_{k-1})=\textstyle\int u(\theta^*,x)P (x_{k-1},x)\mathrm{d}x.
\end{align*}
Note that $\te_k$ is a martingale difference sequence with respect to the filtration $\cF_k$ as
\begin{align*}
    \expec{\te_k|\cF_{k-1}}{}=\expec{u(\theta_{k-1},x_k)|\cF_{k-1}}{}-P u(\theta^*,x_{k-1})=0.
\end{align*}
Similar to the proof of Lemma~\ref{lm:snserrorboundmain}, consider the following 
\begin{align*}
\textstyle    \tS_n=\left(\sum_{i=1}^nl_i\right)^{-1}\sum_{i=1}^n(\sum_{k=t_i}^i\te_k)(\sum_{k=t_i}^i\te_k)^\top.
\end{align*}
By the triangle inequality, we have,
\begin{align*}
    \expec{\|S_n-S\|_2}{}\leq \expec{\norm{S_n-\tS_n}_2}{}+ \expec{\|\tS_n-S\|_2}{}.
\end{align*}
We now bound the terms on the right hand side above displayed equation. \\

\textbf{Bound on $\expec{\norm{S_n-\tS_n}_2}{}$.} Define $b_k=e_k-\te_k$. Note that $b_k$ is a martingale difference sequence as well. First let us establish a bound on $\expec{\|b_k\|_2^2}{}$ which will be crucial to establish a bound on $\expec{\norm{S_n-\tS_n}_2}{}$.
Then,
\begin{align*}
\expec{\|b_k\|_2^2}{} =&\expec{\|e_k-\te_k\|_2^2}{}\\
    =&\expec{\|u(\theta_{k-1},x_{k})-u(\theta^*,x_k)-(P u(\theta_{k-1},x_{k-1})-P u(\theta^*,x_{k-1}))\|_2^2}{}\\
    \lesssim & \expec{\|u(\theta_{k-1},x_{k})-u(\theta^*,x_k)\|_2^2}{}+\expec{\|(P u(\theta_{k-1},x_{k-1})-P u(\theta^*,x_{k-1}))\|_2^2}{}\\
    \leq& \expec{\left(V(x_k)^2+V(x_{k-1})^2\right)\|\theta_{k-1}-\theta^*\|_2^{2}}{}. 
\end{align*}
Now, using Lemma~\ref{lm:expecconviter}, we have
\begin{align*}
    \expec{\|b_k\|_2^2}{}\lesssim \expec{\|\theta_{k-1}-\theta^*\|_2^{2}}{}\leq \eta_k\leq k^{-a}. 
\end{align*}
Similar to \eqref{eq:SntideSnerrorbound}, we have, 
\begin{align}\label{eq:SntideSnerrorboundsindep}
    \expec{\norm{S_n-\tS_n}_2}{}\lesssim \left((d/M)^{1/4}+C\right)M^{-a\beta/2}.
\end{align}
\textbf{Bound on $\expec{\|\tS_n-S\|_2}{}$.} Since $\te_k$ is a martingale difference sequence, we have 
\begin{align}
    \expec{\tS_n}{}=(\textstyle\sum_{i=1}^nl_i)^{-1}\sum_{i=1}^n\expec{(\sum_{k=t_i}^i\te_k)(\sum_{k=t_i}^i\te_k)^\top}{}=(\sum_{i=1}^nl_i)^{-1}\sum_{i=1}^n\sum_{k=t_i}^i\expec{\te_k\te_k^\top}{}.\label{eq:ekekequaltoSsi}
\end{align}
Now let us concentrate on the term $\expec{\te_k\te_k^\top}{}$. Note that this is function of $x_k$, and $x_{k-1}$. For convenience let us write $\te_k\te_k^\top=\mathcal{A}(x_{k-1},x_k)$. 
By Assumption~\ref{as:liangasA3si} the Markov chain $\{x_k\}_k$ is $V$-uniformly ergodic. Let $\Delta$ denote the joint distribution of $(x_{k-1},x_k)$ conditioned on $x_{0}$, and $\Delta_{\theta^*}$ is the joint distribution $(\hat x_{k-1},\hat x_k)$ where $\hat x_{k-1}\sim\pi_{\theta^*}$, and $\hat x_k$ is obtained by applying the transition operator $P$ on $\hat x_{k-1}$. Note that $\expec{\mathcal{A}(\hat x_{k-1},\hat x_k)}{}=S$. One can write $\Delta=PP(x'_{k-1})=P\pi_{\theta^*}+P(P(x'_{k-1})-\pi_{\theta^*})=\Delta_{\theta^*}+P(P(x'_{k-1})-\pi_{\theta^*})$. Then, 
\begin{align*}
    \norm{\Delta-\Delta_{\theta^*}}_{TV}
    \leq & CV(x_{0})\rho^{k},
\end{align*}
for some constant $C>0$, and $0<\rho<1$. 

Then we have, 
\begin{align}
    \expec{\mathcal{A}(x_{k-1},x_k)|x_{0}}{}=S+T_{5,k}, \label{eq:Asrelationsi}
\end{align}
where $\norm{T_{5,k}}_2\lesssim V(x_{0})\rho^{k}$. Taking expectation on both sides of \eqref{eq:Asrelationsi} with respect to $x_{0}$ we have,
\begin{align}
   \expec{\te_k\te_k^\top}{}= \expec{\mathcal{A}(x_{k-1},x_k)}{}=S+\expec{T_{5,k}}{}, \label{eq:Asrelationfinalsi}
\end{align}
where, $\expec{T_{5,k}}{}\lesssim \rho^{k}$. 
Then from \eqref{eq:ekekequaltoSsi}
\begin{align*}
    \expec{\tS_n}{}
    =(\textstyle\sum_{i=1}^nl_i)^{-1}\sum_{i=1}^n\sum_{k=t_i}^i\expec{\te_k\te_k^\top}{}
    = S + (\sum_{i=1}^nl_i)^{-1}\sum_{i=1}^n\sum_{k=t_i}^i\expec{T_{5,k}}{}.
\end{align*}
We also have, 
\begin{align}
    \textstyle(\sum_{i=1}^nl_i)^{-1}\sum_{i=1}^n\sum_{k=t_i}^i\expec{T_{5,k}}{}
    \lesssim & (\textstyle\sum_{i=1}^nl_i)^{-1}\sum_{i=1}^n\sum_{k=t_i}^i\exp\big(-k\log \big(\frac{1}{\rho}\big)\big)\nonumber\\
    \lesssim &\frac{\int_0^\infty\exp(-m^\beta)m^{2\beta}\mathrm{d}m}{\sum_{m=1}^Mn_m^2}\nonumber\\
    \lesssim & M^{-1-2\beta}. \label{eq:sumT5lboundsi}
\end{align}
As $\tS_n-S$ is a symmetric matrix, we have,
\begin{align}
    \expec{\|\tS_n-S\|_2}{}\leq \expec{\sqrt{\tr(\tS_n-S)^2}}{}\leq \sqrt{\tr(\expec{(\tS_n-S)^2}{})}\leq\sqrt{d\norm{\expec{(\tS_n-S)^2}{}}_2}\label{eq:snstr2normrelsi}
\end{align}
Now,
\begin{align*}
 \textstyle   \expec{(\tS_n-S)^2}{}=\expec{\tS_n^2}{}+2(\sum_{i=1}^nl_i)^{-1}\sum_{i=1}^n\sum_{k=t_i}^i\expec{T_{5,k}}{}S-S^2.
\end{align*}
Using, \eqref{eq:sumT5lboundsi}, we get, 
\begin{align*}
    \norm{2(\textstyle\sum_{i=1}^nl_i)^{-1}\sum_{i=1}^n\sum_{k=t_i}^i\expec{T_{5,k}}{}S}_2\lesssim M^{-1-2\beta}. 
\end{align*}
Now we will show that $\expec{\tS_n^2}{}-S^2$ is small. Similar to equation (45) in \cite{zhu2021online}, the following preliminary decomposition takes place for $\expec{\tS_n^2}{}$:
\begin{align*}
    \tS_n^2=(\textstyle\sum_{i=1}^nl_i)^{-2}(R_1+R_2), 
\end{align*}
where,
\begin{align*}
    R_1=\textstyle\sum_{m=1}^{M-1}\sum_{i=a_m}^{a_{m+1}-1}&\bigg[2\sum_{j=a_m}^{i-1}\sum_{a_m\leq p_1\neq p_2\leq j}\left(\te_{p_1}\te_{p_1}^\top \te_{p_1}\te_{p_2}^\top+\te_{p_1}\te_{p_1}^\top \te_{p_2}\te_{p_1}^\top\right) \\&\qquad\qquad \qquad\qquad\qquad+\textstyle \sum_{a_m\leq p_1\neq p_2\leq i}\left(\te_{p_1}\te_{p_1}^\top \te_{p_1}\te_{p_2}^\top+\te_{p_1}\te_{p_1}^\top \te_{p_2}\te_{p_1}^\top\right)\bigg]\\
    \quad+\textstyle\sum_{i=a_M}^{n}&\bigg[2\textstyle\sum_{j=a_M}^{i-1}\sum_{a_M\leq p_1\neq p_2\leq j}\left(\te_{p_1}\te_{p_1}^\top \te_{p_1}\te_{p_2}^\top+\te_{p_1}\te_{p_1}^\top \te_{p_2}\te_{p_1}^\top\right)\\&\qquad\qquad\qquad\qquad\qquad+\textstyle\sum_{a_M\leq p_1\neq p_2\leq i}\left(\te_{p_1}\te_{p_1}^\top \te_{p_1}\te_{p_2}^\top+\te_{p_1}\te_{p_1}^\top \te_{p_2}\te_{p_1}^\top\right)\bigg],
\end{align*}
and,
\begin{align*}
    R_2=\textstyle\sum_{i=1}^n\sum_{j=1}^n\sum_{p=t_i}^i\sum_{q=t_j}^j \te_p\te_p^\top \te_q\te_q^\top.
\end{align*}
Using equation (46) from \cite{zhu2021online}, we have,
\begin{align}
    \norm{\expec{R_1}{}}_2\lesssim M^{-1}.\label{eq:R1boundsi}
\end{align}
Similar to equation (47) in \cite{zhu2021online}, we have,
\begin{align}\label{eq:IIinitialdecompsi}
\begin{aligned}
    &\norm{\big(\sum_{i=1}^nl_i\big)^{-2}\expec{R_2}{}-S^2}_2\\ \lesssim &\big(\sum_{i=1}^{a_{M+1}-1}l_i\big)^{-2}\sum_{m=1}^M\sum_{k=1}^M\sum_{i=a_m}^{a_{m+1}-1}\sum_{j=a_k}^{a_{k+1}-1}\sum_{p=a_m}^i\sum_{q=a_k}^j\norm{\expec{\te_p\te_p^\top \te_q\te_q^\top}{}-S^2}_2.
\end{aligned}
\end{align}
Similar to state-dependent Markovian sampling setting, we decompose \eqref{eq:IIinitialdecompsi} into two cases: \begin{enumerate}
    \item $p$ and $q$ belong to either same block or neighboring blocks, i.e., $\abs{m-k}\leq 1$.
    \item $p$ and $q$ are at least $1$ block apart, i.e., $\abs{m-k}> 1$.
\end{enumerate}
\textbf{Case I. $\abs{m-k}\leq 1$.}
Similar to \eqref{eq:R3bound}, we get, 
\begin{align}
 \textstyle  (\sum_{i=1}^{a_{M+1}-1}l_i)^{-2} R_3\lesssim M^{-1}.\label{eq:R3boundsi}
\end{align}
\textbf{Case II. $\abs{m-k}> 1$.}
Let
\begin{align*}
    R_4\coloneqq \mathop{\sum_{m=1}^M\sum_{k=1}^M}\limits_{\abs{m-k}> 1}\sum_{i=a_m}^{a_{m+1}-1}\sum_{j=a_k}^{a_{k+1}-1}\sum_{p=a_m}^i\sum_{q=a_k}^j \norm{\expec{\te_p\te_p^\top \te_q\te_q^\top}{}-S^2}_2. 
\end{align*}
Let us assume $k\leq m-2$. Then, 
\begin{align*}
    \expec{\te_p\te_p^\top \te_q\te_q^\top|\cF_{a_{m-1}-1}}{}=\expec{\te_p\te_p^\top |\cF_{a_{m-1}-1}}{}\te_q\te_q^\top.
\end{align*}
Similar to \eqref{eq:Asrelationfinalsi}, we also have that
\begin{align*}
    \expec{\te_p\te_p^\top |\cF_{a_{m-1}-1}}{}=S+\expec{T_{5,p}|\cF_{a_{m-1}-1}}{}.
\end{align*}
Hence, we have, $\expec{\te_p\te_p^\top \te_q\te_q^\top}{}=S\expec{\te_q\te_q^\top}{}+\expec{\left(T_{5,p}\right)\te_q\te_q^\top}{}$. Similar to \eqref{eq:Asrelationfinalsi}, we have $\expec{\te_q\te_q^\top}{}=S+\expec{T_{5,q}}{}$. Then, 
\begin{align*}
    \norm{\expec{\te_p\te_p^\top \te_q\te_q^\top}{}-S^2}_2
    \leq&\norm{\expec{T_{5,p}\te_q\te_q^\top}{}}_2+\norm{S\expec{T_{5,q}}{}}_2\\
    \leq & \expec{\norm{T_{5,p}}_2\norm{\te_q\te_q^\top}_2}{}+\norm{S}_2\norm{\expec{T_{5,q}}{}}_2\\
    \lesssim & \sqrt{\expec{\norm{T_{5,p}}_2^2{}}{}\expec{\norm{\te_q\te_q^\top}_2^2}{}}+\exp(-q)\\
    \lesssim & \exp(-p)+\exp(-q).
\end{align*}
Then, using $a_m\asymp m^\beta$, and $n_m\asymp (m+1)^{\beta-1}$, we have, 
\begin{align*}
    R_4\lesssim &\mathop{\sum_{m=1}^M\sum_{k=1}^M}\limits_{\abs{m-k}> 1}\sum_{i=a_m}^{a_{m+1}-1}\sum_{j=a_k}^{a_{k+1}-1}\sum_{p=a_m}^i\sum_{q=a_k}^j \left(e^{-p}+e^{-q}\right)\\
    \lesssim & \mathop{\sum_{m=1}^M\sum_{k=1}^M}\limits_{\abs{m-k}> 1} \bigg(n_k^2\sum_{i=a_m}^{a_{m+1}-1}\sum_{p=a_m}^ie^{-p}+n_m^2\sum_{j=a_k}^{a_{k+1}-1}\sum_{q=a_k}^je^{-q}\bigg)\\
    \lesssim & \sum_{m=1}^M\sum_{k=1}^M\left(n_k^2n_m^2\left(e^{-a_m}+e^{-a_k}\right)\right).
\end{align*}
Using \eqref{eq:li2bound}, and \eqref{eq:sumT5lboundsi} we have,
\begin{align}\label{eq:IVboundeeeesi}
 \textstyle   (\sum_{i=1}^{a_{M+1}-1}l_i)^{-2}R_4\lesssim M^{-1-2\beta}.
\end{align}
So, combining \eqref{eq:snstr2normrelsi}, \eqref{eq:R1boundsi}, \eqref{eq:R3boundsi}, and \eqref{eq:IVboundeeeesi}, we get,
\begin{align}\label{eq:SntideSerrorboundsi}
    \expec{\|\tS_n-S\|_2}{}\lesssim \sqrt{{d}/{M}}.
\end{align}
Combining \eqref{eq:SntideSerrorboundsi} with \eqref{eq:SntideSnerrorboundsindep}, we get,
\begin{align*}
    \expec{\norm{S_n-S}_2}{}\lesssim& \big((d/M)^{1/4}+C\big)M^{-a\beta/2}+\sqrt{d/M}.
\end{align*}
\end{proof}
\section{Experiments on State Independent Markov Chain}
Though our main goal in this paper is online inference for SGD with state-dependent Markovian data, the estimator $\hat{\Sigma}_n$ can be used under state-independent Markovian data as well as we show in Theorem~\ref{th:simain}. In this section, we compare \texttt{BM} with \texttt{Boot} for completion. We set $\varepsilon=0$ in \eqref{eq:dgplinregsynth} to generate exponentially mixing Markovian data. It is worth mentioning here that the theoretical guarantees of \texttt{Boot} hold in this setting when $\nabla F(\theta,x)$ is linear in $\theta$ which does not hold for important applications like logistic regression. In contrast, Theorem~\ref{th:simain} allows for non-linearity in $\nabla F(\theta,x)$. In Figure~\ref{fig:boot_bm_linlogregsi}, we compare the performances of \texttt{BM} with \texttt{Boot} in linear (upper row) and logistic (lower row) regression. We observe similar behavior as the state-dependent Markovian data setting in this case as well.
\begin{sidewaysfigure}
    \centering
    \includegraphics[width=.245\textwidth,height=2in]{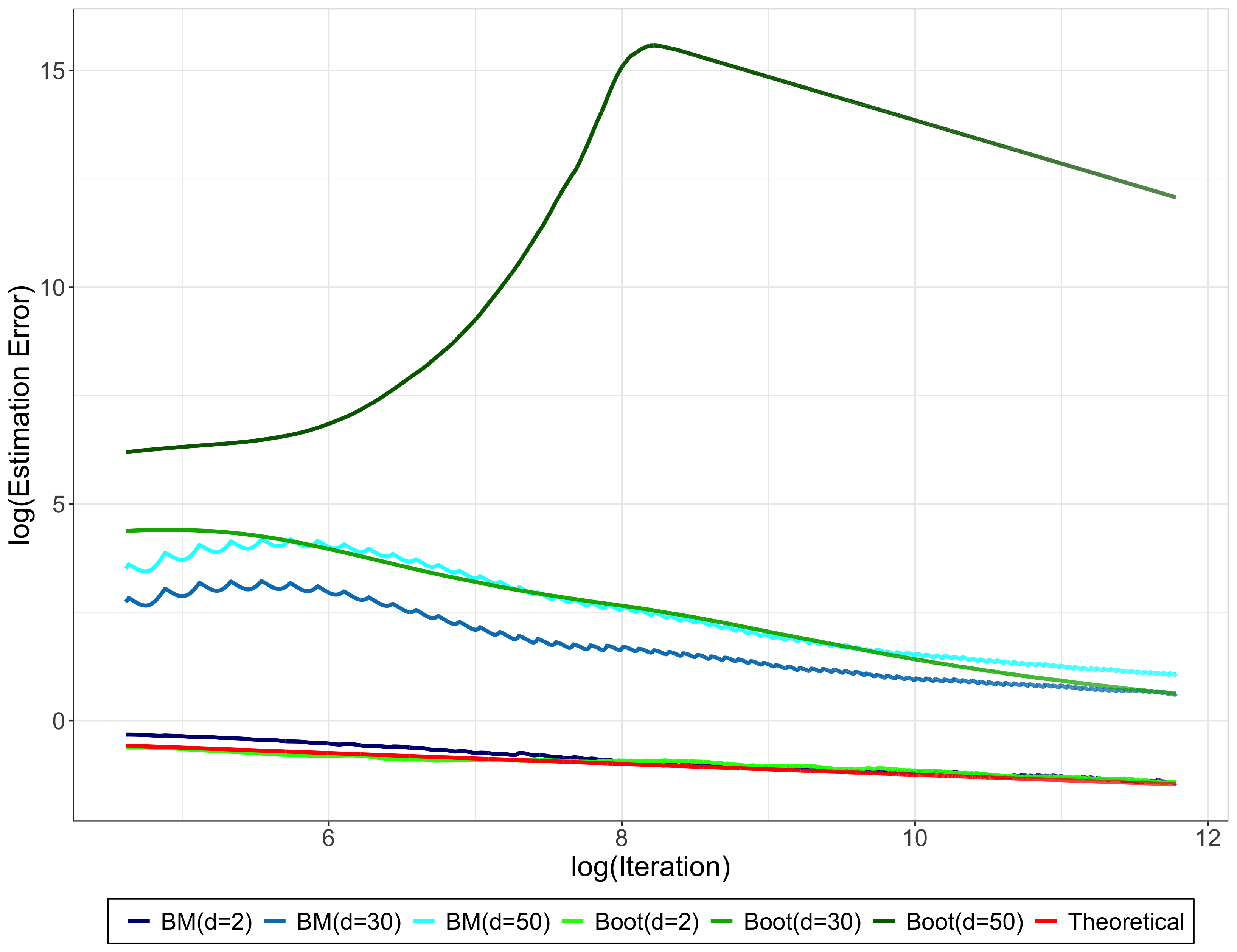} 
    \includegraphics[width=.245\textwidth,height=2in]{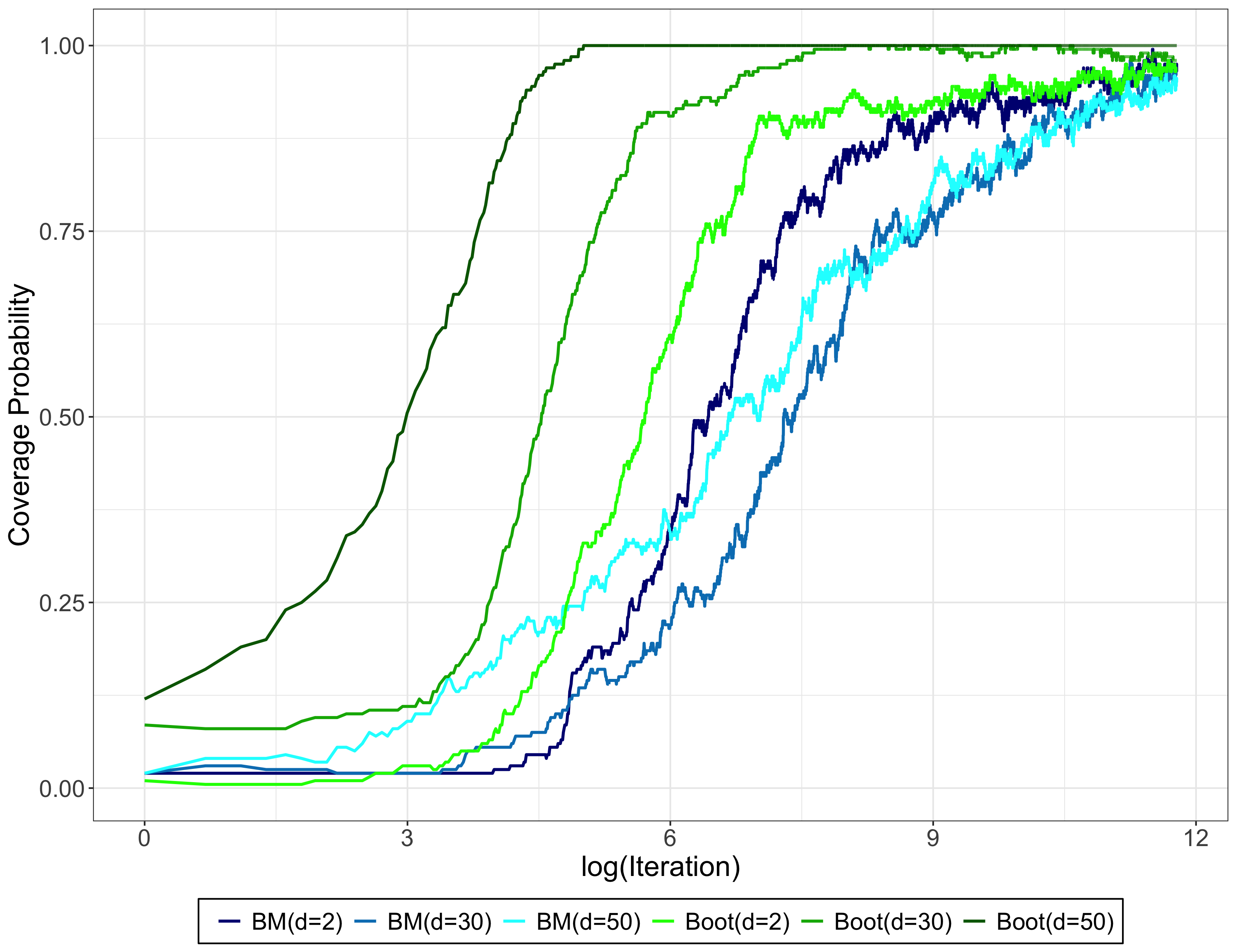}
    \includegraphics[width=.245\textwidth,height=2in]{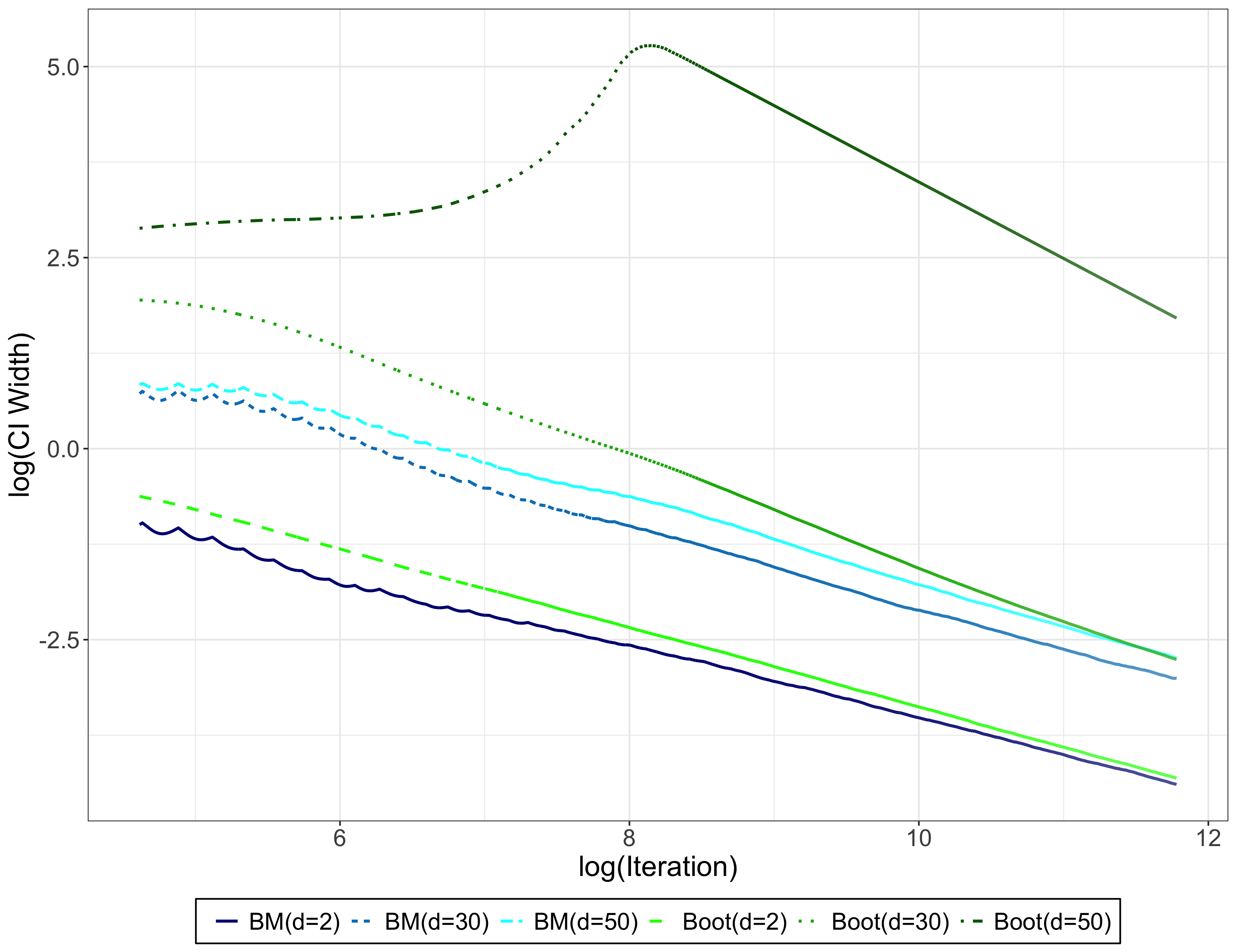}
    \includegraphics[width=.245\textwidth,height=2in]{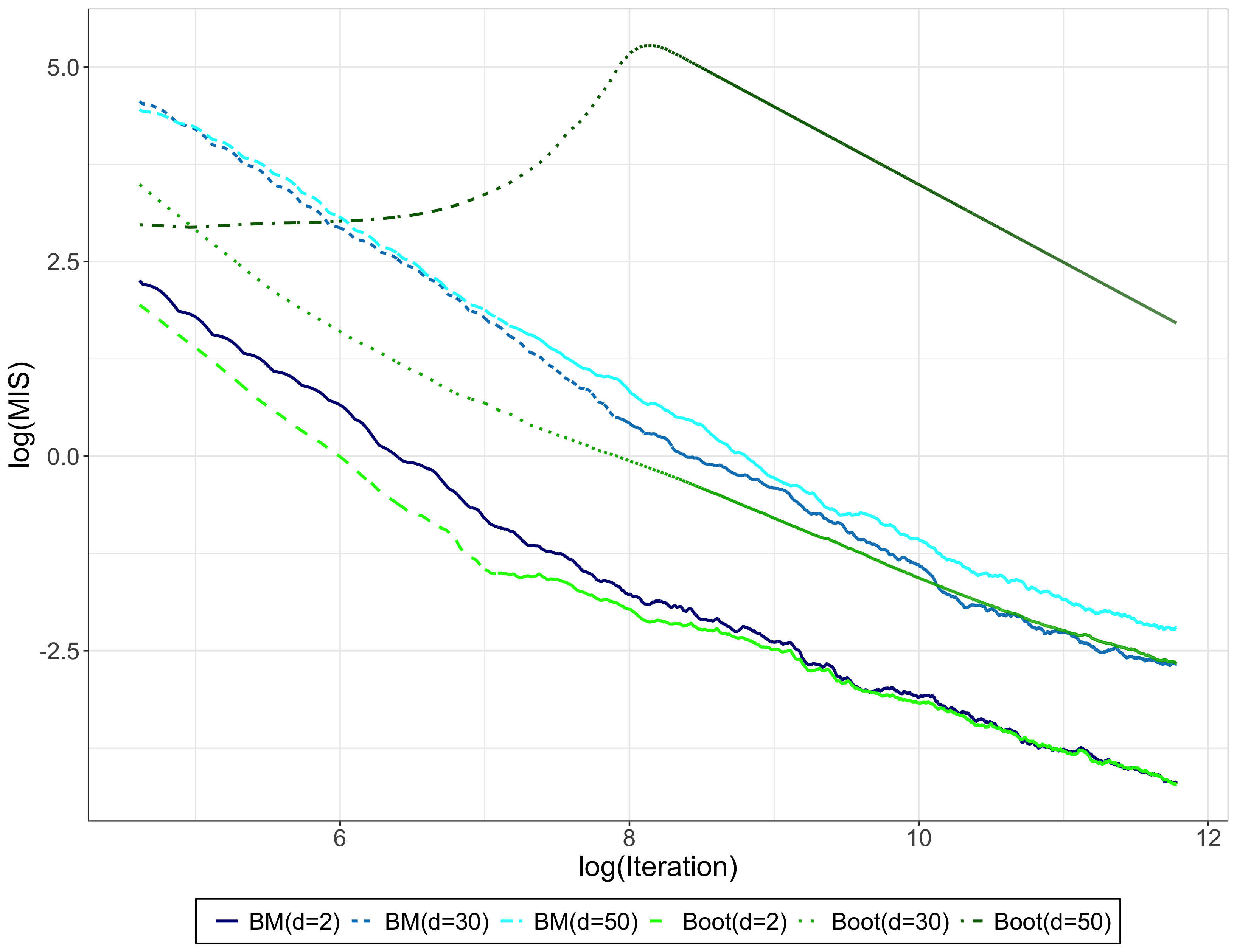}\\
    \includegraphics[width=.245\textwidth,height=2in]{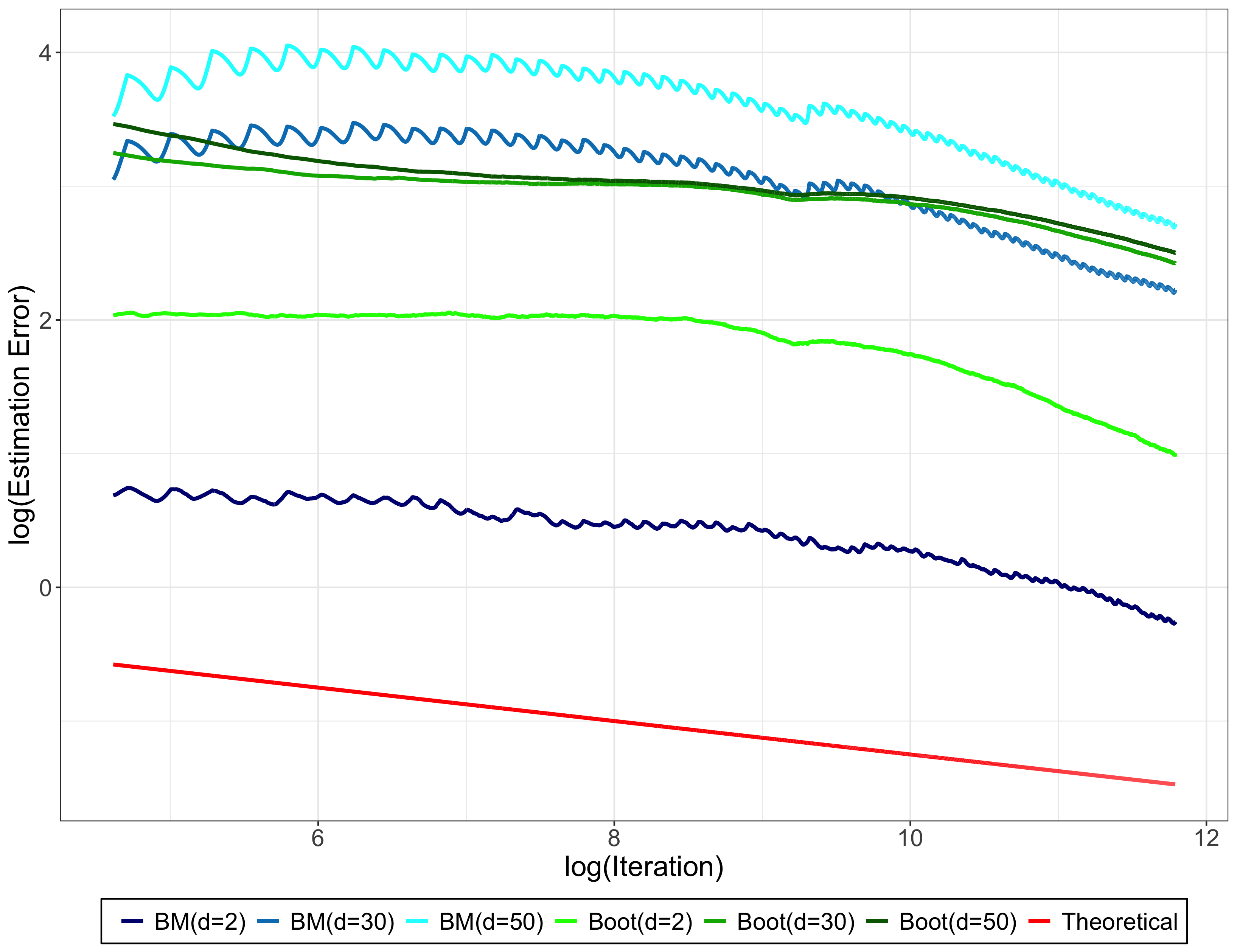}   
    \includegraphics[width=.245\textwidth,height=2in]{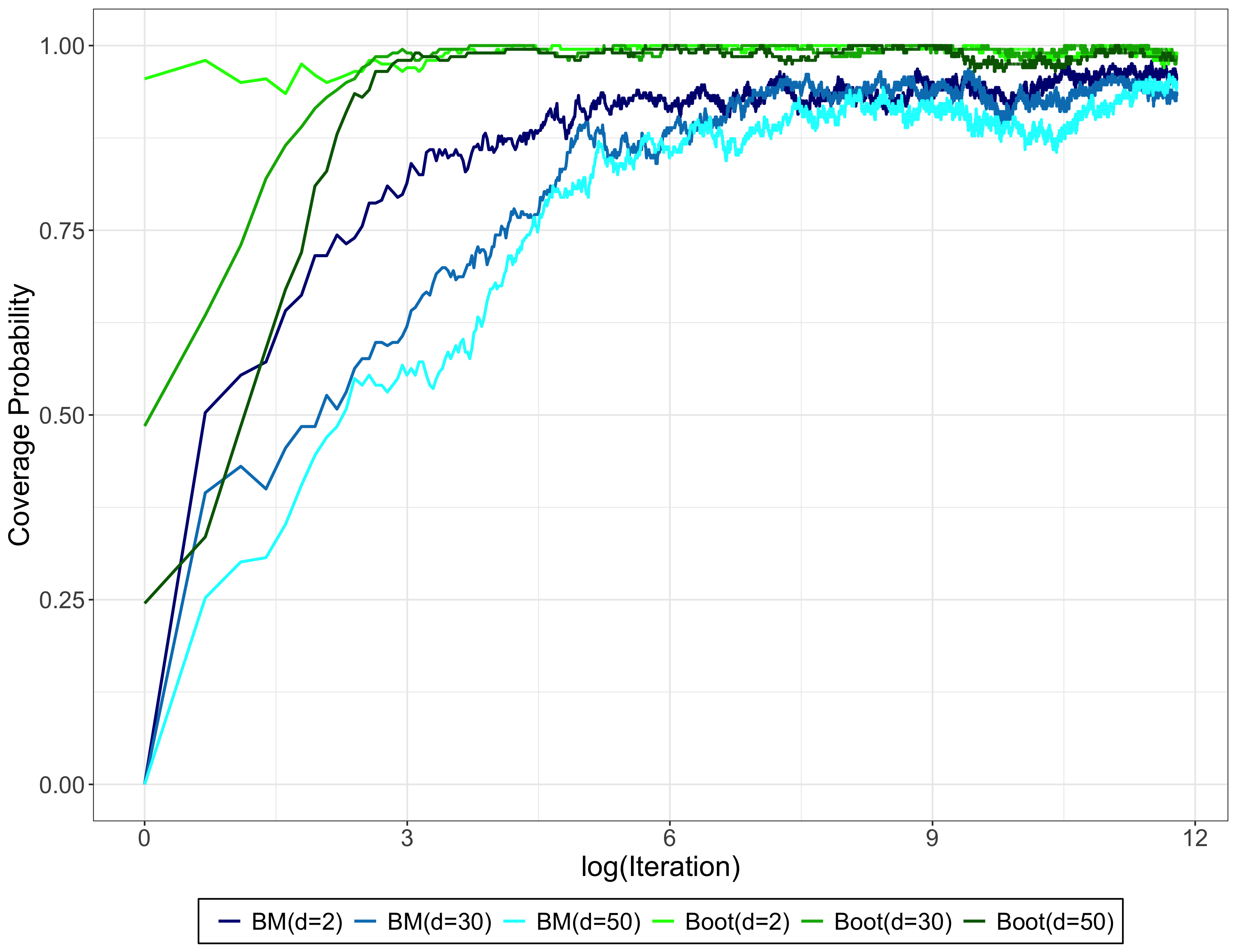}   
    \includegraphics[width=.245\textwidth,height=2in]{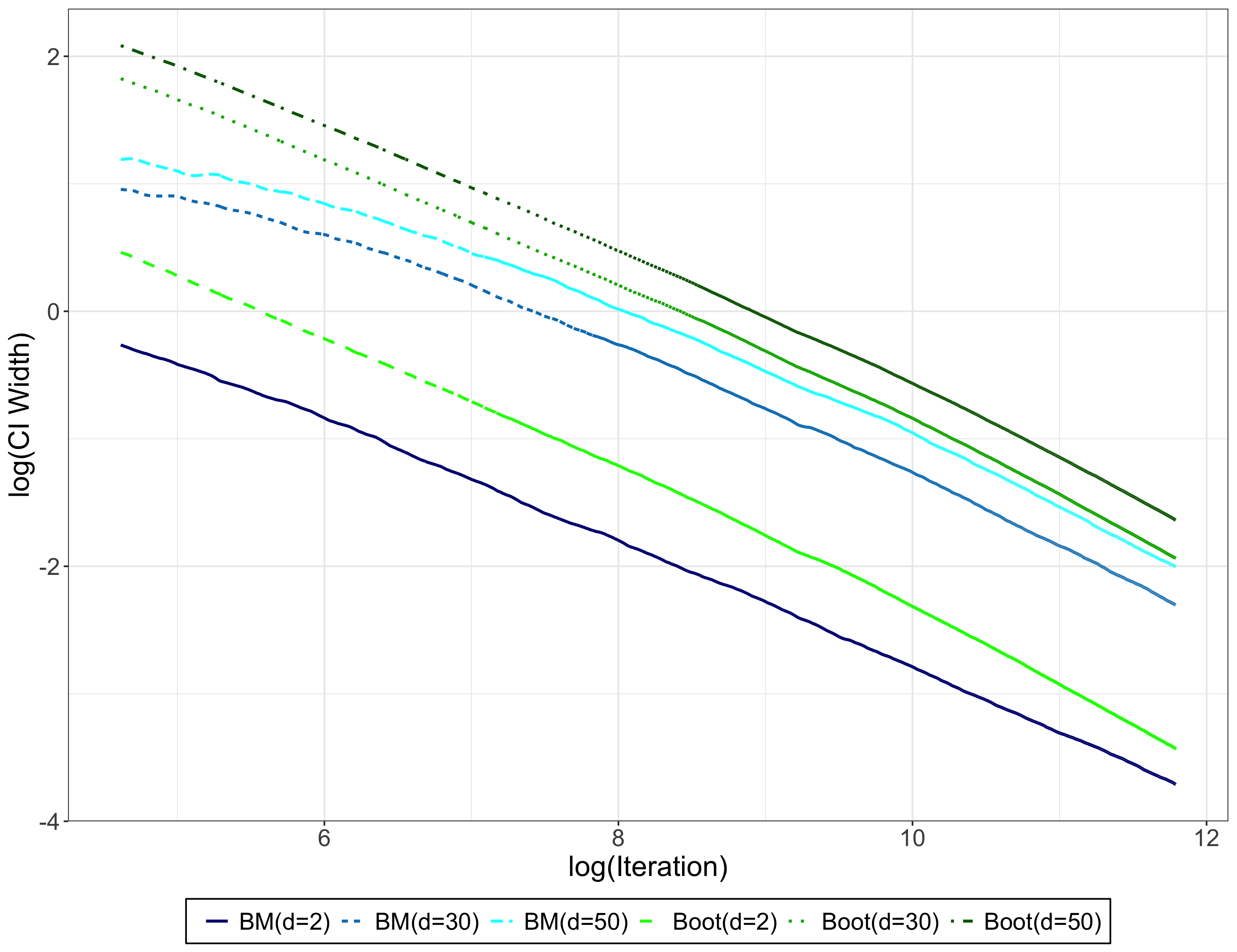}
    \includegraphics[width=.245\textwidth,height=2in]{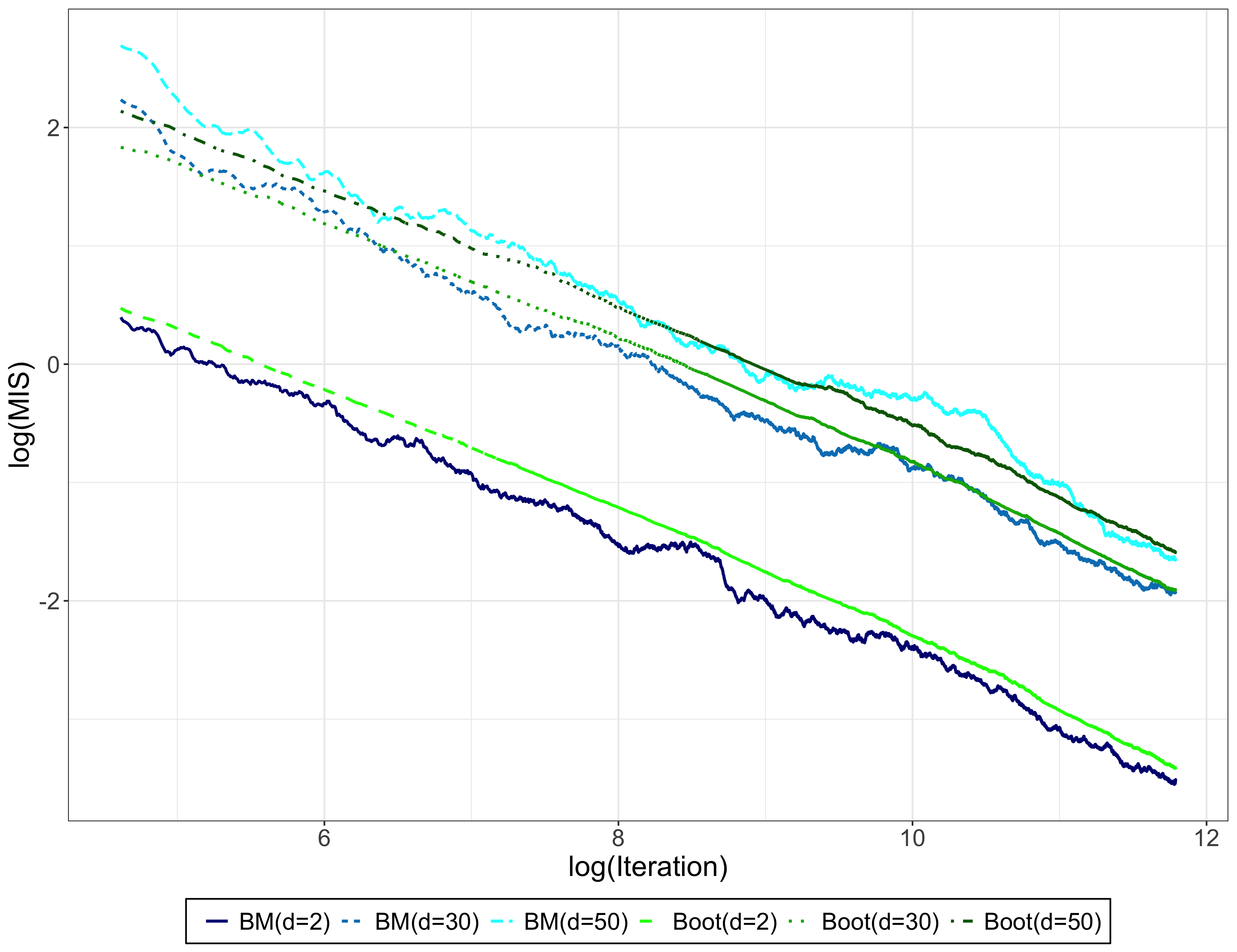}
    \caption{Linear regression (upper) and logistic regression (lower) with synthetic state-independent Markovian data: In each row, from left to right columns are the plots of $\log(\text{Estimation Error})$, coverage probability, $\log(\text{width of Confidence Interval (CI)})$, and $\log(\text{MIS})$ respectively. The blue (green) lines correspond to \texttt{BM} (\texttt{Boot}) for dimensions $d=2,30,50$. The red line in the plots of $\log(\text{Estimation Error})$ corresponds to the theoretical rate obtained in Theorem~\ref{pf:mainthm}. Similar to state-dependent Markovian data setting, the results show that the bootstrap-based method achieves a higher coverage probability by constructing wider confidence interval. The plot of $\log(\text{MIS})$ shows that, \texttt{BM} performs better than \texttt{Boot}, especially in high dimensional linear regression. }
    \label{fig:boot_bm_linlogregsi}
\end{sidewaysfigure}
\clearpage
\section{Comparison of Computation Time of \texttt{BM} with \texttt{Boot}}
In each iteration, \texttt{Boot} requires several gradient computations which can be expensive in practice. In Table~\ref{tab:timetable}, we present a comparison of the average computation time of the \texttt{BM} and \texttt{Boot} estimators in each iteration for various applications. In higher dimension, \texttt{Boot} requires more bootstrap replicates increasing the computation time which can be $\sim 8$ times as large as the computation time of \texttt{BM}. 
\begin{table}[t]
\begin{centering}
\begin{tabular}{|ll|ll|ll|}
\hline
\multicolumn{2}{|l|}{\multirow{2}{*}{}}                           & \multicolumn{2}{l|}{Linear Regression}                                             & \multicolumn{2}{l|}{Logistic Regression}                                           \\ \cline{3-6} 
\multicolumn{2}{|l|}{}                                            & \multicolumn{1}{l|}{\texttt{BM}} & \texttt{Boot} & \multicolumn{1}{l|}{\texttt{BM}} & \texttt{Boot} \\ \hline
\multicolumn{1}{|l|}{\multirow{3}{*}{State Dependent}}   & $d=2$  & \multicolumn{1}{l|}{0.28}                         & 0.63                           & \multicolumn{1}{l|}{0.29}                         & 0.76                           \\ \cline{2-6} 
\multicolumn{1}{|l|}{}                                   & $d=30$ & \multicolumn{1}{l|}{0.49}                         & 1.37                           & \multicolumn{1}{l|}{0.51}                         & 1.63                           \\ \cline{2-6} 
\multicolumn{1}{|l|}{}                                   & $d=50$ & \multicolumn{1}{l|}{0.70}                         & 3.39                           & \multicolumn{1}{l|}{0.68}                         & 3.88                           \\ \hline
\multicolumn{1}{|l|}{\multirow{3}{*}{State Independent}} & $d=2$  & \multicolumn{1}{l|}{0.12}                         & 0.23                           & \multicolumn{1}{l|}{0.076}                        & 0.18                           \\ \cline{2-6} 
\multicolumn{1}{|l|}{}                                   & $d=30$ & \multicolumn{1}{l|}{0.18}                         & 0.59                           & \multicolumn{1}{l|}{0.12}                         & 0.47                           \\ \cline{2-6} 
\multicolumn{1}{|l|}{}                                   & $d=50$ & \multicolumn{1}{l|}{0.25}                         & 1.70                           & \multicolumn{1}{l|}{0.18}                         & 1.38                           \\ \hline
\end{tabular}
\caption{Comparison of the average per-iteration  computation time (ms) of \texttt{BM} and \texttt{Boot}.}
\label{tab:timetable}
\end{centering}
\end{table}


\end{document}